\documentclass[a4paper]{article}
\usepackage{authblk}
\usepackage{amsmath}
\usepackage{amsfonts}
\usepackage{amssymb}
\usepackage{graphicx}
\usepackage{xcolor}
\usepackage{natbib}
\usepackage[top=2cm,bottom=2cm,right=2cm,left=2cm]{geometry}
\usepackage{hyperref}

\newtheorem{theorem}{Theorem}

\newtheorem{corollary}[theorem]{Corollary}
\newtheorem{definition}[theorem]{Definition}

\newtheorem{lemma}[theorem]{Lemma}
\newtheorem{proposition}[theorem]{Proposition}
\newtheorem{remark}[theorem]{Remark}

\graphicspath{{Images}}

\begin{document}

\title{Minimax prediction in the Functional Autoregressive Model}
\author[1]{André Mas}
\author[2]{Angelina Roche}
\affil[1]{IMAG, Univ. Montpellier, CNRS}
\affil[2]{Université Paris Cité, CNRS, MAP5, F-75006 Paris, France}
\date{}
\maketitle

\begin{abstract}
The Functional Autoregressive Model (FAR) generalizes the multivariate AR(1) model in Time Series Analysis to functional data. It serves as a historical foundational point in the study of functional time series and remains a fundamental and widely used model for dependent functional data. The process—the observed data—generated by the FAR model forms a Hilbert-valued Markov chain. This paper investigates the non-asymptotic prediction mean square error and derives a lower bound. This lower bound is established in the specific context of non-i.i.d. data and depends on the mixed smoothness of the functional time series and of the unknown correlation operator driving the FAR model. Instead of the standard functional PCA regularization, a ridge-type estimator is proposed, which avoids the preliminary estimation of the spectrum of the covariance sequence associated with the process. A non-asymptotic upper bound is derived for this estimate, which matches the lower bound up to multiplicative constants. Furthermore, a detailed study of the estimate's bias reveals connections between functional smoothness parameters and regularly/rapidly varying functions, which are common in extreme value theory. Simulation results corroborate the theoretical main theorems.
\end{abstract}

\section{Introduction}

Functional data analysis is an established area of modern statistics. Pioneering works dating back to the 1970s and 1980s focused on the inference of first- or second-order moments \cite{Kleffe1973,Dauxois1982}. From its inception, FDA branched out into several directions corresponding to mainstream classical statistics, leading to fruitful research areas such as exploratory analysis, regression analysis, spatial functional data, and Bayesian analysis for functional data. A series of monographs and state-of-the-art articles provide comprehensive overviews of both theoretical and applied aspects (\cite{Ramsay2005,FerratyVieu2006,Kokoszka2012-vp,Cuevas2014,Hsing2015}).

Functional Time Series Analysis (FTSA) represents a highly active subdomain. The underlying principle consists in extending popular multivariate time series models to functional data. A comprehensive introduction to FTSA is provided by \cite{Aue2015}. FTSA typically addresses dependent data and frequently relies on tools from stochastic analysis (see, e.g., \cite{Merlevede1996,Bosq2007,hormann2010,kühnert2025}).

The functional autoregressive process examined in this work is the foundational model introduced in FTSA, originally proposed by Bosq in \cite{Bosq1991}. By contrast, the linear regression model appeared only a few years later in \cite{CardotFLM99}.

A functional (Hilbertian) autoregressive process generalizes the scalar AR(1) and multivariate VAR(1) models to functional data. It is defined as a stationary solution to the equation:
\begin{equation*}
X_{n}-\boldsymbol\mu=\rho\left( X_{n-1}-\boldsymbol\mu\right) +\varepsilon_{n},\quad n\in\mathbb{Z}
\end{equation*}
where $X_{n}$ is the $n$-th observation in a sample of dependent functional data, $\boldsymbol\mu=\mathbb{E}X_1$, $\rho$ is a bounded linear operator, and $\left(\varepsilon_{n}\right)_{n\in\mathbb{Z}}$ is a Hilbertian white noise. The theoretical properties of this model have been extensively studied in the monograph \cite{Bosq2000}. Various inference strategies have been proposed in the literature, including sieves \cite{BENSMAIN2001}, wavelets \cite{ANTONIADIS2003}, RKHS \cite{Mokhtari2003}, and low-rank approximation \cite{KARGIN2008}. Additionally, an R-package was developed by Damon and Guillas \cite{Damon2005}.

Further significant contributions to the model are detailed in Chapters 13, 14, and 15 of \cite{Kokoszka2012-vp}, which address change-point problems and order identification. An extensive study on prediction is conducted in \cite{Didericksen2011}.

Several works have focused on the theoretical question of the convergence rates of estimators or predictors, typically within an asymptotic framework (e.g., \cite{Mas2007,CAPONERA2022}). Various extensions have also been proposed: in \cite{RuizMedina2011}, the model is adapted to spatial data; in \cite{Pumo1998,RuizMedina2019}, to Banach-valued data; and in \cite{Caponera2021,Caponera2021bis}, to random functions on the sphere.

Finally, numerous applications have been implemented across diverse fields such as climatology (\cite{Besse2000, Damon02, AlvarezLibana2019}), finance (\cite{Cai2018}), econometrics (\cite{franchi_paruolo_2019}), and electricity consumption. The model is well-suited for any framework involving dependent functional data with an underlying Markovian structure.

Most existing estimation strategies rely on functional PCA as a dimension reduction technique for the nonparametric estimation of $\rho$. This approach requires estimating the spectrum (eigenvalues and eigenvectors) of the covariance operator of the stationary process $X_{n}$, denoted by $C$. While eigenvalue estimation is relatively stable, approximating eigenfunctions is more challenging and prone to numerical perturbations. This limitation motivates the alternative strategies explored in this work.

This paper represents an initial attempt to establish optimality results in the minimax sense for a dependent functional data model and to provide non-asymptotic guarantees. The remaining sections are organized as follows. Section 2 introduces the fundamental concepts and tools of functional analysis. Section 3 describes the FAR model, presents the proposed estimator, and details the core assumptions regarding moments and regularity. Note that the term \textit{regularity} is preferred to \textit{smoothness} since the unknown parameters of interest are linear operators rather than classical functions. Section 4 presents the main results, establishing first a lower bound and subsequently deriving the upper bound for the estimator. The proofs are collected in Section 7, complemented by an Appendix.

\section{Functional framework}
This section introduces the core concepts of functional analysis required for the subsequent developments. The presentation is restricted to the essential prerequisites. For a comprehensive overview of Hilbert spaces, linear operators, or Gaussian analysis, the reader is referred to \cite{Ramsay2005}, \cite{Lifshits2010}, and \cite{Hsing2015}.

\subsection{Hilbert spaces and operators}

Let $\mathbf{H}$ be a real separable Hilbert space of functions equipped with the inner product $\left\langle \cdot,\cdot\right\rangle $ and the norm $\left\Vert\cdot\right\Vert $. Typically, $\mathbf{H}$ can be chosen as $L^{2}\left(\Omega\right)$ for a compact set $\Omega$, or as a Sobolev space $H^{2,m}\left(\Omega\right)$ if specific smoothness is required: 
\begin{equation}
H^{2,m}\left( \Omega\right) =\left\{ f\in L^{2}\left( \Omega\right)
:f^{\left( m\right) }\in L^{2}\left( \Omega\right) \right\} .  \label{sob}
\end{equation}
In the statistical model under investigation, the unknown parameter is a bounded linear operator on $\mathbf{H}$. The Banach space of bounded linear operators from $\mathbf{H}$ into $\mathbf{H}$ is denoted by $\mathcal{L}\left(\mathbf{H}\right)$ and is equipped with the standard operator norm $\left\Vert \cdot\right\Vert _{\infty}$ defined for all $T\in\mathcal{L}\left( \mathbf{H}\right) $ by $\left\Vert T\right\Vert _{\infty}=\sup_{\left\Vert x\right\Vert =1}\left\Vert Tx\right\Vert $. 

The space $\mathcal{L}\left( \mathbf{H}\right) $ is too general for our purposes and lacks separability. The space of compact operators $\mathcal{L} _{0}\left( \mathbf{H}\right) \subset\mathcal{L}\left( \mathbf{H}\right) $ provides a more suitable framework. An operator $T\in\mathcal{L}\left(\mathbf{H}\right)$ is compact if the image of the unit ball of $\mathbf{H}$ under $T$ is relatively compact. Let $\otimes$ denote the outer product of two vectors $x, y \in \mathbf{H}$, where $x\otimes y$ is a rank-one operator defined for all $h \in \mathbf{H}$ by $\left(x\otimes y\right) \left( h\right) =\left\langle y,h\right\rangle x$. By the Spectral Theorem, a self-adjoint compact operator $S$ admits the decomposition: $S=\sum_{k\in\mathbb{N}}s_{k}\left\langle \phi_{k},\cdot\right\rangle \phi_{k}=\sum_{k\in\mathbb{N}}s_{k}\phi_{k} \otimes\phi_{k}$, where $(s_{k},\phi_{k})\in \mathbb{R}^{+}\times \mathbf{H}$ for all $k$, $\left(s_{k}\right)_{k\in\mathbb{N}}\in c_{0}$ (i.e., $\lim s_{k}=0$), and $\left(\phi_{k}\right)_{k\in\mathbb{N}}$ forms an orthonormal system. The square root of a non-negative compact symmetric operator $S=\sum_{k\in\mathbb{N}}s_k\phi_k\otimes\phi_k$ is defined as $S^{1/2}=\sum_{k\in\mathbb{N}}s_{k}^{1/2}\left( \phi_{k}\otimes\phi_{k}\right) $. 

To define important classes of linear operators that arise naturally in this context, we introduce the Schatten $p$-class. For $p>0$, the Schatten $p$-class $\mathcal{S}_{p}$ is a Banach space defined by: 
\begin{equation*}
\mathcal{S}_{p}=\left\{ T\in\mathcal{L}_{0}\left( \mathbf{H}\right) :\left\Vert
T\right\Vert _{p}=\left(\mathrm{tr}\left( U^{p}\right)\right)^{1/p} <+\infty,\quad U=\left(
T^{\ast}T\right) ^{1/2}\right\}
\end{equation*}
where $\mathrm{tr}$ denotes the operator trace and $\left( T^{\ast}T\right)^{1/2}$ is the scaling operator of $T$. The trace functional is defined by $\mathrm{tr}\left( U\right) =\sum_{k}\left\langle U\phi_{k},\phi_{k}\right\rangle $, where $(\phi_{k})$ is an arbitrary orthonormal basis of $\mathbf{H}$. The definition of $\mathrm{tr}\left( U\right) $ (and consequently of $\left\Vert T\right\Vert _{p}$) is independent of the choice of basis. We note that $\mathrm{tr}(T)=\mathrm{tr}(T^*)$ and $\mathrm{tr}(TS) = \mathrm{tr}(ST)$. Alternatively, the Schatten class can be characterized as $\mathcal{S}_{p}=\left\{ T\in \mathcal{L}_{0}\left( \mathbf{H}\right) :\left\Vert T\right\Vert _{p}=\left(\sum_{k\in\mathbb{N}}s_{k}^{p}\right)^{1/p}<+\infty\right\} $. An essential property is that if $T\in\mathcal{L}\left( \mathbf{H}\right) $ and $S\in\mathcal{S}_{p}$, then both $TS$ and $ST$ belong to $\mathcal{S}_{p}$.

Two specific classes play a crucial role in this paper: $\mathcal{S}_{2}$ and $\mathcal{S}_{1}$. The space $\mathcal{S}_{1}$ is the space of trace-class (or nuclear) operators, which are of primary interest in FDA due to their connection with covariance operators. The space $\mathcal{S}_{2}$ comprises Hilbert-Schmidt operators and serves as the preferred framework for studying the random operators introduced below. It is a separable Hilbert space equipped with the inner product $\left\langle T,S\right\rangle _{2}=\mathrm{tr}\left( T^{\ast}S\right) $ and the norm $\left\Vert T\right\Vert _{2}^{2}=\sum_{k\in\mathbb{N}}\left\Vert T\phi_{k}\right\Vert ^{2}=\mathrm{tr}\left( T^{\ast}T\right) $. The following standard inequalities hold: 
\begin{align*}
\left\vert \mathrm{tr}\left( TS\right) \right\vert & \leq\left\Vert
T\right\Vert _{2}\left\Vert S\right\Vert _{2}\quad \text{for } T,S\in\mathcal{S}_{2}, \\
\left\vert \mathrm{tr}\left( UV\right) \right\vert & \leq\left\Vert
U\right\Vert _{1}\left\Vert V\right\Vert _{\infty}\quad \text{for } U\in\mathcal{S}_{1}, V\in\mathcal{L}\left( \mathbf{H}\right) .
\end{align*}
When $T$ is compact, symmetric, and positive-definite, it possesses a denumerable sequence of positive eigenvalues, denoted by $\left( \lambda_{k}\right)_{k\in\mathbb{N}}$. In this case, $\left\Vert T\right\Vert _{2}^{2}=\sum_{k\in\mathbb{N}}\lambda_{k}^{2}$ and $\left\Vert T\right\Vert _{1}=\sum_{k\in\mathbb{N}}\lambda_{k}$. 

Unbounded operators on Hilbert spaces will also be encountered in this work. While the use of unbounded operators is less common in statistics, differential operators defined on appropriate domains provide typical examples. For instance, consider $T:H^{2,1}\left( \left[ 0,1\right] \right)\rightarrow L^{2}\left( \left[ 0,1\right] \right)$ defined by $Tf=f^{\prime}$ for $f\in L^{2}\left( \left[ 0,1\right] \right) $. It is straightforward to verify that $T$ is unbounded on $\mathbf{H}=L^{2}\left( \left[ 0,1\right] \right) $ (e.g., consider $f\left( t\right) =t^{-1/4}$). However, $T$ is bounded on $H^{2,1}\left( \left[ 0,1\right] \right) $ as defined in (\ref{sob}), and $H^{2,1}\left( \left[ 0,1\right] \right) $ is dense in $L^{2}\left( \left[ 0,1\right] \right) $.

Differential operators can be viewed, to some extent, as the inverses of integral operators. More generally, an important class of unbounded operators can be defined by inverting injective compact operators. Let $T$ be a compact self-adjoint operator admitting the spectral decomposition $T=\sum_{k\in\mathbb{N}}s_{k}\left\langle \phi_{k},\cdot\right\rangle \phi_{k}=\sum_{k\in\mathbb{N}}s_{k}\left( \phi_{k}\otimes\phi_{k}\right) $, where $\left( s_{k}\right)_{k\in\mathbb{N}}$ is a positive real sequence converging to zero. Typically, $T$ represents the covariance operator of a random function. If $s_{k}>0$ for all $k$, $T$ is injective, allowing the formal definition of the inverse operator $T^{-1}=\sum_{k\in\mathbb{N}}s_{k}^{-1}\left\langle \phi_{k},\cdot\right\rangle \phi_{k}.$ However, for an arbitrary $x\in\mathbf{H}$, the series $\left\Vert T^{-1}x\right\Vert ^{2}=\sum_{k\in\mathbb{N}}s_{k}^{-2}\left\langle \phi_{k},x\right\rangle ^{2} $ may diverge. Consequently, $T^{-1}$ is unbounded and discontinuous. Its domain is defined as: 
\begin{equation*}
\mathcal{D}\left( T^{-1}\right) =\left\{ x\in\mathbf{H:}\sum_{k\in \mathbb{N}
}\frac{\left\langle \phi_{k},x\right\rangle ^{2}}{s_{k}^{2}}<+\infty\right\}
.
\end{equation*}
The domain $\mathcal{D}\left( T^{-1}\right) $ has an empty interior but is dense in $\mathbf{H}$, and $T^{-1}$ defines a measurable mapping from $\mathcal{D}\left( T^{-1}\right) $ into $\mathbf{H}$. Note that $\mathcal{D}\left( T^{-1}\right)$ corresponds to the RKHS associated with the kernel $K(s,t) = \sum_{k\geq 1}s_k\phi_k(s)\phi_k(t)$. In the sequel, all encountered unbounded linear operators are assumed to have dense domains. The general theory of closed unbounded operators with dense domains falls outside the scope of this paper; only the basic properties summarized in Section 7.1 of the Appendix will be utilized.

\subsection{Functional data}

\subsubsection{General functional data}

Let $X$ be a random function, modeled as an $\mathbf{H}$-valued random element. The distribution of $X$ defines a probability measure on $\mathbf{H}$. The expectation $\mathbb{E}X$ is a deterministic element of $\mathbf{H}$. The covariance structure of $X$ is characterized by a bounded linear operator $C:\mathbf{H\rightarrow H}$, defined for all $x\in \mathbf{H}$ by $C\left(x\right) =\mathbb{E}\left[ \left\langle X-\mathbb{E}X,x\right\rangle (X-\mathbb{E}X)\right] $. The operator $C$ generalizes the classical covariance matrix to the infinite-dimensional setting and is positive and self-adjoint by construction. Under the standard assumption that $\mathbb{E}\left\Vert X\right\Vert ^{2}<+\infty $, the domain of $C$ is the entire space $\mathbf{H}$. Furthermore, since $\mathbb E[\|X\|^2]=\mathrm{tr}(C)+\|\mathbb E X\|^2$, it follows that $C\in \mathcal{S}_{1}$. 

Let $\mathbf{L}_{0}^{m}\left( \mathbf{H}\right) $ denote the set of $\mathbf{H}$-valued centered random variables $Z$ satisfying $\mathbb{E}\left\Vert Z\right\Vert^{m}<+\infty $.

As a compact operator, $C$ possesses a pure point spectrum. Let $\left(\lambda_{k},e_{k}\right)_{k\in \mathbb{N}^{\ast }}$ denote the eigenvalue-eigenvector pairs of $C$, which define the functional PCA of $X$. The eigenvalues are positive and summable. We assume that $C$ is injective, that the eigenvalues are simple and sorted in strictly decreasing order: $\lambda_{1}>\dots>\lambda_{k}>\lambda_{k+1}>\dots>0$ for all $k$, and normalized such that $\max_k \lbrace k\lambda_k\rbrace \leq 1$. Multiple eigenvalues could be accommodated with minor modifications to the notation and results. The operator $C^{-1}$ is referred to as the precision operator of $X$. In most applications, covariance operators are integral operators with continuous kernels, and their corresponding precision operators behave similarly to differential operators. The Karhunen-Loève expansion of $X$ is given by: 
\begin{equation}
X=\mathbb{E}X+\sum_{k=1}^{+\infty }\sqrt{\lambda _{k}}\eta _{k}e_{k}
\label{Karhunen-L}
\end{equation}
where the real random variables $\eta _{k}=\left\langle X-\mathbb{E}X,e_{k}\right\rangle /\sqrt{\lambda _{k}}$ are centered with unit variance and are mutually uncorrelated. This decomposition generalizes the classical Fourier series expansion. For any $T\in \mathcal{L}\left(\mathbf{H}\right)$, the following identities hold:
\begin{align*}
\mathbb{E}\left\langle TX,X\right\rangle & =\sum_{k\geq 1}\mathbb E[ \left\langle X,T^\ast e_{k}\right\rangle \left\langle e_{k},X\right\rangle ]=\mathrm{tr}\left(TC\right),\\
\mathbb{E}\left\Vert TX\right\Vert ^{2}& =\mathbb{E}\left\langle T^\ast TX,X\right\rangle =\mathrm{tr}\left( T^{\ast }TC\right) =\mathrm{tr}((TC^{1/2})^\ast (TC^{1/2}))=\left\Vert C^{1/2}T\right\Vert _{2}^{2}.
\end{align*}

\subsubsection{Gaussian functional data\label{subsec-gauss-func-data}}

A Gaussian random function is a Hilbert-valued random element $X$ such that $\left\langle X,h\right\rangle$ is a real Gaussian random variable for all $h\in\mathbf{H}$. The distribution of a Gaussian element $X$ is uniquely determined by its mean $m$ and its covariance operator $C$, with $\left\langle X,h\right\rangle \sim\mathcal{N}\left( \mu_{h},\sigma_{h}^{2}\right) $, where $\mu _{h}=\mathbb{E}\left\langle X,h\right\rangle =\left\langle m,h\right\rangle $ and $\sigma_{h}^{2}=\mathbb{E}\left\langle X,h\right\rangle ^{2}=\left\langle Ch,h\right\rangle $. When $X$ is Gaussian, the random variables $\eta_{k}$ in (\ref{Karhunen-L}) are mutually independent. For a detailed treatment of the theory of Gaussian random functions, see \cite{Lifshits2010}. 

The Feldman-Hájek Theorem provides necessary and sufficient conditions for the equivalence or mutual singularity of two Gaussian measures on a Hilbert space. Following the formulation in \cite{DPZ}, this result will be used to establish the minimax lower bound.

\begin{theorem}
\label{origFeldHayek}(Feldman--H\'{a}jek 1958) Let $\mu=\mathcal{N}\left(m_{1},\Sigma_{1}\right) $ and $\nu=\mathcal{N}\left( m_{2},\Sigma_{2}\right) $ be two Gaussian measures on $\mathbf{H}$. Then $\mu$ and $\nu$ are either mutually singular or equivalent. They are equivalent if and only if the following three conditions hold: (i) $\mathrm{Im}\Sigma_{1}^{1/2}=\mathrm{Im}\Sigma_{2}^{1/2}=H_{0}$, (ii) $m_{1}-m_{2}\in H_{0}$, and (iii) the operator $\left( \Sigma_{1}^{1/2}\Sigma_{2}^{-1/2}\right) \left( \Sigma_{1}^{1/2}\Sigma_{2}^{-1/2}\right) ^{\ast}-\mathbf{I}_{\mathbf{H}}$ is Hilbert-Schmidt on $\overline{H}_{0}$.
\end{theorem}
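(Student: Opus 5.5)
The statement is the classical Feldman--H\'ajek theorem, which the paper quotes from \cite{DPZ}. My plan is a proof along the standard route: Kakutani's dichotomy for product measures, applied after diagonalizing one covariance. I first treat the zero-mean case, then handle the means by a Cameron--Martin argument.

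\textbf{Step 1: necessity of (i).} If $\mu\sim\nu$, then the Cameron--Martin spaces must coincide. A Gaussian measure is concentrated on the closure of its Cameron--Martin space $\mathrm{Im}\,\Sigma^{1/2}$. Moreover, for $h$ outside $\mathrm{Im}\,\Sigma_1^{1/2}$, one can find functionals $\ell_n$ with $\mathrm{Var}_\mu(\ell_n)\to0$ but $\mathrm{Var}_\nu(\ell_n)\ge1$. Passing to a subsequence, $\ell_n\to0$ $\mu$-a.s., while under $\nu$ the sequence does not converge to $0$. This contradicts equivalence. Hence $\mathrm{Im}\,\Sigma_1^{1/2}=\mathrm{Im}\,\Sigma_2^{1/2}=H_0$.

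\textbf{Step 2: (ii) via the Cameron--Martin theorem.} For a common covariance $\Sigma$, the shifted measure $\mathcal N(m,\Sigma)$ is equivalent to $\mathcal N(0,\Sigma)$ exactly when $m\in\mathrm{Im}\,\Sigma^{1/2}$; otherwise the two are singular. This follows from the one-dimensional computation along the basis $\Sigma^{-1/2}$-coordinates, combined with Kakutani. Using this, the problem splits into a mean part and a covariance part.

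\textbf{Step 3: the covariance part.} Under (i), the operator $T=\Sigma_1^{1/2}\Sigma_2^{-1/2}$ extends to a bounded invertible operator on $\overline H_0$ by the closed graph theorem. Condition (iii) concerns $S=TT^\ast-\mathbf I$. Writing $\nu$ in white-noise coordinates $X=\Sigma_2^{1/2}\xi$, the measure $\mu$ becomes the Gaussian measure with covariance $TT^\ast$ in those coordinates.

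If $S$ is Hilbert--Schmidt, it is diagonalizable, with eigenvalues $s_k$ satisfying $\sum s_k^2<\infty$ and $1+s_k>0$. In the eigenbasis both measures become infinite products of one-dimensional centered Gaussians, with variances $1$ and $1+s_k$. The Hellinger affinity of each factor pair is $\left(\frac{2\sqrt{1+s_k}}{2+s_k}\right)^{1/2}$. Kakutani's theorem then gives equivalence if and only if $\sum_k\left(1-\text{affinity}_k\right)<\infty$, which is equivalent to $\sum s_k^2<\infty$.

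The converse direction requires showing that if $S$ is not Hilbert--Schmidt, the measures are singular. This is the main obstacle, because $S$ need not be compact and so has no eigenbasis. I would handle it by choosing an orthonormal sequence on which $\langle S e_k,e_k\rangle$ is non-square-summable, or on which $S$ has a large off-diagonal mass. I would then restrict to finite-dimensional projections, use the fact that the Hellinger affinity of the projected measures dominates the full affinity, and show the projected affinities tend to $0$. The dichotomy itself comes from the Kakutani zero--one law, since the product tail structure forces the affinity to be either positive (equivalence) or zero (singularity).

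Combining Steps 1--3 yields both the dichotomy and the stated characterization.
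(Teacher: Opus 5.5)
Your route matches the paper's proof of its variant, Proposition~\ref{varFeldHay}, which follows Da Prato--Zabczyk: Kakutani's theorem in a diagonalizing basis for sufficiency, and finite-dimensional projections plus monotonicity of the Hellinger affinity $\mathbf h$ for necessity. Two steps, however, do not go through as written.

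First, ``the problem splits into a mean part and a covariance part'' is valid only for sufficiency, where you can chain $\mathcal N(m_1,\Sigma_1)\sim\mathcal N(m_2,\Sigma_1)\sim\mathcal N(m_2,\Sigma_2)$. For necessity, Steps~2 and~3 cannot exclude the case $m_1-m_2\notin H_0$ with (iii) failing. In that case $\mathcal N(m_2,\Sigma_1)$ is singular to both $\mu$ and $\nu$, which is perfectly compatible with $\mu\sim\nu$, so no contradiction arises. You need an extra idea, typically symmetrization. Since $\mathbf h(\mu\otimes\mu,\nu\otimes\nu)=\mathbf h(\mu,\nu)^2$, and $\mathbf h$ can only increase under the map $(x,y)\mapsto(x-y)/\sqrt2$, which sends these products to $\mathcal N(0,\Sigma_1)$ and $\mathcal N(0,\Sigma_2)$, non-singularity passes to the centred pair. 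The centred case then gives (iii). Only after that does Cameron--Martin, applied to $\mu$ and $\mathcal N(m_2,\Sigma_1)\sim\nu$, give (ii).

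Second, the dichotomy is not established. Kakutani's zero--one law concerns two product measures in a common coordinate system, which you have only in the diagonalizable setting of Step~3. It says nothing when (i) fails, or when means and covariances differ simultaneously. Your Step~1 argues by almost sure convergence under $\mu\sim\nu$, so it only yields non-equivalence: $\nu(\ell_{n_k}\to0)<1$, not $=0$. The remedy, which is how the paper organises its necessity step, is to derive (i)--(iii) from the weaker hypothesis $\mathbf h(\mu,\nu)>0$. In Step~1 this reads $\mathbf h(\mu,\nu)\le\mathbf h(\mu\circ\ell_n^{-1},\nu\circ\ell_n^{-1})\le\bigl(2\sigma_{1,n}\sigma_{2,n}/(\sigma_{1,n}^2+\sigma_{2,n}^2)\bigr)^{1/2}\to0$, where $\sigma_{i,n}^2$ are the two variances of $\ell_n$. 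Then ``not singular $\Rightarrow$ (i)--(iii) $\Rightarrow$ equivalent'' is the dichotomy.

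Finally, the non-compact case is not an obstacle and needs no specially chosen sequence. Note first that in the white-noise coordinates of $\nu$ the covariance of $\mu$ is $T^\ast T$, not $TT^\ast$; this is harmless, since $T^\ast T-\mathbf I=T^{-1}(TT^\ast-\mathbf I)T$. Take, e.g., the eigenbasis of $\Sigma_2$ and let $\tau_{jn}$ be the eigenvalues of the compression $\Pi_nT^\ast T\Pi_n$.
\begin{itemize}
\item The projected affinity is $\prod_j\bigl(2\sqrt{\tau_{jn}}/(1+\tau_{jn})\bigr)^{1/2}\ge\mathbf h(\mu,\nu)$.
\item Because $T$ is bounded and boundedly invertible on $\overline{H}_0$, all $\tau_{jn}$ lie in $[\Vert T^{-1}\Vert_\infty^{-2},\Vert T\Vert_\infty^2]$, where $\log\frac{(1+\tau)^2}{4\tau}\ge c(1-\tau)^2$.
\item Hence $c\,\Vert\Pi_n(T^\ast T-\mathbf I)\Pi_n\Vert_2^2=c\sum_j(1-\tau_{jn})^2\le-4\log\mathbf h(\mu,\nu)$, and letting $n\to\infty$ bounds $\Vert T^\ast T-\mathbf I\Vert_2$.
\end{itemize}
This uniform spectral bound is missing from your sketch. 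Without it, $\sum_j(1-\tau_{jn})^2$ cannot be compared with $-\log\mathbf h$.
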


\section{Model\label{section-model}}

\subsection{Estimation procedure}

Without loss of generality, the autoregressive process is assumed to be centered:
\begin{equation}
X_{n}=\rho\left( X_{n-1}\right) +\varepsilon_{n}\quad n\in\mathbb{Z},
\label{model}
\end{equation}
where $\left( \varepsilon_{n}\right)_{n\in \mathbb{Z}}$ is an i.i.d. centered sequence satisfying $\mathbb{E}\left\Vert \varepsilon_1 \right\Vert^2<+\infty$. We denote the following covariance operators:
\begin{equation*}
C=\mathbb{E}\left( X_{1}\otimes X_{1}\right) ,\quad D=\mathbb{E}\left(X_{1}\otimes X_{2}\right), \quad  C_{\varepsilon}=\mathbb{E}\left( \varepsilon_{1}\otimes\varepsilon_{1}\right).
\end{equation*}
As established by Bosq (\cite{Bosq2000}, Theorem 3.1), a unique stationary solution to (\ref{model}) exists provided that:
\begin{equation} \label{A0}
\left\Vert \rho\right\Vert _{\infty}<1.
\end{equation}
Under this condition, $X_0 = \sum_{n=-\infty}^0 \rho^{-n}(\varepsilon_n)$. Taking the outer product on both sides of (\ref{model}) yields the fundamental lyceum equation linking $C$ and $C_{\varepsilon}$:
\begin{equation}
C=\rho C\rho^{\ast}+C_{\varepsilon}.  \label{cov}
\end{equation}
The identification of $\rho$ relies on second-order moments. The corresponding normal equation is analogous to the finite-dimensional case:
\begin{equation}
D=\rho C.  \label{mom}
\end{equation}
While equation (\ref{mom}) is common to general linear functional regression models, equation (\ref{cov}) is specific to the autoregressive structure (\ref{model}). The following proposition summarizes the identifiability properties of the model (see, e.g., \cite{crambes2013}, Section 1.3).

\begin{proposition}
If $\ker C\neq\left\{ 0\right\} $, the parameter $\rho$ is not identifiable. If $\ker C=\left\{ 0\right\} $, which is assumed throughout, a solution to equation (\ref{mom}) uniquely defines $\rho$ only on the dense subspace $\mathrm{Im}C$. The extension of $\rho_{|\mathrm{Im}C}$ to the entire space $\mathbf{H}$ follows from the Hahn-Banach theorem.
\end{proposition}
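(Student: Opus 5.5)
The plan is to treat the two cases of the statement separately, relying only on the normal equation (\ref{mom}), $D=\rho C$, and on the fact that $D$ and $C$ are determined by the law of the observed process.

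\emph{Non-identifiability when $\ker C\neq\{0\}$.} Pick a unit vector $u\in\ker C$. Since $\langle Cu,u\rangle=\mathbb{E}\langle X_1,u\rangle^2=0$, we have $\langle X_n,u\rangle=0$ almost surely for every $n$ by stationarity. For an arbitrary $v\in\mathbf{H}$, set $\tilde\rho=\rho+ v\otimes u$. Then $\tilde\rho(X_{n-1})=\rho(X_{n-1})+\langle u,X_{n-1}\rangle v=\rho(X_{n-1})$ almost surely. Hence the same trajectory $(X_n)$ solves (\ref{model}) with the same noise for both $\rho$ and $\tilde\rho$, so the law of the data is unchanged. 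In particular $\tilde\rho C=\rho C=D$, because $(v\otimes u)C=v\otimes(Cu)=0$. One small check: if $\|\rho\|_\infty<1$, then choosing $\|v\|$ small keeps $\|\tilde\rho\|_\infty<1$, so (\ref{A0}) still holds. Therefore $\rho$ is not identifiable.

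\emph{Uniqueness on $\mathrm{Im}\,C$ when $\ker C=\{0\}$.} Suppose $\rho_1C=\rho_2C=D$. Then $(\rho_1-\rho_2)Cx=0$ for all $x$, so $\rho_1=\rho_2$ on $\mathrm{Im}\,C$, and explicitly $\rho(y)=D(C^{-1}y)$ for $y\in\mathrm{Im}\,C$. Density comes from the self-adjointness of $C$: $\overline{\mathrm{Im}\,C}=(\ker C^\ast)^\perp=(\ker C)^\perp=\mathbf{H}$. Note, however, that $\mathrm{Im}\,C\neq\mathbf{H}$ in infinite dimension, since $C$ is compact.

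\emph{Extension to $\mathbf{H}$.} On $\mathrm{Im}\,C$ the map $y\mapsto DC^{-1}y$ coincides with the bounded operator $\rho$, so it is a linear map bounded by $\|\rho\|_\infty$ on a dense subspace. It therefore has a unique continuous extension to $\mathbf{H}$. The Hahn--Banach theorem, applied to each functional $y\mapsto\langle DC^{-1}y,h\rangle$, gives a norm-preserving extension, and density makes the continuous extension unique; this is the form in which the statement invokes Hahn--Banach. The only subtle point is this one: $DC^{-1}$ is a composition with an unbounded operator. Its boundedness on $\mathrm{Im}\,C$ is not automatic from $C$ and $D$ alone, but follows from the assumption that the model holds for some bounded $\rho$. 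So uniqueness holds within the class of bounded operators, which is the sense of the statement.
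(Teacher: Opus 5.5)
Your proof is correct, and there is no proof in the paper to compare it with: the paper states this proposition without argument and only points to \cite{crambes2013}, Section 1.3. Your write-up supplies the missing details.

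\begin{itemize}
\item For $\ker C\neq\{0\}$, the rank-one perturbation $\tilde\rho=\rho+v\otimes u$ with $u\in\ker C$ works. It leaves every $\tilde\rho(X_{n-1})$ almost surely unchanged, so the trajectory and hence the law of the data are unchanged, and for small $v\neq 0$ it keeps $\Vert\tilde\rho\Vert_\infty<1$.
\item For $\ker C=\{0\}$, you get agreement on $\mathrm{Im}\,C$ together with density from $\overline{\mathrm{Im}\,C}=(\ker C)^\perp=\mathbf{H}$.
\end{itemize}

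Your treatment of the extension step is worth keeping as written. What actually does the work is extension by continuity from a dense subspace, which gives both existence and uniqueness of the bounded extension. Hahn--Banach on its own, applied functional by functional, only gives existence; uniqueness comes from density, as you note. Your argument in fact shows that $\rho$ is identifiable within $\mathcal{L}(\mathbf{H})$. So the word ``only'' in the statement should be read as referring to the formula $DC^{-1}$, which makes sense only on $\mathrm{Im}\,C$, not to a genuine loss of identifiability off $\mathrm{Im}\,C$.

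You rely on $D=\rho C$ as in (\ref{mom}), which is correct. Note, however, that with the paper's convention $(x\otimes y)(h)=\langle y,h\rangle x$, the displayed definition $D=\mathbb{E}(X_1\otimes X_2)$ actually equals $C\rho^{\ast}$. The operator consistent with (\ref{mom}) and with $D_n$ is $\mathbb{E}(X_2\otimes X_1)$.
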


Equation (\ref{mom}) implies that $\rho$ cannot be directly recovered via $DC^{-1}$ since the inverse operator $C^{-1}$ is unbounded and discontinuous everywhere. Because the true covariance operator $C$ is unknown, estimating $C^{-1}$ constitutes an ill-posed inverse problem that requires regularization.

A standard ordinary least squares framework minimizes the objective:
\begin{equation*}
\min_{\rho}\sum_{k=1}^{n-1}\left\Vert X_{k+1}-\rho\left( X_{k}\right)\right\Vert ^{2}.
\end{equation*}
The first-order optimality condition leads to the empirical sample equation $D_{n}=\widehat{\rho}_{n}C_{n}$, where:
\begin{equation*}
C_{n}=\dfrac{1}{n}\sum_{k=1}^{n-1}X_{k}\otimes X_{k},\quad D_{n}=\dfrac{1}{n}\sum_{k=1}^{n-1}X_{k+1}\otimes X_{k}.
\end{equation*}
Since the empirical covariance operator $C_{n}$ is random and has finite rank, it cannot be inverted directly. A widely used regularized approach replaces $C_{n}$ with a low-dimensional spectral truncation based on functional PCA. Let $\left( \widehat{\lambda}_{k},\widehat{e}_{k}\right)$ denote the eigenvalue-eigenvector pairs of $C_{n}$. A generalized inverse can be defined as:
\begin{equation*}
C_{n}^{\natural}=\sum_{k=1}^{K_{n}}\frac{1}{\widehat{\lambda}_{k}}\widehat{e}_{k}\otimes\widehat{e}_{k}
\end{equation*}
where the truncation parameter $K_{n}$ must be appropriately calibrated as the sample size $n$ tends to infinity. An alternative approach based on low-rank approximations was proposed by \cite{KARGIN2008}.

To address the ill-posed nature of the inversion, Tikhonov regularization provides an alternative to spectral truncation (\cite{Engl2000}). Incorporating a ridge penalty on the Hilbert-Schmidt norm $\mathcal{S}_{2}$ leads to the following program:

\begin{proposition}
For a regularization parameter $\alpha>0$, the Tikhonov optimization problem:
\begin{equation}
\min_{\rho \in \mathcal{S}_{2}}\left\{ \sum_{k=1}^{n-1}\left\Vert
X_{k+1}-\rho \left( X_{k}\right) \right\Vert ^{2}+\alpha \left\Vert \rho
\right\Vert _{2}^{2}\right\}  \label{Tikho}
\end{equation}
is strictly convex and admits a unique solution given by $\widehat{\rho}_{n}=D_{n}\left( C_{n}+\alpha \mathbf{I}\right)^{-1}$.
\end{proposition}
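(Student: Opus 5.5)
The plan is to treat the objective in (\ref{Tikho}) as a quadratic functional on the separable Hilbert space $\mathcal{S}_{2}$ with inner product $\langle T,S\rangle_2=\mathrm{tr}(T^{\ast}S)$. I will expand it exactly around an arbitrary point, read off the first-order condition, and get strict convexity and uniqueness from the same expansion. One normalisation point must be fixed first. $C_n$ and $D_n$ carry the factor $1/n$, while the sum in (\ref{Tikho}) does not. The closed form $D_n(C_n+\alpha\mathbf{I})^{-1}$ is therefore exact either when the data-fit term is divided by $n$, or after the rescaling $\alpha\mapsto\alpha/n$. I will carry out the argument with the objective divided by $n$, which changes neither the minimiser nor convexity, and point out this convention explicitly.

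\textbf{Rewriting the objective.} For $\rho\in\mathcal{S}_2$ and vectors $x,y$ one has $\langle \rho x,y\rangle=\mathrm{tr}(\rho^{\ast}(y\otimes x))=\langle \rho, y\otimes x\rangle_2$ and $\|\rho x\|^2=\mathrm{tr}(\rho (x\otimes x)\rho^{\ast})$. Summing over $k$ then gives
\begin{equation*}
F(\rho):=\frac1n\sum_{k=1}^{n-1}\|X_{k+1}-\rho X_k\|^2+\alpha\|\rho\|_2^2
=\frac1n\sum_{k=1}^{n-1}\|X_{k+1}\|^2-2\langle \rho,D_n\rangle_2+Q(\rho),
\end{equation*}
where $Q(\rho)=\mathrm{tr}(\rho C_n\rho^{\ast})+\alpha\|\rho\|_2^2=\langle \rho,\rho(C_n+\alpha\mathbf{I})\rangle_2$. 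The operators $C_n$ and $D_n$ have finite rank, so every term is finite on $\mathcal{S}_2$. Since $C_n\geq 0$, we have $\mathrm{tr}(\rho C_n\rho^{\ast})=\|C_n^{1/2}\rho^{\ast}\|_2^2\geq 0$, and hence $Q(h)\geq\alpha\|h\|_2^2$.

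\textbf{Exact expansion and strict convexity.} By polarisation, for $\rho,h\in\mathcal{S}_2$,
\begin{equation*}
F(\rho+h)=F(\rho)+2\langle \rho(C_n+\alpha\mathbf{I})-D_n,\,h\rangle_2+Q(h).
\end{equation*}
This identity uses the self-adjointness of $C_n+\alpha\mathbf{I}$ to move it across the trace. Since $Q(h)\geq \alpha\|h\|_2^2>0$ for $h\neq 0$, $F$ is strictly convex; indeed it is $\alpha$-strongly convex. Consequently $\rho$ is a minimiser if and only if the linear term vanishes for all $h$, that is, $\rho(C_n+\alpha\mathbf{I})=D_n$. If this holds, then $F(\rho+h)>F(\rho)$ for every $h\neq0$, which gives uniqueness.

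\textbf{Solving the normal equation.} The operator $C_n+\alpha\mathbf{I}$ is self-adjoint with spectrum in $[\alpha,\infty)$, so it has a bounded inverse with $\|(C_n+\alpha\mathbf{I})^{-1}\|_\infty\leq\alpha^{-1}$. Thus $\widehat\rho_n=D_n(C_n+\alpha\mathbf{I})^{-1}$ solves the equation. It lies in $\mathcal{S}_2$ because $D_n$ has finite rank and the ideal property of $\mathcal{S}_2$ applies. The only genuinely delicate point is bookkeeping: checking the trace identities that turn the sums into $\langle\cdot,D_n\rangle_2$ and $\mathrm{tr}(\rho C_n\rho^{\ast})$, with the correct order in $X_{k+1}\otimes X_k$ so that it matches the definition of $D_n$, together with the normalisation of $\alpha$ discussed above.
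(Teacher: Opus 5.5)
The paper states this proposition without proof; it only appeals informally to the first-order condition, as it does for the least-squares normal equation $D_n=\widehat\rho_n C_n$. Your argument therefore stands on its own, and it holds up.
\begin{itemize}
\item The exact expansion $F(\rho+h)=F(\rho)+2\langle \rho(C_n+\alpha\mathbf{I})-D_n,h\rangle_2+Q(h)$, with $Q(h)\geq\alpha\|h\|_2^2$, gives strong convexity.
\item The same expansion characterises minimisers by the normal equation $\rho(C_n+\alpha\mathbf{I})=D_n$.
\item The bounded inverse $\|(C_n+\alpha\mathbf{I})^{-1}\|_\infty\leq\alpha^{-1}$, together with the ideal property of $\mathcal{S}_2$, gives existence.
\item Your trace identities are correct, including the order in $X_{k+1}\otimes X_k$.
\end{itemize}

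Your normalisation remark is a genuine correction of the statement, not a technicality. Because $C_n$ and $D_n$ carry the factor $1/n$, the literal objective (\ref{Tikho}) is minimised by $D_n(C_n+\frac{\alpha}{n}\mathbf{I})^{-1}$. This is already visible for $\mathbf{H}=\mathbb{R}$. The displayed formula holds only if the sum is divided by $n$, or equivalently if the penalty is $n\alpha\|\rho\|_2^2$.

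One sentence of yours should be fixed accordingly. You say you work with ``the objective divided by $n$, which changes neither the minimiser nor convexity''. Your $F$, however, divides only the data-fit term, and that does change the minimiser relative to (\ref{Tikho}). Dividing the whole objective by $n$ would leave the minimiser at $D_n(C_n+\frac{\alpha}{n}\mathbf{I})^{-1}$. Your phrase ``after the rescaling $\alpha\mapsto\alpha/n$'' also leaves the direction ambiguous.

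State explicitly that you prove the closed form for the renormalised problem. Strict convexity of (\ref{Tikho}) itself holds either way, since its quadratic part is $n\,\mathrm{tr}(\rho C_n\rho^{\ast})+\alpha\|\rho\|_2^2\geq\alpha\|\rho\|_2^2$.
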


\begin{remark}\label{rem:general_penatly}
The objective function can be generalized to $\min_{\rho \in \mathcal{S}_{2}}\left\{\sum_{k=1}^{n-1}\left\Vert X_{k+1}-\rho \left( X_{k}\right) \right\Vert^{2}+\left\Vert T\rho \right\Vert _{2}^{2}\right\} $, where $T$ is a positive-definite and coercive operator, such as a differential operator.
\end{remark}

Evaluating $\widehat{\rho}_{n}$ does not require computing the random empirical project eigenvectors of $C_{n}$. The functional observations $X_{n}$ can be projected onto a fixed, deterministic basis (e.g., Fourier, splines, or wavelets).

The parameter $\alpha$ controls the bias-variance trade-off, analogous to the truncation parameter in PCA:
\begin{itemize}
\item As $\alpha \rightarrow 0$, the estimator approaches the unregularized empirical normal equation, and $\left(C_{n}+\alpha \mathbf{I}\right)^{-1}$ becomes ill-conditioned, leading to numerical instability since its operator norm is bounded by $\|D_n\|_\infty/\alpha$.
\item Conversely, large values of $\alpha$ introduce a severe shrinkage bias, driving the norm $\|\widehat \rho_n\|_2$ toward zero.
\end{itemize}
This behavior is illustrated in the numerical studies in Section 5. Selecting an optimal $\alpha$ is therefore essential to achieve an optimal bias-variance compromise.

\subsection{Assumptions}
In functional data analysis, theoretical guarantees typically require three types of assumptions: moment conditions, the smoothness of the underlying processes, and the smoothness of the operator to be estimated. Because linear operators do not commute, regularity conditions must distinguish between left- and right-regularity.

\subsubsection{Regularity of the autocorrelation operator}

We introduce the following structural condition:
\begin{equation} \label{A2}
\left\Vert C_{\varepsilon }^{-1/2}\rho \right\Vert _{\infty }<+\infty,
\end{equation}
which characterizes the left-regularity of the statistical model. This condition is crucial for establishing the minimax lower bound. Under the assumption of Gaussian innovations, condition (\ref{A2}) is necessary and sufficient for the conditional distribution of $X_{n}$ given $X_{n-1}$ to be equivalent to the distribution of $\varepsilon_{n}$. This absolute continuity is required to evaluate statistical distances, such as the Kullback-Leibler divergence. Note that (\ref{A2}) implies that $\rho \in \mathcal{S}_{2}$.

In non-parametric functional estimation, regularity classes are commonly defined in terms of ellipsoids. For the ridge estimator, it is convenient to introduce a specific regularity class defined via the eigenvalues of $C$. Let $\tau :\mathbb{R}^{+}\rightarrow \mathbb{R}^{+}$ be a non-decreasing function. We say that $\rho \in \mathbf{S}\left(\tau\right)$ if:
\begin{equation}
\forall N\in \mathbb{N}^{\ast }:\max \left\{ \sum_{k=1}^{N}\frac{\left\Vert
\rho e_{k}\right\Vert ^{2}}{\lambda _{k}},\frac{1}{\lambda _{N}}
\sum_{k=N+1}^{+\infty }\left\Vert \rho e_{k}\right\Vert ^{2}\right\} \leq
\tau \left( N\right) .  \label{A2ter}
\end{equation}
The function $\tau$ quantifies the alignment between the operator $\rho$ and the principal components of $C$, controlling the behavior of the bias terms. Section 4.3 provides a detailed discussion of this regularity class.

\begin{remark}
The $\lambda _{k}$'s are unknown but the ratio $\left\Vert \rho
e_{k}\right\Vert ^{2}/\lambda _{k}$ may be easily interpreted in terms of
\textquotedblleft amount of information" on the $k$-th dimension of the
functional PCA. And $\left\Vert \rho e_{k}\right\Vert ^{2}$ relates in
Fourier sense to the energy of the $k$-th harmonics of $\rho $ with respect
to the eigenbasis of $C$. Finally function $\tau $ plays a role similar to
function $\varphi $ in \cite{crambes2013} p.2633 and quantifies the
regularity required to address this problem.
\end{remark}

Combining conditions (\ref{A0}), (\ref{A2}), and (\ref{A2ter}), we state the global assumption $\mathbf{A}_{1}$:
\begin{equation} \label{A1}
\mathbf{A}_{1}:\left\Vert \rho\right\Vert _{\infty}<1, \quad \left\Vert C_{\varepsilon }^{-1/2}\rho \right\Vert _{\infty
}<+\infty ,\quad \rho \in \mathbf{S}\left( \tau \right). 
\end{equation}
We denote the associated constants as follows:
\begin{align*}
\left\Vert \rho\right\Vert _{\infty}=a_{1},\quad
\left\Vert C_{\varepsilon }^{-1/2}\rho \right\Vert _{\infty }=a_{2},\quad
\mathrm{tr}C_{\varepsilon} & =\sigma_{\varepsilon}^{2}=\mathbb{E}\left\Vert
\varepsilon_{1}\right\Vert ^{2}.
\end{align*}
As shown in Proposition \ref{closed-op} of the Appendix, under $\mathbf{A}_{1}$, the operator $C_{\varepsilon}^{-1/2}C^{1/2}$ is bounded and invertible. We additionally define:
\begin{align*}
\left\Vert C^{-1/2}\rho C^{1/2} \right\Vert_{\infty} =a_{1}^{\prime}<1,\quad
\left\Vert C^{-1/2}\rho\right\Vert_{\infty} =a_{2}^{\prime},\quad
\left\Vert C_{\varepsilon}^{-1/2}C^{1/2}\right\Vert _{\infty}=a_{3}>1,\quad\left\Vert
C^{-1/2}C_{\varepsilon}^{1/2}\right\Vert _{\infty}=a_{3}^{\prime}<1.
\end{align*}

\subsubsection{Moments}

The following condition controls the higher-order moments of the principal components. For $\mathbf{s}\in\mathbb{N}^{\ast}$, let:
\begin{equation}
\mathbf{A}_{2}=\mathbf{A}_{2}(\mathbf{s}):\sup_{k}\frac{\mathbb{E}\left\langle
X_{1},e_{k}\right\rangle ^{2\mathbf{s}}}{\lambda_{k}^{\mathbf{s}}}=\sup_{k}
\frac{\mathbb{E}\left\langle X_{1},e_{k}\right\rangle ^{2\mathbf{s}}}{\left( 
\mathbb{E}\left\langle X_{1},e_{k}\right\rangle ^{2}\right) ^{\mathbf{s}}}
\leq\xi_{\mathbf{s}}.  \label{A3m}
\end{equation}
Assumption $\mathbf{A}_{2}$ ensures that the projections onto the eigenfunctions possess bounded moments up to order $2\mathbf{s}$, scaled by the corresponding eigenvalues. If $\mathbf{A}_{2}$ holds, then $\sup_{k}\frac{\mathbb{E}\left\langle X_{1},e_{k}\right\rangle ^{2p}}{\lambda_{k}^{p}}\leq\xi_{\mathbf{s}}^{p/\mathbf{s}}$ for all $p\leq\mathbf{s}$.

\begin{remark}
A stronger condition requires this property to hold for all orders. Let $\Gamma$ be a positive constant:
\begin{equation}
\mathbf{A}_{2,\infty}\left( \Gamma\right) :\forall p\geq1\sup_{k}\frac{
\mathbb{E}\left\langle X_{1},e_{k}\right\rangle ^{2p}}{\lambda_{k}^{p}}
=\xi_{p}\leq\frac{\left( 2p\right) !}{p!}\Gamma^{p-1}.  \label{A3infty}
\end{equation}
Condition $\mathbf{A}_{2,\infty}$ is satisfied by Gaussian processes (with $\Gamma=2^{-1}$) and by random elements with compact support.
\end{remark}

\subsubsection{Smoothness of the data}

Sharp control of the bias requires quantifying the decay rate of the tail series of the sequences $(\lambda_{k})_{k\in\mathbb{N}}$ and $(\|\rho e_k\|^2)_{k\in\mathbb{N}}$. We introduce a specific subspace of $l^1(\mathbb{N})$:

\begin{definition}
For $\eta > 0$, let:
\begin{equation*}
l_{\eta}^{1}=\left\{ \left( u_{k}\right) _{k\in\mathbb{N}}\in l^{1}:\left(
k^{1+\eta}u_{k}\right) _{k\in\mathbb{N}}\ \mathrm{is\ decreasing}\right\} .
\end{equation*}
\end{definition}

The smoothness assumption $\mathbf{A}_{3}$ is formulated as follows for $\eta,\xi>0$:
\begin{equation*}\label{A3}
\mathbf{A}_{3}:\left( \lambda_{k}\right) _{k\in\mathbb{N}}\in
l_{\eta}^{1},\quad\left( \left\Vert \rho e_{k}\right\Vert ^{2}\right) _{k\in 
\mathbb{N}}\in l_{\xi}^{1}.
\end{equation*}
Assumption $\mathbf{A}_{3}$ specifies a polynomial decay condition on the eigenvalues and operator coefficients, which is standard for processes such as the Wiener process (where $\lambda_k \asymp k^{-2}$).
\begin{remark} \label{rem.aprime3}
Assumption $\mathbf{A}_{3}$ could be replaced by assumption $\mathbf{A}'_{3}$ below. Pick $\eta$ and $\xi$ in $\left(  0, +\infty \right] $.

\begin{center}
$\mathbf{A}'_{3}$ : The sequence $\left( \lambda_{k}\right) _{k\in\mathbb{N}}$ (resp. $( \left\Vert \rho e_{k}\right\Vert ^{2}) _{k\in \mathbb{N}}$) is regularly varying with index $-1-\eta$ (resp $-1-\xi$).
\end{center}

The notion of regularly varying sequence was introduced in \cite{Galambos1973} and is strongly related to regularly varying functions. We refer to subsection \ref{bias-study} for references and developments about this topic and to the proof of Lemma \ref{series} in the Appendix deriving equivalent results under $\mathbf{A}_{3}$ and $\mathbf{A}'_{3}$. The case $\eta=+\infty$ (or $\xi=+\infty$) refers to rapidly varying sequences. Here it is considered as a particular case of regularly varying sequence only because it does not change the final results. In all the sequel we mention only $\mathbf{A}_{3}$ for brevity but all our result hold with $\mathbf{A}'_{3}$ instead of $\mathbf{A}_{3}$. 

\end{remark}

\section{Main results \label{section-main-results}}

A lower bound is derived first then we turn to obtaining an upper bound in
the next subsection. Set:
\begin{align*}
\mathfrak{M}_{\mathbf{s}} & = \lbrace (\rho,X,\varepsilon),\mathbf{A_{1-3}} \text{ hold} \rbrace, \\
 \mathfrak{M}_{G} & =\lbrace (\rho,X,\varepsilon),\mathbf{A_{1},\mathbf{A_{3} } \text{ hold and } (X,\varepsilon)} \text{ is Gaussian} \rbrace.
\end{align*}

\subsection{Lower bound}

For a lower bound in the framework of model (\ref{model}) two main issues appear. First it is not possible here to take advantage of the simplifications due to independence. Second the output data are of functional nature and the use of likelihood based methods is usually not possible. However the likelihood ratio of two Gaussian Hilbert-valued random elements may be evaluated by Feldman--H\'{a}jek's theorem that generalizes Cameron-Martin's theorem on a Hilbert space to any affine transformation. The latter will provide a basis to compute the Kullback-Leibler information between Gaussian measures in our setting. This is possible essentially thanks to assumption (\ref{A1}).

 \bigskip

Let $\mathbf{P}_{0}$ be the probability distribution of $\mathbb{U}=\left(
\varepsilon_{n},\varepsilon_{n-1},...,\varepsilon_{1},X_{0}\right) $ in $
\mathbf{H}^{n+1}$. In order to elaborate on the lower bound we assume now
that $\mathbb{U}$ is Gaussian centered. Since the vector $\mathbb{U}$ has
independent components its covariance operator is :
\begin{equation*}
\mathbf{\Sigma_{0}=}\mathrm{diag}\left( C_{\varepsilon},...,C_{\varepsilon
},C\right) \mathbf{\in}\left( \mathcal{S}_{1}\right) ^{n+1}.
\end{equation*}
This restriction to Gaussian distribution is not a problem as far as the
lower bound is under concern. Indeed since $\mathfrak{M}_{G}\subset\mathfrak{
M}_{\mathbf{s}}$ :
\begin{equation*}
\sup_{\mathfrak{M}_{\mathbf{s}}}\mathbb{E}\left\Vert \left( \widehat{\rho }
-\rho\right) X_{n+1}\right\Vert ^{2}\geq\sup_{\mathfrak{M}_{G}}\mathbb{E}
\left\Vert \left( \widehat{\rho}-\rho\right) X_{n+1}\right\Vert ^{2}
\end{equation*}
and a lower bound under $\mathfrak{M}_{G}$ will provide a lower bound under
more general models such as $\mathfrak{M}_{\mathbf{s}}$. Now let $\mathbf{P}_{\rho}$ be the probability distribution of $
\mathbb{X}=\left( X_{n},X_{n-1},...,X_{1},X_{0}\right) $ in the model (\ref
{model}). Here $\mathbb{X}=\mathbf{R}_{\rho}\mathbb{U}$ with :
\begin{equation*}
\mathbf{R}_{\rho}\mathbf{=}\left[ 
\begin{array}{ccccc}
\mathbf{I} & \rho & \rho^{2} & \cdots & \rho^{n} \\ 
0 & \mathbf{I} & \rho & \ddots & \vdots \\ 
\vdots & \ddots & \ddots & \ddots & \vdots \\ 
\vdots & 0 & \ddots & \mathbf{I} & \rho \\ 
0 & \cdots & \cdots & 0 & \mathbf{I}
\end{array}
\right] \in\mathcal{L}\left( \mathbf{H}^{n+1}\right)
\end{equation*}
and $\mathbf{P}_{\rho}$ is Gaussian centered with covariance operator $
\mathbf{\Sigma}_{\rho}=\mathbf{R}_{\rho}\mathbf{\Sigma_{0}R}_{\rho}^{\ast}$.

\subsubsection{Absolute continuity and Kullback-Leibler (KL) divergence}

As a preliminary step we compute the KL divergence between
the distribution of $\mathbf{P}_{\rho}$ and $\mathbf{P}_{0}$. First an
important difference appears between this dependent model and other
regression models in the i.i.d. case. When regressors are independent, the
likelihood of the Gaussian model, conditioned with respect to the input
data, is generally expressed as a shift of a Gaussian distribution. The KL divergence is quite simple to compute thanks to the
Cameron-Martin Theorem. As seen above, the dependent data encountered in our
model lead naturally to a different form for the KL divergence : $\mathbb{X}$
and $\mathbb{U}$ are both centered but differ only by their covariance
operators.

A useful tool is then the Feldman--H\'{a}jek theorem \cite
{feldman58,hajek58} which ensures that $\mathbf{P}_{\rho}$ and $\mathbf{P}
_{0}$ are either equivalent or mutually singular. For the sake of
self-containedness this crucial result was given above in the subsection \ref
{subsec-gauss-func-data} (see Theorem \ref{origFeldHayek}). The problem is that the initial version is not directly applicable in our framework: it involves the computation of the square root of $\mathbf{R}_{\rho }\mathbf{\Sigma_{0}R}_{\rho}^{\ast}$ which is not simple. In Proposition \ref{varFeldHay}, postponed to the Appendix, we derive a
variant of Feldman--H\'{a}jek theorem suited to the specific case when $
\Sigma_{1}=R_{1}\Sigma_{0}R_{1}^{\ast}$ and $\Sigma_{2}=R_{2}\Sigma
_{0}R_{2}^{\ast}$ with $R_1$ and $R_2$ two bounded linear operators. This result may be useful to explore some other dependent linear statistical models. In addition a computation of the KL
divergence is proposed in a specific case, suited to our further purposes.

The next Proposition gives an exact computation of the KL divergence in the
FAR model, then a more practical bound.

\begin{proposition}
\label{Kull-Leib-prop}The measures $\mathbf{P}_{\rho}$ and $\mathbf{P}_{0}$
are equivalent under $\mathbf A_1$. Then when $\ker C_{\varepsilon}=\left\{ 0\right\} $ the Kullback-Leibler
divergence between the distribution of $\mathbf{P}_{\rho}$ and $\mathbf{P}
_{0}$ is : 
\begin{equation}
\mathbf{KL}\left( \mathbf{P}_{\rho}||\mathbf{P}_{0}\right)
=\sum_{k=1}^{n-1}\left( n-k\right) \left\Vert
C_{\varepsilon}^{-1/2}\rho^{k}C_{\varepsilon}^{1/2}\right\Vert
_{2}^{2}+\sum_{k=1}^{n}\left\Vert
C_{\varepsilon}^{-1/2}\rho^{k}C^{1/2}\right\Vert _{2}^{2},  \label{KL-ARH}
\end{equation}
leading to the bound :
\begin{equation}
\mathbf{KL}\left( \mathbf{P}_{\rho}||\mathbf{P}_{0}\right) \leq
nb_{1}  \label{KL-bound}
\end{equation}
with $b_{1}=\frac{a_{3}^2 }{1-(a'_{1})^{2}}$.
\end{proposition}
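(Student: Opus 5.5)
The plan is to avoid taking square roots of $\mathbf{\Sigma}_{\rho}=\mathbf{R}_{\rho}\mathbf{\Sigma_{0}R}_{\rho}^{\ast}$ altogether and to use the Markov structure of (\ref{model}) instead. Under $\mathbf{P}_{\rho}$ the law of $\mathbb{X}$ disintegrates as follows: $X_0\sim\mathcal{N}(0,C)$, and conditionally on the past, $X_k\sim\mathcal{N}(\rho X_{k-1},C_{\varepsilon})$. Under $\mathbf{P}_{0}$ the same coordinates carry the product law $\mathcal{N}(0,C_\varepsilon)^{\otimes n}\otimes\mathcal{N}(0,C)$.

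\textbf{Equivalence.} For fixed $x\in\mathbf{H}$, assumption (\ref{A2}) gives $\rho x=C_\varepsilon^{1/2}(C_\varepsilon^{-1/2}\rho)x\in\mathrm{Im}\,C_\varepsilon^{1/2}$. Hence, by the Cameron--Martin theorem (the case $\Sigma_1=\Sigma_2$ of Theorem \ref{origFeldHayek}), the conditional law $\mathcal{N}(\rho x,C_\varepsilon)$ is equivalent to $\mathcal{N}(0,C_\varepsilon)$, with density
\begin{equation*}
\exp\Big(\langle C_\varepsilon^{-1/2}\rho x,C_\varepsilon^{-1/2}y\rangle-\tfrac12\|C_\varepsilon^{-1/2}\rho x\|^2\Big),
\end{equation*}
where the first term is understood as the usual $L^2$ limit. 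The joint density $d\mathbf{P}_\rho/d\mathbf{P}_0$ is then the product of these $n$ conditional densities, because the $X_0$ marginals coincide. This product is a.s.\ positive and finite, which gives equivalence. Alternatively, one can check the three conditions of the variant of Feldman--H\'ajek, Proposition \ref{varFeldHay}, with $R_1=\mathbf{R}_\rho$ and $R_2=\mathbf{I}$.

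\textbf{Computing the KL divergence.} By the chain rule for KL divergence, each step contributes the Cameron--Martin term $\mathbb{E}\|C_\varepsilon^{-1/2}\rho X_{k-1}\|^2$. Stationarity and the identity $\mathbb{E}\|TX\|^2=\|TC^{1/2}\|_2^2$ then give
\begin{equation*}
\mathbf{KL}(\mathbf{P}_\rho\|\mathbf{P}_0)= n\,\|C_\varepsilon^{-1/2}\rho C^{1/2}\|_2^2 ,
\end{equation*}
up to the normalising factor $1/2$, which I will reconcile with the paper's convention.

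To reach the form (\ref{KL-ARH}), iterate (\ref{cov}) to get $C=\sum_{j\ge0}\rho^jC_\varepsilon\rho^{\ast j}$. Writing $a_m=\|C_\varepsilon^{-1/2}\rho^{m}C_\varepsilon^{1/2}\|_2^2$, this yields
\begin{equation*}
\|C_\varepsilon^{-1/2}\rho^kC^{1/2}\|_2^2=\sum_{j\ge0}a_{k+j}.
\end{equation*}
A reindexing then shows that the right-hand side of (\ref{KL-ARH}) equals
\begin{equation*}
\sum_{m<n}(n-m)a_m+\sum_{m\ge1}\min(m,n)a_m=n\sum_{m\ge1}a_m=n\|C_\varepsilon^{-1/2}\rho C^{1/2}\|_2^2 .
\end{equation*}
So the two expressions agree. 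I would present the direct chain-rule computation first and this combinatorial identity second.

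\textbf{The bound (\ref{KL-bound}).} I would factor
\begin{equation*}
C_\varepsilon^{-1/2}\rho^kC^{1/2}=(C_\varepsilon^{-1/2}C^{1/2})(C^{-1/2}\rho C^{1/2})^{k},
\end{equation*}
which is legitimate by Proposition \ref{closed-op}, and use $\|C_\varepsilon^{-1/2}C^{1/2}\|_\infty=a_3$ and $\|C^{-1/2}\rho C^{1/2}\|_\infty=a'_1<1$. Summing the resulting geometric series in $(a_1')^{2k}$ produces the factor $a_3^2/(1-(a_1')^2)$.

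\textbf{Main obstacle.} The delicate point is that one factor must be Hilbert--Schmidt rather than merely bounded. To supply it, I would first try the identity obtained by conjugating (\ref{cov}):
\begin{equation*}
\mathbf{I}=(C^{-1/2}\rho C^{1/2})(C^{-1/2}\rho C^{1/2})^\ast+(C^{-1/2}C_\varepsilon^{1/2})(C^{-1/2}C_\varepsilon^{1/2})^\ast .
\end{equation*}
I would use it to control the traces through the sum $\sum_{m\ge1}a_m$ with the operator norms $a_3$ and $a'_1$. I would also make sure that all compositions of unbounded operators are taken on the dense domains described in Appendix Section 7.1.
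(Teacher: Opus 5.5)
Your derivation of the identity (\ref{KL-ARH}) is correct, but it follows a genuinely different route from the paper.

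\textbf{The identity.} The paper applies its variant of Feldman--H\'ajek (Proposition~\ref{varFeldHay}) directly on $\mathbf{H}^{n+1}$. It writes $\mathbf{\Sigma}_0^{-1/2}\mathbf{R}_\rho\mathbf{\Sigma}_0^{1/2}=\mathbf{I}+\mathcal{A}$, where $\mathcal{A}$ is strictly block upper-triangular and Hilbert--Schmidt. The trace term in (\ref{KL}) is then $\|\mathcal{A}\|_2^2$, which is literally the right-hand side of (\ref{KL-ARH}), and the determinant equals $1$ by nilpotency.

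You instead disintegrate along the Markov chain, apply Cameron--Martin at each step, and combine the chain rule for KL with stationarity. This buys you three things:
\begin{itemize}
\item You need nothing beyond the Cameron--Martin theorem.
\item You sidestep the infinite-dimensional trace and determinant in (\ref{KL}). These formally require $\mathcal{A}+\mathcal{A}^\ast+\mathcal{A}\mathcal{A}^\ast$ to be trace class, whereas the paper only checks Hilbert--Schmidt.
\item You obtain the closed form $n\|C_\varepsilon^{-1/2}\rho C^{1/2}\|_2^2$. Your reindexing argument, showing that (\ref{KL-ARH}) collapses to this, is correct.
\end{itemize}
The factor $1/2$ you flag is an error on the paper's side: (\ref{KL}) omits the usual $1/2$, so (\ref{KL-ARH}) is exactly twice the standard divergence. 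This only affects constants in Theorem~\ref{LowBound}. In exchange, the paper's route yields a reusable tool for covariances of the form $R\Sigma_0R^\ast$.

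\textbf{The gap: the bound (\ref{KL-bound}).} You try to prove it literally, with a right-hand side $nb_1$ depending only on the operator norms $a_3$ and $a_1'$. You propose to manufacture the missing Hilbert--Schmidt factor from $\mathbf{I}=TT^\ast+BB^\ast$, where $T=C^{-1/2}\rho C^{1/2}$ and $B=C^{-1/2}C_\varepsilon^{1/2}$. This cannot work. Taking traces only gives the identity $\|T\|_2^2=\mathrm{tr}(\mathbf{I}-BB^\ast)$, and no trace is controlled by operator norms.

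In fact the literal bound is false. Take $0<r<1$, and let $C$ and $\rho$ be diagonal in $(e_k)$ with $\rho e_k=re_k$ for $k\leq N$ and $\rho e_k=0$ for $k>N$. Set $C_\varepsilon=C(\mathbf{I}-\rho^2)$. Then (\ref{cov}) holds, $\ker C_\varepsilon=\{0\}$, and $\mathbf{A}_1$ holds for a suitable $\tau$. In this example:
\begin{itemize}
\item $a_1'=r$ and $a_3^2=(1-r^2)^{-1}$, so $b_1=(1-r^2)^{-2}$;
\item $\|C_\varepsilon^{-1/2}\rho C^{1/2}\|_2^2=Nr^2/(1-r^2)$.
\end{itemize}
Hence the divergence exceeds $nb_1$ once $N>2/(r^2(1-r^2))$, whichever normalisation is used.

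The statement has dropped a factor. The paper's proof actually establishes
\[
\mathbf{KL}(\mathbf{P}_\rho\|\mathbf{P}_0)\leq nb_1\|C^{-1/2}\rho C^{1/2}\|_2^2,
\]
and this is the form used in the proof of Theorem~\ref{LowBound}, where $\|C^{-1/2}\rho_\theta C^{1/2}\|_2^2$ is computed explicitly.

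With your closed form, this corrected bound follows in one line, with no geometric series:
\[
n\|C_\varepsilon^{-1/2}\rho C^{1/2}\|_2^2\leq na_3^2\|C^{-1/2}\rho C^{1/2}\|_2^2\leq nb_1\|C^{-1/2}\rho C^{1/2}\|_2^2 .
\]
The Hilbert--Schmidt factor is finite because $\|C^{-1/2}\rho C^{1/2}\|_2\leq a_2'(\mathrm{tr}\,C)^{1/2}$. So you should keep that factor rather than try to eliminate it.
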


\subsubsection{A lower bound evaluation}


\begin{theorem}
\label{LowBound}Let function $\tau $, introduced within Definition \ref{thau}, and set :
\begin{equation}
m^{\ast }=\max_{m}\left\{ \frac{1}{n}\leq \lambda _{m}^{2}\frac{\tau \left(
m\right) }{m}\right\} .  \label{alpha.star}
\end{equation}
When $m^{\ast }/\left( n\lambda _{m^{\ast }}\right) \leq 1$ a lower bound holds :
\begin{equation}
\mathfrak{E}_{n}=\min_{\widehat{\rho }}\sup_{\rho \in \mathfrak{M}_{G}}
\mathbb{E}\left\Vert \left( \widehat{\rho }-\rho \right) X_{n+1}\right\Vert
^{2}\geq  \underline{\mathfrak{c}} \frac{\lambda_1}{a_3^2} \frac{m^{\ast }}{n}  \label{LB}
\end{equation}
 with $
\underline{\mathfrak{c}}$ a universal constant.
\end{theorem}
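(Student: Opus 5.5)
The plan is a standard multiple-hypotheses argument, Fano's lemma or Assouad's cube, with the Kullback--Leibler divergence controlled by Proposition \ref{Kull-Leib-prop}.

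\textbf{Step 1: the hypotheses.} Fix a Gaussian noise with covariance $C_\varepsilon$ and a reference direction. For $\theta\in\{0,1\}^{m}$, with $m=m^\ast$, I set
\begin{equation*}
\rho_\theta=\delta\sum_{k=1}^{m}\theta_k\, u\otimes e_k ,
\end{equation*}
with $u$ a fixed unit vector (for instance $u=e_1$, or a direction adapted to $C_\varepsilon$) and $\delta$ a small amplitude to be tuned. The point of this choice is that the loss along coordinate $k$ becomes $\mathbb{E}\langle X_{n+1},e_k\rangle^2=\lambda_k$ times a squared coefficient. I will then check membership in $\mathfrak M_G$:
\begin{itemize}
\item $\|\rho_\theta\|_\infty<1$ and $\|C_\varepsilon^{-1/2}\rho_\theta\|_\infty<\infty$ hold for small $\delta$.
\item $\rho_\theta\in\mathbf S(\tau)$ requires $\delta^2\sum_{k\le N}1/\lambda_k\le\tau(N)$, plus the corresponding tail condition. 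This is exactly where the defining relation $1/n\le\lambda_m^2\tau(m)/m$ enters: taking $\delta^2\asymp \lambda_m/n$, or $\delta^2\asymp m/(n\lambda_m)$ times a normalisation, should make the $\tau$ constraint hold.
\end{itemize}
The side condition $m^\ast/(n\lambda_{m^\ast})\le1$ is presumably what guarantees that $\delta\le 1$, which keeps $\|\rho_\theta\|_\infty<1$ and the constants uniform.

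A technical point is that $C$ depends on $\rho$ through \eqref{cov}. For the decay ordering of $\lambda_k$ I will therefore work with $C$ close to $C_\varepsilon$, which holds because $\delta$ is small, and absorb the discrepancy through $a_3$. This is the origin of the factor $\lambda_1/a_3^2$.

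\textbf{Step 2: reduction to testing.} For any estimator $\widehat\rho$ I will use
\begin{equation*}
\mathbb{E}\|(\widehat\rho-\rho_\theta)X_{n+1}\|^2\ge \sum_k \lambda_k\,\mathbb{E}\|(\widehat\rho-\rho_\theta)e_k\|^2,
\end{equation*}
up to the non-independence of $X_{n+1}$ and $\widehat\rho$. I handle that by conditioning on $X_n$: write $X_{n+1}=\rho X_n+\varepsilon_{n+1}$, where $\varepsilon_{n+1}$ is independent of the sample. This yields the lower bound $\|(\widehat\rho-\rho)C_\varepsilon^{1/2}\|_2^2$, and then $\ge a_3^{-2}\|(\widehat\rho-\rho)C^{1/2}\|_2^2$ by the definition of $a_3$. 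The loss thus splits coordinatewise in the $e_k$ basis, and Assouad's lemma applies.

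\textbf{Step 3: Assouad.} Assouad's lemma gives
\begin{equation*}
\mathfrak E_n\gtrsim a_3^{-2}\, m\,\min_k\lambda_k\delta^2\,\Bigl(1-\max\sqrt{\mathbf{KL}(\mathbf P_\theta\|\mathbf P_{\theta'})/2}\Bigr)
\end{equation*}
over neighbouring $\theta,\theta'$. I will bound the KL between two neighbours by going through $\mathbf P_0$, or more directly by adapting \eqref{KL-ARH}: the chain-rule form of the Gaussian Markov likelihood gives $\mathbf{KL}\le n\,\mathbb{E}\|C_\varepsilon^{-1/2}(\rho_\theta-\rho_{\theta'})X\|^2\lesssim n\delta^2\lambda_k\|C_\varepsilon^{-1/2}u\|^2$. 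Choosing $\delta^2\lambda_k\asymp 1/n$ on the relevant block keeps the KL bounded by a constant (Proposition \ref{Kull-Leib-prop} supplies the analogous bound $nb_1$ scaled by $\delta^2$). The result is then $\mathfrak E_n\gtrsim a_3^{-2}\,m/n$ times $\lambda_1$ from the normalisation of $u$.

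\textbf{Main obstacle.} The main difficulty is reconciling the three constraints: the $\mathbf S(\tau)$ membership, the bounded KL, and a nontrivial separation. These must hold simultaneously with a single amplitude, and it is precisely the definition of $m^\ast$ that balances them. I would first try weights $\delta_k$ depending on $k$, taking $\delta_k^2\asymp 1/(n\lambda_k)$, which makes every coordinate's KL of order one. The per-coordinate loss is then $\lambda_k\delta_k^2\asymp 1/n$, summing to $m/n$. I would then verify $\sum_{k\le N}\delta_k^2/\lambda_k\le m/(n\lambda_m^2)\le\tau(m)$ by the definition of $m^\ast$, and use monotonicity of $\tau$ for $N\neq m$.
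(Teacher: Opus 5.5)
Your plan is sound and uses the paper's skeleton. The reduction $\mathbb{E}\|(\widehat\rho-\rho)X_{n+1}\|^2\ge\mathbb{E}\|(\widehat\rho-\rho)C_\varepsilon^{1/2}\|_2^2\ge a_3^{-2}\,\mathbb{E}\|(\widehat\rho-\rho)C^{1/2}\|_2^2$ through $\varepsilon_{n+1}$ is the paper's Step~1. Your final weighted family $\sum_{k\le m}\theta_k\delta_k\, e_1\otimes e_k$ with $\delta_k^2\propto 1/(n\lambda_k)$ is the paper's $\rho_\theta$: amplitudes $\sqrt{\lambda_0}\gamma_p/\sqrt{\lambda_p}$ with $\gamma_p=n^{-1/2}$, minus its auxiliary block $m<p\le 2m$, which plays no role in a lower bound. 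In both, the $\mathbf{S}(\tau)$ constraint reduces to $m/(n\lambda_m^2)\le\tau(m)$, and the side condition $m/(n\lambda_m)\le 1$ keeps $\|\rho_\theta\|_\infty$ small.

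The genuine difference is the information-theoretic step. The paper uses a Varshamov--Gilbert subset of $\{\pm1\}^m$ and a Fano-type bound. It measures KL against the single reference $\mathbf{P}_0$ and computes it exactly through its Feldman--H\'{a}jek variant (Propositions~\ref{varFeldHay} and~\ref{Kull-Leib-prop}). That divergence is of order $\lambda_0 m/\lambda_1$, which Fano tolerates because $\log M\asymp m$.

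You instead use Assouad on the full cube, which fits the coordinatewise structure $\|(\widehat\rho-\rho)C^{1/2}\|_2^2=\sum_k\lambda_k\|(\widehat\rho-\rho)e_k\|^2$, so you only need KL between neighbours. The Markov chain rule gives it as $\frac n2\,\delta_k^2\lambda_k\|C_\varepsilon^{-1/2}e_1\|^2$ plus an initial-law term. This uses only Cameron--Martin for each Gaussian transition, since the shift lies along $e_1\in\mathrm{Im}\,C_\varepsilon^{1/2}$. Your route is more elementary: no infinite-dimensional Feldman--H\'{a}jek theory and no Varshamov--Gilbert bound. The paper's route instead yields the exact path-space formula \eqref{KL-ARH} and needs only a crude average bound.

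Your alternative of ``going through $\mathbf{P}_0$'' does not combine with Assouad. $\mathbf{KL}(\mathbf{P}_{\rho_\theta}\|\mathbf{P}_0)$ is of order $m$, not $O(1)$, so Proposition~\ref{Kull-Leib-prop} is useful only for Fano, and you must do the direct neighbour computation.

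Some points need tightening.

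First, a single amplitude $\delta$, as in your Steps~1 and~3, cannot reach $m/n$. Keep the weights and normalise them as $\delta_k^2=c\,\lambda_1^{\varepsilon}/(n\lambda_k)$ with $\lambda_1^{\varepsilon}=\langle C_\varepsilon e_1,e_1\rangle$. Otherwise the neighbour divergence is $1/(2\lambda_1^{\varepsilon})$, which is not uniformly bounded. This is where $\lambda_1$ enters, not through $C$ being close to $C_\varepsilon$. Here $\lambda_1^{\varepsilon}=\lambda_1/a_3^2$, which recovers the paper's $\lambda_0\propto\lambda_1/a_3^2$.

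Second, with $u=e_1$ the operator $\rho_\theta C\rho_\theta^\ast$ is a multiple of $e_1\otimes e_1$, so $C$ and $C_\varepsilon$ share the same eigenvectors exactly and no approximation is needed. With $\theta\in\{0,1\}^m$ that multiple depends on $\theta$. The stationary law (or $C_\varepsilon$, if you fix $C$) then moves with $\theta$, and the chain rule picks up an extra, negligible term. The paper's choice $\theta\in\{\pm1\}^m$ avoids this, because $\theta_p^2=1$ makes $\rho_\theta C\rho_\theta^\ast$ independent of $\theta$.

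Third, for $N<m$ in $\mathbf{S}(\tau)$, monotonicity of $\tau$ points the wrong way. Use instead that $\lambda_N^2\tau(N)/N$ is non-increasing (remark after the theorem), which gives $\frac1n\sum_{k\le N}\lambda_k^{-2}\le N/(n\lambda_N^2)\le\tau(N)$.

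Two things are left unchecked by both you and the paper, which verifies the class constraint only at $N=m$: the tail half of that constraint for $N<m$, and the $l^1_\xi$ part of $\mathbf{A}_3$ (your weights $\delta_k^2\propto 1/\lambda_k$ increase in $k$, as do the paper's).
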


\begin{remark}
It is plain from the definition of $\tau$ that the sequence $\left(\lambda_m^2\tau(m)/m\right)_{m\in\mathbb N^*}$ is non-increasing and tends to zero. This implies that (\ref{alpha.star}) has always a unique solution.
\end{remark}

The lower bound (\ref{LB}) above may be explicited in some situations,
typically when additional information is given about the eigenvalue sequence
and the crucial series $\sum_{p=1}^{m}\frac{\left\Vert \rho e_{p}\right\Vert
^{2}}{\lambda_{p}}$ upper bounded by $\tau\left( m\right) $. The next
Corollary collects in a table these values of $
m^{\ast }$ and $m^{\ast}/n$ for different scenarii. We also display the values of $\alpha^{\ast}=\lambda_{m^{\ast}}$ which will be important later. The proof stems from relatively standard calculations based on Riemann sums and from Proposition \ref{AB12}. This Proposition estimates function $\tau$  and also enligths the function $\Lambda_{\gamma,\delta}$ used in the Corollary.  The symbol $
\asymp$ is classical and $\sim$ below means that the order of magnitude holds up to a residual factor that is not shown but may be easily computed however. The two tables below treat two cases :
first $\lambda_{p}\asymp p^{-1-\eta}$ and second $\lambda_{p}\asymp\exp
\left( -\eta^{\prime}p\right) $ for two different shapes of $\left\Vert \rho
e_{p}\right\Vert ^{2}$ (in rows).

\begin{corollary}
\label{cor-examples}Let $\eta,\eta^{\prime},\beta,\beta^{\prime},\gamma
,\gamma^{\prime}$ be strictly positive constants. Denote $\Lambda
_{\gamma,\gamma^{\prime}}:\mathbb{R}^{+}\rightarrow\mathbb{R}^{+}$ the
function defined by $\Lambda_{\gamma,\gamma^{\prime}}\left( t\right)
=t^{\gamma}\log^{-\gamma^{\prime}}\left( 1/t\right) $ which is increasing on 
$\left] 0,1\right[ $. The tables below evaluate, up to constants, $m^{\ast}$ and $\mathfrak{E}_{n}$ defined in (\ref{alpha.star}) and (\ref{LB}) and $\alpha^{\ast}=\lambda_{m^{\ast}}$ (for further purpose) in different and classical cases.   
%

\begin{equation*}
\begin{tabular}{|c|c|c|c|}
\hline
$\lambda_{p}\asymp p^{-1-\eta}$ & \multicolumn{1}{||c|}{$\alpha^{\ast}$} & $
m^{\ast}$ & $\mathfrak{E}_{n}$ \\ \hline\hline
\multicolumn{1}{|c|}{$\left\Vert \rho e_{p}\right\Vert ^{2}\asymp
p^{-1-\beta }, \quad\beta\leq 1+\eta$} & $n^{-\frac{1+\eta}{2+\eta+\beta}}$ & $n^{\frac{1}{
2+\eta+\beta}}$ & $n^{-\frac{1+\eta+\beta}{2+\eta+\beta}}$ \\
\hline
\multicolumn{1}{|c|}{$\left\Vert \rho e_{p}\right\Vert ^{2}\asymp
p^{-1-\beta }, \quad\beta= 1+\eta$} & $\sim (n \log n)^{-\frac{1+\eta}{2+\eta}}$ & $\Lambda^{-1}_{2+\eta,1}(n)\sim (n \log n)^{\frac{1}{2+\eta}}$&$\sim n^{-\frac{1+\eta}{2+\eta}} (\log n)^{\frac{1}{2+\eta}}$  \\
\hline

\multicolumn{1}{|c|}{$\left\Vert \rho e_{p}\right\Vert ^{2}\asymp
p^{-1-\beta }, \quad\beta> 1+\eta$} & $n^{-\frac{1+\eta}{3+2\eta}}$ & $n^{\frac{1}{
3+2\eta}}$ & $n^{-\frac{2+2\eta}{3+2\eta}}$ \\ \hline
\multicolumn{1}{|c|}{$\left\Vert \rho e_{p}\right\Vert ^{2}\asymp\exp\left(
-\beta^{\prime}p\right) $} & $n^{-\frac{1+\eta}{3+2\eta}}$ & $n^{\frac {1}{
3+2\eta}}$ & $n^{-\frac{2+2\eta}{3+2\eta}}$ \\ \hline
\end{tabular}
\end{equation*}

\begin{equation*}
\begin{tabular}{|c|c|c|c|}
\hline
$\lambda_{p}\asymp\exp\left( -\eta^{\prime}p\right) $ & 
\multicolumn{1}{||c|}{$\alpha^{\ast}$} & $m^{\ast}$ & $\mathfrak{E}_{n}$ \\ 
\hline\hline
$\left\Vert \rho e_{p}\right\Vert ^{2}\asymp p^{-1-\beta}$ & $\Lambda
_{1,1+\beta}^{-1}\left( 1/n\right) \sim\frac{\log^{1+\beta}n}{n}$ & $
\log\left( 1/\alpha^{\ast}\right) $ & $\sim\frac{\log n}{n}$ \\ \hline

$\left\Vert \rho e_{p}\right\Vert ^{2}\asymp\exp\left( -\beta^{\prime
}p\right), \quad \beta^{\prime}=\eta^{\prime} $ & $\frac{1}{\sqrt{n}}$ & $\log\left(n\right)$ & $\frac{\log n}{n}$ \\ \hline

$\left\Vert \rho e_{p}\right\Vert ^{2}\asymp\exp\left( -\beta^{\prime
}p\right), \quad \beta^{\prime} \neq \eta^{\prime}  $ & $\Lambda_{\max\{1+\beta^{\prime}/\eta^{\prime},2\},1}^{-1}\left(
1/n\right) \sim\left( \frac{\log n}{n}\right) ^{\min\left\{\frac{\eta^{\prime}}{
\eta^{\prime}+\beta^{\prime}},\frac12\right\}}$ & $\log\left( 1/\alpha^{\ast}\right)  $ & $\sim \frac{\log n}{n}$ \\ \hline

\end{tabular}
\ 
\end{equation*}
\end{corollary}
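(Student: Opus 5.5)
The statement to prove is Corollary~\ref{cor-examples}. It is a computation: in each scenario I plug the prescribed decays of $\lambda_p$ and $\|\rho e_p\|^2$ into the definition (\ref{alpha.star}) of $m^\ast$ and then read off $\mathfrak{E}_n\asymp m^\ast/n$ from Theorem~\ref{LowBound}.

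The first step is to identify $\tau$ up to constants. By Proposition~\ref{AB12}, the minimal admissible $\tau$ in (\ref{A2ter}) is the larger of the head sum $\sum_{k\le N}\|\rho e_k\|^2/\lambda_k$ and the tail term $\lambda_N^{-1}\sum_{k>N}\|\rho e_k\|^2$. I evaluate both by Riemann-sum comparison, which is legitimate because the sequences are monotone under $\mathbf{A}_3$ or $\mathbf{A}'_3$.

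\emph{Polynomial eigenvalues, polynomial coefficients.} Take $\lambda_p\asymp p^{-1-\eta}$ and $\|\rho e_p\|^2\asymp p^{-1-\beta}$. The summand of the head sum is $p^{\eta-\beta}$. So the head is $\asymp N^{1+\eta-\beta}$ when $\beta<1+\eta$, $\asymp\log N$ when $\beta=1+\eta$, and $O(1)$ when $\beta>1+\eta$. The tail term is $\asymp N^{1+\eta}N^{-\beta}$, which has the same order as the head in the first case and is $o(1)$-compatible with it otherwise. Hence $\tau(m)\asymp m^{1+\eta-\beta}$, $\log m$, or $1$ in the three cases. The defining quantity in (\ref{alpha.star}) is $\lambda_m^2\tau(m)/m$. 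Solving $1/n\asymp \lambda_m^2\tau(m)/m$ gives:
\begin{itemize}
\item $m^{-(2+\eta+\beta)}\asymp 1/n$ when $\beta<1+\eta$;
\item $m^{-(3+2\eta)}\log m\asymp 1/n$ in the boundary case, which I invert with $\Lambda_{\gamma,\gamma'}^{-1}$;
\item $m^{-(3+2\eta)}\asymp 1/n$ when $\beta>1+\eta$.
\end{itemize}
In each case I then compute $\alpha^\ast=\lambda_{m^\ast}$ and $m^\ast/n$.

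\emph{Polynomial eigenvalues, exponential coefficients.} With $\|\rho e_p\|^2\asymp e^{-\beta' p}$ the head sum converges and the tail term is $N^{1+\eta}e^{-\beta' N}\to0$. So $\tau\asymp1$ and this case reduces to the $\beta>1+\eta$ row.

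\emph{Exponential eigenvalues.} Take $\lambda_p\asymp e^{-\eta' p}$. Then $\tau(m)$ is dominated by geometric or polynomial-times-exponential growth: $e^{\eta' m}m^{-1-\beta}$ in the polynomial-coefficient case, $m$ when $\beta'=\eta'$, and $e^{(\eta'-\beta')_+m}$ otherwise (bounded if $\beta'>\eta'$). Substituting into (\ref{alpha.star}), write $t=\lambda_m=e^{-\eta' m}$, so that $m\asymp\log(1/t)$. The equation becomes $\Lambda_{\gamma,\gamma'}(t)\asymp1/n$ with the stated exponents, giving $\alpha^\ast=\Lambda^{-1}(1/n)$. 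Monotonicity of $\Lambda$ on $]0,1[$ makes the inversion valid. Then $m^\ast\asymp\log(1/\alpha^\ast)\asymp\log n$, and hence $\mathfrak{E}_n\sim\log n/n$.

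In all cases I also check the hypothesis $m^\ast/(n\lambda_{m^\ast})\le1$ of Theorem~\ref{LowBound}. This amounts to $m^\ast\lambda_{m^\ast}^{-1}\lesssim n$, which follows from $\tau\gtrsim\|\rho e_1\|^2/\lambda_1$ together with the defining inequality (up to constants absorbed in "$\asymp$").

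The main obstacle is the boundary and logarithmic cases, namely $\beta=1+\eta$ and the exponential-eigenvalue rows. There, inverting $t^\gamma\log^{-\gamma'}(1/t)=1/n$ produces only the leading order, which is what "$\sim$" records. I would handle these with the standard asymptotic inverse $\Lambda^{-1}_{\gamma,\gamma'}(s)\sim (s\log^{\gamma'}(1/s))^{1/\gamma}\gamma^{-\gamma'/\gamma}$, obtained by one fixed-point iteration.
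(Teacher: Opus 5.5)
Your route is the paper's: bound $\tau$ by the head and tail sums as in Proposition~\ref{AB12}, then solve $1/n\asymp\lambda_m^2\tau(m)/m$. However, one computation is wrong, and you assert that the equations return ``the stated exponents'' without ever comparing with the table.

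\textbf{The $\tau$ error.} In the row $\lambda_p\asymp e^{-\eta' p}$, $\|\rho e_p\|^2\asymp p^{-1-\beta}$ you keep only the head, $\tau(m)\asymp e^{\eta' m}m^{-1-\beta}$. The tail term $\lambda_m^{-1}\sum_{k>m}k^{-1-\beta}\asymp e^{\eta' m}m^{-\beta}$ is larger by a factor $m$. This is case $c.1$ of Proposition~\ref{prop.reg-var.bias}, and it is also what Proposition~\ref{AB12} gives: $\alpha^{-1}\log^{-\beta}(1/\alpha)$ at $\alpha=\lambda_m$. With your $\tau$ you would get $\lambda_m^2\tau(m)/m\asymp\Lambda_{1,2+\beta}(\lambda_m)$ and $\alpha^\ast\sim\log^{2+\beta}n/n$, contradicting the tabulated $\Lambda^{-1}_{1,1+\beta}(1/n)$. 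With the tail included you recover the table.

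\textbf{Two rows your setup cannot reproduce.} In these rows your (correct) setup gives different values, because the table appears to be misprinted there. A proof has to derive and flag this rather than claim agreement.
\begin{itemize}
\item For $\beta=1+\eta$, your equation $m^{3+2\eta}\asymp n\log m$ gives $m^\ast\asymp(n\log n)^{1/(3+2\eta)}$, $\alpha^\ast\asymp(n\log n)^{-(1+\eta)/(3+2\eta)}$ and $m^\ast/n\asymp n^{-(2+2\eta)/(3+2\eta)}(\log n)^{1/(3+2\eta)}$. The table prints exponent $2+\eta$, which is what one gets with $\lambda_m$ in place of $\lambda_m^2$. Continuity with the neighbouring rows confirms $3+2\eta$; the first row is even printed for $\beta\le 1+\eta$ and gives $n^{1/(3+2\eta)}$ there.
\item For $\beta'\neq\eta'$, your $\tau\asymp e^{(\eta'-\beta')_+m}$ gives $\lambda_m^2\tau(m)/m\asymp\Lambda_{\gamma,1}(\lambda_m)$. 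Here $\gamma=1+\beta'/\eta'$ if $\beta'<\eta'$ and $\gamma=2$ if $\beta'>\eta'$, so $\gamma=\min\{1+\beta'/\eta',2\}$ and $\alpha^\ast\sim(\log n/n)^{\max\{\eta'/(\eta'+\beta'),1/2\}}$. The table has $\min$ and $\max$ interchanged: for $\eta'=1$, $\beta'=1/2$ the exponent is $2/3$, not $1/2$. Note also that for $\beta'>\eta'$, $\tau$ is bounded, so $m^\ast$ cannot depend on $\beta'$, yet the printed formula does.
\end{itemize}
The columns $m^\ast\asymp\log n$ and $\mathfrak{E}_n\sim\log n/n$ of the exponential rows are unaffected by either issue.

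\textbf{The hypothesis check.} Your verification of $m^\ast/(n\lambda_{m^\ast})\le 1$ runs the wrong way. The defining inequality (\ref{alpha.star}) gives $m^\ast/(n\lambda_{m^\ast})\le\lambda_{m^\ast}\tau(m^\ast)$, so you need an \emph{upper} bound on $\lambda_m\tau(m)$; the lower bound $\tau\gtrsim\|\rho e_1\|^2/\lambda_1$ does not help. The right argument uses $\lambda_m\le\lambda_k$ for $k\le m$, whence $\lambda_m\tau(m)\lesssim\sum_k\|\rho e_k\|^2=\|\rho\|_2^2$. So the condition holds up to a constant. Finally, Theorem~\ref{LowBound} alone only yields $\mathfrak{E}_n\gtrsim m^\ast/n$; a two-sided evaluation of $\mathfrak{E}_n$ also needs Theorem~\ref{minimaxUpperBound}.
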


\begin{remark}
The results indicate that the decay rate of the eigenvalues of $C$ is the dominant factor governing the minimax rate. Notably, the negative effects of the ill-posed inverse problem are mitigated in this prediction framework: a faster decay of the eigenvalues leads to a faster convergence rate for the prediction error.
\end{remark}

\subsection{Upper bound} \label{upper.bound.section}

In the sequel we set : $m(\alpha)=argmin_p \lbrace \vert \alpha -\lambda_p \vert \rbrace$ and set with an abuse of notation:
 $\alpha=\alpha(m)=\lambda_{m}$. This should be read the following way : once $\alpha$ is fixed, a specific dimension $m$ plays a crucial role through the previous equation. This dimension is associated to the eigenvalue closest to $\alpha$ and is similar to the truncation in Functional Principal Component Regression or to the concept of intrisic/effective dimension in the statistical literature. 
The upper bounds depend on typical bias and variance terms. With clear
notations set :
\begin{equation}
\mathbf{V}_{n}=\frac{\sigma _{\varepsilon }^{2}}{n}\sum_{p=1}^{+\infty }
\frac{\lambda _{p}}{\lambda _{p}+\alpha }\quad \mathbf{B}_{n}=\alpha
^{2}\sum_{p=1}^{+\infty }\frac{\left\Vert \rho e_{p}\right\Vert ^{2}}{
\lambda _{p}+\alpha }  \label{biais-var}
\end{equation}
where $\mathbf{V}_{n}$ accounts for variance and $\mathbf{B}_{n}$ for bias.
The next Proposition provides a kind of non asymptotic bias-variance
decomposition for $\mathcal{M}_{n}$.


The first result is a general upper bound with explicit constants.

\begin{theorem}
\label{risk-upper-bound}When $m/\left( n\lambda_{m}\right) \leq1$ for all $n$ and $m \geq 4$
we have :
\begin{equation}
\sup_{\left( \rho,\varepsilon,X_{0}\right) \in\mathfrak{M}_{\mathbf{s}}}
\mathbb{E}\left\Vert \left( \widehat{\rho}_{n}-\rho\right) X_{n+1} \right\Vert ^{2} \leq (8+4(a'_{3})^{2}) \mathbf{V}_{n}+12 \cdot\mathbf{B}_{n}+\mathfrak{R}_{n}. \label{risk-main}
\end{equation}

The residual term is $\mathfrak{R}_{n}$ :
\begin{align*}
0 & \leq\mathfrak{R}_{n}\leq\frac{\mathcal{R}'_{n}}{n}+\mathcal{R}_{\mathbf s}^{\prime\prime
}\left( \mathbf{V}_{n}+\mathbf{B}_{n}\right) m\left( \frac{m^{2}}{n}\right)
^{\left( \mathbf{s}-2\right) /3}
\end{align*}

where $\mathcal{R}'_n$ and $\mathcal{R}''_{\mathbf s}$ are bounded by explicit constants.
\end{theorem}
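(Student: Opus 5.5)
We split the prediction error into a variance part and a bias part. Using $D_n=\rho C_n+E_n$ with $E_n=\frac1n\sum_{k=1}^{n-1}\varepsilon_{k+1}\otimes X_k$, we obtain
\begin{equation*}
\widehat\rho_n-\rho=E_n(C_n+\alpha\mathbf I)^{-1}-\alpha\rho(C_n+\alpha\mathbf I)^{-1}.
\end{equation*}
The plan is then to replace the random resolvent $(C_n+\alpha\mathbf I)^{-1}$ by the deterministic one $(C+\alpha\mathbf I)^{-1}$ on a good event, and to handle the complement separately. The new observation $X_{n+1}$ is not independent of the sample. We will write $X_{n+1}=\rho X_n+\varepsilon_{n+1}$. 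The $\varepsilon_{n+1}$ part is independent of $\mathcal F_n$, so it contributes exactly $\mathbb E\|C_\varepsilon^{1/2}(\widehat\rho_n-\rho)^*\|_2^2$. The $\rho X_n$ part is controlled by the constants $a'_1,a'_2$, since $C^{-1/2}\rho C^{1/2}$ is a contraction. After these reductions everything is expressed through $\mathbb E\|(\widehat\rho_n-\rho)C^{1/2}\|_2^2$ up to the factor $(a_3')^2$ coming from $C_\varepsilon\le (a'_3)^2C$. This accounts for the constant $8+4(a'_3)^2$.

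For the variance term we compute $\mathbb E\|E_n(C+\alpha)^{-1}C^{1/2}\|_2^2$. This uses the martingale-difference structure: $\varepsilon_{k+1}$ is independent of $X_k,\dots,X_0$, so the cross terms vanish. The result equals $\frac{n-1}{n^2}\,\mathrm{tr}(C_\varepsilon)\,\mathrm{tr}(C(C+\alpha)^{-2}C)$, which is bounded by $\mathbf V_n$ because $\lambda_p^2/(\lambda_p+\alpha)^2\le\lambda_p/(\lambda_p+\alpha)$. For the bias, $\alpha^2\|\rho(C+\alpha)^{-1}C^{1/2}\|_2^2=\alpha^2\sum_p\|\rho e_p\|^2\lambda_p/(\lambda_p+\alpha)^2\le\mathbf B_n$.

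To pass from $C$ to $C_n$ we use the identity
\begin{equation*}
(C_n+\alpha)^{-1}=(C+\alpha)^{-1/2}\big(\mathbf I+\Delta_n\big)^{-1}(C+\alpha)^{-1/2},\qquad \Delta_n=(C+\alpha)^{-1/2}(C_n-C)(C+\alpha)^{-1/2}.
\end{equation*}
On the event $\Omega_n=\{\|\Delta_n\|_\infty\le 1/2\}$ the middle factor has norm at most $2$. This gives the main terms with constants of order $4\times 2$ and $12$, where the factor 3 comes from $(a+b+c)^2\le 3(a^2+b^2+c^2)$ applied to the variance, bias, and $\rho X_n$ contributions.

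On $\Omega_n^c$ we use the crude bound $\|(C_n+\alpha)^{-1}\|_\infty\le 1/\alpha=1/\lambda_m$ together with Hölder's inequality and a Markov bound $\mathbb P(\Omega_n^c)\le 2^{2q}\mathbb E\|\Delta_n\|_2^{2q}$. This produces $\mathfrak R_n$.

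The main obstacle is the moment bound on $\|\Delta_n\|_2$ for dependent data. The entries are $\langle\Delta_n e_p,e_q\rangle=\frac1n\sum_k(\langle X_k,e_p\rangle\langle X_k,e_q\rangle-\lambda_p\delta_{pq})/\sqrt{(\lambda_p+\alpha)(\lambda_q+\alpha)}$. Their second moments are $O(\frac1n)$ after summing covariances over lags. The lag-$h$ covariance decays like $(a_1')^{h}$: via $X_{k+h}=\rho^hX_k+\sum\rho^j\varepsilon$, the correlations are governed by the contraction $C^{-1/2}\rho C^{1/2}$. Summing over $p,q$ with weights $\lambda_p/(\lambda_p+\alpha)$ gives $\mathbb E\|\Delta_n\|_2^2\lesssim (\sum_p\lambda_p/(\lambda_p+\alpha))^2/n\lesssim m^2/n$, using assumption $\mathbf A_3$ and Lemma \ref{series} to show that the effective dimension is $O(m)$.

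Higher moments up to order $\mathbf s$ are then obtained from $\mathbf A_2(\mathbf s)$ through a Rosenthal/Burkholder-type inequality for the weakly dependent, geometrically mixing linear process. Here we would decompose $X_k$ into the innovation martingale plus a geometrically small remainder.

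Choosing the Hölder exponents to balance the $1/\lambda_m\le n/m$ blow-up against $\mathbb P(\Omega_n^c)\lesssim (m^2/n)^{q}$ should yield the stated factor $m(m^2/n)^{(\mathbf s-2)/3}$. The condition $m/(n\lambda_m)\le1$ and $m\ge4$ are used exactly at this stage, and the leftover constant pieces, including the initial-condition effect of $X_0$, are collected into $\mathcal R'_n/n$.
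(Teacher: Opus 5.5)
Your architecture matches the paper's: a good event where $C_n$ is close to $C$, the deterministic resolvent on it, a martingale computation of the variance, the bias coming from $\alpha\rho C^{\dagger}$, and moment/Markov bounds on the complement. Your variance and bias computations are correct. The gap is the step meant to handle the dependence of $X_{n+1}$ on the sample.

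Splitting $X_{n+1}=\rho X_n+\varepsilon_{n+1}$ correctly isolates $\varepsilon_{n+1}$. Its contribution is $\mathbb{E}\|(\widehat\rho_n-\rho)C_\varepsilon^{1/2}\|_2^2\le(a_3')^2\,\mathbb{E}\|(\widehat\rho_n-\rho)C^{1/2}\|_2^2$. But $X_n$ is itself a sample point: $D_n$ contains $X_n\otimes X_{n-1}$, and every $\varepsilon_j$ with $j\le n$ enters $\widehat\rho_n$.

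Your contraction remark suggests $\mathbb{E}\|(\widehat\rho_n-\rho)\rho X_n\|^2\le(a_1')^2\,\mathbb{E}\|(\widehat\rho_n-\rho)C^{1/2}\|_2^2$. That would require $\mathbb{E}[X_n\otimes X_n\mid\widehat\rho_n]=C$, which is false. Unrolling $X_n$ further never yields a piece independent of $\widehat\rho_n$. So the reduction of the prediction risk to $\mathbb{E}\|(\widehat\rho_n-\rho)C^{1/2}\|_2^2$ is not justified, and that reduction is precisely the non-i.i.d. difficulty of the theorem.

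The relevant constant is $a_2'$, not $a_1'$. On $\Omega_n$ you have $\|(\widehat\rho_n-\rho)\rho X_n\|\le 2a_2'\|X_n\|\,\|(E_n-\alpha\rho)C^{\dagger1/2}\|_2$, since $\|C^{\dagger1/2}\rho\|_\infty\le\|C^{-1/2}\rho\|_\infty$. Cauchy--Schwarz with the fourth-moment bounds of Lemma \ref{lem.conc.mom} and Corollary \ref{corr-conc-U_n} (your $E_n$ is the paper's $U_n$) then gives $O(\mathbf V_n+\mathbf B_n)$. This repairs the rate. However, the constant then involves $(a_2')^2$, $\mathrm{tr}\,C$ and moment constants, so you do not get the stated $8+4(a_3')^2$ and $12$. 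Your own bookkeeping (a factor $2$ from $\Omega_n$, a factor $3$ from $(a+b+c)^2$) does not produce those numbers either.

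The paper keeps sharp constants by a different device. Its good event $\mathcal E_n$ also contains $\{\|R_{n,\alpha}C^{\dagger1/2}X_{n+1}\|<1\}$. Combined with the identity $\widehat\rho_n-\rho=(\widehat\rho_n-\rho)(C+\alpha\mathbf I)^{1/2}R_{n,\alpha}C^{\dagger1/2}+T_{n,\alpha}$, this gives $\|(\widehat\rho_n-\rho)X_{n+1}\|1_{\mathcal E_n}\le(1-c_n)^{-1}\|T_{n,\alpha}(C+\alpha\mathbf I)^{1/2}\|_2+\|T_{n,\alpha}X_{n+1}\|$. The random resolvent therefore never meets $X_{n+1}$, which only appears against the deterministic $C^{\dagger}$.

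Proposition \ref{variance} then computes $\mathbb{E}\|U_nC^{\dagger}X_{n+1}\|^2$ by writing, for each $k$, $X_{n+1}=(\varepsilon_{n+1}+\dots+\rho^{n-k-1}\varepsilon_{k+2})+\rho^{n-k}X_{k+1}$.
\begin{itemize}
\item The first part is independent of $(\varepsilon_{k+1},X_k)$, so the $Z^{+}_{k,n}$ are orthogonal and yield $(a_3')^2\mathbf V_n$.
\item The in-sample parts $Z^{-}_{k,n}$ are damped by $\rho^{n-k}$ and $a_2'$, so they contribute only $O(m/n^2)$, which is the $\mathcal R'_n/n$ term.
\end{itemize}
The constant $12=2/(1-c_n)^2+4$ uses $c_n=(nm)^{-1/6}\le m^{-1/2}\le 1/2$. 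This is where $m\ge 4$ enters, in the main term rather than the residual.

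Finally, your residual step is only asserted. In the paper, the exponent $(\mathbf s-2)/3$ comes from the shrinking threshold $c_n$, balanced in Lemma \ref{P-barre} against $\mathbb P(\|C^{\dagger1/2}X_{n+1}\|>1/c_n)$. The $1/\alpha$ blow-up is handled by the self-bounding estimate $\|G_n\|_\infty\le\frac{\lambda_1+\alpha}{\alpha}\|R_{n,\alpha}\|_\infty^{1/\varepsilon}+\frac1\varepsilon$. With your fixed threshold $1/2$ and the crude factor $\alpha^{-2}\le(n/m)^2$, you would need to carry out the H\"older bookkeeping explicitly. Reaching the stated form $m(m^2/n)^{(\mathbf s-2)/3}$ is not automatic.
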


\begin{remark}
The technical assumption $m/\left( n\lambda_{m}\right) \leq1$ could be
removed and replaced with a looser $m/\left( n\lambda_{m}\right) \leq c$. It
avoids introducing the additional constant $c.$ In view of the next result
about selection of the optimal parameters $\alpha$ and $m$ (see below) this
constraint always holds.
\end{remark}

The term $\mathfrak{R}_{n}$ my be viewed as residual in the asymptotic sense
: if the two first series on the right hand side of (\ref{risk-main}) are
small then $\mathfrak{R}_{n}$ is substantially smaller than both of them.
Obviously for
large $n$ and for $\mathbf{s}>2$, optimizing the mean
square prediction risk comes down to ignoring $\mathfrak{R}_{n}$ and
focusing on the two first series.

\begin{theorem}
\label{minimaxUpperBound}Assume that $\mathbf{s}>2$ in assumption $\mathbf{A}
_{2}$ and $\eta \geq 3/\left( \mathbf s-2\right) $ where $\eta $ was
defined within $\mathbf{A}_{3}$. An optimal choice of $\alpha $ that
minimizes the bias-variance within (\ref{risk-main}) is defined through :
\begin{equation}
m^{\ast }=\max_{m}\left\{ \frac{1}{n}\leq \frac{\lambda _{m}^{2}}{m}\tau
\left( m\right) \right\} ,\alpha ^{\ast }=\lambda _{m^{\ast }}  \label{m_opt}
\end{equation}
then for $\widehat{\rho }_{n}^{\ast }=\widehat{\rho }_{n}\left( \alpha
^{\ast }\right) $ :
\begin{equation*}
\sup_{\left( \rho ,\varepsilon ,X_{0}\right) \in \mathfrak{M}_{\mathbf{s}}}
\mathbb{E}\left\Vert \left( \widehat{\rho }_{n}^{\ast }-\rho \right)
X_{n+1}\right\Vert ^{2}\leq \overline{\mathfrak{c}}\frac{m^{\ast }}{n}
\end{equation*}
where $\overline{\mathfrak{c}}$ is a constant that depends on $\mathcal{R}_{n}^{\prime},\, \mathcal{R}_{s}^{\prime \prime
}, \, \sigma _{\varepsilon }^{2}$ and $\eta $. \end{theorem}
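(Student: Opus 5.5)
Starting from Theorem \ref{risk-upper-bound}, the task is to control $\mathbf V_n$, $\mathbf B_n$ and $\mathfrak R_n$ at $\alpha^\ast=\lambda_{m^\ast}$, and to show that each is $O(m^\ast/n)$. First we check the hypothesis $m^\ast/(n\lambda_{m^\ast})\le 1$. The definition (\ref{m_opt}) gives $1/n\le\lambda_{m^\ast}^2\tau(m^\ast)/m^\ast$. We expect $\tau(m)\lesssim 1/\lambda_m$ (indeed $\frac{1}{\lambda_N}\sum_{k>N}\|\rho e_k\|^2\le \|\rho\|_2^2/\lambda_N$, while $\sum_{k\le N}\|\rho e_k\|^2/\lambda_k\le \|C^{-1/2}\rho^\ast\|_2^2$ is controlled through $a_2'$). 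This yields $m^\ast/(n\lambda_{m^\ast})\lesssim \lambda_{m^\ast}\tau(m^\ast)\lesssim 1$, possibly up to a constant, which Remark after Theorem \ref{risk-upper-bound} allows.

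The variance is handled by splitting at $m=m^\ast$:
\[
\sum_p \frac{\lambda_p}{\lambda_p+\alpha}\le m+\frac1{\lambda_m}\sum_{p>m}\lambda_p .
\]
Under $\mathbf A_3$ with $(\lambda_k)\in l^1_\eta$, Lemma \ref{series} (or a direct Riemann-sum comparison, since $k^{1+\eta}\lambda_k$ is decreasing) gives $\sum_{p>m}\lambda_p\le c_\eta\, m\lambda_m$. Hence $\mathbf V_n\le \sigma_\varepsilon^2(1+c_\eta)m^\ast/n$. For the bias we split in the same way:
\[
\mathbf B_n\le \alpha^2\sum_{p\le m}\frac{\|\rho e_p\|^2}{\lambda_p}+\alpha\sum_{p>m}\|\rho e_p\|^2\le 2\lambda_m^2\tau(m),
\]
using the two parts of (\ref{A2ter}) with $N=m$, since $\alpha=\lambda_m$. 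The key point is that the maximality in (\ref{m_opt}) goes the other way at $m^\ast+1$: $\lambda_{m^\ast+1}^2\tau(m^\ast+1)/(m^\ast+1)<1/n$. To convert this into $\lambda_{m^\ast}^2\tau(m^\ast)\lesssim m^\ast/n$, we use monotonicity of $\tau$ and $\lambda_{m^\ast}/\lambda_{m^\ast+1}\le ((m^\ast+1)/m^\ast)^{1+\eta}\le 2^{1+\eta}$, which follows from $\lambda\in l^1_\eta$. This gives $\mathbf B_n\le 2^{3+2\eta}m^\ast/n$.

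The residual term is the main obstacle. The piece $\mathcal R'_n/n$ is trivially $O(m^\ast/n)$. The second piece equals $(\mathbf V_n+\mathbf B_n)\, m^\ast (m^{\ast2}/n)^{(\mathbf s-2)/3}$, so we need
\[
m^\ast\,(m^{\ast2}/n)^{(\mathbf s-2)/3}=O(1),\qquad\text{i.e.}\qquad m^{\ast\,(3+2(\mathbf s-2))}\lesssim n^{\mathbf s-2}.
\]
From $m^\ast\le n\lambda_{m^\ast}\lesssim n\,m^{\ast-(1+\eta)}$ (using $\lambda_m\le \lambda_1 m^{-1-\eta}$ from $l^1_\eta$ and $k\lambda_k\le1$), we get $m^{\ast\,2+\eta}\lesssim n$. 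It therefore suffices that $(3+2(\mathbf s-2))/(\mathbf s-2)\le 2+\eta$, which is exactly $\eta\ge 3/(\mathbf s-2)$. This explains the hypothesis. Under $\mathbf A_3'$ the same inequalities hold up to slowly varying factors, handled via Potter bounds. Collecting the three bounds gives $\overline{\mathfrak c}$ depending on $\sigma_\varepsilon^2$, $\eta$, $\mathcal R'_n$ and $\mathcal R''_{\mathbf s}$.
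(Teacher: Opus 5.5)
Your variance, bias-splitting and residual estimates are the paper's. The paper then plugs $2\alpha^{\ast 2}\tau(m^\ast)$ into (\ref{risk-main}) without comment. You rightly notice that (\ref{m_opt}) gives $\lambda_{m^\ast}^2\tau(m^\ast)\ge m^\ast/n$, which is the wrong direction, so an upper bound must come from maximality at $m^\ast+1$.

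\textbf{The gap.} The inequality you use for this step is reversed. Monotonicity of $k^{1+\eta}\lambda_k$ gives $(m+1)^{1+\eta}\lambda_{m+1}\le m^{1+\eta}\lambda_m$, i.e. $\lambda_m/\lambda_{m+1}\ge ((m+1)/m)^{1+\eta}$. Membership in $l^1_\eta$ therefore bounds the ratio from below and permits arbitrarily fast decay. For $\lambda_k\propto e^{-k^2}$ one has $\lambda_m/\lambda_{m+1}=e^{2m+1}$.

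Now take $\rho$ smooth enough that $\tau$ stays bounded, and take $n$ just below $(m+1)/(\lambda_{m+1}^2\tau(m+1))$. Then $m^\ast=m$ and $m^\ast/n\lesssim\lambda_{m+1}^2$, whereas $\mathbf B_n\ge\frac{\lambda_m^2}{2}\|\rho e_1\|^2/\lambda_1$. So $\mathbf B_n/(m^\ast/n)$ is of order $(\lambda_{m^\ast}/\lambda_{m^\ast+1})^2$, which is unbounded along such $n$. Consequently the right-hand side of (\ref{risk-main}) is not $O(m^\ast/n)$ under $\mathbf A_3$ alone.

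\textbf{How to close it.} You need an extra hypothesis such as $K=\sup_k\lambda_k/\lambda_{k+1}<\infty$. It holds for the polynomial and geometric decays of Corollary~\ref{cor-examples}, and under $\mathbf A_3'$ with finite $\eta$ (regular variation forces $\lambda_{k+1}/\lambda_k\to 1$). It fails in the rapidly varying case. With it, $\tau(m^\ast)\le\tau(m^\ast+1)$ gives $\lambda_{m^\ast}^2\tau(m^\ast)\le K^2\lambda_{m^\ast+1}^2\tau(m^\ast+1)<2K^2m^\ast/n$, and the rest of your argument goes through.

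\textbf{A smaller point about your first paragraph.} The constant $a_2'=\|C^{-1/2}\rho\|_\infty$ does not control $\sum_k\|\rho e_k\|^2/\lambda_k=\|\rho C^{-1/2}\|_2^2$. That sum can diverge under $\mathbf A_1$: take $\rho$ diagonal in $(e_k)$ with $\|\rho e_k\|^2=\lambda_k/2$. The bound you want is immediate anyway. Since $\lambda_k\ge\lambda_N$ for $k\le N$, both terms in (\ref{A2ter}) are at most $\|\rho\|_2^2/\lambda_N\le\sigma_\varepsilon^2a_2^2/\lambda_N$.
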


\begin{remark}\label{rem:minimax}
Theorems~\ref{minimaxUpperBound} and Theorem~\ref{LowBound} establish that the ridge estimator $\widehat{\rho }_{n}^{\ast }$ is minimax optimal, up to multiplicative constants, matching the rates derived in the literature for functional linear regression models. Note that
 \cite{Hall2007,Benatia2017} obtain convergence rates for a ridge estimate very close to the one given just above. Their models and assumptions are different from the present ones, the framework is asymptotic but some comparisons are possible. Optimality is derived in \cite{Hall2007} but with restricted assumptions especially on the eigenvalue sequence. These authors consider the MSE for $\widehat{\rho }_{n}$ not the prediction error which is smoother and may lead to faster rates.
\end{remark}

\subsection{The bias of the ridge estimator in functional models - connection with regularly
and exponentially varying functions\label{bias-study}}

To the best of our knowledge, bounding the bias of ridge estimators in functional models has previously been achieved only under the assumption of polynomial decay for both the eigenvalues of the covariance operator and the sequence $(\|\rho e_p\|)_{p\geq 1}$ (see, e.g., \cite{Hall2007,PP18}). However, to derive more general results, a natural connection with the theory of regularly and exponentially varying functions emerges. This section aims to elucidate these connections and derive the corresponding convergence rates. As introduced in the preceding section, the principal bias component is given by:
\begin{equation*}
\mathbf{B}_{n}=\alpha ^{2}\left[ \sum_{p=1}^{m}
\frac{\left\Vert \rho e_{p}\right\Vert ^{2}}{\lambda _{p}+\alpha }
+\sum_{p=m+1}^{+\infty }\frac{\left\Vert \rho e_{p}\right\Vert ^{2}}{\lambda
_{p}+\alpha }\right]. 
\end{equation*}
As stated in Section \ref{upper.bound.section}, setting $\alpha=\lambda_m$ yields:
\begin{eqnarray*}
\frac{\alpha^2}{2} (\mathbf{b}_{1}+\mathbf{b}_{2}) \leq \mathbf{B}_{n}\leq \alpha^2 (\mathbf{b}_{1}+\mathbf{b}_{2}) \\
\mathbf{b}_{1}\left( n,m\right)=\sum_{p=1}^{m}\frac{\left\Vert \rho
e_{p}\right\Vert ^{2}}{\lambda _{p}}, \quad \mathbf{b}_{2}\left( n,m\right) =\frac{1}{\lambda _{m}}
\sum_{p=m+1}^{+\infty }\left\Vert \rho e_{p}\right\Vert ^{2}.
\end{eqnarray*}

Because the upper bound is expressed in terms of $\max \left( \mathbf{b}
_{1}\left( n,m\right) ,\mathbf{b}_{2}\left( n,m\right) \right) $ via $\tau$ in (\ref{m_opt}), it is necessary to determine which term dominates. The subsequent proposition identifies the dominant term—either $\mathbf{b}_{1}\left( n,m\right) $ or $\mathbf{b}_{2}\left( n,m\right) $—in the bias decomposition of the ridge estimator. We demonstrate that the theory of regularly varying functions and their extensions provides a comprehensive framework to resolve this question. Note that this proposition holds beyond the FAR model and can be applied to general function-on-function regression models. 

Hereafter, the discrete sequences are continuously interpolated into functions. This framework is analytically more tractable and permits the use of calculus techniques that simplify the proof of the forthcoming proposition. This continuous embedding is formally justified by Corollary p.115 in \cite{Galambos1973} and Proposition 10.3 in \cite{Granata2016b}.

\begin{definition}
\label{thau}Let $\varrho $ (respectively, $\lambda $) denote a $
C^{1}\left( \left[ 1,+\infty \right) ,\mathbb{R}^{+}\right) $ function
interpolating the sequence $\left\Vert \rho e_{p}\right\Vert ^{2}$ such that $\varrho
\left( p\right) =\left\Vert \rho e_{p}\right\Vert ^{2}$ (respectively, interpolating $\lambda_{p}$ such that $\lambda \left( p\right) =\lambda _{p}$). Define:
\begin{equation}
g\left( x\right) =\int_{1}^{x}\frac{\varrho \left( s\right) }{\lambda \left(
s\right) }ds.  \label{A2bis}
\end{equation}
\end{definition}

\begin{remark}
The above definition yields $g^{\prime }\left( x\right) =\varrho \left(
x\right) /\lambda \left( x\right) $, indicating that $g$ is strictly increasing. Furthermore, both integrals $
\int_{1}^{+\infty }\lambda \left( s\right) ds$ and $\int_{1}^{+\infty
}\varrho \left( s\right) ds$ are convergent.
\end{remark}

Several classes of real functions, prominently featured in extreme value theory, are instrumental here and will be linked to the functions $\varrho$ and $\lambda$. A brief overview of regularly varying functions is therefore warranted. A comprehensive reference on this subject is \cite{Bingham1987}. Let $\mathbf{RV}_{\alpha}$ denote the class of regularly varying functions with index $\alpha$, and let $\mathbf{RV}_{+}$
(respectively, $\mathbf{RV}_{-}$, $\mathbf{RV}_{0}$) represent the classes of functions that are regularly varying at infinity with strictly positive (respectively, strictly negative, zero) indices. The notation $\mathbf{RV}_{\alpha}$ is standard in extreme value theory; it is retained in this subsection despite the prior use of the parameter $\alpha$ elsewhere in the paper. The class $\mathbf{RV}_{0}$ corresponds to slowly varying functions at infinity. Examples of functions within $\mathbf{RV}_{\alpha}$ are provided below. Finally, let $\mathbf{EV}_{c}$ denote the class of functions with exponential variation at infinity with index $c$, $\mathbf{EV}_{0}$ the class of functions with hypoexponential variation, and $\mathbf{RV}_{\infty }$ the class of rapidly varying functions. 

The families $\mathbf{EV}_{c}$ and $\mathbf{EV}_{0}$ are both contained within $\mathbf{RV}_{\infty }$, but they are disjoint from $\mathbf{RV}_{\alpha }$. Furthermore, the nesting $\mathbf{RV}_{0}\subset \mathbf{RV}_{\alpha
}\subset \mathbf{EV}_{0}\subset \mathbf{EV}_{c}$ holds. These classes are rigorously defined and analyzed in \cite{Granata2016a} and \cite{Granata2016b}. To clarify these functional classes, we provide explicit examples:

\begin{itemize}
\item $\mathbf{RV}_{0}$ contains constant functions and functions possessing a finite, non-zero limit at infinity. Additionally:
\begin{equation*}
\left\{ \log ^{q_{1}}x,\log ^{q_{2}}(\log x),\exp \left( \log
^{q_{3}}x\right) \right\} \subset \mathbf{RV}_{0}
\end{equation*}
for any $q_{1}, q_{2} \in \mathbb{R}$ and $0<q_{3}<1$. 

\item $\mathbf{RV}_{\alpha }$ consists precisely of functions of the form $
x^{\alpha } \ell(x)$, where $\ell \in \mathbf{RV}_{0}$ and $
\alpha \in \mathbb{R}^{\ast }$. Furthermore, $\mathbf{RV}_{+}=\bigcup_{\alpha >0} \mathbf{RV}
_{\alpha }$ and $\mathbf{RV}_{-}=\bigcup _{\alpha <0}\mathbf{RV}_{\alpha }$. 

\item $\mathbf{EV}_{0}$ contains $\mathbf{RV}_{\alpha }$ as well as functions of the form:
\begin{equation*}
\left\{ \exp \left( \log ^{q_{4}}x\right) ,\exp \left( x^{q_{5}}\right) \ : \
q_{4}>1, \ 0<q_{5}<1\right\}
\end{equation*}
along with their products with functions from $\mathbf{RV}_{\alpha }$.

\item $\mathbf{EV}_{c}$ consists precisely of functions of the form $h\left(
x\right) \exp \left( cx\right) $, where $h\in \mathbf{EV}_{0}$ and $c\neq 0$.
\end{itemize}

While other families of rapidly varying functions exist—such as exponentially hypervarying functions, which include $\exp \left(
x^{q_{6}}\right) $ for $q_{6}>1$ (see \cite{Granata2016b})—we restrict our analysis to the aforementioned classes, as they sufficiently cover a broad spectrum of practical scenarios.

\begin{remark}
Consider the function:
\begin{equation*}
x^{-1-\alpha }\exp \left( -c_0 x\right) \log ^{q_{1}}x\log ^{q_{2}}(\log x)\exp
\left( \log ^{q_{3}}x\right) \exp \left(-c_4 \log ^{q_{4}}x\right) \exp \left(-c_5 x^{q_{5}}\right)
\end{equation*}
This function belongs to $\mathbf{EV}_{c}$ if the constant $c_0$ is non-zero; it belongs to $\mathbf{EV}_{0}$ if $c_0=0$ and either $c_4$ or $c_5$ is non-zero (with $q_{4}>1$ and/or $0<q_{5}<1$). It reduces to a function in $\mathbf{RV}_{\alpha }$ when $c_0=c_4=c_5=0$. 
\end{remark}

To proceed, we classify $\varrho $ and $\lambda $ into three strictly partitioned cases: 
$\lambda \in \mathbf{EV}_{c}$ (i.e., $\lambda \in \mathbf{EV}_{c}\setminus \mathbf{EV}_{0}$), $\lambda
\in \mathbf{EV}_{0}$ (i.e., $\lambda \in \mathbf{EV}_{0}\setminus \mathbf{RV}_{\alpha }$), and $\lambda
\in \mathbf{RV}_{\alpha }$. Note that neither $\lambda $ nor $\varrho $ can belong
to $\mathbf{RV}_{0}$ because they must be integrable, which implies that $\alpha < -1$. Table \ref{table:gprime} catalogs the nine possible configurations for the ratio $g^{\prime }=\rho /\lambda$:

\begin{table}[h]
\centering
\begin{tabular}{c|c|c|c}
 $g^{\prime }=\rho /\lambda $ & $\lambda \in \mathbf{EV}_{c}$ & $\lambda \in \mathbf{EV}_{0}$ & $\lambda \in \mathbf{RV}_{\alpha }$ \\ 
\hline $\rho \in \mathbf{EV}_{c}$ & $a.1$ & $a.2$ & $a.3$ \\ 
\hline  $\rho \in \mathbf{EV}_{0}$ & $b.1$ & $b.2$ & $b.3$ \\ 
\hline  $\rho \in \mathbf{RV}_{\alpha }$ & $c.1$ & $c.2$ & $c.3$ \\
\end{tabular}
\caption{\label{table:gprime}}
\end{table}

These nine configurations are summarized in Proposition \ref{prop.reg-var.bias}. For each case, the corresponding class of $g^{\prime }$ and the dominant bias component are identified. We define the following integral criteria:
\begin{equation*}
\mathcal{B}_{1}:\frac{\int_{x}^{+\infty }\varrho \left( s\right) ds}{\lambda
\left( x\right) }\leq M\int_{1}^{x}\frac{\varrho \left( s\right) }{\lambda
\left( s\right) }ds\quad \mathrm{and}\quad \mathcal{B}_{2}:\int_{1}^{x}\frac{
\varrho \left( s\right) }{\lambda \left( s\right) }ds\leq M^{\prime }\frac{
\int_{x}^{+\infty }\varrho \left( s\right) ds}{\lambda \left( x\right) }.
\end{equation*}

Condition $\mathcal{B}_{1}$ implies that $\mathbf{b}_{1}$ is the dominant term. The notation $\mathbf{b}_{1} \cap \mathbf{b}_{2}$ indicates that both $\mathcal{B}_{1}$ and $\mathcal{B}_{2}$ are satisfied; the constants $M$ and $M^{\prime }$ may differ, but the asymptotic rates coincide up to a multiplicative constant. In case $b.2$, the notation $\mathbf{b}_{1} \cup \mathbf{b}_{2}$ indicates that neither term systematically dominates. Depending on the specific choices of $\varrho $ and $
\lambda $, either $\mathcal{B}_{1}$ or $\mathcal{B}_{2}$ may hold exclusively. Examples illustrating these behaviors are provided in the proofs.

\begin{proposition} The nine configurations detailed in Table \ref{table:gprime} are directly linked to assumption $\mathbf{A}'_{3}$ introduced in Remark \ref{rem.aprime3}. This classification determines the functional class of $g'=\rho / \lambda$ and identifies the dominant term in the bias decomposition, as summarized in the following table:
\label{prop.reg-var.bias} 
\begin{equation*}
\begin{tabular}{c|c|c|c|c|c|c|c|c|c}
Case & $a.1$ & $a.2$ & $a.3$ & $b.1$ & $b.2$ & $b.3$ & $c.1$ & $c.2$ & $c.3$
\\ 
\hline Class for $\rho /\lambda $ & $\mathbf{EV}_{c}\cup \mathbf{EV}_{0}$ & $\mathbf{EV}_{c}$ & $\mathbf{EV}_{c}$ & $
\mathbf{EV}_{c}$ & $\mathbf{EV}_{0}\cup \mathbf{RV}_{\alpha }$ & $\mathbf{EV}_{0}$ & $\mathbf{EV}_{c}$ & $\mathbf{EV}_{0}$ & $
\mathbf{RV}_{\alpha }$ \\ 
\hline Dominant term & $\mathbf{b}_{1}$ & $\mathbf{b}_{1}\cap \mathbf{b}_{2}$ & $
\mathbf{b}_{1}\cap \mathbf{b}_{2}$ & $\mathbf{b}_{2}$ & $\mathbf{b}
_{1}\cup \mathbf{b}_{2}$ & $\mathbf{b}_{1}\cap \mathbf{b}_{2}$ & $
\mathbf{b}_{2}$ & $\mathbf{b}_{2}$ & $\mathbf{b}_{1}\cap \mathbf{b}_{2}$
\end{tabular}
\end{equation*}
\end{proposition}

\section{Simulations}\label{sec:simus}
\subsection{Simulation method}

We first simulate 
$$
X_0(t) = \sum_{j=1}^J \sqrt{\lambda_j}\eta_je_j(t), 
$$
with $\psi_j(t) = \sqrt2\sin(2\pi(j-0.5)t)$, $(\eta_1,\hdots,\eta_J)\sim_{i.i.d.}\mathcal N(0,1)$, $(\lambda_j)_{j\geq 1}$ a summable sequence of positive real numbers. $J$ is chosen sufficiently large so that $X_0$ is approximately equal in distribution to a centered Gaussian process with covariance kernel $K(s,t)=\sum_{j\geq 1}\lambda_je_j(s)e_j(t)$. In the following we set $J=1100$.  

The process $\rho$ is chosen as follows 
\[
\rho : f \in\mathbf H\mapsto \sum_{j=1}^J (-1)^j \rho_j \langle f,e_j\rangle e_j, 
\]
with $(\rho_j)_{j\geq 1}$ a square summable sequence of real numbers in the interval $[0,1)$. Remark that, the construction of $\rho$  verifies, for $j\leq J$, $\|\rho e_j\| = \rho_j$. 

Finally, the noise process is generated as follows
\[
\varepsilon_n = \sum_{j=1}^J\sqrt{\mu_j}\xi_j^{(n)}e_j, \text{with } \mu_j= \lambda_j(1-\rho_j^2), 
\]
and $(\xi_j^{(n)})_{j=1,\hdots,J; n\geq 1}\sim_{i.i.d}\mathcal U(-\sqrt{3},\sqrt{3})$. 
This choice ensures that \eqref{cov} is satisfied and that the process $(X_n)_{n\in\mathbb N}$ is stationary. 

For illustrative purposes, examples of trajectories of the resulting process $(X_n)_{n\geq 1}$ are plotted in Figure~\ref{fig:process_trajectory} for different choices of the sequences $(\lambda_j)_{j=1,\hdots,J}$ and $(\rho_j)_{j=1,\hdots,J}$.

\begin{figure}
\begin{tabular}{c|c}
\multicolumn{2}{c}{$\lambda_j=j^{-2}$, $\rho_j=\rho_0j^{-3/2}$}\\
\includegraphics[width=0.45\textwidth]{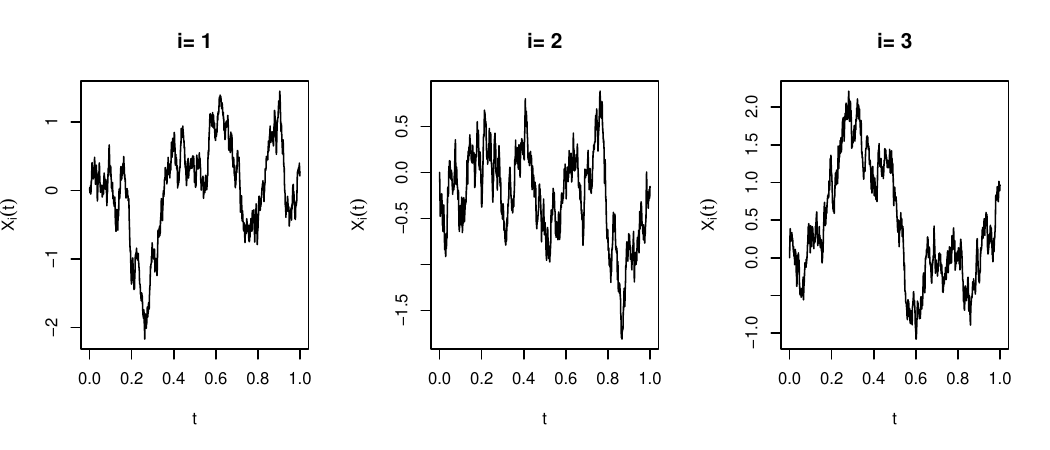}&\includegraphics[width=0.45\textwidth]{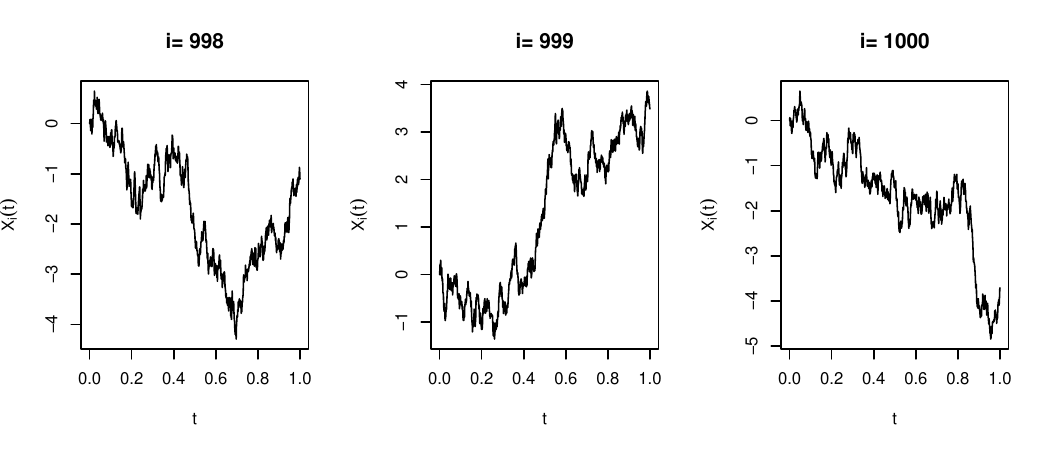}\\
\multicolumn{2}{c}{$\lambda_j=j^{-2}$, $\rho_j=\rho_0e^{-(j-1)/2}$}\\
\includegraphics[width=0.45\textwidth]{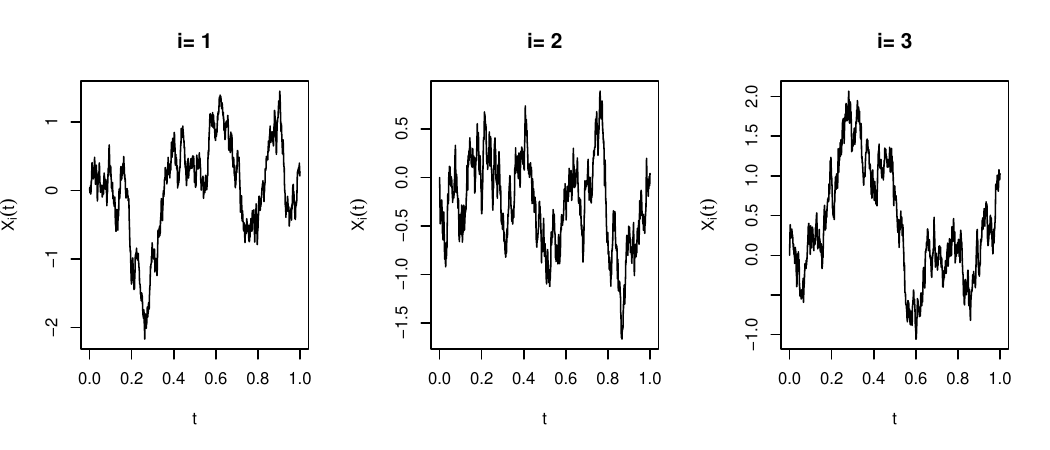}&\includegraphics[width=0.45\textwidth]{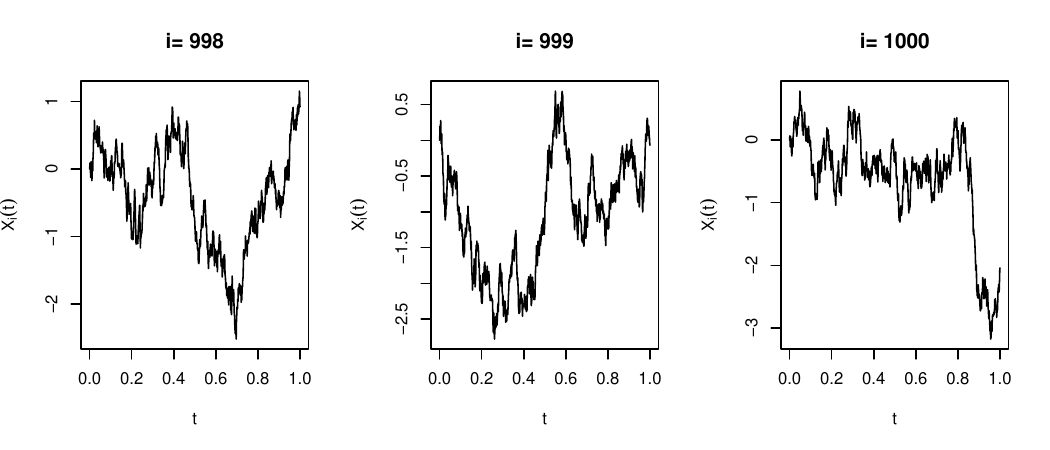}\\
\multicolumn{2}{c}{$\lambda_j=e^{-j}$, $\rho_j=\rho_0j^{-3/2}$}\\
\includegraphics[width=0.45\textwidth]{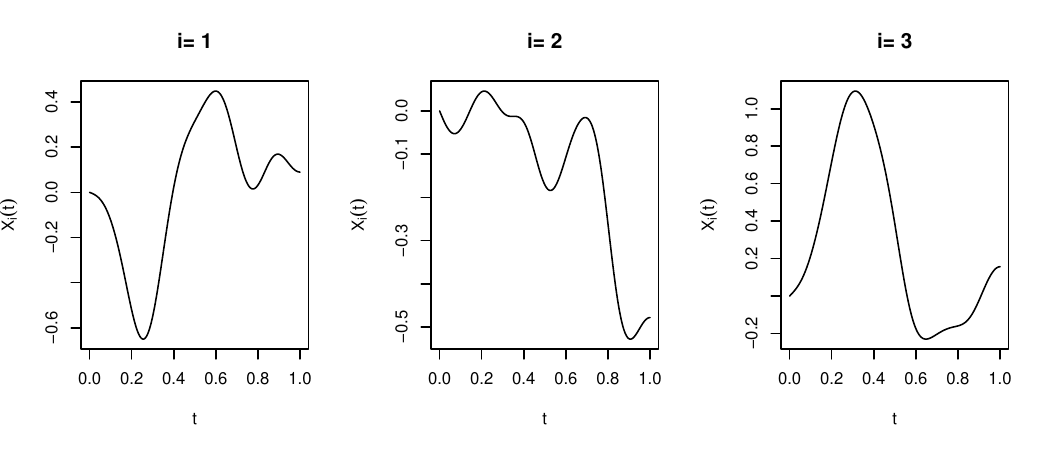}&\includegraphics[width=0.45\textwidth]{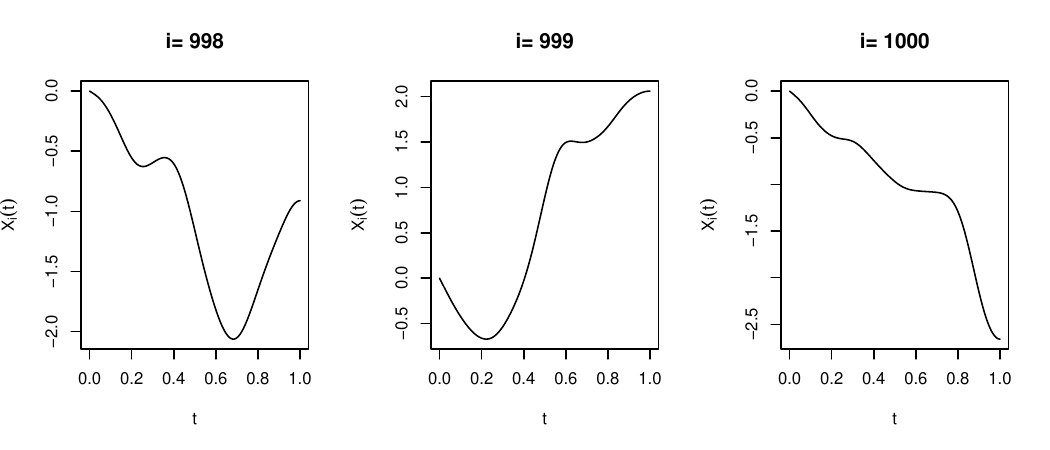}\\
\multicolumn{2}{c}{$\lambda_j=e^{-j}$, $\rho_j=\rho_0e^{-(j-1)/2}$}\\
\includegraphics[width=0.45\textwidth]{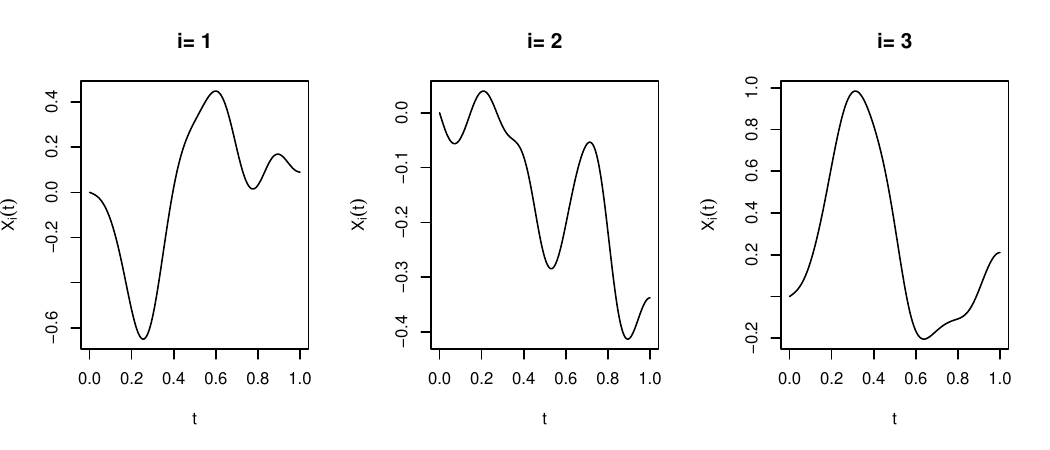}&\includegraphics[width=0.45\textwidth]{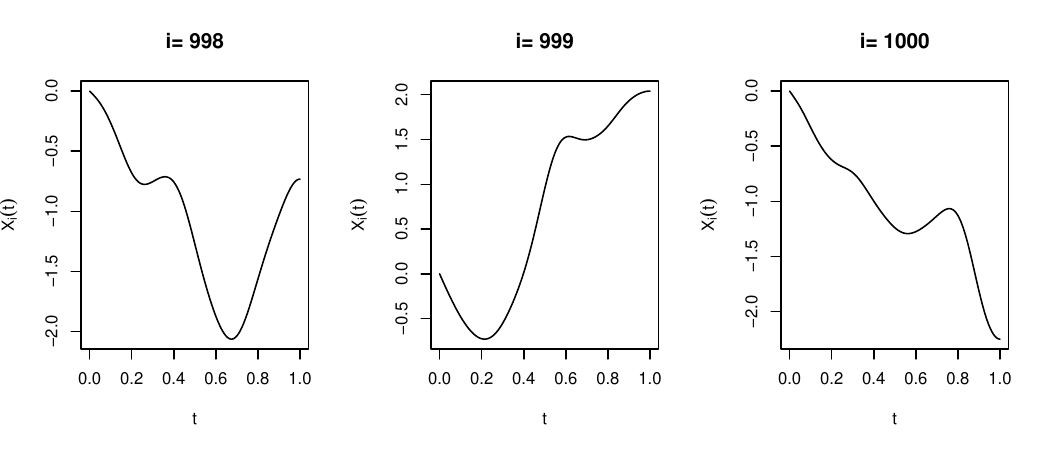}\\
\end{tabular}
\caption{\label{fig:process_trajectory} Example of trajectories of the process for different values of $(\lambda_j)_{j\geq 1}$ and $(\rho_j)_{j\geq 1}$ (here we chose $\rho_0=0.99$ so that $|\rho_j|<1$ for all $j\geq 1$. The random seed has been fixed so that the sequences $(\eta_1,\hdots,\eta_J)$ and $(\xi_j^{(\ell)})_{j=1,\hdots,J;  \ell=1,\hdots,n}$ are the same for all processes.}
\end{figure}

\subsection{Estimation method}

Numerically it is impossible to calculate explicitly the inverse of the operator $C_n+\alpha I$. To overcome this difficulty, a usual approach consists in choosing an orthonormal basis $(\varphi_\ell)_{\ell\geq 1}$ and $L$ a sufficiently large integer. Let $\mathbf C_n=\left(\langle C_n \varphi_\ell,\varphi_{\ell'}\rangle\right)_{\ell,\ell'=1,\hdots,L}$ (resp. $\mathbf D_n=\left(\langle D_n \varphi_\ell,\varphi_{\ell'}\rangle\right)_{\ell,\ell'=1,\hdots,L}=(d_{\ell,\ell'})_{\ell,\ell'=1,\hdots,L}$) the matrices of $C_n$ (resp. $D_n$) in the basis $(\varphi_\ell)_{\ell=1,\hdots,L}$. Remark that, for a function $f\in\mathbf H $, 
\begin{eqnarray*}
\widehat\rho f&=& D_n(C_n+\alpha I)^{-1}f =  D_n(C_n+\alpha I)^{-1}\left(\sum_{\ell\geq 1}f_\ell \varphi_\ell\right) = \sum_{\ell\geq 1}f_\ell D_n(C_n+\alpha I)^{-1}\varphi_\ell  \\
&=& \sum_{\ell,\ell'\geq 1}f_\ell \langle D_n(C_n+\alpha I)^{-1}\varphi_\ell,\varphi_{\ell'}\rangle \varphi_{\ell'} = \sum_{\ell,\ell',\ell''\geq 1}f_\ell \langle (C_n+\alpha I)^{-1}\varphi_\ell,\varphi_{\ell''}\rangle \langle D_n\varphi_{\ell''},\varphi_{\ell'}\rangle \varphi_{\ell'} \\
&\approx& \sum_{\ell,\ell',\ell''=1}^L f_\ell \left[\left(\mathbf C_n+\alpha I_L\right)^{-1}\right]_{\ell,\ell''} d_{\ell'',\ell'}  \varphi_{\ell'} = \sum_{\ell'=1}^L\left[\mathbf f\left(\mathbf C_n+\alpha I_L\right)^{-1}\mathbf D_n\right]_{\ell'}\varphi_{\ell'},
\end{eqnarray*}
where we denote by $f_\ell=\langle f,\varphi_\ell\rangle$, and for a matrix $A$, its coefficients by $[A]_{\ell,\ell'}$ and $I_L$ the identity matrix if size $L$.

We choose $\varphi_\ell(t) = \sqrt{p}\mathbf 1_{[t_{\ell-1},t_{\ell})}(t)$ with $t_\ell = \ell/p$, $\ell=1,\hdots,p$ with $p=1000$. A similar procedure could be conducted with other approximation basis such as Fourier, splines, wavelets,... However, this choice of basis minimizes the bias induced by the projection step.

In accordance with the theoretical results, a first attempt was made with $\alpha^*$ given by Equation~\eqref{m_opt}. However, this value of $\alpha^*$ was too large and induced a biased estimator as can be seen in Figure~\ref{fig:predict}. In accordance with the results of Figure~\ref{fig:predict}, we finally choose $\alpha^*/10$ which leads to better results. 
 \begin{figure}
\begin{tabular}{|c|c|c|}
\hline
 						& $\lambda_j=j^{-2}$ & $\lambda_j=e^{-j}$\\
						\hline
 $\rho_j=\rho_0j^{-3/2}$		&&\\
 & \includegraphics[width=0.4\textwidth]{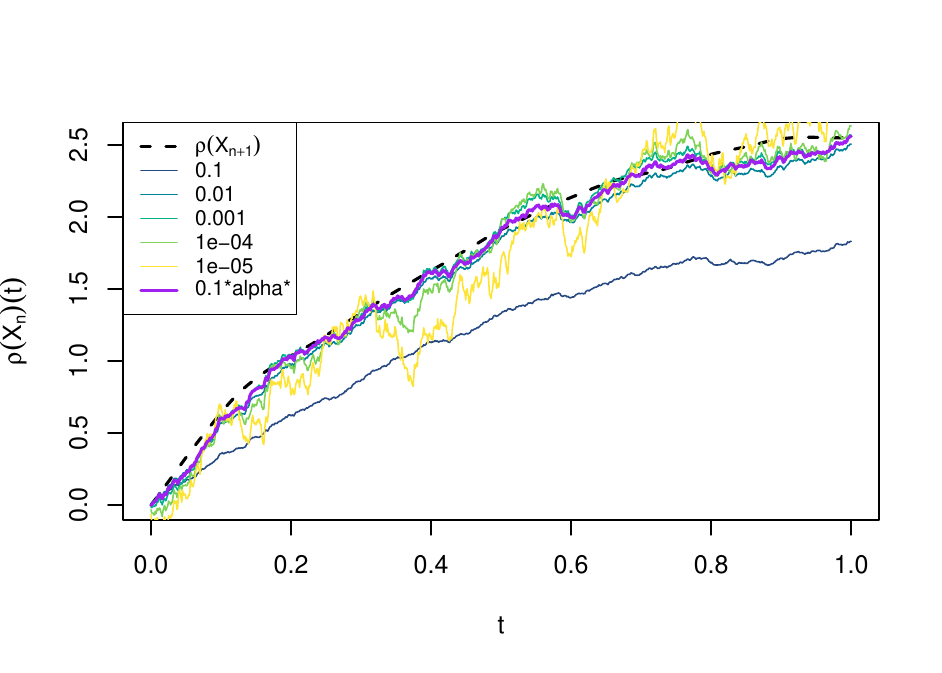}&\includegraphics[width=0.4\textwidth]{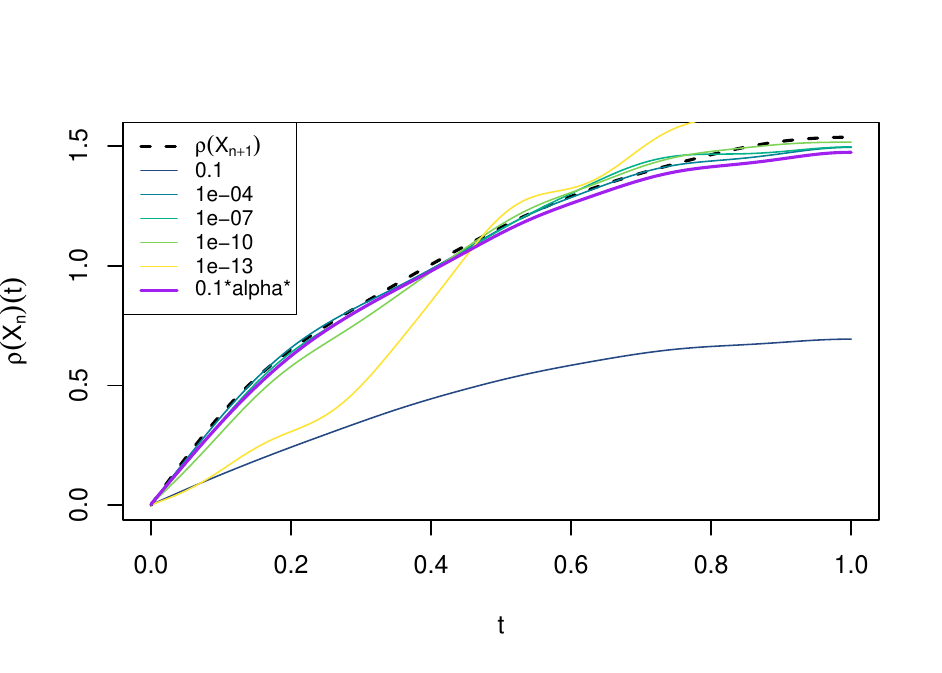}\\
 \hline
 $\rho_j=\rho_0e^{-(j-1)/2}$.       &&\\
 & \includegraphics[width=0.4\textwidth]{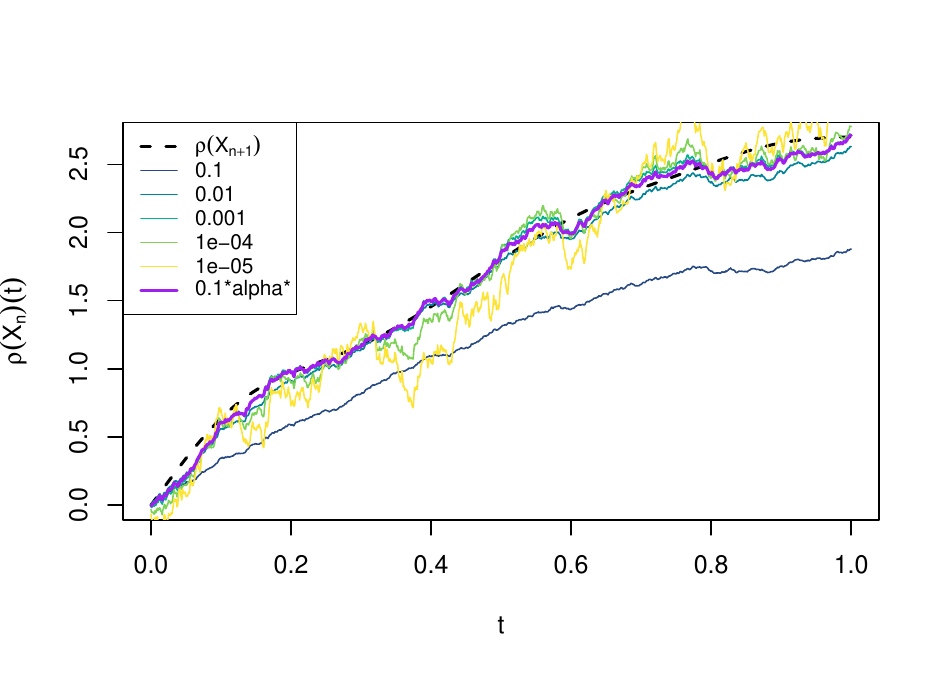}&\includegraphics[width=0.4\textwidth]{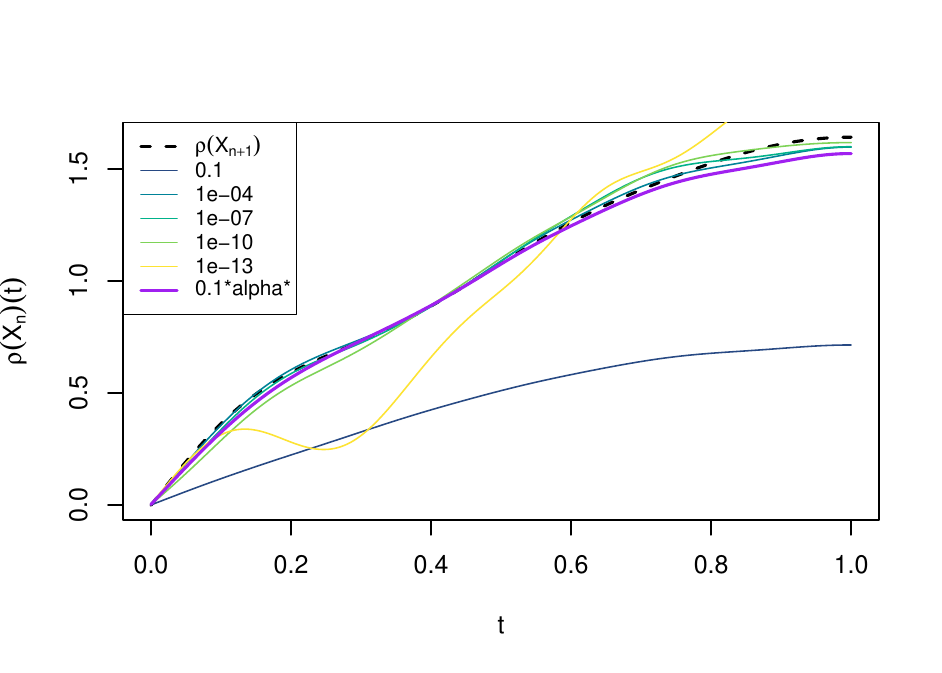}\\
 \hline
\end{tabular}
\caption{\label{fig:predict} Prediction of $\rho(X_{n})$ for different values of $\alpha$ and for $\alpha^*/10$.  }
\end{figure}

Now we estimate the risk $\mathbb{E}\left\Vert \left( \widehat{\rho}_{n}-\rho\right) X_{n+1} \right\Vert ^{2}$ with a Monte Carlo study. We generate independently $N=50$ samples $\{X_1^{(j)},\hdots,X_{n+1}^{(j)}\}$, $j=1,\hdots,N$ and compute, for different values of $n$, the quantity 
\[
\widehat {\mathcal R_n} = \frac1N\sum_{j=1}^N\left\|\left( \widehat{\rho}_{n}^{(j)}-\rho\right) X_{n}^{(j)} \right\|^2, 
\]
where $ \widehat{\rho}_{n}^{(j)}$ is the estimator calculated from $\{X_1^{(j)},\hdots,X_{n}^{(j)}\}$. The resulting risks are plotted in Figures~\ref{fig:risk} and \ref{fig:logrisk}. Remark that the rates obtained are coherent with the theoretical rates proven in Corollary~\ref{cor-examples}. 
\begin{figure}
\begin{tabular}{|c|c|c|}
\hline
 						& $\lambda_j=j^{-2}$ & $\lambda_j=e^{-j}$\\
						\hline
 $\rho_j=\rho_0j^{-3/2}$		&&\\
 & \includegraphics[width=0.4\textwidth]{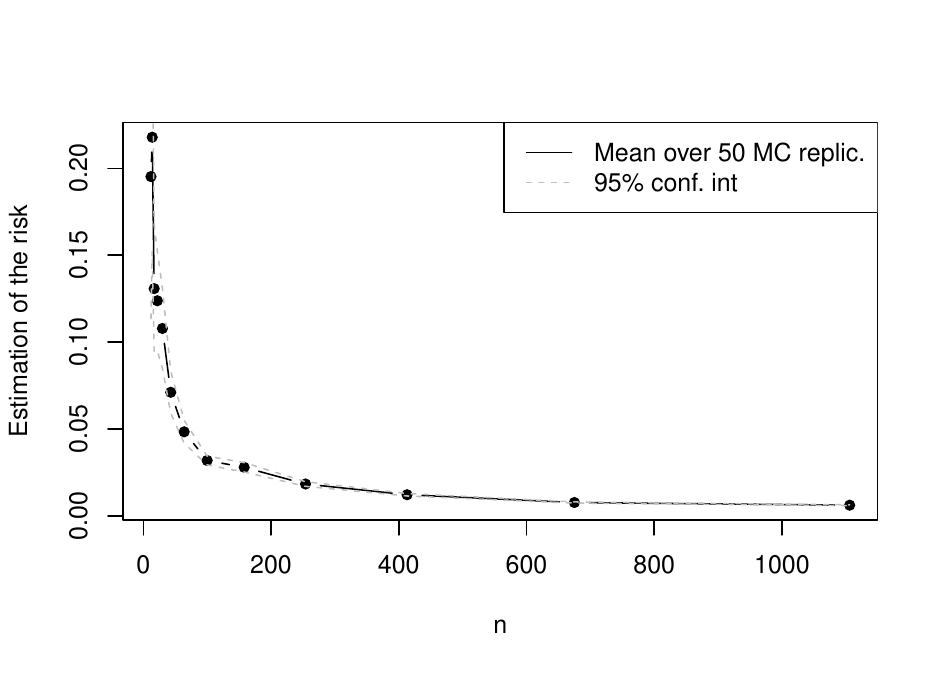}&\includegraphics[width=0.4\textwidth]{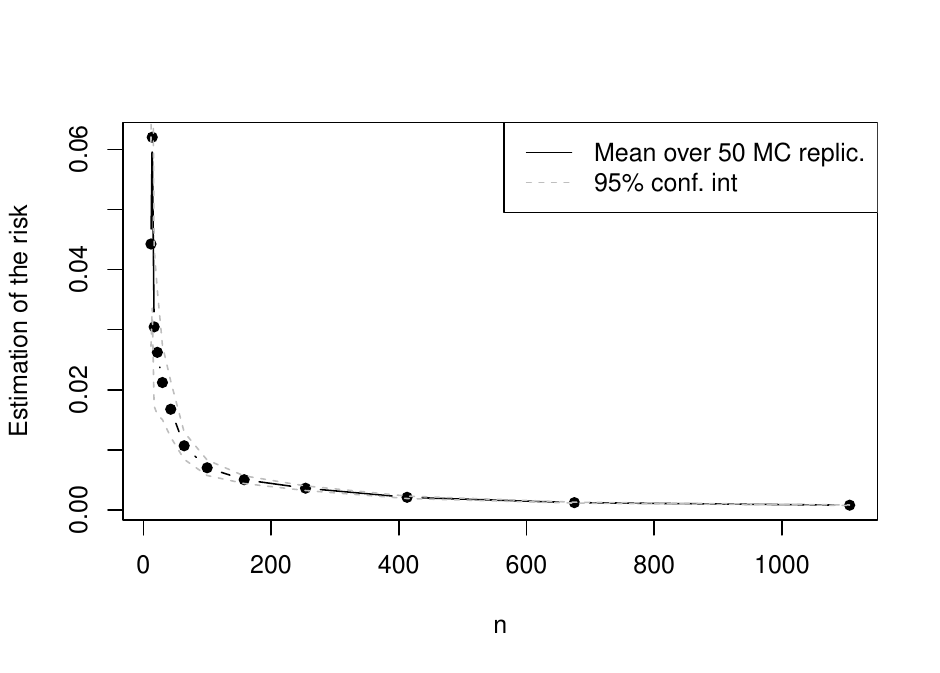}\\
 \hline
 $\rho_j=\rho_0e^{-(j-1)/2}$.       &&\\
 & \includegraphics[width=0.4\textwidth]{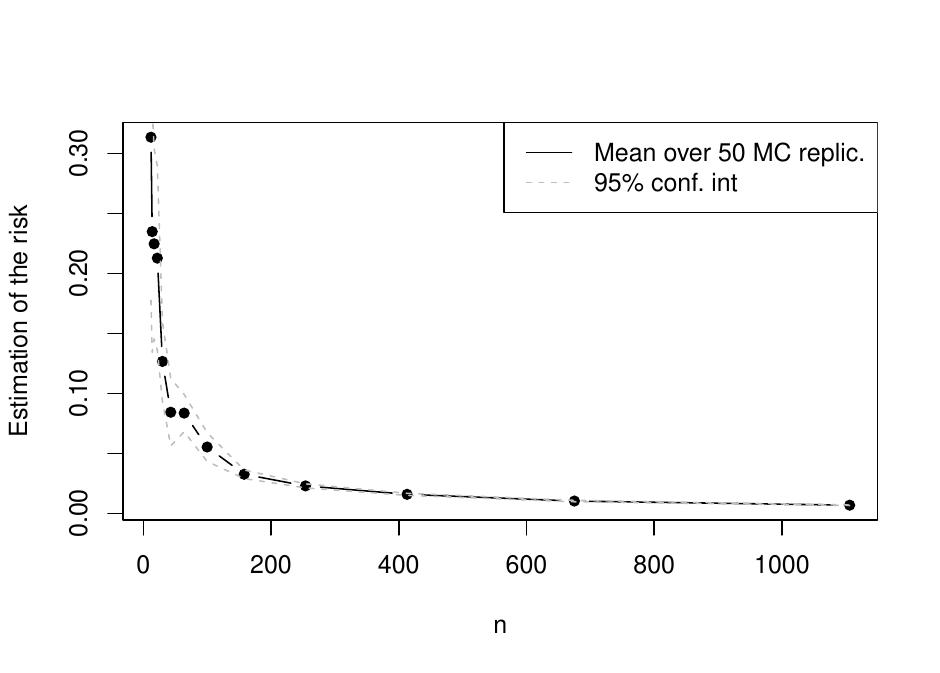}&\includegraphics[width=0.4\textwidth]{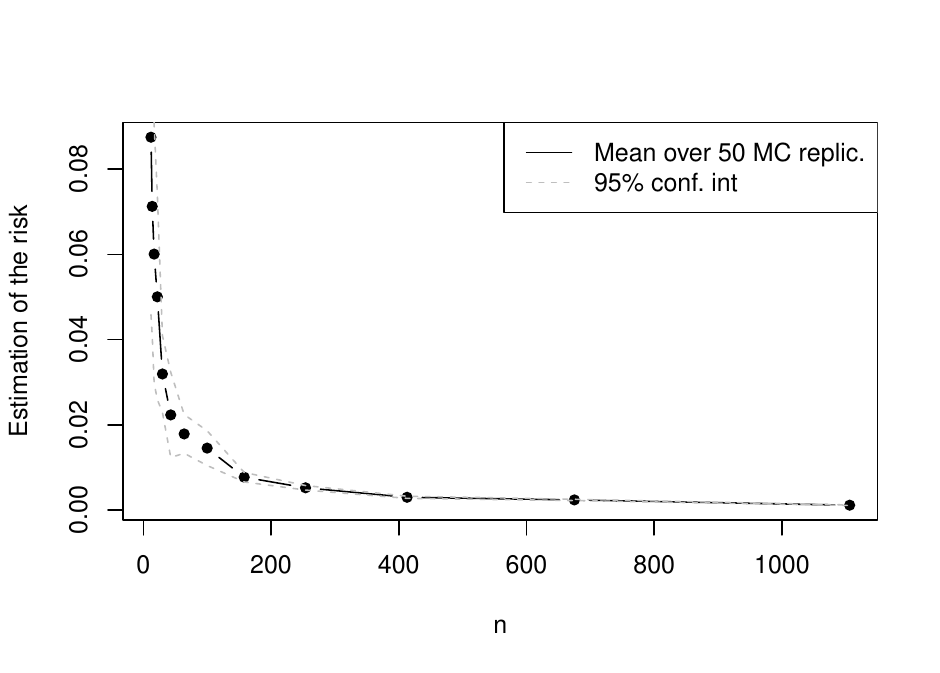}\\
 \hline
\end{tabular}
\caption{\label{fig:risk} Plot of Monte-Carlo estimation of the risk for different values of $n$.  }
\end{figure}

\begin{figure}
\begin{tabular}{|c|c|c|}
\hline
 						& $\lambda_j=j^{-2}$ & $\lambda_j=e^{-j}$\\
						\hline
 $\rho_j=\rho_0j^{-3/2}$		&&\\
 & \includegraphics[width=0.4\textwidth]{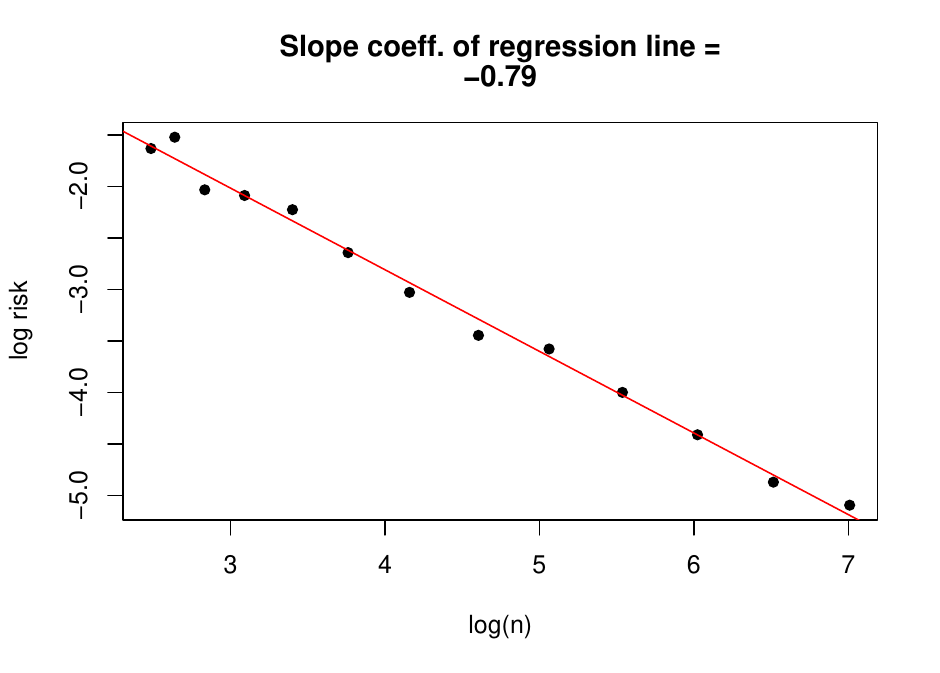}&\includegraphics[width=0.4\textwidth]{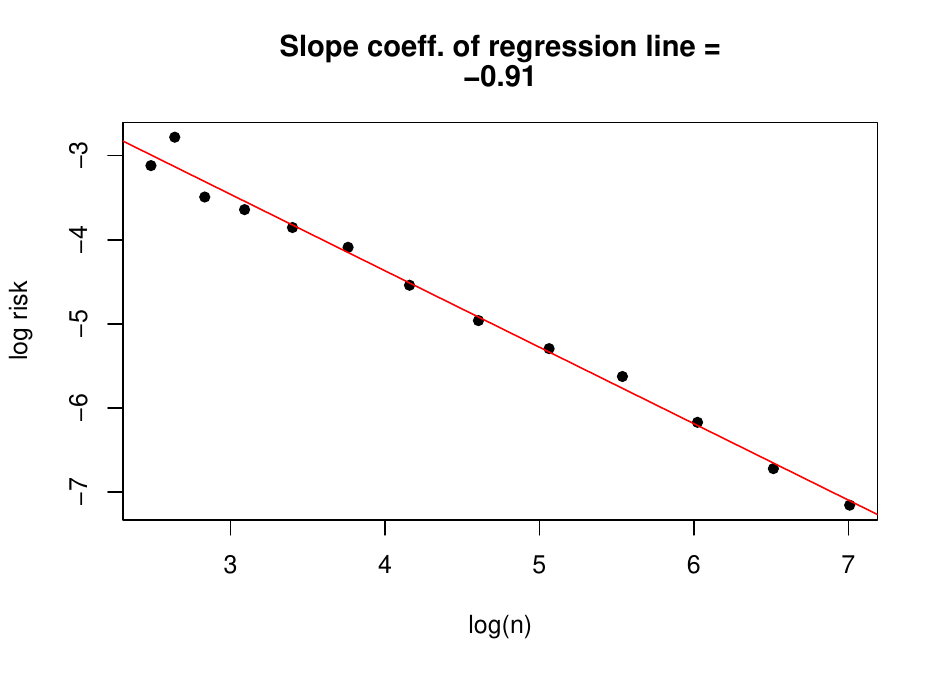}\\
 \hline
 $\rho_j=\rho_0e^{-(j-1)/2}$.       &&\\
 & \includegraphics[width=0.4\textwidth]{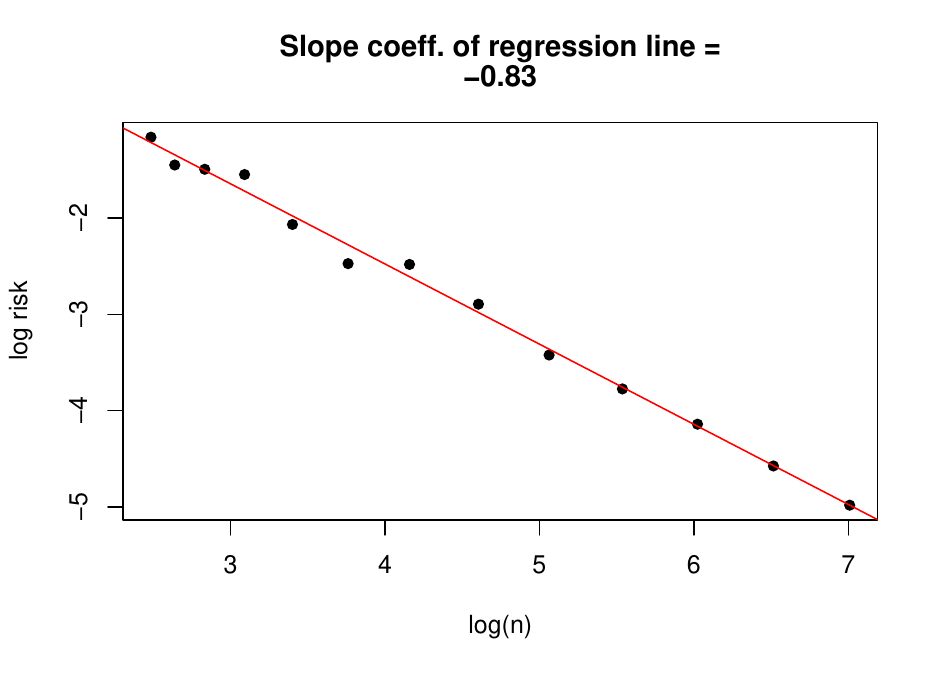}&\includegraphics[width=0.4\textwidth]{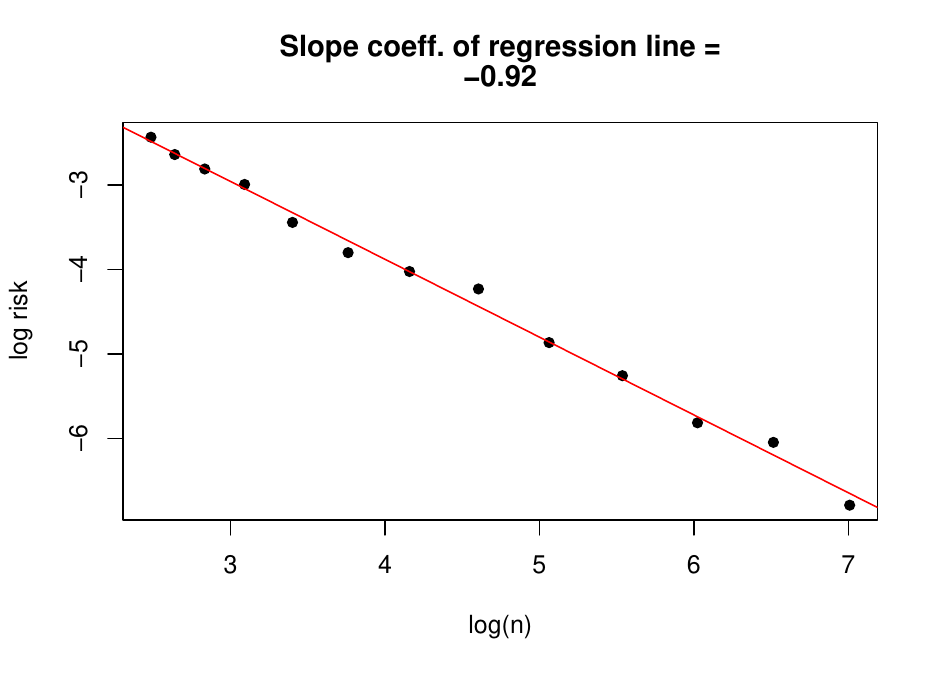}\\
 \hline
\end{tabular}
\caption{\label{fig:logrisk} Plot of Monte-Carlo estimation of the risk for different values of $n$ in $\log$-scale with estimation of the slope.  }
\end{figure}

\section{Conclusion and Perspectives}

In this paper, we have provided a comprehensive minimax analysis of the prediction error in the Functional Autoregressive model of order 1. While the existing literature has largely focused on the estimation of the autoregressive operator $\rho$ in terms of the operator norm or the Hilbert-Schmidt norm, our work addresses the fundamental problem of forecasting. By deriving minimax lower bounds and establishing matching upper bounds for the Tikhonov (Ridge) regularized estimator, we have characterized the optimal convergence rates for the predictive risk.

Our findings underscore the efficiency of Tikhonov regularization in the context of functional time series. We have demonstrated that by appropriately tuning the regularization parameter $\alpha$ in relation to the sample size $n$ and the spectral properties of the covariance operator $C$, the Ridge estimator achieves the minimax optimal rate. This rate is governed by a subtle balance between the regularity of the operator $\rho$ (expressed through harmonics decay) and the decay rate of the eigenvalues of the process. In particular, we have shown how the "effective dimension" $m$ of the functional data dictates the speed of convergence.

Several avenues for further investigation remain open. Adaptivity is an important matter, naturally prolonging this work. The optimal values of $\alpha$ depend on unknown quantities, such as the decreasing rate of the eigenvalues of the covariance operator $C$ as proven in Theorems \ref{LowBound} and \ref{risk-upper-bound} (see also Corollary \ref{cor-examples}). Conversely to linear regression, where the data are usually assumed to be independent, classical cross-validation cannot be applied directly in autoregressive models. Hence, developing a data-driven selection of the parameter leading to an adaptive estimator is an immediate perspective of this work. This necessitates defining an empirical version of the bias and estimating its variance appropriately.
    
Real-world functional data often exhibit more complex temporal dependencies. Extending this minimax framework to FAR($p$) models would require analyzing the resolved Yule-Walker equations under Tikhonov regularization and understanding how the prediction error scales with the lag order $p$. Extensions could also be investigated for more general linear processes such as:
\[
X_{n}=\varepsilon_{n}+\sum
_{k=1}^{+\infty}a_{k}\left( \varepsilon_{n-k}\right)
=\varepsilon_{n}+\sum_{k=1}^{n-1}a_{k}\left( \varepsilon_{n-k}\right) +X_{0}
\]
with $X_{0}=\sum_{k=n}^{+\infty}a_{k}\left( \varepsilon_{n-k}\right) $ and $\left( a_{k}\right)_{k\geq1}$ a sequence of linear operators satisfying the summability property $\sum_{k=1}^{+\infty}\left\Vert a_{k}\right\Vert _{\infty}^{2}<+\infty$.
    
It would be valuable to study the behavior of the Ridge-based predictor when the underlying process deviates from the FAR(1) structure. Quantifying the risk inflation due to model misspecification would provide practitioners with essential guarantees for the robustness of Tikhonov-based forecasting.

\section*{Acknowledgement} 
The authors acknowledge the support of the French Agence Nationale de la Recherche (ANR) under reference ANR-24-CE40-2439 (FUNMathStatproject).

\section*{AI Disclosure}
An AI was utilized on some parts of the manuscript exclusively for English language editing and proofreading to improve the clarity and readability . The AI was not used for data analysis, interpretation, or content generation, and the authors retain full responsibility for the scientific integrity and originality of the work.

\section{Proofs}

This section is split in three subsections and deserves a short introduction. The lower bound is entirely treated in the next subsection and the upper
bound in section \ref{section-upperbound} with two distinct parts dealing
respectively with a main term and a residual term. Within the proof of the
main terms upper bound an auxiliary result is required, namely Proposition 
\ref{variance}. The residual term in the upper bound depends on non-asymptotic concentration inequalities developped in the Appendix which also contains auxiliary results about entire series required throughout the paper. The last subsection of the proofs is short and devoted to the proof of Proposition \ref{prop.reg-var.bias}.

\subsection{Technical results about operators :\label{tech-res-op}}
Below we denote densely defined operators like bounded ones whenever they admit bounded extension. For instance $C^{1/2}C_{\varepsilon}^{-1/2}$ is densely defined with bounded extension because $C_{\varepsilon}^{-1/2}C^{1/2}$  is bounded. The four following facts are classical from functional analysis and may be found in \cite{Weidmann1980} at pages 58, 68, 71 and 72 respectively :
\begin{description}
\item[F1] If $T$ is self-adjoint $\left\Vert T\right\Vert _{\infty}=\sup\left\{ \left\vert
\left\langle Tx,x\right\rangle \right\vert ,\left\Vert x\right\Vert
=1\right\} ,$ and $\left\Vert T^{\ast}\right\Vert _{\infty}=\left\Vert
T\right\Vert _{\infty}$
\item[F2] If $T$ is a densely defined operator on $\mathbf{H}$. The operator 
$T$ is bounded if and only if $T^{\ast}$ is bounded. [ $C_{
\varepsilon}^{-1/2}C_{X}$ is bounded if and only if $\left(
C_{\varepsilon}^{-1/2}C_{X}\right) ^{\ast}$ is bounded]
\item[F3] Let $T$ and $S$ two densely defined operators on $H.$ If $ST$ is
densely defined then $\left( ST\right) ^{\ast}$ is an extension of $T^{\ast
}S^{\ast}$. If $S$ is bounded then $T^{\ast}S^{\ast}=\left( ST\right)
^{\ast} $.[ $C_{\varepsilon}^{-1/2}C_{X}^{1/2}$ is densely defined then $
\left( C_{\varepsilon}^{-1/2}C_{X}^{1/2}\right) ^{\ast}$ is an extension of $
\left( C_{X}^{1/2}\right) ^{\ast}\left( C_{\varepsilon}^{-1/2}\right)
^{\ast}=C_{X}^{1/2}\left( C_{\varepsilon}^{-1/2}\right) ^{\ast}$]
\item[F4] Let $T$ be densely defined and injective If $\mathrm{Im}T$ is
dense then $T^{\ast}$ is injective and $\left( T^{\ast}\right) ^{-1}=\left(
T^{-1}\right) ^{\ast}$ [$C_{\varepsilon}^{1/2}$ is densely defined and
injective. Since ran$C_{\varepsilon}^{1/2}$ is dense then $
\left( C_{\varepsilon}^{1/2}\right) ^{\ast}$ is injective and $\left[ \left(
C_{\varepsilon}^{1/2}\right) ^{\ast}\right] ^{-1}=\left(
C_{\varepsilon}^{1/2}\right) ^{-1}=C_{\varepsilon}^{-1/2}=\left(
C_{\varepsilon}^{-1/2}\right) ^{\ast}$].
\end{description}

\subsection{Lower bound}


The main tool for proving the lower bound is Proposition \ref{Kull-Leib-prop}, which gives us the Kullback-Leibler divergence between two distributions in the FAR model. The proof of Proposition \ref{Kull-Leib-prop} is based on the following variant of the Felman-Hajek theorem, proven in Section~\ref{subsec:ProofvarFeldHay}.
\begin{proposition}
\textbf{Variant of Feldman--H\'{a}jek's theorem} \label{varFeldHay}Let $\mu=
\mathcal{N}\left( m_{1},\Sigma_{1}\right) $ and $\nu=\mathcal{N}\left(
m_{2},\Sigma_{2}\right) $ be two Gaussian measures on $\mathbf{H}$ with $
\Sigma_{i}=R_{i}\Sigma_{0}R_{i}^{\ast}$ $i=1,2$,$\Sigma_{0}$ positive and trace class and $R_{i}$ bounded. Assume that $\ker\Sigma
_{1}=\ker\Sigma_{2}=\left\{ 0\right\} $. Denote $S_{i}=R_{i}\Sigma_{0}^{1/2}$. Then $\mu$ and $\nu$ are equivalent if and only if the 3 following
conditions hold~: (i) $\mathrm{Im}S_{1}=\mathrm{Im}S_{2}=H_{0}$, (ii) $
m_{1}-m_{2}\in H_{0}$, (iii) The operator $\left( S_{1}^{-1}S_{2}\right)
\left( S_{1}^{-1}S_{2}\right) ^{\ast}-I$ is a Hilbert-Schmidt operator on $
\overline{H}_{0}$. Besides if $R_{1}=I$ and $\left(
\Sigma_{0}^{-1/2}S_{2}\right) \left( \Sigma_{0}^{-1/2}S_{2}\right) ^{\ast}-I$
is trace-class we have :
\begin{equation}
\mathbf{KL}\left( \nu||\mu\right) =\mathrm{tr}\left[
\Sigma_{0}^{-1/2}S_{2}S_{2}^{\ast}\Sigma_{0}^{-1/2}-I\right] -\log\left(
\det\left( \Sigma_{0}^{-1/2}S_{2}S_{2}^{\ast}\Sigma_{0}^{-1/2}\right)
\right) .  \label{KL}
\end{equation}
\end{proposition}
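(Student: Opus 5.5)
The plan is to reduce everything to the original Feldman--H\'{a}jek theorem (Theorem \ref{origFeldHayek}) by rewriting its three conditions in terms of $S_i=R_i\Sigma_0^{1/2}$. This avoids any square root of $\Sigma_i$. The key tool is polar decomposition. Since $\Sigma_i=S_iS_i^{\ast}$, we can write $S_i=\Sigma_i^{1/2}U_i$ with $U_i$ a partial isometry. Moreover $\ker\Sigma_i=\{0\}$ makes $\mathrm{Im}\,\Sigma_i^{1/2}$ dense, so $U_i$ is a co-isometry ($U_iU_i^{\ast}=I$), with initial space $\overline{\mathrm{Im}S_i^{\ast}}$.

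\textbf{Conditions (i) and (ii).} From $S_i=\Sigma_i^{1/2}U_i$ and $U_i$ onto, we get $\mathrm{Im}\,S_i=\mathrm{Im}\,\Sigma_i^{1/2}$. This is the standard range identity $\mathrm{Im}(SS^\ast)^{1/2}=\mathrm{Im}S$, which follows from Douglas' lemma. Hence (i) and (ii) coincide exactly with conditions (i) and (ii) of Theorem \ref{origFeldHayek}.

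\textbf{Condition (iii).} Under (i), the operators $S_1^{-1}S_2$ and $\Sigma_1^{-1/2}\Sigma_2^{1/2}$ are well defined on suitable domains, with $S_1^{-1}$ taken as the inverse of $S_1$ restricted to $(\ker S_1)^\perp$. Write $S_1^{-1}=U_1^{\ast}\Sigma_1^{-1/2}$ on $H_0$. Then
\[
S_1^{-1}S_2=U_1^{\ast}\,\Sigma_1^{-1/2}\Sigma_2^{1/2}\,U_2 ,
\]
and, using $U_2U_2^\ast=I$,
\[
(S_1^{-1}S_2)(S_1^{-1}S_2)^{\ast}-I=U_1^{\ast}\left[(\Sigma_1^{-1/2}\Sigma_2^{1/2})(\Sigma_1^{-1/2}\Sigma_2^{1/2})^{\ast}-I\right]U_1 + (U_1^\ast U_1-I).
\]
Here $U_1^{\ast}U_1-I$ is minus the projection onto $\ker S_1$. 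To make this rigorous, I would regard $S_1^{-1}S_2$ as acting into $(\ker S_1)^\perp$ and compare on that space. On it, $U_1$ is unitary onto $\overline{H}_0$, so the Hilbert--Schmidt property is preserved in both directions. The facts F2--F4 are what justify the adjoint manipulations with unbounded inverses. The main obstacle is exactly this unbounded-operator bookkeeping: $\Sigma_1^{-1/2}\Sigma_2^{1/2}$ is only densely defined, and one must use that, under (i), it extends to a bounded operator by the closed graph theorem. Only after that can the identity above be read as an identity of bounded operators.

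\textbf{KL formula.} Set $R_1=I$, so $\mu=\mathcal N(0,\Sigma_0)$ with $S_1=\Sigma_0^{1/2}$; in the application both measures are centered. The operator $T=\Sigma_0^{-1/2}S_2S_2^{\ast}\Sigma_0^{-1/2}$ is bounded, positive and invertible. Condition (iii) together with the trace-class assumption makes $T-I$ trace class, so $\det T$ is a well-defined Fredholm determinant. I would diagonalize $T=\sum_j t_j f_j\otimes f_j$ and map both measures by the isometry $x\mapsto \Sigma_0^{-1/2}x$, defined measurably on the Cameron--Martin space. Under this map, $\mu$ becomes the standard cylindrical Gaussian and $\nu$ the Gaussian with covariance $T$. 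In the coordinates $\langle\cdot,f_j\rangle$ both are product measures of one-dimensional centered Gaussians, with variances $1$ and $t_j$. The KL divergence is additive over independent coordinates, which gives
\[
\mathbf{KL}(\nu\|\mu)=\frac12\sum_j\big(t_j-1-\log t_j\big),
\]
with the factor $1/2$ to be absorbed or reconciled with the normalization in (\ref{KL}). The series converges because $\sum|t_j-1|<\infty$ and $t-1-\log t=O((t-1)^2)$. Summing yields $\mathrm{tr}(T-I)-\log\det T$. Truncating to finitely many coordinates and passing to the limit (via the martingale convergence of the Radon--Nikodym derivatives) makes this rigorous.
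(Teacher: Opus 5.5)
Your proof of the equivalence is correct, but it takes a different route from the paper's. The paper does not reduce to Theorem \ref{origFeldHayek}. It re-runs the Da Prato--Zabczyk proof with $S_i$ in place of $\Sigma_i^{1/2}$:
\begin{itemize}
\item sufficiency comes from realising $\mu$ and $\nu$ as the laws of $\sum_j\xi_jS_1\phi_j$ and $\sum_j\sqrt{\tau_j}\,\xi_jS_1\phi_j$, where $(\tau_j,\phi_j)$ are the spectral data of $R=(S_1^{-1}S_2)(S_1^{-1}S_2)^\ast$, and multiplying one-dimensional Hellinger affinities;
\item necessity comes from comparing $\mathbf{h}(\mu,\nu)$ with the Hellinger affinities of finite-dimensional marginals.
\end{itemize}
You instead treat the classical theorem as a black box and transfer its conditions through the polar decomposition $S_i=\Sigma_i^{1/2}U_i$ with $U_iU_i^\ast=I$. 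Your identity $(S_1^{-1}S_2)(S_1^{-1}S_2)^\ast-I=U_1^\ast(AA^\ast-I)U_1-P_{\ker S_1}$, with $A=\Sigma_1^{-1/2}\Sigma_2^{1/2}$, is right. Restricting to $(\ker S_1)^\perp$, where $U_1$ is unitary onto $H=\overline{H}_0$, settles (iii) in both directions.

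The bookkeeping is lighter than you expect. Under (i), $A$ is everywhere defined and closed, so it is bounded by the closed graph theorem, and no extension step is needed. The form of (iii) you use is equivalent to the one printed in Theorem \ref{origFeldHayek}: for bounded invertible $D$, the condition $DD^\ast-I\in\mathcal S_2$ is invariant under $D\mapsto D^\ast$ and $D\mapsto D^{-1}$.

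What each route buys: yours is shorter, needs no new probabilistic argument, and makes explicit something the statement glosses over. If $S_1$ is not injective, (iii) must be read on $(\ker S_1)^\perp$ rather than on $\overline{H}_0$; the two coincide in the application, where $S_1=\Sigma_0^{1/2}$ is injective. The paper's route is self-contained and gives the Hellinger affinity explicitly in terms of the $\tau_j$.

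For the KL formula, the paper simply invokes Theorem 2 of \cite{Minh2020}, while you compute directly by diagonalising $T$. Your computation is the correct one, but the factor $1/2$ cannot be ``absorbed''. Each coordinate contributes $\frac12(t_j-1-\log t_j)$, so what you actually prove is $\mathbf{KL}(\nu\|\mu)=\frac12[\mathrm{tr}(T-I)-\log\det T]$. Formula (\ref{KL}) as printed is therefore twice the true divergence. No normalization rescues it; the appendix's definition $\int\log(d\mu/d\nu)\,d\nu$ even has the density inverted. Your closing sentence ``summing yields $\mathrm{tr}(T-I)-\log\det T$'' contradicts your own display, so state the corrected identity instead. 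Downstream this is harmless. The exact expression (\ref{KL-ARH}) should be halved too, but Theorem \ref{LowBound} only uses the upper bound (\ref{KL-bound}), which stays valid.

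One technical repair is needed. You cannot push $\mu$ forward by $x\mapsto\Sigma_0^{-1/2}x$ ``on the Cameron--Martin space'', because $\mathrm{Im}\,\Sigma_0^{1/2}$ is $\mu$-null in infinite dimension. Use instead the white-noise coordinates $\xi_j=\langle\cdot,\Sigma_0^{-1/2}f_j\rangle$, understood as $L^2(\mu)$-limits along approximations of $f_j$ from $\mathrm{Im}\,\Sigma_0^{1/2}$. These are i.i.d.\ $\mathcal N(0,1)$ under $\mu$. They are independent $\mathcal N(0,t_j)$ under $\nu$, since boundedness of $T$ and $\mu\sim\nu$ make the limits agree $\nu$-a.e. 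They also generate the Borel $\sigma$-field up to null sets, so your truncation and martingale argument then goes through.
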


\bigskip

\textbf{Proof of Proposition \ref{Kull-Leib-prop} :} We apply Proposition \ref{varFeldHay} to 
$ S_{1}=\mathbf{\Sigma_{0}^{1/2}}$ and $S_{2}=\mathbf{R}_{\rho}\mathbf{\Sigma_{0}^{1/2}}$. The reader will check that the operators $\mathbf{\Sigma_{0}^{-1/2}}\mathbf{R}_{\rho}\mathbf{\Sigma_{0}^{1/2}}$ and $\mathbf{\Sigma_{0}^{-1/2}}\mathbf{R}_{\rho}^{-1}\mathbf{\Sigma_{0}^{1/2}}$ are both bounded. This implies (i) in Proposition \ref{varFeldHay}. Remark that (ii) is trivially satisfied since the measures are centered. We then focus on point (iii) and we have to prove that the operator $\left( S_{1}^{-1}S_{2}\right) \left( S_{1}^{-1}S_{2}\right) ^{\ast }-I$ is an Hilbert-Schmidt operator when $\mathbf A_1$ holds. Notice that :
\begin{equation*}
\left( S_{1}^{-1}S_{2}\right) \left( S_{1}^{-1}S_{2}\right) ^{\ast }-I
\mathbf{=\Sigma_{0}^{-1/2}R}_{\rho}\mathbf{\Sigma_{0}R}_{\rho}^{\ast }
\mathbf{\Sigma_{0}^{-1/2}-I}
\end{equation*}
where $\mathbf{I}$ denotes here the identity operator in the space $\mathbf{H
}^{n+1}$. Simple computations lead to :
$$
\mathbf{\Sigma_{0}^{-1/2}R}_{\rho}\mathbf{\Sigma_{0}}^{1/2}=\mathbf{I}+\mathcal A, 
$$
where $\mathcal A$ can be written with the following block-matrix representation
$$
\mathcal A = \left[ 
\begin{array}{cccccc}
0 & A_{1} & A_{2} & \cdots & A_{n-1} & A_{nX} \\ 
0 & 0 & A_{1} & \ddots & \ddots & \vdots \\ 
\vdots & \ddots & \ddots & \ddots & \ddots & \vdots \\ 
\vdots & \cdots & \ddots & \ddots & A_{1} & A_{2X} \\ 
\vdots & \cdots & \cdots & \ddots & \ddots & A_{1X} \\ 
0 & \cdots & \cdots & \cdots & 0 & 0
\end{array}
\right], 
$$
with $A_{k}=C_{\varepsilon}^{-1/2}\rho^{k}C_{\varepsilon}^{1/2}$
and $A_{kX}=C_{\varepsilon}^{-1/2}\rho^{k}C^{1/2}$. Then 
$$
\left( S_{1}^{-1}S_{2}\right) \left( S_{1}^{-1}S_{2}\right) ^{\ast } = (\mathbf I+\mathcal A)(\mathbf I+\mathcal A^*) = \mathcal A\mathcal A^*+\mathcal A+\mathcal A^*+\mathbf I
$$
and 
$$
\left( S_{1}^{-1}S_{2}\right) \left( S_{1}^{-1}S_{2}\right) ^{\ast }-\mathbf I =  \mathcal A\mathcal A^*+\mathcal A+\mathcal A^*.
$$
Under $\mathbf A_1$, the operators $A_k$ and $A_{kX}$ are all Hilbert-Schmidt operators, which implies that $\mathcal A$, $\mathcal A^*$ and $\mathcal A\mathcal A^*$ are also Hilbert-Schmidt operators and that $(iii)$ is true. 
We now compute the Kullback-Leibler divergence.

Applying (\ref{KL}) of Theorem \ref{varFeldHay} we also get, since ${\rm tr}(\mathcal A)=0$,
\begin{align*}
\mathrm{tr}\left[ \mathbf{\Sigma_{0}^{-1/2}R}_{\rho}\mathbf{\Sigma_{0}R}
_{\rho}^{\ast}\mathbf{\Sigma_{0}^{-1/2}-I}\right] & =\mathrm{tr}\left[ 
\mathcal{A}\mathbf{+}\mathcal{A}^{\ast}+\mathcal{AA}^{\ast}\right] =\mathrm{
tr}\left[ \mathcal{AA}^{\ast}\right] =\left\Vert \mathcal{A}\right\Vert _{
\mathcal{S}_{2}\left( \mathbf{H}^{n}\right) }^{2} \\
& =\sum_{k=1}^{n-1}\left( n-k\right) \left\Vert A_{k}\right\Vert
_{2}^{2}+\sum_{k=1}^{n}\left\Vert A_{kX}\right\Vert _{2}^{2}.
\end{align*}
Moreover
\begin{equation*}
\det\left( \mathbf{\Sigma_{0}^{-1/2}R}_{\rho}\mathbf{\Sigma_{0}R}_{\rho
}^{\ast}\mathbf{\Sigma_{0}^{-1/2}}\right) =\det\left( \mathbf{I+}\mathcal{A}
\right) \left( \mathbf{I+}\mathcal{A}^{\ast}\right) =\det\left( \mathbf{I+}
\mathcal{A}\right) \det\left( \mathbf{I+}\mathcal{A}^{\ast }\right)=1, 
\end{equation*}
since all the eigenvalues of $\mathcal{A}$ are
null, hence $\log\left[ \det\left( \mathbf{
\Sigma_{0}^{-1/2}R}_{\rho}\mathbf{\Sigma_{0}R}_{\rho}^{\ast }\mathbf{
\Sigma_{0}^{-1/2}}\right) \right] =0$. This implies the expected result :
\begin{equation*}
\mathbf{KL}\left( \mathbf{P}_{\rho}||\mathbf{P}_{0}\right)
=\sum_{k=1}^{n-1}\left( n-k\right) \left\Vert
C_{\varepsilon}^{-1/2}\rho^{k}C_{\varepsilon}^{1/2}\right\Vert
_{2}^{2}+\sum_{k=1}^{n}\left\Vert
C_{\varepsilon}^{-1/2}\rho^{k}C^{1/2}\right\Vert _{2}^{2}.
\end{equation*}

Now in order to get the last bound we see first that : \begin{align*}
\mathbf{KL}\left(  \mathbf{P}_{\rho}||\mathbf{P}_{0}\right)  & \leq\left\Vert C_{\varepsilon}^{-1/2}C^{1/2}\right\Vert _{\infty}%
^{2}\left\{  \sum_{k=1}^{n}\left(  n-k\right)  \left\Vert C^{-1/2}%
C_{\varepsilon}^{1/2}\right\Vert _{\infty}^{2}\left\Vert C^{-1/2}\rho
^{k}C^{1/2}\right\Vert _{2}^{2}+\sum_{k=1}^{n}\left\Vert C^{-1/2}\rho
^{k}C^{1/2}\right\Vert _{2}^{2}\right\}  \\
& \leq a_{3}^{2}\sum_{k=1}^{n}\left(  n-k+1\right)  \left\Vert C^{-1/2}%
\rho^{k}C^{1/2}\right\Vert _{2}^{2}.
\end{align*}

Notice that $\left\Vert C^{-1/2}\rho^{k}C^{1/2}\right\Vert _{2}^{2}%
\leq\left\Vert C^{-1/2}\rho
C^{1/2}\right\Vert _{2}^{2}\left[  \left\Vert C^{-1/2}\rho C^{1/2}\right\Vert
_{\infty}^{2}\right]  ^{k-1}$which leads to :%
\begin{align*}
\mathbf{KL}\left(  \mathbf{P}_{\rho}||\mathbf{P}_{0}\right)    & \leq
na_{3}^{2}\left\Vert C^{-1/2}\rho C^{1/2}\right\Vert _{2}^{2}\sum_{k=1}%
^{n}\frac{n-k+1}{n}\left(  a_{1}^{\prime}\right)  ^{2\left(  k-1\right)  }\\
& \leq n\frac{a_{3}^{2}}{1-\left(  a_{1}^{\prime}\right)  ^{2}}\left\Vert
C^{-1/2}\rho C^{1/2}\right\Vert _{2}^{2}=n\left\Vert C^{-1/2}\rho
C^{1/2}\right\Vert _{2}^{2}b_{1}%
\end{align*}

with $b_{1}=\frac{a_{3}^{2}}{1-\left(  a_{1}^{\prime}\right)  ^{2}}$ which is the desired result.

\bigskip

\textbf{Proof of Theorem \ref{LowBound} :} The derivation of minimax lower bounds on a statistical model $\mathfrak{M}_{\rho }$ may
follow the route very nicely exposed for instance in 
\cite{Tsyb09} and conclude by one of the methods proposed ibidem (namely
Theorem 2.5 p.99). This route is summarized in three steps.

\textbf{Step 1 }: First the metric involved in the loss function must be
identified. Remind that $a_3 =\left\Vert C_{\varepsilon
}^{-1/2}C^{1/2}\right\Vert _{\infty }$. Here from :
\begin{equation*}
\mathbb{E}\left\Vert \left( \widehat{\rho }-\rho \right) X_{n+1}\right\Vert
^{2}\geq \mathbb{E}\left\Vert \left( \widehat{\rho }-\rho \right)
\varepsilon _{n+1}\right\Vert ^{2}=\left\Vert \left( \widehat{\rho }-\rho
\right) C^{1/2}C^{-1/2}C_{\varepsilon }^{1/2}\right\Vert _{2}^{2}\geq 
a_3^{-1}\left\Vert \left( \widehat{\rho }-\rho \right)
C^{1/2}\right\Vert _{2}^{2}
\end{equation*}
thanks to Proposition \ref{closed-op}. It is hence natural to take $d\left(
\rho _{1},\rho _{2}\right) =\left\Vert \left( \rho _{1}-\rho _{2}\right)
C^{1/2}\right\Vert _{2}^{2}$ as a metric on $\mathfrak{M}$. Note that this
metric involves both \textquotedblleft parts" of the model : the regularity
of $\rho $ and the regularity of $\varepsilon $ and $X$ through their covariance operator and $a_3$.

\textbf{Step 2 }: Another metric operating between two probability distributions $\mathbb{P}_{\rho}$ and $\mathbb{P}
_{\rho^{\prime}}$ $\rho,\rho^{\prime}\in\mathfrak{M}$ has to be selected.
Here $\mathbb{P}_{\rho}$ denotes the distribution of the vector of the $n$
output data which depends on $\rho$. This metric will measure how close are $
\mathbb{P}_{\rho}$ and $\mathbb{P}_{\rho^{\prime}}$ : Hellinger distance,
Kullback-Leibler divergence, Total-Variation distance are commonly used.
Below we take advantage of Theorem \ref{varFeldHay} and choose the Kullback-Leibler
divergence denoted $\mathbf{KL}\left( \mathbf{.},\mathbf{.}\right) $.

\textbf{Step 3 }: This tough step consists in selecting a possibly large set 
$\mathcal{I}$ of distinct parameters $\rho _{i}\in \mathfrak{M}$ and evaluate :
\begin{equation*}
\max_{_{\substack{ \rho _{i},\rho _{j}\in \mathfrak{M}  \\ \rho _{i}\neq
\rho _{j}}}}d\left( \rho _{i},\rho _{j}\right) \quad s.t.\quad \mathbf{KL}
\left( \mathbb{P}_{\rho _{i}},\mathbb{P}_{\rho _{j}}\right) \leq c\log
\left\vert \mathcal{I}\right\vert
\end{equation*}
for some constant $c$. With words, the testing parameters should be
far enough from each other for the metric $d\left( .,.\right) $ and
simultaneously should generate statistical models relatively close enough.
It should be noted that assuming that $\rho _{i}\in \mathfrak{M}$ comes down
practically to adding constraints to the program above. A lower bound for
the metric $d\left( .,.\right) $ is then $\max_{\rho _{i}\neq \rho
_{j}}d\left( \rho _{i},\rho _{j}\right) $.

Our only task is to complete \textbf{Step 3} above with $\mathfrak{M=M}_{G}$
. We construct a set of testing parameters. Let $\mathcal{VG}\left( m\right) 
$ be a Varshamov-Gilbert hypercube of size $m$. Then $\mathcal{VG}\left(
m\right) \subset \left\{ 0,1\right\} ^{m}$ and for all $\left( \omega
,\omega ^{\prime }\right) \in $ $\mathcal{VG}\left( m\right) $ it is
possible to take simultaneously :
\begin{equation*}
\mathrm{card}\mathcal{VG}\left( 2m\right) =M,\quad M>2^{m/8},\quad \mathrm{
ham}\left( \omega ,\omega ^{\prime }\right) \geq m/8
\end{equation*}
where $\mathrm{ham}\left( \omega ,\omega ^{\prime }\right) $ denotes the
Hamming distance between $\omega $ and $\omega ^{\prime }$ (see \cite{Tsyb09}
Lemma 2.9 p.104). Let $\theta =2\omega -1$, $\theta ^{\prime }=2\omega
^{\prime }-1$ and set :
\begin{equation*}
\rho _{\theta }\left( .\right) =\sqrt{\lambda _{0}}e_{1}\left( \sum_{p=1}^{m}
\frac{\theta _{p}}{\sqrt{\lambda _{p}}}\gamma _{p}\left\langle
e_{p},.\right\rangle +\sum_{p=m+1}^{2m}\frac{\gamma _{p}}{\sqrt{\lambda _{p}}
}\left\langle e_{p},.\right\rangle \right)
\end{equation*}
where $\lambda _{0}$ will be tuned below and $\left( \gamma _{p}\right)
_{1\leq p\leq 2m}$ is a sequence such that $\gamma _{p}=1/\sqrt{n}$ for all $
1\leq p\leq m$. Then :
\begin{equation}
\min_{\theta \neq \theta ^{\prime }}\left\Vert \left( \rho _{\theta }-\rho
_{\theta ^{\prime }}\right) C^{1/2}\right\Vert _{2}^{2}\geq 4\lambda
_{0}\min_{1\leq p\leq m}\gamma _{p}^{2}\cdot \mathrm{ham}\left( \omega
,\omega ^{\prime }\right) \geq \frac{\lambda _{0}}{2}\frac{m}{n}.
\end{equation}
Turn to the Kullback-Leibler divergence $\mathbf{KL}\left( \mathbf{P}_{\rho
_{\theta }}||\mathbf{P}_{0}\right) $ and take in addition $\gamma _{p}=\left[
\lambda _{2m}/\left( n\lambda _{m}\right) \right] ^{1/2}$ for $m+1\leq p\leq
2m$. The constant $b_1$ in (\ref{KL-bound}) depends on $\rho_{\theta}$ here, through its denumerator. It is easy to check that $a'_{1}=(\max_p \gamma _{p}) \sqrt{\lambda_0 / \lambda_1}$ and fixing $\lambda_0 \leq n \lambda_1 /2$ ensures that $b_{1} \leq 2 a_3^2$ where $a_3^2$ does not depend on $\rho$. Also notice that since $\lambda \left( \cdot \right) $ is decreasing we have that $\sup_{m}\lambda _{2m}/\lambda _{m} \leq 1$.
Then an application of (\ref{KL-bound}) gives : 


\begin{equation*}
\mathbf{KL}\left( \mathbf{P}_{\rho _{\theta }}||\mathbf{P}_{0}\right) \leq 
\frac{\lambda _{0}}{\lambda _{1}}b_{1}\left( m+m\sup_{m}\frac{
\lambda _{2m}}{\lambda _{m}}\right) \leq 4\frac{\lambda _{0}}{\lambda _{1}}
a_{3}^2 m\leq a_{3}^2 \frac{32}{\log 2}\frac{\lambda _{0}}{
\lambda _{1}}\log M<\frac{1}{9}\log M
\end{equation*}
when $\lambda _{0}=(\lambda _{1} \log{2})/(320 a_3^2)$ for instance (which is certainly strictly stronger than the first constraint $\lambda_0 \leq n \lambda_1 /2$). Now the integer $
m$ will be selected so that the constraints of model $\mathfrak{M}_{G}$
hold. The first restrictions are related to the definition and geometry of $
\mathfrak{M}_{G}$. We first require
simultaneously :
\begin{align*}
\left\Vert C_{\varepsilon }^{-1/2}\rho _{\theta }\right\Vert _{\infty }^{2}&
\leq a_3\left\Vert C^{-1/2}\rho _{\theta }\right\Vert _{2}^{2}=
\frac{\lambda _{0}a_3}{n\lambda _{1}}\left( \sum_{p=1}^{m}\frac{1}{
\lambda _{p}}+\sup_{m}\frac{\lambda _{2m}}{\lambda _{m}}\sum_{p=m+1}^{2m}
\frac{1}{\lambda _{p}}\right) \leq a_{2}^{2}, \\
\left\Vert \rho _{\theta }\right\Vert _{\infty }^{2}& \leq \frac{\lambda _{0}
}{n}\left( \sum_{p=1}^{m}\frac{1}{\lambda _{p}}+\sup_{m}\frac{\lambda _{2m}}{
\lambda _{m}}\sum_{p=m+1}^{2m}\frac{1}{\lambda _{p}}\right) \leq a_{1}^{2}.
\end{align*}
Clearly both equations above are quite similar with different
constants. The last constraint, expressing the right-regularity of $\rho
_{\theta }$, turns out to be stricter so that we ignore the two equations
just above and turn to :
\begin{equation*}
\max \left\{ \sum_{p=1}^{m}\frac{\left\Vert \rho _{\theta }\left(
e_{p}\right) \right\Vert ^{2}}{\lambda _{p}},\frac{1}{\lambda _{m}}
\sum_{p=m+1}^{2m}\left\Vert \rho _{\theta }\left( e_{p}\right) \right\Vert
^{2}\right\} \leq \tau \left( m\right)
\end{equation*}
with :
\begin{eqnarray*}
\mathbf{b}_{1}\left( n,m\right) &=&\sum_{1\leq p\leq m}\frac{\left\Vert \rho
_{\theta }\left( e_{p}\right) \right\Vert ^{2}}{\lambda _{p}}=\frac{\lambda
_{1}}{n}\cdot \sum_{1\leq p\leq m}\frac{1}{\lambda _{p}^{2}}\leq \frac{m}{n}
\frac{\lambda _{1}}{\lambda _{m}^{2}} \\
\mathbf{b}_{2}\left( n,m\right) &=&\frac{1}{\lambda _{m}}\sum_{p=m+1}^{2m}
\left\Vert \rho _{\theta }\left( e_{p}\right) \right\Vert ^{2}=\frac{1}{n}
\frac{\lambda _{1}}{\lambda _{m}}\frac{\lambda _{2m}}{\lambda _{m}}
\sum_{p=m+1}^{2m}\frac{1}{\lambda _{p}}\leq \frac{m}{n}\frac{\lambda _{1}}{
\lambda _{m}^{2}}.
\end{eqnarray*}
We see above that the sequence $\gamma _{p}$ for $m+1\leq p\leq 2m$ was
chosen so that $\mathbf{b}_{1}\left( n,m\right) $ and $\mathbf{b}_{2}\left(
n,m\right) $ are bounded by the same quantity. This last constraint leads to 
$m/\left( n\lambda _{m}^{2}\right) \leq \tau \left( m\right) $ Since the
minimax rate takes the form (up to a constant) : $m/n$ we are led to
choosing the largest $m$ so that ;
\begin{equation*}
m^{\ast }=\max_{m}\left\{ \frac{1}{n}\leq \frac{\tau \left( m\right) \lambda
_{m}^{2}}{m}\right\} .
\end{equation*}
given at (\ref{alpha.star}).

We constructed above a set of testing parameters $\left\{ \rho _{\theta
},\theta =2\omega -1,\omega \in \mathcal{VG}\left( m^{\ast }\right) \right\}
\subset \mathfrak{M}_{\rho }$. We are now in position to apply (\cite{Tsyb09}
, Theorem 2.7 p.101). which yields for some universal constant $
\mathfrak{c}$ :
\begin{equation*}
\inf_{\widehat{\rho }}\sup_{\rho _{\theta }\in \mathfrak{M}_{G}}\mathbb{E}
\left( \left\Vert \left( \rho _{\theta }-\widehat{\rho }\right)
C_{\varepsilon }^{1/2}\right\Vert _{2}^{2}\right) \geq \mathfrak{c}\frac{
\lambda _{0}}{4}\frac{m^{\ast }}{n}.
\end{equation*}

\subsection{General and Minimax upper bound\label{section-upperbound}}
All along this section denote 
$$
C_n^\dag = (C_n+\alpha\mathbf I)^{-1},\ C^\dag = (C+\alpha\mathbf I)^{-1}
$$
and
\begin{equation*}
U_{n}=\dfrac{1}{n}\sum_{k=1}^{n-1}\varepsilon_{k+1}\otimes X_{k},\quad
R_{n,\alpha}=C^{\dagger1/2}\left( C-C_{n}\right) C^{\dagger1/2},\quad
T_{n,\alpha}=U_{n}C^{\dag}-\alpha\rho C^{\dag}.
\end{equation*}

Notice that $nU_{n}$ is a $\mathcal{S}_{2}$-valued martingale.

\textbf{Proof of Theorem \ref{risk-upper-bound} :} Let $0<c_{n}<1$ and define the set $\mathcal{E}_{n}$ this way :
\begin{equation}
\mathcal{E}_{n}=\mathcal{E}_{n}\left( c_{n}\right) =\left\{ \left\Vert
R_{n,\alpha}\right\Vert _{\infty}<c_{n}\right\} \cap\left\{ \left\Vert
R_{n,\alpha}C^{\dagger1/2}X_{n+1}\right\Vert <1\right\}. \label{E_n}
\end{equation}

The prediction risk is split in two terms : 
\begin{equation}
\mathbb{E}\left\Vert \left( \widehat{\rho}_{n}-\rho\right)
X_{n+1}\right\Vert ^{2}=\underset{\mathcal{M}_{n}}{\underbrace{\mathbb{E}
\left\{ \left\Vert \left( \widehat{\rho}_{n}-\rho\right) X_{n+1}\right\Vert
^{2}1_{\mathcal{E}_{n}}\right\} }}+\underset{\mathcal{R}_{n}}{\underbrace{
\mathbb{E}\left\{ \left\Vert \left( \widehat{\rho}_{n}-\rho\right)
X_{n+1}\right\Vert ^{2}1_{\overline{\mathcal{E}}_{n}}\right\} }}.
\label{risk-prelim-dec}
\end{equation}
The expectation in $\mathcal{M}_{n}$ is the main part and $\mathcal{R}_{n}$
is a residual term. Roughly speaking when $\mathcal{E}
_{n} $ holds, $C_{n}$ is close enough to $C$ so that replacing the random
and regularized operator $C_{n}^{\dagger}$ by a non random -though still
regularized- version $C^{\dagger}$ in $\mathcal{M}_{n}$ is possible. The price to pay is the control of $\mathbb{P}\left( \overline{\mathcal{E}}
_{n}\right) $ in $\mathcal{R}_{n}$ which comes down to a concentration
inequality on $C_{n}^{\dagger}$.

The proof of the Theorem is a consequence of Propositions \ref{main-term} and \ref{prop-up-bpund-res}. The tuning parameter $c_n$ is just left in $(0,1)$ in Proposition \ref{main-term} and fixed to $(mn)^{-1/6}$ in Lemma \ref{P-barre} and through subsection \ref{residual-meansq-risk-section}. The condition $m/(n\lambda_m)<1$ coupled with $m \lambda_m \leq 1$ leads to $m^3\leq mn$ and finally $(mn)^{-1/6} \leq m^{-1/2}$. Assuming $m \geq 4$ Proposition \ref{main-term} gives $\Lambda_{\mathbf{B}} \leq 12$ and $\Lambda_{\mathbf{V}} \leq 8 +4\left(
	a_{3}^{\prime}\right) ^{2}/\sigma_{\varepsilon}^{2}$. This is the upper bound given in Theorem \ref{risk-upper-bound}. All the details are deferred to the two next sections \ref{bias-variance-section} and \ref{residual-meansq-risk-section}. But a sketch of the proof for Theorem \ref{risk-upper-bound} is given now in order to help the reader.	 The three main steps of the proof are :

\textbf{First step :} The main term $\mathcal{M}_{n}$ is bounded with $\Lambda_{\mathbf{V}},\mathbf{V}_{n},\Lambda_{\mathbf{B}}$ and $\mathbf{B}_{n}$ plus an additional term that may be neglected.

\textbf{Second step :} The calculation of variance $\mathbf{V}_{n}$ is
given in subsection \ref{bias-variance-section}.

\textbf{Third step :} The block $\mathcal{R}_{n}$ is bounded by a non
asymptotic concentration inequality in section \ref
{residual-meansq-risk-section}. This inequality, whose proof is deferred to the Appendix, is adapted from existing
Hilbert-valued martingale difference general theorems.

\subsubsection{Main term\label{bias-variance-section}}

\begin{proposition}
\label{main-term}For all $n$ :
\begin{equation*}
\mathcal{M}_{n}=\mathbb{E}\left[ \left\Vert \left( \widehat{\rho}
_{n}-\rho\right) X_{n+1}\right\Vert ^{2}1_{\mathcal{E}_{n}}\right] \leq
\Lambda_{\mathbf{V}}\cdot\mathbf{V}_{n}+\Lambda_{\mathbf{B}}\cdot \mathbf{B}
_{n}+4\frac{\mathcal{R}'_{n}}{n}
\end{equation*}
where : 
\[
\Lambda_{\mathbf{V}} =\frac{2}{\left( 1-c_n\right) ^{2}}+4(a'_{3})^{2},\quad\Lambda _{\mathbf{B
}}=\frac{2}{\left( 1-c_n\right) ^{2}}+4
\]

and $\mathcal{R}'_{n}$ is a bounded sequence.
\end{proposition}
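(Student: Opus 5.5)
We start from an algebraic decomposition of $\widehat{\rho}_n-\rho$. Write $D_n=\rho C_n+U_n+r_n$, where $r_n$ is a boundary term of order $1/n$ caused by the range $k\le n-1$ in $C_n$ versus $D_n$. Then
\begin{equation*}
\widehat{\rho}_n-\rho=D_nC_n^\dagger-\rho=U_nC_n^\dagger-\alpha\rho C_n^\dagger+r_nC_n^\dagger .
\end{equation*}
The resolvent identity, written in the symmetric form
\begin{equation*}
C_n^\dagger=C^{\dagger1/2}\left(\mathbf I-R_{n,\alpha}\right)^{-1}C^{\dagger1/2},
\end{equation*}
is valid on $\mathcal E_n$ because $\|R_{n,\alpha}\|_\infty<c_n<1$ there. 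It gives
\begin{equation*}
(\widehat{\rho}_n-\rho)X_{n+1}=\big(U_n-\alpha\rho\big)C^{\dagger1/2}(\mathbf I-R_{n,\alpha})^{-1}C^{\dagger1/2}X_{n+1}+\text{(boundary term)}.
\end{equation*}
We then expand $(\mathbf I-R_{n,\alpha})^{-1}=\mathbf I+R_{n,\alpha}(\mathbf I-R_{n,\alpha})^{-1}$. The first piece is $T_{n,\alpha}X_{n+1}$. The second is controlled on $\mathcal E_n$ by the factor $\|(\mathbf I-R_{n,\alpha})^{-1}\|_\infty\le (1-c_n)^{-1}$, which is where $2/(1-c_n)^2$ comes from, via $\|a+b\|^2\le 2\|a\|^2+2\|b\|^2$.

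\textbf{Variance term.} We compute $\mathbb E\|U_nC^\dagger X_{n+1}\|^2$ and the analogous term with $C^{\dagger1/2}$. The key point is the martingale structure of $nU_n$: cross terms $\mathbb E\langle \varepsilon_{k+1},\varepsilon_{l+1}\rangle\cdots$ with $k\neq l$ vanish by independence of future innovations. The catch is that $X_{n+1}$ depends on the past. We therefore decompose $X_{n+1}=\rho X_n+\varepsilon_{n+1}$. The $\varepsilon_{n+1}$ part is independent of $U_n$ and yields, via $\mathrm{tr}(C_\varepsilon)\,\mathrm{tr}(C^\dagger C C^\dagger C)/n\le \sigma_\varepsilon^2 n^{-1}\sum_p\lambda_p/(\lambda_p+\alpha)$, a contribution to $\mathbf V_n$. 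The $\rho X_n$ part, together with the covariance of the $X_k$ used as auxiliary regressors, is handled by writing $C_\varepsilon\le C$ and using $\|C^{-1/2}C_\varepsilon^{1/2}\|_\infty=a_3'$; this produces the $4(a_3')^2$ factor. I would isolate this conditional-expectation computation as a separate auxiliary result (the Proposition on variance announced in the text). Residual covariances between lags decay geometrically through $a_1'$ and are absorbed into $\mathcal R'_n/n$.

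\textbf{Bias term.} We use
\begin{equation*}
\mathbb E\|\alpha\rho C^\dagger X_{n+1}\|^2=\alpha^2\,\mathrm{tr}\left(C^{\dagger}\rho^*\rho C^\dagger C\right)=\alpha^2\sum_p\|\rho e_p\|^2\frac{\lambda_p}{(\lambda_p+\alpha)^2}\le \mathbf B_n ,
\end{equation*}
where $X_{n+1}$ has covariance $C$ by stationarity. The version with the random factor $(\mathbf I-R_{n,\alpha})^{-1}$ is bounded on $\mathcal E_n$ by the same operator-norm argument. The constant $4$ in $\Lambda_{\mathbf B}$ arises from splitting the bias and variance pieces once more.

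\textbf{Main obstacle.} The obstacle is the dependence between $R_{n,\alpha}$, $U_n$ and $X_{n+1}$ on $\mathcal E_n$. We cannot factor the expectations, so we bound the random operator pathwise using the indicator, including $\|R_{n,\alpha}C^{\dagger1/2}X_{n+1}\|<1$. This leaves a leftover of the form $\mathbb E\|U_nC^{\dagger1/2}\|^2_\infty$-type quantities, which again reduce to $\mathbf V_n$ using the martingale and moment bounds. The boundary terms $r_n$ give $O(1/n)$, collected as $4\mathcal R'_n/n$, with $\mathcal R'_n$ bounded using $\mathbb E\|X\|^4$ (assumption $\mathbf A_2$) and $\|C_n^\dagger\|_\infty\le 1/\alpha$ together with $m/(n\lambda_m)\le1$.
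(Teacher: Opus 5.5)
Your skeleton is the paper's. On $\mathcal E_n$ you reduce to $T_{n,\alpha}X_{n+1}$ plus $T_{n,\alpha}(C+\alpha\mathbf I)^{1/2}(\mathbf I-R_{n,\alpha})^{-1}R_{n,\alpha}C^{\dagger1/2}X_{n+1}$. Your symmetric identity $C_n^\dagger=C^{\dagger1/2}(\mathbf I-R_{n,\alpha})^{-1}C^{\dagger1/2}$, once $R_{n,\alpha}$ is commuted with $(\mathbf I-R_{n,\alpha})^{-1}$, is equivalent to the paper's relation $(\widehat\rho_n-\rho)(C+\alpha\mathbf I)^{1/2}(\mathbf I-R_{n,\alpha})=T_{n,\alpha}(C+\alpha\mathbf I)^{1/2}$. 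The rest also matches: the second event in $\mathcal E_n$ removes $\|R_{n,\alpha}C^{\dagger1/2}X_{n+1}\|$, the martingale structure gives $\mathbb E\|U_nC^{\dagger1/2}\|_2^2\le\mathbf V_n$, the bias is bounded as in the paper, and $\mathbb E\|U_nC^\dagger X_{n+1}\|^2$ is delegated to Proposition~\ref{variance} and Corollary~\ref{cor-variance}, as the paper does. The bookkeeping around that last step is wrong in several places, though.

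\textbf{No boundary term.} $C_n$ and $D_n$ both sum over $k=1,\dots,n-1$, so $D_n=\rho C_n+U_n$ exactly. Hence $4\mathcal R'_n/n$ comes entirely from the variance proposition, not from boundary effects or from $\|C_n^\dagger\|_\infty\le 1/\alpha$.

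\textbf{Roles of the two pieces reversed.} In $X_{n+1}=\rho X_n+\varepsilon_{n+1}$ you have swapped what each piece contributes. The $\varepsilon_{n+1}$ piece is exactly $\frac{n-1}{n^2}\sigma_\varepsilon^2\,\mathrm{tr}(C^\dagger CC^\dagger C_\varepsilon)$, a single trace rather than a product of traces. It is there that $\langle C_\varepsilon e_p,e_p\rangle\le(a_3')^2\lambda_p$ produces $(a_3')^2\mathbf V_n$. The $\rho X_n$ piece is only $O(1/n)$ and is precisely $\mathcal R'_n/n$. With your accounting ($\mathbf V_n$ from $\varepsilon_{n+1}$ via $C_\varepsilon\le C$, plus $(a_3')^2\mathbf V_n$ from $\rho X_n$), the non-$(1-c_n)$ part of $\Lambda_{\mathbf V}$ would be $4+4(a_3')^2$, not $4(a_3')^2$.

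\textbf{Handling the dependence.} $C_\varepsilon\le C$ does nothing about the dependence, and a single split is not enough, since $X_n$ still contains $\varepsilon_{k+1}$ for every summand $k$ of $U_n$. The paper splits $X_{n+1}$ separately for each $k$, as $\sum_{j=0}^{n-k-1}\rho^j\varepsilon_{n+1-j}+\rho^{n-k}X_{k+1}$. The first part is independent of $(\varepsilon_{k+1},X_k)$, so cross terms vanish and only the leading trace plus a geometric remainder survive. The second part carries $\rho^{n-k}$ and is summed using $\|C^{\dagger1/2}\rho\|_\infty\le a_2'$, powers of $a_1=\|\rho\|_\infty$, and third and fourth moments of $\varepsilon_1$. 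Its diagonal term also needs $\mathbf A_2$ with $\mathbf s=2$ and $\mathbf A_3$; this is the source of $m\Theta_2$.

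\textbf{The constant $2/(1-c_n)^2$.} To get this rather than $4/(1-c_n)^2$, do not split the remainder a second time. Use $\mathbb E\|(U_n-\alpha\rho)C^{\dagger1/2}\|_2^2=\mathbb E\|U_nC^{\dagger1/2}\|_2^2+\mathbf B_n$, which holds because $\mathbb E U_n=0$. Only $T_{n,\alpha}X_{n+1}$ is split with $\|a+b\|^2\le 2\|a\|^2+2\|b\|^2$, and that is what produces the $4$'s.
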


\textbf{Proof of Proposition \ref{main-term} :} By definition $\widehat{\rho}_{n}=D_{n}C_{n}^{\dagger}$ and $D_n = \rho C_n+U_n$ then
\begin{align*}
\widehat{\rho}_{n} & =D_{n}C_{n}^{\dagger}\left( C-C_{n}\right)
C^{\dagger}+D_{n}C^{\dagger} \\
& =\widehat{\rho}_{n}\left( C-C_{n}\right) C^{\dagger}+\rho\left(
C_{n}-C\right) C^{\dag}+\rho CC^{\dag}+U_{n}C^{\dag}.
\end{align*}
Hence :
\begin{equation}
\widehat{\rho}_{n}-\rho=\left( \widehat{\rho}_{n}-\rho\right) \left(
C+\alpha \mathbf I\right) ^{1/2}R_{n,\alpha}C^{\dagger1/2}+T_{n,\alpha}
\label{main-temp}
\end{equation}
with $T_{n,\alpha}$ and $R_{n,\alpha}$ defined just above ($
T_{n,\alpha}=U_{n}C^{\dag}-\alpha\rho C^{\dag}$). We state a first
intermediate equation :
\begin{align*}
\left( \widehat{\rho}_{n}-\rho\right) \left( C+\alpha\mathbf I\right) ^{1/2} &
=\left( \widehat{\rho}_{n}-\rho\right) \left( C+\alpha\mathbf I\right)
^{1/2}R_{n,\alpha}+T_{n,\alpha}\left( C+\alpha\mathbf I\right) ^{1/2} \\
\left( \widehat{\rho}_{n}-\rho\right) \left( C+\alpha\mathbf I\right) ^{1/2}\left(
I-R_{n,\alpha}\right) & =T_{n,\alpha}\left( C+\alpha\mathbf I\right) ^{1/2}.
\end{align*}
When $\left\Vert R_{n,\alpha}\right\Vert _{\infty}<c_{n}<1$ :
\begin{equation*}
\left( \widehat{\rho}_{n}-\rho\right) \left( C+\alpha\mathbf I\right)
^{1/2}=T_{n,\alpha}\left( C+\alpha\mathbf I\right) ^{1/2}\left( I-R_{n,\alpha
}\right) ^{-1}.
\end{equation*}
Then plugging the previous bound in (\ref{main-temp}) :
\begin{align*}
\left( \widehat{\rho}_{n}-\rho\right) X_{n+1}1_{\mathcal{E}_{n}} & =\left( \widehat{\rho}_{n}-\rho\right) \left(
C+\alpha \mathbf I\right) ^{1/2}R_{n,\alpha}C^{\dagger1/2}X_{n+1}1_{\mathcal{E}_{n}}+T_{n,\alpha}X_{n+1}1_{\mathcal{E}_{n}}\\
&=T_{n,\alpha}\left( C+\alpha\mathbf I\right) ^{1/2}\left( I-R_{n,\alpha}\right)
^{-1}R_{n,\alpha}C^{\dagger1/2}X_{n+1}1_{\mathcal{E}_{n}}+T_{n,
\alpha}X_{n+1}1_{\mathcal{E}_{n}} \\
\left\Vert \left( \widehat{\rho}_{n}-\rho\right) X_{n+1}\right\Vert 1_{
\mathcal{E}_{n}} & \leq\left\Vert T_{n,\alpha}\left( C+\alpha\mathbf I\right)
^{1/2}\right\Vert _{2}\left\Vert \left( I-R_{n,\alpha}\right)
^{-1}R_{n,\alpha}C^{\dagger1/2}X_{n+1}\right\Vert 1_{\mathcal{E}
_{n}}+\left\Vert T_{n,\alpha}X_{n+1}\right\Vert \\
& \leq\left( 1-c_{n}\right) ^{-1}\left\Vert T_{n,\alpha}\left( C+\alpha
I\right) ^{1/2}\right\Vert _{2}\left\Vert
R_{n,\alpha}C^{\dagger1/2}X_{n+1}\right\Vert +\left\Vert
T_{n,\alpha}X_{n+1}\right\Vert .
\end{align*}
We used the bound $\left\Vert (I-T)^{-1} \right\Vert_{\infty} \leq (1-\left\Vert T \right\Vert_{\infty} )^{-1}$ when $\left\Vert T \right\Vert_{\infty}<1$. Now when $\left\Vert R_{n,\alpha}C^{\dagger1/2}X_{n+1}\right\Vert <1$ : 
\begin{equation*}
\left\Vert \left( \widehat{\rho}_{n}-\rho\right) X_{n+1}\right\Vert
\leq\left( 1-c_{n}\right) ^{-1}\left\Vert T_{n,\alpha}\left( C+\alpha
I\right) ^{1/2}\right\Vert _{2}+\left\Vert T_{n,\alpha}X_{n+1}\right\Vert.
\end{equation*}

The behaviour of $c_{n}$ is not important for the moment and will be
detailed later when elaborating on the concentration inequalities.

Summarizing the previous results with $\mathcal{E}_{n}=\left\{ \left\Vert
R_{n,\alpha}\right\Vert _{\infty}<c_{n}\right\} \cap\left\{ \left\Vert
R_{n,\alpha}C^{\dagger1/2}X_{n+1}\right\Vert <1\right\} $ we get :
\begin{equation*}
\left\Vert \left( \widehat{\rho}_{n}-\rho\right) X_{n+1}\right\Vert 1_{
\mathcal{E}_{n}}\leq\frac{1}{1-c_{n}}\left\Vert T_{n,\alpha}\left( C+\alpha
I\right) ^{1/2}\right\Vert _{2}+\left\Vert T_{n,\alpha}X_{n+1}\right\Vert
\end{equation*}
\begin{equation}
\mathbb{E}\left\{ \left\Vert \left( \widehat{\rho}_{n}-\rho\right)
X_{n+1}\right\Vert ^{2}1_{\mathcal{E}_{n}}\right\} \leq2\left( \frac {1}{
1-c_{n}}\right) ^{2}\mathbb{E}\left\Vert T_{n,\alpha}\left( C+\alpha
I\right) ^{1/2}\right\Vert _{2}^{2}+2\mathbb{E}\left\Vert
T_{n,\alpha}X_{n+1}\right\Vert ^{2}.  \label{rain}
\end{equation}

It is simpler to go on with $T_{n,\alpha}\left( C+\alpha\mathbf I\right)
^{1/2}=U_{n}C^{\dag/2}-\alpha\rho C^{\dag/2}$ and :
\begin{equation*}
\mathbb{E}\left\Vert T_{n,\alpha}\left( C+\alpha\mathbf I\right) ^{1/2}\right\Vert
_{2}^{2}=\mathbb{E}\left\Vert U_{n}C^{\dag/2}\right\Vert _{2}^{2}+\alpha
\left\Vert \rho C^{\dag/2}\right\Vert _{2}^{2},
\end{equation*}
which is split in a bias term :
\begin{equation*}
\mathbf{B}_{n}=\left\Vert \alpha\rho C^{\dag/2}\right\Vert _{2}^{2}=\alpha
^{2}\sum_{p=1}^{+\infty}\frac{\left\Vert \rho e_{p}\right\Vert ^{2}}{
\lambda_{p}+\alpha},
\end{equation*}
and a variance part related to $\mathbb{E}\left\Vert U_{n}C^{\dag
/2}\right\Vert _{2}^{2}$, term that may developped :
\begin{align*}
\mathbb{E}\left\Vert U_{n}C^{\dag/2}\right\Vert _{2}^{2} & =\frac{1}{n^{2}}
\sum_{p=1}^{+\infty}\mathbb{E}\left\Vert \sum_{k=1}^{n-1}\varepsilon
_{k+1}\left\langle C^{\dag/2}X_{k},e_{p}\right\rangle \right\Vert ^{2}=\frac{
1}{n^{2}}\sum_{p=1}^{+\infty}\sum_{k=1}^{n-1}\mathbb{E}\left\Vert
\varepsilon_{k+1}\right\Vert ^{2}\mathbb{E}\left\langle
C^{\dag/2}X_{k},e_{p}\right\rangle ^{2} \\
& \leq\frac{\mathbb{E}\left\Vert \varepsilon_{1}\right\Vert ^{2}}{n}
\sum_{p=1}^{+\infty}\mathbb{E}\left\langle
C^{\dag/2}X_{0},e_{p}\right\rangle ^{2}=\frac{\sigma_{\varepsilon}^{2}}{n}
\sum_{p=1}^{+\infty}\frac {\mathbb{E}\left\langle X_{0},e_{p}\right\rangle
^{2}}{\lambda_{p}+\alpha }=\frac{\sigma_{\varepsilon}^{2}}{n}
\sum_{p=1}^{+\infty}\frac{\lambda_{p}}{\lambda_{p}+\alpha}=\mathbf{V}_{n}
\end{align*}
so that 
\begin{equation*}
\mathbb{E}\left\{ \left\Vert \left( \widehat{\rho}_{n}-\rho\right)
X_{n+1}\right\Vert ^{2}1_{\mathcal{E}_{n}}\right\} \leq2\left( \frac {1}{
1-c_{n}}\right) ^{2}\left\{ \mathbf{B}_{n}+\mathbf{V}_{n}\right\} +2\mathbb{E
}\left\Vert T_{n,\alpha}X_{n+1}\right\Vert ^{2}.
\end{equation*}
We are going to prove that $\mathbb{E}\left\Vert
T_{n,\alpha}X_{n+1}\right\Vert ^{2}$ is comparable to $\mathbb{E}\left\Vert
T_{n,\alpha}\left( C+\alpha\mathbf I\right) ^{1/2}\right\Vert _{2}^{2}$. If $
X_{n+1} $ was independent from $T_{n,\alpha}$ this would be straigthforward.
Here $X_{n+1}$ depends on all the terms inside $U_{n}$ which yields
technical difficulties. We have first :
\begin{equation*}
\mathbb{E}\left\Vert T_{n,\alpha}X_{n+1}\right\Vert ^{2}=\mathbb{E}
\left\Vert \left( U_{n}C^{\dag}-\alpha\rho C^{\dag}\right)
X_{n+1}\right\Vert ^{2}\leq2\mathbb{E}\left\Vert
U_{n}C^{\dag}X_{n+1}\right\Vert ^{2}+2\alpha ^{2}\mathbb{E}\left\Vert \rho
C^{\dag}X_{n+1}\right\Vert ^{2}
\end{equation*}
Clearly $\mathbb{E}\left\Vert \rho C^{\dag}X_{n+1}\right\Vert ^{2}=\sum
_{p=1}^{+\infty}\frac{\lambda_{p}\left\Vert \rho e_{p}\right\Vert ^{2}}{
\left( \lambda_{p}+\alpha\right) ^{2}}\leq\sum_{p=1}^{+\infty}\frac{
\left\Vert \rho e_{p}\right\Vert ^{2}}{\lambda_{p}+\alpha}$. It is proved in
Corollary \ref{cor-variance} below that :
\begin{equation*}
\mathbb{E}\left\Vert U_{n}C^{\dag}\left( X_{n+1}\right) \right\Vert ^{2}\leq
(a'_{3})^{2} \,\mathbf{V}
_{n}+\frac{\mathcal{R}'_{n}}{n} 
\end{equation*}
so that 
\begin{equation*}
\mathbb{E}\left\Vert T_{n,\alpha}X_{n+1}\right\Vert ^{2}\leq 2(a'_{3})^{2} \,\mathbf{V}
_{n}+2\mathbf{
B}_{n}+2\frac{\mathcal{R}'_{n}}{n}.
\end{equation*}
Summing up the few lines above with equation (\ref{rain}) gives :
\begin{align*}
\mathbb{E}\left\{ \left\Vert \left( \widehat{\rho}_{n}-\rho\right)
X_{n+1}\right\Vert ^{2}1_{\mathcal{E}_{n}}\right\} & \leq2\left( \frac {1}{
1-c_{n}}\right) ^{2}\left[ \mathbf{B}_{n}+\mathbf{V}_{n}\right] +4\left[ 
\mathbf{B}_{n}+(a'_{3})^2 \mathbf{V}_{n}+\frac{1}{n}\mathcal{R}_{n}^{\prime }\left(
\alpha\right) \right] \\
& \leq\Lambda_{\mathbf{V}}\cdot\mathbf{V}_{n}+\Lambda_{\mathbf{B}}\cdot
\mathbf{B}_{n}+4\frac{\mathcal{R}'_{n}}{n}
\end{align*}
with :
\begin{equation*}
\Lambda_{\mathbf{V}}=\frac{2}{\left( 1-c_{n}\right) ^{2}}+4 (a'_{3})^{2} \quad\Lambda _{\mathbf{B
}}=\frac{2}{\left( 1-c_{n}\right) ^{2}}+4
\end{equation*}
which is the desired result.

The next crucial Proposition is proved in the Appendix.

\begin{proposition}
\label{variance}We have for all $n$ :
\begin{align*}
\mathbb{E}\left\Vert U_{n}C^{\dag}\left( X_{n+1}\right) \right\Vert ^{2} & =
\frac{n-1}{n^2}  \sigma_{\varepsilon}^{2} \left[ \sum_{k=1}^{+\infty}\frac{\lambda_{k}}{\left( \lambda
_{k}+\alpha\right) ^{2}}\left\langle C_{\varepsilon}e_{k},e_{k}\right\rangle 
\right] +\frac{1}{n}\mathcal{R}_{n}^{\prime}, \\
0 & \leq\mathcal{R}_{n}^{\prime}\leq r_{v}+\frac{1}{n}\left(r'_{v}+
\Theta_{1}+m \Theta_{2}\right)
\end{align*}
with :
\[
0 \leq r_{v}\leq\frac{\sigma_{\varepsilon}^{2}}{1-a_{1}^{2}}\left(
a_{2}^{\prime}\right) ^{2}, \quad  0 \leq r'_{v} \leq  2\left\Vert C_{\varepsilon}^{2}\right\Vert _{1} (a'_2)^2 \frac{a_{1}^{2}}{\left(
1-a_{1}^{2}\right) ^{2}}
\]
and $\Theta_{1}, \Theta_{2}$ are two constants given in Lemma \ref{moinsmoins}. 
\end{proposition}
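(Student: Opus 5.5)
The plan is to expand the square of the martingale sum and separate the part of $X_{n+1}$ that is independent of the sample from the part that is not. Write $X_{n+1}=\varepsilon_{n+1}+\rho X_n$, so that
\begin{equation*}
U_nC^{\dag}X_{n+1}=\frac1n\sum_{k=1}^{n-1}\left\langle X_k,C^{\dag}\varepsilon_{n+1}\right\rangle\varepsilon_{k+1}+\frac1n\sum_{k=1}^{n-1}\left\langle X_k,C^{\dag}\rho X_n\right\rangle\varepsilon_{k+1}=:\mathbf{I}_n+\mathbf{II}_n .
\end{equation*}
The innovation $\varepsilon_{n+1}$ is centered and independent of $(X_0,\varepsilon_1,\dots,\varepsilon_n)$. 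Hence the cross term $\mathbb{E}\langle \mathbf{I}_n,\mathbf{II}_n\rangle$ vanishes, since it is linear in $\varepsilon_{n+1}$. This gives $\mathbb{E}\Vert U_nC^{\dag}X_{n+1}\Vert^2=\mathbb{E}\Vert\mathbf{I}_n\Vert^2+\mathbb{E}\Vert\mathbf{II}_n\Vert^2$.

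\textbf{Step 1: the leading term.} Condition on $\varepsilon_{n+1}=h$. For $k<l$, $\varepsilon_{l+1}$ is centered and independent of $(\varepsilon_{k+1},X_k,X_l)$, so the off-diagonal terms of $\mathbb{E}\Vert\mathbf{I}_n\Vert^2$ vanish. Each diagonal term factorizes, because $\varepsilon_{k+1}$ is independent of $X_k$:
\begin{equation*}
\mathbb{E}\Vert\varepsilon_{k+1}\Vert^2\,\mathbb{E}\langle X_k,C^{\dag}h\rangle^2=\sigma_\varepsilon^2\langle CC^{\dag}h,C^{\dag}h\rangle .
\end{equation*}
Summing these $n-1$ equal terms and integrating over $h$ gives $\frac{n-1}{n^2}\sigma_\varepsilon^2\,\mathrm{tr}(C^{\dag}CC^{\dag}C_\varepsilon)$. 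Evaluating this trace in the eigenbasis $(e_k)$ of $C$ produces exactly the displayed series $\sum_k\lambda_k(\lambda_k+\alpha)^{-2}\langle C_\varepsilon e_k,e_k\rangle$. This part is exact and easy.

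\textbf{Step 2: the residual $n\,\mathbb{E}\Vert\mathbf{II}_n\Vert^2=\mathcal{R}'_n$.} Here $X_n$ depends on the innovations appearing in $U_n$. Since $X_k\in\mathcal{F}_k$, for each $k$ I would use $X_n=\rho^{n-k-1}X_{k+1}+\sum_{j=k+2}^{n}\rho^{n-j}\varepsilon_j$ and split $X_{k+1}=\rho X_k+\varepsilon_{k+1}$. Throughout, the key operator bounds are $C^{\dag}\preceq C^{-1}$, hence $\Vert C^{\dag 1/2}\rho\Vert_\infty\le\Vert C^{-1/2}\rho\Vert_\infty=a'_2$, together with $\Vert\rho\Vert_\infty=a_1<1$ for the geometric sums. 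I expect three contributions.
\begin{itemize}
\item[(a)] In the diagonal terms $k=l$, the piece of $X_n$ independent of $\varepsilon_{k+1}$ gives something like $\frac1{n^2}\sum_k\sigma_\varepsilon^2\,\mathbb{E}\langle C^{\dag}X_k,\rho X_n\rangle^2$. Bounding $\Vert C^{\dag 1/2}\rho\Vert_\infty\le a'_2$ and using $\mathbb{E}\Vert C^{\dag1/2}X_k\Vert^2$ together with the stationary variance series $\sum_j a_1^{2j}$ should yield $r_v\le\sigma_\varepsilon^2(a'_2)^2/(1-a_1^2)$, at order $1/n$ once multiplied by $n$.
\item[(b)] In the off-diagonal terms $k<l$, a nonzero expectation requires $\rho X_n$ to contain both $\rho^{n-k}\varepsilon_{k+1}$ and $\rho^{n-l}\varepsilon_{l+1}$ (the latter only through the pairing with $\varepsilon_{l+1}$). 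These give double geometric sums in $a_1^2$ weighted by $\mathrm{tr}(C_\varepsilon^2)$-type quantities, i.e.\ the term $r'_v\le 2\Vert C_\varepsilon^2\Vert_1(a'_2)^2a_1^2/(1-a_1^2)^2$, which is of order $1/n^2$.
\item[(c)] In the diagonal terms, the piece where $\rho X_n$ contains $\rho^{n-k}\varepsilon_{k+1}$ itself produces fourth-order moments of the form $\mathbb{E}[\Vert\varepsilon\Vert^2\langle\cdot,\varepsilon\rangle^2]$ and $\mathbb{E}\langle C^{\dag}X_k,\cdot\rangle^4$. Expanding in $(e_p)$ and invoking $\mathbf{A}_2$ with $\mathbf{s}\ge2$ bounds these by constants $\Theta_1$, plus $\Theta_2$ times $\sum_p\lambda_p/(\lambda_p+\alpha)$. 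The latter sum is $O(m)$ when $\alpha=\lambda_m$, which explains the factor $m\Theta_2$; these constants are presumably those of Lemma~\ref{moinsmoins}.
\end{itemize}

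\textbf{Main obstacle.} The main difficulty is the bookkeeping in Step 2, particularly item (c), where the non-Gaussian fourth moments and the dependence of $X_n$ on both $X_k$ and $\varepsilon_{k+1}$ must be controlled without losing a factor $n$. I would first treat the Gaussian case via Isserlis' formula to identify which pairings survive. I would then replace the Isserlis identities by the moment bound $\mathbf{A}_2$ and Cauchy--Schwarz, checking that every surviving term carries either a factor $1/n$ or a summable geometric factor in $a_1$.
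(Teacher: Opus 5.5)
Your Step 1 is correct. The split $X_{n+1}=\varepsilon_{n+1}+\rho X_n$ is a slightly cleaner version of the paper's route. After your further split $\rho X_n=\sum_{j=k+2}^{n}\rho^{n-j+1}\varepsilon_j+\rho^{n-k}X_{k+1}$, the quantity $n\,\mathbf{II}_n$ is exactly the paper's $\sum_k\bigl(Z^+_{k,n}+Z^-_{k,n}\bigr)$ with the $\varepsilon_{n+1}$-part of $Z^+_{k,n}$ removed. So the leading term is exact, and $\mathcal R'_n=n\,\mathbb E\Vert\mathbf{II}_n\Vert^2\ge0$ is immediate; the paper only bounds its cross terms in absolute value. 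What remains is the content of Lemmas \ref{plusplus}--\ref{moinsmoins}, and there your Step 2 does not yield the stated bounds.

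The failing step is (a). Cauchy--Schwarz with $\Vert C^{\dagger1/2}\rho\Vert_\infty\le a_2'$ and $\mathbb E\Vert C^{\dagger1/2}X_k\Vert^2$ costs the factor $\mathbb E\Vert C^{\dagger1/2}X_k\Vert^2=\sum_p\lambda_p/(\lambda_p+\alpha)\asymp m$. You would therefore only get a bound on $r_v$ of order $m$, i.e.\ a remainder of order $m/n$, as large as $\mathbf V_n$ itself, instead of the $m$-free $r_v\le\sigma_\varepsilon^2(a_2')^2/(1-a_1^2)$.

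Also, ``the piece independent of $\varepsilon_{k+1}$'' lumps together two terms of different orders.
\begin{itemize}
\item The innovation part $Y_k=\sum_{j=k+2}^{n}\rho^{n-j}\varepsilon_j$ is independent of $(X_k,\varepsilon_{k+1})$, so its off-diagonal terms vanish. Its diagonal term must be computed with the covariance of $X_k$, exactly as in your Step 1: $\mathbb E[\langle X_k,C^\dagger\rho Y_k\rangle^2\mid Y_k]=\Vert C^{1/2}C^\dagger\rho Y_k\Vert^2\le(a_2')^2\Vert Y_k\Vert^2$, since $\Vert C^{1/2}C^{\dagger1/2}\Vert_\infty\le1$. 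Together with $\mathbb E\Vert Y_k\Vert^2\le\sigma_\varepsilon^2/(1-a_1^2)$, this gives the $r_v$ of Lemma \ref{plusplus}.
\item The part $\rho^{n-k+1}X_k$ is not independent of $X_k$, so $\sigma_\varepsilon^2\,\mathbb E\langle X_k,C^\dagger\rho^{n-k+1}X_k\rangle^2$ is a genuine fourth moment. It carries the factor $a_1^{2(n-k)}$, however, so it is summable in $k$ and belongs to the $n^{-2}$ part. For instance, use $\mathbb E[\Vert C^{\dagger1/2}X_1\Vert^2\Vert X_1\Vert^2]\le\xi_2\,(\mathrm{tr}C)\,(1+1/\eta)m$ from Lemmas \ref{lem.conc.mom} and \ref{series}.
\end{itemize}
At this very point of Lemma \ref{moinsmoins}, the paper replaces $\mathbb E\langle X_k,C^\dagger\rho^{n-k+1}X_k\rangle^2$ by $[\mathrm{tr}(CC^\dagger\rho^{n-k+1})]^2$, i.e.\ it drops the variance of the quadratic form. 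A fourth-moment bound as in your (c) is what is actually needed there, and it adds an $m$-dependent piece to $\Theta_2$.

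Items (b)--(c) are a forecast rather than a proof, and the allocation of terms is off. Not all off-diagonal terms are of $r_v'$ type. The pairs $\mathbb E\langle Z^+_{j,n},Z^-_{k,n}\rangle$ with $j<k$ give $r_v'$ (Lemma \ref{plusmoins}), but the pairs $\mathbb E\langle Z^-_{j,n},Z^-_{k,n}\rangle$ produce the $\Vert C_\varepsilon^2\Vert_1$-terms of $\Theta_1$. Moreover, $\varepsilon_{k+1}$ can be paired not only through $\rho^{n-k}\varepsilon_{k+1}$ inside $\rho X_n$. It can also be paired through the regressor $X_l=u_{l,k}+\rho^{l-k}X_k$ for $l>k$, which contains $\rho^{l-k-1}\varepsilon_{k+1}$. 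So the constants are not ``presumably'' those of Lemma \ref{moinsmoins}; the pairing computation has to be carried out. Finally, $\sum_p\lambda_p/(\lambda_p+\alpha)=O(m)$ requires the tail condition $\mathbf A_3$ via Lemma \ref{series}, not merely $\alpha=\lambda_m$.
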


\begin{remark}
The statement of the previous Proposition gives an exact approximation of the dominant variance term plus a residual part (namely $\mathcal{R}_{n}^{\prime} /n $) 
\end{remark}

The next Corollary gives a rough but more compact version of Proposition \ref{variance}. It is not proved.

\begin{corollary}
\label{cor-variance}The following bound holds under the assumptions of Proposition \ref{variance} :
\begin{align*}
\mathbb{E}\left\Vert U_{n}C^{\dag}\left( X_{n+1}\right) \right\Vert ^{2} &
\leq (a'_{3})^{2} \,\mathbf{V}_{n}+\frac{1}{n}  \mathcal{R}'_{n}.
\end{align*}
\end{corollary}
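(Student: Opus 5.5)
The corollary should follow from Proposition \ref{variance} by bounding the leading term, with the residual $\mathcal{R}'_{n}/n$ carried over unchanged. The plan is to show
\begin{equation*}
\frac{n-1}{n^{2}}\,\sigma_{\varepsilon}^{2}\sum_{k=1}^{+\infty}\frac{\lambda_{k}}{(\lambda_{k}+\alpha)^{2}}\left\langle C_{\varepsilon}e_{k},e_{k}\right\rangle \leq (a'_{3})^{2}\,\mathbf{V}_{n}.
\end{equation*}
There are three elementary reductions.

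\textbf{First}, bound the prefactor: $(n-1)/n^{2}\leq 1/n$.

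\textbf{Second}, compare the diagonal entries of $C_{\varepsilon}$ with the eigenvalues of $C$. We have
\begin{equation*}
\left\langle C_{\varepsilon}e_{k},e_{k}\right\rangle=\Vert C_{\varepsilon}^{1/2}e_{k}\Vert^{2},\qquad C_{\varepsilon}^{1/2}e_{k}=\bigl(C_{\varepsilon}^{1/2}C^{-1/2}\bigr)C^{1/2}e_{k}.
\end{equation*}
Here $C_{\varepsilon}^{1/2}C^{-1/2}$ is densely defined, and $e_{k}\in\mathrm{Im}\,C^{1/2}$ because $e_{k}=C^{1/2}(\lambda_{k}^{-1/2}e_{k})$. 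By facts F2–F3 of Section \ref{tech-res-op}, $C_{\varepsilon}^{1/2}C^{-1/2}$ has a bounded extension equal to $\bigl(C^{-1/2}C_{\varepsilon}^{1/2}\bigr)^{\ast}$. By F1 its operator norm is $a'_{3}$. Hence
\begin{equation*}
\left\langle C_{\varepsilon}e_{k},e_{k}\right\rangle\leq (a'_{3})^{2}\,\Vert C^{1/2}e_{k}\Vert^{2}=(a'_{3})^{2}\lambda_{k}.
\end{equation*}
As a cross-check, (\ref{cov}) gives $C_{\varepsilon}=C-\rho C\rho^{\ast}\leq C$ in the order of positive operators. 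This yields $\left\langle C_{\varepsilon}e_{k},e_{k}\right\rangle\leq\lambda_{k}$ directly and shows that $a'_{3}\leq 1$ is consistent.

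\textbf{Third}, use $\lambda_{k}/(\lambda_{k}+\alpha)\leq 1$. This gives
\begin{equation*}
\frac{\lambda_{k}}{(\lambda_{k}+\alpha)^{2}}\cdot(a'_{3})^{2}\lambda_{k}\leq (a'_{3})^{2}\frac{\lambda_{k}}{\lambda_{k}+\alpha}.
\end{equation*}
Summing over $k$ and multiplying by $\sigma_{\varepsilon}^{2}/n$ yields exactly $(a'_{3})^{2}\mathbf{V}_{n}$, by the definition (\ref{biais-var}).

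Adding back the residual $\mathcal{R}'_{n}/n$ from Proposition \ref{variance}, whose bounds are already explicit, gives the corollary.

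The only delicate point is the second step: one must justify identifying the densely defined product $C_{\varepsilon}^{1/2}C^{-1/2}$ on $\mathrm{Im}\,C^{1/2}$ with the adjoint of the bounded operator $C^{-1/2}C_{\varepsilon}^{1/2}$, so that the two norms agree. If that identification were troublesome, I would fall back on the operator inequality $C_{\varepsilon}\leq C$ from (\ref{cov}). That fallback gives the same bound with constant $1\geq (a'_{3})^{2}$, which suffices for the upper bound up to constants.
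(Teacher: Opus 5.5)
Your argument is correct and is essentially the intended one. The paper states this corollary without proof, but your chain $(n-1)/n^{2}\leq 1/n$, then $\left\langle C_{\varepsilon}e_{k},e_{k}\right\rangle=\lambda_{k}\Vert C_{\varepsilon}^{1/2}C^{-1/2}e_{k}\Vert^{2}\leq (a'_{3})^{2}\lambda_{k}$, then $\lambda_{k}/(\lambda_{k}+\alpha)\leq 1$, with $\mathcal{R}'_{n}/n$ carried over from Proposition \ref{variance}, is exactly the manipulation the paper itself applies to $\sum_{k}\Vert C_{\varepsilon}^{1/2}e_{k}\Vert^{2}/(\lambda_{k}+\alpha)$ in the proof of Proposition \ref{conc.mom}.
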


\noindent \textbf{Proof of Proposition \ref{variance}:} The technique of proof is inspired by \cite{Mas2007}. But
the non asymptotic framework and the search of sharp bounds make
computations quite longer :
\begin{equation*}
U_{n}C^{\dag}\left( X_{n+1}\right) =\frac{1}{n}\sum_{k=1}^{n-1}
\varepsilon_{k+1}\left\langle X_{k},C^{\dag}\left( X_{n+1}\right)
\right\rangle =\frac{1}{n}\sum_{k=1}^{n-1}\left(
Z_{k,n}^{+}+Z_{k,n}^{-}\right)
\end{equation*}

with :
\begin{equation*}
Z_{k,n}^{+}=\varepsilon_{k+1}\left\langle X_{k},C^{\dag}\left( \varepsilon
_{n+1}+...+\rho^{n-k-1}\varepsilon_{k+2}\right) \right\rangle \quad
Z_{k,n}^{-}=\varepsilon_{k+1}\left\langle
X_{k},C^{\dag}\rho^{n-k}X_{k+1}\right\rangle .
\end{equation*}
A primary decomposition is :
\begin{equation*}
\left\Vert U_{n}C^{\dag}\left( X_{n+1}\right) \right\Vert ^{2}=\frac {1}{
n^{2}}\left\Vert \sum_{k=1}^{n-1}\left( Z_{k,n}^{+}+Z_{k,n}^{-}\right)
\right\Vert ^{2}=\frac{1}{n^{2}}\left\{ \left\Vert
\sum_{k=1}^{n-1}Z_{k,n}^{+}\right\Vert ^{2}+2\sum_{1\leq j,k\leq
n-1}\left\langle Z_{j,n}^{+},Z_{k,n}^{-}\right\rangle +\left\Vert
\sum_{k=1}^{n-1}Z_{k,n}^{-}\right\Vert ^{2}\right\}.
\end{equation*}

The Proposition follows from three forthcoming Lemmas adressing each of the
three terms in the equation above.

\begin{lemma}\label{plusplus}
We have :
\begin{align*}
\frac{1}{n^{2}}\mathbb{E}\left\Vert \sum_{k=1}^{n-1}Z_{k,n}^{+}\right\Vert
^{2} & = \sigma_{\varepsilon}^{2} \frac{n-1}{n^2}\left[ \sum_{p=1}^{+\infty}\frac{\lambda_{p}}{\left(
\lambda_{p}+\alpha\right) ^{2}}\left\langle
C_{\varepsilon}e_{p},e_{p}\right\rangle +r_{v}\right], \\
0 & \leq r_{v}\leq \sigma_{\varepsilon}^{2} \frac{(a'_2)^2}{1-a_{1}^{2}}.
\end{align*}
\end{lemma}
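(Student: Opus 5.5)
The plan is to compute $\mathbb{E}\Vert\sum_{k}Z_{k,n}^{+}\Vert^{2}$ exactly, using only the independence of the innovations. First I show that the cross terms vanish. The diagonal terms then split into a leading part, which gives the announced series, and a geometric tail, which is $r_v$.

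\textbf{Notation.} For $1\le k\le n-1$ set
\[
W_k=\sum_{i=k+2}^{n+1}\rho^{\,n+1-i}\varepsilon_i ,
\]
so that $Z_{k,n}^{+}=\varepsilon_{k+1}\langle C^{\dag}X_k,W_k\rangle$. Recall that $X_k$ is measurable with respect to $(\varepsilon_i)_{i\le k}$.

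\textbf{Step 1: cross terms vanish.} Expanding the square gives diagonal terms plus $2\sum_{j<k}\mathbb{E}\langle Z_{j,n}^{+},Z_{k,n}^{+}\rangle$. Fix $j<k$ and integrate first with respect to $\varepsilon_{k+1}$.
\begin{itemize}
\item The variable $\varepsilon_{k+1}$ appears in $\langle\varepsilon_{j+1},\varepsilon_{k+1}\rangle$.
\item It also appears in $W_j$, through the single summand $\rho^{n-k}\varepsilon_{k+1}$.
\item It does not appear in $X_j$, $X_k$ or $W_k$.
\end{itemize}
Since $\varepsilon_{k+1}$ is centered, the terms linear in it vanish. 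The only surviving contribution is
\[
\mathbb{E}\Big[\big\langle \varepsilon_{j+1},C_\varepsilon\rho^{\ast(n-k)}C^{\dag}X_j\big\rangle\,\langle C^{\dag}X_k,W_k\rangle\Big].
\]
Here $W_k$ depends only on $\varepsilon_{k+2},\dots,\varepsilon_{n+1}$. It is therefore centered and independent of $(\varepsilon_{j+1},X_j,X_k)$, and the expectation is zero. So every cross term vanishes, and no higher moment assumption is needed.

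\textbf{Step 2: diagonal terms.} Since $\varepsilon_{k+1}$ is independent of $(X_k,W_k)$, each diagonal term factorises:
\[
\mathbb{E}\Vert Z_{k,n}^{+}\Vert^2=\sigma_\varepsilon^2\,\mathbb{E}\langle C^{\dag}X_k,W_k\rangle^2 .
\]
Moreover $X_k$ and $W_k$ are independent, with covariances $C$ and $\Sigma_{W_k}=\sum_{i=0}^{n-k-1}\rho^iC_\varepsilon\rho^{\ast i}$. Hence
\[
\mathbb{E}\langle C^{\dag}X_k,W_k\rangle^2=\mathrm{tr}\big(C^{\dag}CC^{\dag}\Sigma_{W_k}\big).
\]
The $i=0$ term equals $\sum_p \lambda_p(\lambda_p+\alpha)^{-2}\langle C_\varepsilon e_p,e_p\rangle$. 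This uses that $C$ and $C^{\dag}$ are diagonal in $(e_p)$, so $C^{\dag}CC^{\dag}$ is diagonal with entries $\lambda_p/(\lambda_p+\alpha)^2$. Summing over $k=1,\dots,n-1$ and dividing by $n^2$ gives the factor $\frac{n-1}{n^2}$ and the main term of the lemma.

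\textbf{Step 3: the remainder $r_v$.} Each term with $i\ge1$ is nonnegative:
\[
\mathrm{tr}\big(C^{\dag}CC^{\dag}\rho^iC_\varepsilon\rho^{\ast i}\big)=\Vert C^{1/2}C^{\dag}\rho^iC_\varepsilon^{1/2}\Vert_2^2\ge0 .
\]
To bound it, write $C^{1/2}C^{\dag}\rho^i=(C^{1/2}C^{\dag}C^{1/2})(C^{-1/2}\rho)\rho^{i-1}$ and use $\Vert C^{1/2}C^{\dag}C^{1/2}\Vert_\infty\le1$ together with $\Vert C_\varepsilon^{1/2}\Vert_2^2=\sigma_\varepsilon^2$. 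This gives
\[
\Vert C^{1/2}C^{\dag}\rho^iC_\varepsilon^{1/2}\Vert_2^2\le (a_2')^2a_1^{2(i-1)}\sigma_\varepsilon^2 .
\]
Summing the geometric series in $i$ yields a per-$k$ remainder in $\big[0,\sigma_\varepsilon^2(a_2')^2/(1-a_1^2)\big]$. I then define $r_v$ as the average over $k$ of these remainders, which keeps it in the same interval. This gives exactly the stated form.

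\textbf{Main obstacle.} The delicate point is Step 1, namely tracking which innovations enter which factor. $X_k$ contains $\varepsilon_{j+1}$ and $W_j$ contains $\varepsilon_{k+1}$, so the cross terms are not obviously zero. The ordering of the conditioning (first $\varepsilon_{k+1}$, then $W_k$) is what makes them vanish. The density issue with $C^{-1/2}\rho$ is handled by the facts F1--F4 and assumption $\mathbf{A}_1$, which give $a_2'<\infty$.
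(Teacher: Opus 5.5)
Your proposal is correct and follows the paper's proof. The cross terms vanish by independence of $\varepsilon_{k+1}$ and of the future innovations $W_k$: you integrate out $\varepsilon_{k+1}$ first, while the paper first splits $W_j$ into $W_k$ plus the remaining terms, but the argument is the same. As in the paper, the diagonal terms factor, and the $i\geq 1$ contributions are bounded geometrically using $\Vert CC^{\dag}\Vert_\infty\leq 1$ and $a_2'$.

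Your version is slightly cleaner than the paper's. You place $C_\varepsilon^{1/2}$ on the correct side inside the Hilbert--Schmidt norm. You also make explicit that $r_v$ is the average over $k$ of the $k$-dependent remainders, which the paper leaves implicit.
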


\textbf{Proof of Lemma \ref{plusplus} :} Take $1\leq i<k\leq n-1$ then :
\begin{align*}
\mathbb{E}\left\langle Z_{k,n}^{+},Z_{i,n}^{+}\right\rangle & =\mathbb{E}
\left\langle \varepsilon_{k+1},\varepsilon_{i+1}\right\rangle \left\langle
X_{k},C^{\dag}\left( \varepsilon_{n+1}+...+\rho^{n-k-1}\varepsilon
_{k+2}\right) \right\rangle \left\langle X_{i},C^{\dag}\left( \varepsilon
_{n+1}+...+\rho^{n-i-1}\varepsilon_{i+2}\right) \right\rangle \\
& =\mathbb{E}\left\langle \varepsilon_{k+1},\varepsilon_{i+1}\right\rangle
\left\langle X_{k},C^{\dag}\left(
\varepsilon_{n+1}+...+\rho^{n-k-1}\varepsilon_{k+2}\right) \right\rangle
\left\langle X_{i},C^{\dag}\left(
\varepsilon_{n+1}+...+\rho^{n-k-1}\varepsilon_{k+2}\right) \right\rangle \\
& +\mathbb{E}\left\langle \varepsilon_{k+1},\varepsilon_{i+1}\right\rangle
\left\langle X_{k},C^{\dag}\left(
\varepsilon_{n+1}+...+\rho^{n-k-1}\varepsilon_{k+2}\right) \right\rangle
\left\langle X_{i},C^{\dag}\left(
\rho^{n-k}\varepsilon_{k+1}+...+\rho^{n-i-1}\varepsilon_{i+2}\right)
\right\rangle \\
& =0
\end{align*}
because $\varepsilon_{k+1}$ is independent of all the rest in the first term
and $\left( \varepsilon_{n+1}+...+\rho^{n-k-1}\varepsilon_{k+2}\right) $ is
independent of all the rest in the second. Hence :
\begin{equation*}
\mathbb{E}\left\Vert \sum_{k=1}^{n-1}Z_{k,n}^{+}\right\Vert ^{2}=\sum
_{k=1}^{n-1}\mathbb{E}\left\Vert Z_{k,n}^{+}\right\Vert ^{2}
\end{equation*}
with :
\begin{align*}
\mathbb{E}\left\Vert Z_{k,n}^{+}\right\Vert ^{2} & =\mathbb{E}\left\Vert
\varepsilon_{k+1}\right\Vert ^{2}\mathbb{E}\left\langle X_{k},C^{\dag}\left(
\varepsilon_{n+1}+...+\rho^{n-k-1}\varepsilon_{k+2}\right) \right\rangle ^{2}
\\
& =\mathbb{E}\left\Vert \varepsilon_{1}\right\Vert ^{2}\mathbb{E}\left\Vert
C^{1/2}C^{\dag}\left( \varepsilon_{n+1}+...+\rho^{n-k-1}\varepsilon
_{k+2}\right) \right\Vert ^{2}
\end{align*}
after conditioning with respect to $\varepsilon_{n+1}+...+\rho^{n-k-1}
\varepsilon_{k+2}$. By independence :
\begin{align*}
\mathbb{E}\left\Vert C^{1/2}C^{\dag}\left( \varepsilon_{n+1}+...+\rho
^{n-k-1}\varepsilon_{k+2}\right) \right\Vert ^{2} & =\mathbb{E}\left\Vert
C^{1/2}C^{\dag}\varepsilon_{n+1}\right\Vert ^{2}+...+\mathbb{E}\left\Vert
C^{1/2}C^{\dag}\rho^{n-k-1}\varepsilon_{k+2}\right\Vert ^{2} \\
& =\left\Vert C_{\varepsilon}^{1/2}C^{1/2}C^{\dag}\right\Vert
_{2}^{2}+...+\left\Vert
C_{\varepsilon}^{1/2}C^{1/2}C^{\dag}\rho^{n-k-1}\right\Vert _{2}^{2} \\
& =\left\Vert C_{\varepsilon}^{1/2}C^{1/2}C^{\dag}\right\Vert _{2}^{2}+r_{v}
\end{align*}
with $r_{v}=\sum_{k=1}^{n-k-1}\left\Vert C_{\varepsilon}^{1/2}C^{1/2}C^{\dag
}\rho^{n-k-1}\right\Vert _{2}^{2}$ and possibly $r_{v}=0$ when $n-k-1=0$.
Observe also that for all $j\geq1$ and $\left\Vert CC^{\dag}\right\Vert
_{\infty}\leq1$ :
\begin{align*}
\left\Vert C_{\varepsilon}^{1/2}C^{1/2}C^{\dag}\rho^{j}\right\Vert _{2}^{2}
& \leq\left\Vert C_{\varepsilon}^{1/2}\right\Vert _{2}^{2}\left\Vert
C^{1/2}C^{\dag}\rho^{j}\right\Vert _{\infty}^{2} \\
& \leq\mathrm{tr}C_{\varepsilon}\left\Vert C^{-1/2}\rho\right\Vert _{\infty
}^{2}\left\Vert \rho\right\Vert
_{\infty}^{2j-2}=\sigma_{\varepsilon}^{2}\left\Vert C^{-1/2}\rho\right\Vert
_{\infty}^{2}a_{1}^{2j-2}
\end{align*}
hence $r_{v}\leq\frac{\sigma_{\varepsilon}^{2}}{1-a_{1}^{2}} (a'_2)^{2}$. At last :
\begin{equation*}
\left\Vert C_{\varepsilon}^{1/2}C^{1/2}C^{\dag}\right\Vert _{2}^{2}=\mathrm{
tr}\left( C^{\dag}C^{1/2}C_{\varepsilon}C^{1/2}C^{\dag}\right)
=\sum_{p=1}^{+\infty}\frac{\lambda_{p}}{\left( \lambda_{p}+\alpha\right) ^{2}
}\left\langle C_{\varepsilon}e_{p},e_{p}\right\rangle.
\end{equation*}
This finishes the proof of the first Lemma.

\begin{lemma}
\label{plusmoins}We have :
\begin{equation*}
\left\vert \sum_{1\leq j,k\leq n-1}\mathbb{E}\left\langle
Z_{j,n}^{+},Z_{k,n}^{-}\right\rangle \right\vert \leq 2\left\Vert C_{\varepsilon}^{2}\right\Vert _{1} (a'_2)^2 \frac{a_{1}^{2}}{\left(
1-a_{1}^{2}\right) ^{2}}.
\end{equation*}
\end{lemma}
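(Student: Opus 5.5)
We expand the cross terms and use independence of the innovations, as in the proof of Lemma \ref{plusplus}. Write
$$
Z_{j,n}^{+}=\varepsilon_{j+1}\langle X_j,C^{\dag}W_j\rangle, \qquad W_j=\varepsilon_{n+1}+\dots+\rho^{n-j-1}\varepsilon_{j+2},
$$
and
$$
Z_{k,n}^{-}=\varepsilon_{k+1}\langle X_k,C^{\dag}\rho^{n-k}X_{k+1}\rangle.
$$
In $Z_{k,n}^{-}$ we substitute $X_{k+1}=\rho X_k+\varepsilon_{k+1}$. This isolates the innovation $\varepsilon_{k+1}$, which appears twice, and leaves $X_k$, which is measurable with respect to $\varepsilon_i$, $i\le k$, and $X_0$. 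The goal is to show that most pairs $(j,k)$ give a zero expectation. Only a few configurations survive, namely those where the same innovation appears an even number of times. Those are then bounded by operator norms that decay geometrically.

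\textbf{Case $j>k$.} Here $\varepsilon_{j+1}$ is independent of $X_j$, $W_j$ and $Z_{k,n}^{-}$, since $Z_{k,n}^{-}$ depends only on innovations up to $k+1\le j$. Conditioning on everything else kills the expectation.

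\textbf{Case $j<k$.} The factor $W_j$ contains $\varepsilon_{i}$ for $j+2\le i\le n+1$. All terms with $i>k+1$ are independent of the rest and centred, so they vanish. What remains are the components of $W_j$ with $j+2\le i\le k+1$, which sit inside $X_k$ or equal $\varepsilon_{k+1}$. I would first treat the term with $i=k+1$, which pairs $\varepsilon_{j+1}$ against $\varepsilon_{k+1}$. Since $\varepsilon_{j+1}$ is then matched only by the occurrence of $\varepsilon_{j+1}$ inside $X_k$, we obtain a fourth-order moment. The expectation factorises through $\mathbb{E}[\varepsilon\otimes\varepsilon]=C_\varepsilon$ applied twice, which is where $\|C_\varepsilon^2\|_1$ enters.

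\textbf{Case $j=k$.} Here we get $\mathbb{E}\|\varepsilon_{k+1}\|^2\langle X_k,C^\dag W_k\rangle\langle X_k,C^\dag\rho^{n-k}X_{k+1}\rangle$. The factor $W_k$ only involves innovations after $k+1$, while the other factors do not, so this expectation is zero.

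After the cases are sorted, each surviving contribution should be bounded by $\mathrm{tr}(C_\varepsilon^2)$ times a product of factors. These factors have the form $\|C^{1/2}C^{\dag}\rho^{a}\|_\infty\le a'_2 a_1^{a-1}$, which is the same estimate already used in Lemma \ref{plusplus}, together with $\|\rho^{b}\|_\infty\le a_1^{b}$. Summing over the two lag indices (the gap $k-j$ and the distance $n-k$) gives a double geometric series bounded by $a_1^2/(1-a_1^2)^2$. The factor $2$ comes from the two ways $\varepsilon_{j+1}$ can pair, or equivalently from the two terms of $Z_{k,n}^{-}$. The bound is then independent of $n$, as claimed.

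The main obstacle is the careful bookkeeping of the surviving fourth-moment terms in the case $j<k$. In general these involve $\mathbb{E}\langle\varepsilon,A\varepsilon\rangle\langle\varepsilon,B\varepsilon\rangle$, which is not fully determined by $C_\varepsilon$ without Gaussianity. I would first try to arrange the surviving terms so that each innovation appears exactly twice, in two independent pairs. Then only second moments enter, and each pair contributes a trace against $C_\varepsilon$. The factor $\mathrm{tr}(C_\varepsilon\cdot\,\cdot\,C_\varepsilon\cdot)$ is controlled by $\|C_\varepsilon^2\|_1$ via $|\mathrm{tr}(UV)|\le\|U\|_1\|V\|_\infty$. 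If a genuine fourth moment of a single $\varepsilon$ survives, I would fall back on a Cauchy--Schwarz bound with moment assumption $\mathbf{A}_2$.
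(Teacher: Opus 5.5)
Your route is the paper's. The cases $j>k$ and $j=k$ are killed exactly as there, and for $j<k$ you correctly discard the components of $W_j$ with index larger than $k+1$. What is missing is the step that actually fixes the constant. For $j<k$ you never establish which configurations survive, and you leave open that a genuine fourth moment of a single innovation might remain. Your fallback (Cauchy--Schwarz with $\mathbf{A}_2$) cannot give the stated right-hand side, which involves only $\Vert C_\varepsilon^2\Vert_1$, $a_2'$ and $a_1$. Your alternative explanation of the factor $2$ is also wrong: the term $\varepsilon_{k+1}\langle X_k,C^{\dag}\rho^{n-k}\varepsilon_{k+1}\rangle$ of $Z_{k,n}^-$ contributes nothing.

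The missing argument is a parity count after substituting $X_k=u_{k,j}+\rho^{k-j}X_j$, with $u_{k,j}=\varepsilon_k+\dots+\rho^{k-j-1}\varepsilon_{j+1}$. Since $X_j,\varepsilon_{j+1},\dots,\varepsilon_{k+1}$ are independent and centred, any product in which one of them occurs exactly once has zero mean.

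\begin{itemize}
\item Pairing the $\rho X_k$-part of $Z^-_{k,n}$ with the $\varepsilon_{j+2},\dots,\varepsilon_k$-part of $W_j$ leaves $\varepsilon_{k+1}$ alone.
\item In any product containing the $\varepsilon_{k+1}$-part of $Z^-_{k,n}$, $\varepsilon_{j+1}$ must be matched by $\rho^{k-j-1}\varepsilon_{j+1}$ inside $X_k$, and then $X_j$ occurs once. So the triple occurrence of $\varepsilon_{k+1}$ never matters.
\item The only survivor is $\langle\varepsilon_{k+1},\varepsilon_{j+1}\rangle\langle X_j,C^{\dag}\rho^{n-k}\varepsilon_{k+1}\rangle\langle X_k,C^{\dag}\rho^{n-k+1}X_k\rangle$, the paper's $\mathbf{B}_1\mathbf{B}_3$. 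Within it the same rule keeps exactly two terms: $\rho^{k-j-1}\varepsilon_{j+1}$ occupies one slot of the quadratic form and $\rho^{k-j}X_j$ the other. This is where the $2$ comes from.
\end{itemize}

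In these two terms $\varepsilon_{k+1}$, $\varepsilon_{j+1}$ and $X_j$ each occur exactly twice, so only $C_\varepsilon$, $C_\varepsilon$ and $C$ enter. Conditioning in cascade gives $\mathrm{tr}(CC^{\dag}\rho^{n-k}C_\varepsilon^2(\rho^*)^{k-j-1}C^{\dag}\rho^{n-j+1})$ and $\mathrm{tr}(CC^{\dag}\rho^{n-k}C_\varepsilon^2(\rho^*)^{n-j}C^{\dag}\rho^{k-j})$. Each is bounded by $\Vert C_\varepsilon^2\Vert_1(a_2')^2a_1^{2n-2j-2}$ as follows:
\begin{itemize}
\item the first $C^{\dag}$ is absorbed using $\Vert CC^{\dag}\Vert_\infty\le 1$;
\item the second $C^{\dag}$ is split as $C^{\dag1/2}C^{\dag1/2}$ between the flanking $\rho^*$ and $\rho$, with $\Vert C^{\dag1/2}\rho\Vert_\infty\le a_2'$ on each side.
\end{itemize}
These are not factors of the form $C^{1/2}C^{\dag}\rho^a$, contrary to what you suggest.

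The term $k=j+1$ of the first trace has no $\rho^*$ to the left of that $C^{\dag}$. It needs a separate estimate, for instance via $\Vert C^{\dag1/2}C_\varepsilon^{1/2}\Vert_\infty\le a_3'$, at the cost of a different constant; the paper glosses over this as well. Your double geometric sum over $k-j$ and $n-k$ then concludes.
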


\textbf{Proof of Lemma \ref{plusmoins} :} Start with :
\begin{equation*}
\sum_{1\leq j,k\leq n-1}\left\langle Z_{j,n}^{+},Z_{k,n}^{-}\right\rangle
=\sum_{1\leq k\leq n-1}\left\langle Z_{k,n}^{+},Z_{k,n}^{-}\right\rangle
+\sum_{1\leq k<j\leq n-1}\left\langle Z_{j,n}^{+},Z_{k,n}^{-}\right\rangle
+\sum_{1\leq j<k\leq n-1}\left\langle Z_{j,n}^{+},Z_{k,n}^{-}\right\rangle .
\end{equation*}

Go on with :
\begin{equation*}
\mathbb{E}\left\langle Z_{k,n}^{+},Z_{k,n}^{-}\right\rangle =\mathbb{E}\left[
\left\Vert \varepsilon_{k+1}\right\Vert ^{2}\left\langle
X_{k},C^{\dag}\left(
\varepsilon_{n+1}+...+\rho^{n-k-1}\varepsilon_{k+2}\right) \right\rangle
\left\langle X_{k},C^{\dag}\rho^{n-k}X_{k+1}\right\rangle \right] =0
\end{equation*}
due to $\varepsilon_{n+1}+...+\rho^{n-k-1}\varepsilon_{k+2}$. Take $k<j$
then $\mathbb{E}\left\langle Z_{j,n}^{+},Z_{k,n}^{-}\right\rangle =0$ is
clear by independence of $\varepsilon_{j+1}$ from the rest. Keep in mind
that from now on until the end of the proof : $k>j$. The remaining term is :
\begin{equation*}
\sum_{1\leq j<k\leq n-1}\left\langle Z_{j,n}^{+},Z_{k,n}^{-}\right\rangle
=\sum_{1\leq j<k\leq n-1}\left\langle \varepsilon_{k+1},\varepsilon
_{j+1}\right\rangle \left\langle
X_{k},C^{\dag}\rho^{n-k}X_{k+1}\right\rangle \left\langle
X_{j},C^{\dag}\left(
\varepsilon_{n+1}+...+\rho^{n-j-1}\varepsilon_{j+2}\right) \right\rangle .
\end{equation*}

We have to split the previous term in different pieces in order to compute
it more accurately. Denote :
\begin{align*}
\mathbf{B}_{1} & =\left\langle
\varepsilon_{k+1},\varepsilon_{j+1}\right\rangle \left\langle
X_{k},C^{\dag}\rho^{n-k+1}X_{k}\right\rangle \quad\mathbf{B}
_{2}=\left\langle \varepsilon_{k+1},\varepsilon_{j+1}\right\rangle
\left\langle X_{k},C^{\dag}\rho^{n-k}\varepsilon_{k+1}\right\rangle \\
\mathbf{B}_{3} & =\left\langle X_{j},C^{\dag}\rho^{n-k}\varepsilon
_{k+1}\right\rangle \quad\mathbf{B}_{4}=\left\langle X_{j},C^{\dag}\left(
\rho^{n-k+1}\varepsilon_{k}+...+\rho^{n-j-1}\varepsilon_{j+2}\right)
\right\rangle
\end{align*}
where the sequence $C^{\dag}\left(
\varepsilon_{n+1}+...+\rho^{n-k-1}\varepsilon_{k+2}\right) $ was dropped for
obvious reasons in the second line above. The four $\mathbf{B}_{p}$'s all depend on $k$ and $j$
although these indices were dropped for clarity in the notations. We have :
\begin{equation*}
\sum_{1\leq j<k\leq n-1}\mathbb{E}\left\langle
Z_{j,n}^{+},Z_{k,n}^{-}\right\rangle =\sum_{1\leq j<k\leq n-1}\mathbb{E}
\left[ \left( \mathbf{B}_{1}+\mathbf{B}_{2}\right) \left( \mathbf{B}_{3}+
\mathbf{B}_{4}\right) \right] .
\end{equation*}
Notice that when $k=j+1$ then $\mathbf{B}_{4}$ does not exist. Otherwise $
\mathbb{E}\left( \mathbf{B}_{1}\mathbf{B}_{4}\right) =0$ due to the single $
\varepsilon_{k+1}$. In order to alleviate the notations set $
u_{k,j}=\varepsilon_{k}+...+\rho^{k-j-1}\varepsilon_{j+1}$. Remind that $
X_{k}=u_{k,j}+\rho^{k-j}X_{j}$. We go on with :
\begin{align*}
\mathbb{E}\left( \mathbf{B}_{2}\mathbf{B}_{3}\right) & =\mathbb{E}
\left\langle \varepsilon_{k+1},\varepsilon_{j+1}\right\rangle \left\langle
u_{k,j}+\rho^{k-j}X_{j},C^{\dag}\rho^{n-k}\varepsilon_{k+1}\right\rangle
\left\langle X_{j},C^{\dag}\rho^{n-k}\varepsilon_{k+1}\right\rangle \\
& =\mathbb{E}\left\langle \varepsilon_{k+1},\varepsilon_{j+1}\right\rangle
\left\langle \rho^{k-j-1}\varepsilon_{j+1}+\rho^{k-j}X_{j},C^{\dag}\rho
^{n-k}\varepsilon_{k+1}\right\rangle \left\langle X_{j},C^{\dag}\rho
^{n-k}\varepsilon_{k+1}\right\rangle \\
& =\mathbb{E}\left\langle \varepsilon_{k+1},\varepsilon_{j+1}\right\rangle
\left\langle
\rho^{k-j}X_{j},C^{\dag}\rho^{n-k}\varepsilon_{k+1}\right\rangle
\left\langle X_{j},C^{\dag}\rho^{n-k}\varepsilon_{k+1}\right\rangle \\
& +\mathbb{E}\left\langle \varepsilon_{k+1},\varepsilon_{j+1}\right\rangle
\left\langle \rho^{k-j-1}\varepsilon_{j+1},C^{\dag}\rho^{n-k}\varepsilon
_{k+1}\right\rangle \left\langle X_{j},C^{\dag}\rho^{n-k}\varepsilon
_{k+1}\right\rangle
\end{align*}
and $\mathbb{E}\left( \mathbf{B}_{2}\mathbf{B}_{3}\right) =0$ again because $
\varepsilon_{j+1}$ and $X_{j}$ are centered. A development similar ot the
one above shows that $\mathbb{E}\left( \mathbf{B}_{2}\mathbf{B}_{4}\right)
=0 $ for the same reasons. Our last concern is :
\begin{align*}
\mathbb{E}\left( \mathbf{B}_{1}\mathbf{B}_{3}\right) & =\mathbb{E}
\left\langle \varepsilon_{k+1},\varepsilon_{j+1}\right\rangle \left\langle
X_{j},C^{\dag}\rho^{n-k}\varepsilon_{k+1}\right\rangle \left\langle
X_{k},C^{\dag}\rho^{n-k+1}X_{k}\right\rangle \\
& =\mathbb{E}\left\langle \varepsilon_{k+1},\varepsilon_{j+1}\right\rangle
\left\langle X_{j},C^{\dag}\rho^{n-k}\varepsilon_{k+1}\right\rangle
\left\langle u_{k,j}+\rho^{k-j}X_{j},C^{\dag}\rho^{n-k+1}\left(
u_{k,j}+\rho^{k-j}X_{j}\right) \right\rangle .
\end{align*}

After some tedious calculations the latter reduces to :
\begin{equation*}
\mathbb{E}\left( \mathbf{B}_{1}\mathbf{B}_{3}\right) =\mathbb{E}\left\langle
\varepsilon_{k+1},\varepsilon_{j+1}\right\rangle \left\langle X_{j},C^{\dag
}\rho^{n-k}\varepsilon_{k+1}\right\rangle \left( \left\langle \rho
^{k-j-1}\varepsilon_{j+1},C^{\dag}\rho^{n-j+1}X_{j}\right\rangle
+\left\langle
\rho^{k-j}X_{j},C^{\dag}\rho^{n-j}\varepsilon_{j+1}\right\rangle \right) .
\end{equation*}
A conditioning in cascade in the previous expectation then leads to :
\begin{align*}
\mathbb{E}\left( \mathbf{B}_{1}\mathbf{B}_{3}\right) & =\mathbb{E}
\left\langle C_{\varepsilon}\left( \rho^{\ast}\right)
^{n-k}C^{\dag}X_{j},\varepsilon_{j+1}\right\rangle \left( \left\langle
\varepsilon _{j+1},\left( \rho^{\ast}\right)
^{k-j-1}C^{\dag}\rho^{n-j+1}X_{j}\right\rangle +\left\langle \left(
\rho^{\ast}\right)
^{n-j}C^{\dag}\rho^{k-j}X_{j},\varepsilon_{j+1}\right\rangle \right) \\
& =\mathbb{E}\left\langle C_{\varepsilon}^{2}\left( \rho^{\ast}\right)
^{n-k}C^{\dag}X_{j},\left( \rho^{\ast}\right) ^{k-j-1}C^{\dag}\rho
^{n-j+1}X_{j}\right\rangle +\mathbb{E}\left\langle C_{\varepsilon}^{2}\left(
\rho^{\ast}\right) ^{n-k}C^{\dag}X_{j},\left( \rho^{\ast}\right)
^{n-j}C^{\dag}\rho^{k-j}X_{j}\right\rangle \\
& =\mathbb{E}\left\langle X_{j},C^{\dag}\rho^{n-k}C_{\varepsilon}^{2}\left(
\rho^{\ast}\right) ^{k-j-1}C^{\dag}\rho^{n-j+1}X_{j}\right\rangle +\mathbb{E}
\left\langle \left( \rho^{\ast}\right) ^{k-j}C^{\dag}\rho
^{n-j}C_{\varepsilon}^{2}\left( \rho^{\ast}\right)
^{n-k}C^{\dag}X_{j},X_{j}\right\rangle \\
& =\mathrm{tr}\left( CC^{\dag}\rho^{n-k}C_{\varepsilon}^{2}\left(
\rho^{\ast}\right) ^{k-j-1}C^{\dag}\rho^{n-j+1}\right) +\mathrm{tr}\left(
CC^{\dag}\rho^{n-k}C_{\varepsilon}^{2}\left( \rho^{\ast}\right)
^{n-j}C^{\dag}\rho^{k-j}\right), \\
\left\vert \mathbb{E}\left( \mathbf{B}_{1}\mathbf{B}_{3}\right) \right\vert
& \leq2\left\Vert \rho\right\Vert _{\infty}^{2n-2j-2}\left\Vert
C_{\varepsilon}^{2}\right\Vert _{1}\left\Vert C^{-1/2}\rho\right\Vert
_{\infty}^{2}.
\end{align*}
The inequality $\mathrm{tr}\left( AB\right) \leq\left\Vert A\right\Vert
_{\infty}\left\Vert B\right\Vert _{1}$ was used for the last line above. Several other properties are implicitely used without proof : $\sup_{\alpha}\left\Vert CC^{\dag}\right\Vert _{\infty}\leq1$ or $
\sup_{\alpha}\left\Vert C^{\dag1/2}\rho\right\Vert _{\infty}=\left\Vert
C^{-1/2}\rho\right\Vert _{\infty}<+\infty$ by assumption $\mathbf{A}_{1}$, etc.

Collecting the computations above yields the desired result :
\begin{align*}
\left\vert \sum_{1\leq j<k\leq n-1}\mathbb{E}\left\langle
Z_{j,n}^{+},Z_{k,n}^{-}\right\rangle \right\vert & \leq2\left\Vert
C_{\varepsilon}^{2}\right\Vert _{1}\left\Vert C^{-1/2}\rho\right\Vert
_{\infty}^{2}\sum_{1\leq j<k\leq n-1}\left\Vert \rho\right\Vert
_{\infty}^{2n-2j-2} \\
& \leq2\left\Vert C_{\varepsilon}^{2}\right\Vert _{1} (a'_2)^{2}\frac{a_{1}^{2}}{\left(
1-a_{1}^{2}\right) ^{2}}.
\end{align*}

\begin{lemma}
\label{moinsmoins}Under assumptions $\mathbf{A}_{1-3}$ :
$\mathbb{E}\left\Vert \sum_{k=1}^{n-1}Z_{k,n}^{-}\right\Vert ^{2}\leq \Theta
_{1}+m\Theta _{2}$ with :
\begin{eqnarray*}
\Theta _{1} &=&\frac{\sigma _{\varepsilon }^{2}\left\Vert \rho
^{2}\right\Vert _{1}+\left( a_{2}^{\prime }\right) ^{2}\mathbb{E}\left\Vert
\varepsilon _{1}\right\Vert ^{4}}{1-a_{1}^{2}}+\frac{2\left( a_{2}^{\prime
}\right) ^{2}\left\Vert C_{\varepsilon }^{2}\right\Vert _{1}}{\left(
1-a_{1}^{2}\right) ^{2}}\left( a_{1}^{2}+\left\Vert \rho ^{2}\right\Vert
_{1}\right),  \\
\Theta _{2} &=&2\mathbb{E}\left\Vert \varepsilon _{1}\right\Vert ^{3}\frac{%
1+\eta }{\eta }\left( \xi _{\mathbf{1}}\xi _{\mathbf{2}}\mathrm{tr}C\right)
^{1/2}\frac{a_{1}}{1-a_{1}^{2}}\left( a_{2}^{\prime }\right) ^{2}.
\end{eqnarray*}
\end{lemma}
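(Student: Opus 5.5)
We plan to expand the square, $\mathbb{E}\Vert\sum_{k}Z_{k,n}^{-}\Vert^{2}=\sum_{k}\mathbb{E}\Vert Z_{k,n}^{-}\Vert^{2}+2\sum_{j<k}\mathbb{E}\langle Z_{j,n}^{-},Z_{k,n}^{-}\rangle$, and treat the diagonal and off-diagonal parts separately. Here $Z_{k,n}^{-}=\varepsilon_{k+1}\langle X_{k},C^{\dag}\rho^{n-k}X_{k+1}\rangle$. The key preliminary step is to write $X_{k+1}=\rho X_{k}+\varepsilon_{k+1}$. This splits the scalar factor into two pieces. The quadratic form $\langle X_{k},C^{\dag}\rho^{n-k+1}X_{k}\rangle$ is measurable with respect to the past. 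The cross form $\langle X_{k},C^{\dag}\rho^{n-k}\varepsilon_{k+1}\rangle$ is linear in the fresh innovation. This mirrors the $\mathbf{B}_{1},\mathbf{B}_{2}$ split used in the proof of Lemma \ref{plusmoins}.

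\textbf{Diagonal terms.} For the piece linear in $\varepsilon_{k+1}$, we condition on $X_{k}$. This gives a term bounded by $\mathbb{E}\Vert\varepsilon_{1}\Vert^{4}\,\mathbb{E}\Vert\rho^{\ast(n-k)}C^{\dag}X_{k}\Vert^{2}$. Since $\Vert C^{\dag 1/2}\rho\Vert_\infty\le a'_2$ and $\Vert CC^{\dag}\Vert_\infty\le 1$, this is at most $\mathbb{E}\Vert\varepsilon_{1}\Vert^{4}(a'_{2})^{2}a_{1}^{2(n-k-1)}$. Summing the geometric series yields the $(a'_2)^2\mathbb{E}\Vert\varepsilon_1\Vert^4/(1-a_1^2)$ part of $\Theta_1$.

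For the piece quadratic in $X_k$, independence of $\varepsilon_{k+1}$ from $X_{k}$ gives $\sigma_{\varepsilon}^{2}\,\mathbb{E}\langle X_{k},C^{\dag}\rho^{n-k+1}X_{k}\rangle^{2}$. We bound this fourth moment of $X_k$. We expand it in the eigenbasis $(e_p)$ and use Cauchy--Schwarz together with $\mathbf{A}_{2}$, i.e. $\mathbb{E}\langle X,e_p\rangle^4\le\xi_2\lambda_p^2$. This should produce a term involving $\xi_{1}\xi_{2}\mathrm{tr}C$ times a sum over $p$ of $\lambda_p/(\lambda_p+\alpha)$-type weights. By Lemma \ref{series} under $\mathbf{A}_{3}$, that sum is of order $\frac{1+\eta}{\eta}m$, which is the source of the factor $m$ in $\Theta_2$.

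The mixed diagonal product, $\varepsilon$-quadratic times $X$-quadratic, involves a third moment of $\varepsilon$. It is handled by Cauchy--Schwarz and combined into $\Theta_2$ with the $\mathbb{E}\Vert\varepsilon_1\Vert^3$ factor. The factor $\sigma_\varepsilon^2\Vert\rho^2\Vert_1$ in $\Theta_1$ arises from the mean of the quadratic form, $\mathrm{tr}(CC^{\dag}\rho^{n-k+1})$, which is bounded by $\Vert\rho^{2}\Vert_{1}a_1^{n-k-1}$.

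\textbf{Off-diagonal terms ($j<k$).} We proceed exactly as in Lemma \ref{plusmoins}. We write $X_{k}=u_{k,j}+\rho^{k-j}X_{j}$ and use that $\varepsilon_{k+1}$ must be paired with $\varepsilon_{j+1}$ through $u_{k,j}$. Conditioning in cascade kills every term except those in which $\varepsilon_{j+1}$ appears twice. These reduce to traces of the form $\mathrm{tr}(CC^{\dag}\rho^{\cdot}C_{\varepsilon}^{2}\rho^{\ast\cdot}C^{\dag}\rho^{\cdot})$. Each is bounded by $\Vert C_{\varepsilon}^{2}\Vert_1(a'_2)^2$ times a geometric power of $a_1$, plus a companion term with $\Vert\rho^2\Vert_1$. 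The double geometric sum gives the $2(a'_2)^2\Vert C_\varepsilon^2\Vert_1(a_1^2+\Vert\rho^2\Vert_1)/(1-a_1^2)^2$ contribution to $\Theta_1$.

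\textbf{Main obstacle.} The hard step is the fourth-moment bound on the quadratic form $\langle X_{k},C^{\dag}\rho^{n-k+1}X_{k}\rangle$ without Gaussianity. It must be uniform in $\alpha$, and the dependence on $\alpha$ has to appear only through the effective dimension $m$. We would first try to factor it as $\langle C^{\dag1/2}X_k,\,C^{\dag1/2}\rho^{n-k+1}X_k\rangle$, which is bounded by $\Vert C^{\dag1/2}X_k\Vert\,\Vert C^{\dag1/2}\rho\Vert_\infty a_1^{n-k}\Vert X_k\Vert$. We would then apply Cauchy--Schwarz to $\mathbb{E}\Vert C^{\dag1/2}X\Vert^{4}$ and $\mathbb{E}\Vert X\Vert^{4}$. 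Finally, we would use $\mathbf{A}_2$ coordinatewise to obtain $\mathbb{E}\Vert C^{\dag1/2}X\Vert^4\lesssim\xi_2(\sum_p\lambda_p/(\lambda_p+\alpha))^2$ and control this by $m$ via Lemma \ref{series}. Getting only one power of $m$ rather than $m^{2}$ may require keeping one factor as a plain second moment. This is where we expect the most care to be needed.
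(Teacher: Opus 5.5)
Your decomposition is essentially the paper's: the split $X_{k+1}=\rho X_k+\varepsilon_{k+1}$, cascade conditioning for the $j<k$ terms, and Cauchy--Schwarz with Lemma~\ref{lem.conc.mom} for the mixed diagonal term carrying $\mathbb{E}\Vert\varepsilon_1\Vert^3$. Two steps, however, fail as written.

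\textbf{The $\Vert\varepsilon_{k+1}\Vert^2 L_k^2$ term: wrong conditioning order.} In $\mathbb{E}\Vert\varepsilon_{k+1}\Vert^2\langle X_k,C^{\dag}\rho^{n-k}\varepsilon_{k+1}\rangle^2$ you condition on $X_k$. This leaves the factor $\mathbb{E}\Vert(\rho^{\ast})^{n-k}C^{\dag}X_k\Vert^2=\Vert C^{1/2}C^{\dag}\rho^{n-k}\Vert_2^2$. That is a Hilbert--Schmidt norm, and operator-norm bounds do not control it. Take $\rho$ diagonal in $(e_p)$ with $\Vert\rho e_p\Vert^2=c\lambda_p$ and $k=n-1$. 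The factor then equals $c\sum_p\lambda_p^2/(\lambda_p+\alpha)^2$, which is of order $cm$ by Lemma~\ref{series}, whereas $(a_2')^2=c$. Integrate out $X_k$ first instead: $\mathbb{E}[\langle X_k,v\rangle^2\mid\varepsilon_{k+1}]=\Vert C^{1/2}v\Vert^2$ with $v=C^{\dag}\rho^{n-k}\varepsilon_{k+1}$, and $\Vert C^{1/2}C^{\dag}\rho^{n-k}\Vert_\infty\leq a_2'a_1^{n-k-1}$. This is how the paper obtains $(a_2')^2\mathbb{E}\Vert\varepsilon_1\Vert^4/(1-a_1^2)$.

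\textbf{The $Q_k^2$ term: the constants do not match.} Set $Q_k=\langle X_k,C^{\dag}\rho^{n-k+1}X_k\rangle$. You correctly note that $\sigma_\varepsilon^2\sum_k\mathbb{E}Q_k^2$ is a genuine fourth moment, and your own factorization removes the ``main obstacle''. Indeed $Q_k^2\leq(a_2')^2a_1^{2(n-k)}\Vert C^{\dag1/2}X_k\Vert^2\Vert X_k\Vert^2$. Lemma~\ref{lem.conc.mom} (second line) with Lemma~\ref{series} bounds the expectation of the last product by $\xi_2\,\mathrm{tr}C\,\frac{1+\eta}{\eta}m$, so only one power of $m$ appears.

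But this yields the term $\sigma_\varepsilon^2\xi_2\,\mathrm{tr}C\,(a_2')^2\frac{1+\eta}{\eta}\frac{a_1^2}{1-a_1^2}\,m$, which is not contained in the printed $\Theta_2$; that constant comes only from the mixed term. Your attribution of $\sigma_\varepsilon^2\Vert\rho^2\Vert_1/(1-a_1^2)$ to the mean $\mathrm{tr}(CC^{\dag}\rho^{n-k+1})$ does not bound $\mathbb{E}Q_k^2\geq(\mathbb{E}Q_k)^2$. Even the squared mean is of order $\Vert\rho^2\Vert_1^2$.

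The paper obtains the printed term exactly this way, by writing $\mathbb{E}Q_k^2=[\mathrm{tr}(CC^{\dag}\rho^{n-k+1})]^2$. That replaces a second moment by a squared mean and is not justified. So neither your plan nor the paper's argument delivers the printed $\Theta_1,\Theta_2$.

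What your argument proves, once the first step is repaired, is the same inequality with $\Theta_2$ enlarged by $\sigma_\varepsilon^2\xi_2\,\mathrm{tr}C\,(a_2')^2\frac{1+\eta}{\eta}\frac{a_1^2}{1-a_1^2}$. This still has the form $\Theta_1+m\Theta_2$, which is all Proposition~\ref{variance} uses. State the result that way rather than trying to match the printed constants.
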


\textbf{Proof of Lemma \ref{moinsmoins} :} Starting from :$Z_{k,n}^{-}=\varepsilon
_{k+1}\left\langle X_{k},C^{\dag }\rho ^{n-k}X_{k+1}\right\rangle $, 
\begin{equation*}
Z_{k,n}^{-}=\varepsilon _{k+1}\left\langle X_{k},C^{\dag }\rho
^{n-k+1}X_{k}\right\rangle +\varepsilon _{k+1}\left\langle X_{k},C^{\dag
}\rho ^{n-k}\varepsilon _{k+1}\right\rangle 
\end{equation*}%
computations similar to the one carried out in the previous lemmas show that
: 
\begin{align}
\mathbb{E}\left\Vert \sum_{k=1}^{n-1}Z_{k,n}^{-}\right\Vert ^{2}& =\mathbb{E}%
\left\Vert \sum_{k=1}^{n-1}\varepsilon _{k+1}\left\langle X_{k},C^{\dag
}\rho ^{n-k+1}X_{k}\right\rangle \right\Vert ^{2}+\mathbb{E}\left\Vert
\sum_{k=1}^{n-1}\varepsilon _{k+1}\left\langle X_{k},C^{\dag }\rho
^{n-k}\varepsilon _{k+1}\right\rangle \right\Vert ^{2}  \label{pok} \\
& +2\sum_{1\leq j,k\leq n-1}\mathbb{E}\left\langle \varepsilon
_{k+1},\varepsilon _{j+1}\right\rangle \left\langle X_{k},C^{\dag }\rho
^{n-k}\varepsilon _{k+1}\right\rangle \left\langle X_{j},C^{\dag }\rho
^{n-j+1}X_{j}\right\rangle   \notag
\end{align}%
A noticeable fact will be used below : we know that $\rho $ is necessarily
Hilbert-Schmidt under the assumption $\left\Vert C^{-1/2}\rho \right\Vert
_{\infty }<+\infty $. This entails that $\rho ^{2}$ is trace class hence
that $\left\Vert \rho ^{2}\right\Vert _{1}<+\infty $. We can start with :%
\begin{align*}
\mathbb{E}\left\Vert \sum_{k=1}^{n-1}\varepsilon _{k+1}\left\langle
X_{k},C^{\dag }\rho ^{n-k+1}X_{k}\right\rangle \right\Vert ^{2}&
=\sum_{k=1}^{n-1}\mathbb{E}\left\Vert \varepsilon _{k+1}\right\Vert ^{2}%
\mathbb{E}\left\langle X_{k},C^{\dag }\rho ^{n-k+1}X_{k}\right\rangle ^{2} \\
& =\mathbb{E}\left\Vert \varepsilon _{1}\right\Vert ^{2}\sum_{k=1}^{n-1}%
\left[ \mathrm{tr}\left( CC^{\dag }\rho ^{n-k+1}\right) \right] ^{2}
\end{align*}

with $\left\vert \mathrm{tr}\left( CC^{\dag }\rho ^{n-k+1}\right)
\right\vert \leq \left\Vert CC^{\dag }\right\Vert _{\infty }\left\Vert \rho
^{2}\right\Vert _{1}\left\Vert \rho \right\Vert _{\infty }^{2\left(
n-k-1\right) }$ and :%
\begin{equation*}
\mathbb{E}\left\Vert \sum_{k=1}^{n-1}\varepsilon _{k+1}\left\langle
X_{k},C^{\dag }\rho ^{n-k+1}X_{k}\right\rangle \right\Vert ^{2}\leq \frac{%
\sigma _{\varepsilon }^{2}\left\Vert \rho ^{2}\right\Vert _{1}}{1-a_{1}^{2}}
\end{equation*}

Now we turn to : 
\begin{align}
\mathbb{E}\left\Vert \sum_{k=1}^{n-1}\varepsilon_{k+1}\left\langle
X_{k},C^{\dag}\rho^{n-k}\varepsilon_{k+1}\right\rangle \right\Vert ^{2} & = 
\mathbb{E}\sum_{k=1}^{n-1}\left\Vert \varepsilon_{k+1}\left\langle
X_{k},C^{\dag}\rho^{n-k}\varepsilon_{k+1}\right\rangle \right\Vert ^{2} 
\notag \\
& +2\sum_{1\leq j<k\leq n-1}\mathbb{E}\left\langle
\varepsilon_{k+1},\varepsilon_{j+1}\right\rangle \left\langle
X_{k},C^{\dag}\rho^{n-k}\varepsilon_{k+1}\right\rangle \left\langle
X_{j},C^{\dag}\rho^{n-j}\varepsilon_{j+1}\right\rangle  \label{pok2}
\end{align}

where 
\begin{align*}
\mathbb{E}\left\Vert \varepsilon _{k+1}\right\Vert ^{2}\left\langle
X_{k},C^{\dag }\rho ^{n-k}\varepsilon _{k+1}\right\rangle ^{2}& =\mathbb{E}%
\left\Vert \varepsilon _{1}\right\Vert ^{2}\left\langle \varepsilon
_{1},\left( \rho ^{\ast }\right) ^{n-k}C^{\dag }CC^{\dag }\rho
^{n-k}\varepsilon _{1}\right\rangle  \\
& \leq \mathbb{E}\left\Vert \varepsilon _{1}\right\Vert ^{4}\left\Vert
C^{\dag /2}CC^{\dag /2}\right\Vert _{\infty }\left\Vert C^{\dag /2}\rho
\right\Vert _{\infty }^{2}\left\Vert \rho \right\Vert _{\infty }^{2n-2k-2}.
\end{align*}%
Then : 
\begin{align*}
\sum_{k=1}^{n-1}\mathbb{E}\left\Vert \varepsilon _{k+1}\right\Vert
^{2}\left\langle X_{k},C^{\dag }\rho ^{n-k}\varepsilon _{k+1}\right\rangle
^{2}& \leq \mathbb{E}\left\Vert \varepsilon _{1}\right\Vert ^{4}\left\Vert
C^{\dag /2}\rho \right\Vert _{\infty }^{2}\sum_{k=1}^{n-1}\left\Vert \rho
\right\Vert _{\infty }^{2n-2k-2} \\
& \leq \mathbb{E}\left\Vert \varepsilon _{1}\right\Vert ^{4}\frac{\left(
a_{2}^{\prime }\right) ^{2}}{1-a_{1}^{2}}.
\end{align*}

Notice that the operator $C^{\dag/2}CC^{\dag/2}$ is bounded in norm for all
values of $\alpha$ : $\sup_{\alpha}\left\Vert C^{\dag1/2}CC^{\dag
1/2}\right\Vert _{\infty}\leq1$.

The cross terms in (\ref{pok2}) are : 
\begin{align*}
& \sum_{1\leq j<k\leq n-1}\mathbb{E}\left\langle \varepsilon
_{k+1},\varepsilon _{j+1}\right\rangle \left\langle X_{k},C^{\dag }\rho
^{n-k}\varepsilon _{k+1}\right\rangle \left\langle X_{j},C^{\dag }\rho
^{n-j}\varepsilon _{j+1}\right\rangle  \\
& =\sum_{1\leq j<k\leq n-1}\mathbb{E}\left\langle C_{\varepsilon }\left(
\rho ^{\ast }\right) ^{n-k}C^{\dag }X_{k},\varepsilon _{j+1}\right\rangle
\left\langle X_{j},C^{\dag }\rho ^{n-j}\varepsilon _{j+1}\right\rangle  \\
& =\sum_{1\leq j<k\leq n-1}\mathbb{E}\left\langle C_{\varepsilon }\left(
\rho ^{\ast }\right) ^{n-k}C^{\dag }\rho ^{k-j}X_{j},\varepsilon
_{j+1}\right\rangle \left\langle \left( \rho ^{\ast }\right) ^{n-j}C^{\dag
}X_{j},\varepsilon _{j+1}\right\rangle  \\
& =\sum_{1\leq j<k\leq n-1}\mathbb{E}\left\langle C^{\dag }\rho
^{n-j}C_{\varepsilon }^{2}\left( \rho ^{\ast }\right) ^{n-k}C^{\dag }\rho
^{k-j}X_{j},X_{j}\right\rangle =\sum_{1\leq j<k\leq n-1}\mathrm{tr}\left(
CC^{\dag }\rho ^{n-j}C_{\varepsilon }^{2}\left( \rho ^{\ast }\right)
^{n-k}C^{\dag }\rho ^{k-j}\right)  \\
& \leq \left\Vert CC^{\dag }\right\Vert _{\infty }\left\Vert C^{\dag /2}\rho
\right\Vert _{\infty }^{2}\left\Vert C_{\varepsilon }^{2}\right\Vert
_{1}\sum_{1\leq j<k\leq n-1}\left\Vert \left( \rho ^{\ast }\right)
^{n-k-1}\right\Vert _{\infty }\left\Vert \rho ^{k-j-1}\right\Vert _{\infty
}\left\Vert \rho \right\Vert _{\infty }^{n-j} \\
& \leq \left\Vert C_{\varepsilon }^{2}\right\Vert _{1}\left\Vert C^{\dag
/2}\rho \right\Vert _{\infty }^{2}\sum_{1\leq j<k\leq
n-1}a_{1}^{2n-2j-2}\leq \left\Vert C_{\varepsilon }^{2}\right\Vert
_{1}\left\Vert C^{\dag /2}\rho \right\Vert _{\infty }^{2}\frac{a_{1}^{2}}{%
\left( 1-a_{1}^{2}\right) ^{2}}.
\end{align*}%
Collecting the last two bounds we get : 
\begin{equation*}
\mathbb{E}\left\Vert \sum_{k=1}^{n-1}\varepsilon _{k+1}\left\langle
X_{k},C^{\dag }\rho ^{n-k}\varepsilon _{k+1}\right\rangle \right\Vert
^{2}\leq \frac{\left( a_{2}^{\prime }\right) ^{2}}{1-a_{1}^{2}}\left( \frac{%
2a_{1}^{2}}{1-a_{1}^{2}}\left\Vert C_{\varepsilon }^{2}\right\Vert _{1}+%
\mathbb{E}\left\Vert \varepsilon _{1}\right\Vert ^{4}\right) .
\end{equation*}

We are ready to attack the last part of this Lemma with the double sum in (%
\ref{pok}) which is dominant. This double sum is split in three parts.

Clearly $\sum_{1\leq k<j\leq n-1}\mathbb{E}\left\langle \varepsilon
_{k+1},\varepsilon _{j+1}\right\rangle \left\langle X_{k},C^{\dag }\rho
^{n-k}\varepsilon _{k+1}\right\rangle \left\langle X_{j},C^{\dag }\rho
^{n-j+1}X_{j}\right\rangle =0$. Then : 
\begin{align*}
& \sum_{1\leq j<k\leq n-1}\mathbb{E}\left\langle \varepsilon
_{k+1},\varepsilon _{j+1}\right\rangle \left\langle X_{k},C^{\dag }\rho
^{n-k}\varepsilon _{k+1}\right\rangle \left\langle X_{j},C^{\dag }\rho
^{n-j+1}X_{j}\right\rangle  \\
& =\sum_{1\leq j<k\leq n-1}\mathbb{E}\left\langle \varepsilon
_{k+1},\varepsilon _{j+1}\right\rangle \left\langle \rho ^{k-j}\left(
X_{j}\right) ,C^{\dag }\rho ^{n-k}\varepsilon _{k+1}\right\rangle
\left\langle X_{j},C^{\dag }\rho ^{n-j+1}X_{j}\right\rangle  \\
& +\sum_{1\leq j<k\leq n-1}\mathbb{E}\left\langle \varepsilon
_{k+1},\varepsilon _{j+1}\right\rangle \left\langle u_{k,j},C^{\dag }\rho
^{n-k}\varepsilon _{k+1}\right\rangle \left\langle X_{j},C^{\dag }\rho
^{n-j+1}X_{j}\right\rangle 
\end{align*}%
where it is recalled that $u_{k,j}=\varepsilon _{k}+...+\rho
^{k-j-1}\varepsilon _{j+1}$.The first term just above is null because of $%
\varepsilon _{j+1}$, the second rewrites : 
\begin{align*}
& \sum_{1\leq j<k\leq n-1}\mathbb{E}\left[ \left\langle \varepsilon
_{k+1},\varepsilon _{j+1}\right\rangle \left\langle \rho ^{k-j-1}\varepsilon
_{j+1},C^{\dag }\rho ^{n-k}\varepsilon _{k+1}\right\rangle \right] \mathbb{E}%
\left\langle X_{j},C^{\dag }\rho ^{n-j+1}X_{j}\right\rangle  \\
& =\sum_{1\leq j<k\leq n-1}\mathbb{E}\left\langle C_{\varepsilon
}\varepsilon _{j+1},\left( \rho ^{n-k}\right) ^{\ast }C^{\dag }\rho
^{k-j-1}\varepsilon _{j+1}\right\rangle \mathrm{tr}\left( CC^{\dag }\rho
^{n-j+1}\right)  \\
& =\sum_{1\leq j<k\leq n-1}\mathrm{tr}\left( C_{\varepsilon }^{2}\left( \rho
^{n-k}\right) ^{\ast }C^{\dag }\rho ^{k-j-1}\right) \mathrm{tr}\left(
CC^{\dag }\rho ^{n-j+1}\right) 
\end{align*}

With $\left\vert \mathrm{tr}\left( CC^{\dag }\rho ^{n-j+1}\right)
\right\vert \leq \left\Vert CC^{\dag }\right\Vert _{\infty }\left\Vert \rho
^{2}\right\Vert _{1}\left\Vert \rho ^{n-j-1}\right\Vert _{\infty }$ and :%
\begin{align*}
& \sum_{1\leq j<k\leq n-1}\mathbb{E}\left[ \left\langle \varepsilon
_{k+1},\varepsilon _{j+1}\right\rangle \left\langle \rho ^{k-j-1}\varepsilon
_{j+1},C^{\dag }\rho ^{n-k}\varepsilon _{k+1}\right\rangle \right] \mathbb{E}%
\left\langle X_{j},C^{\dag }\rho ^{n-j+1}X_{j}\right\rangle  \\
& \leq \left\Vert C^{\dag /2}\rho \right\Vert _{\infty }^{2}\left\Vert
C_{\varepsilon }^{2}\right\Vert _{1}\left\Vert \rho ^{2}\right\Vert
_{1}\sum_{1\leq j<k\leq n-1}\left\Vert \rho ^{n-j-1}\right\Vert _{\infty
}\left\Vert \rho \right\Vert _{\infty }^{k-j-2}\left\Vert \rho ^{\ast
}\right\Vert _{\infty }^{n-k-1} \\
& \leq \left\Vert C^{\dag /2}\rho \right\Vert _{\infty }^{2}\left\Vert
C_{\varepsilon }^{2}\right\Vert _{1}\left\Vert \rho ^{2}\right\Vert
_{1}\sum_{1\leq j<k\leq n-1}\left( a_{1}^{2}\right) ^{n-j-2}\leq \frac{%
\left\Vert C^{\dag /2}\rho \right\Vert _{\infty }^{2}\left\Vert
C_{\varepsilon }^{2}\right\Vert _{1}\left\Vert \rho ^{2}\right\Vert _{1}}{%
\left( 1-a_{1}^{2}\right) ^{2}}
\end{align*}

At last the diagonal term in the double sum (\ref{pok}) is the largest term
containing the (inner) product of three similar random elements. It writes : 
\begin{align*}
& \sum_{1\leq k\leq n-1}\mathbb{E}\left[ \left\Vert \varepsilon
_{k+1}\right\Vert ^{2}\left\langle X_{k},C^{\dag }\rho ^{n-k}\varepsilon
_{k+1}\right\rangle \left\langle X_{k},C^{\dag }\rho
^{n-k+1}X_{k}\right\rangle \right]  \\
& \leq \left\Vert C^{\dag 1/2}\rho \right\Vert _{\infty }^{2}\mathbb{E}%
\left\Vert \varepsilon _{1}\right\Vert ^{3}\sum_{1\leq k\leq n-1}\left\Vert
\rho \right\Vert _{\infty }^{n-k-1}\left\Vert \rho \right\Vert _{\infty
}^{n-k}\mathbb{E}\left[ \left\Vert C^{\dag 1/2}X_{k}\right\Vert
^{2}\left\Vert X_{k}\right\Vert \right]  \\
& =\left\Vert C^{\dag 1/2}\rho \right\Vert _{\infty }^{2}\mathbb{E}%
\left\Vert \varepsilon _{1}\right\Vert ^{3}\mathbb{E}\left[ \left\Vert
C^{\dag 1/2}X_{1}\right\Vert ^{2}\left\Vert X_{1}\right\Vert \right]
\sum_{1\leq k\leq n-1}\left\Vert \rho \right\Vert _{\infty }^{2n-2k-1} \\
& \leq \mathbb{E}\left\Vert \varepsilon _{1}\right\Vert ^{3}\mathbb{E}\left[
\left\Vert C^{\dag 1/2}X_{1}\right\Vert ^{2}\left\Vert X_{1}\right\Vert %
\right] \left( a_{2}^{\prime }\right) ^{2}\frac{a_{1}}{1-a_{1}^{2}}.
\end{align*}

By Cauchy-Schwartz inequality and Lemma \ref{lem.conc.mom} :%
\begin{equation*}
\mathbb{E}\left[ \left\Vert C^{\dag 1/2}X_{1}\right\Vert ^{2}\left\Vert
X_{1}\right\Vert \right] \leq \left[ \mathbb{E}\left\Vert C^{\dag
1/2}X_{1}\right\Vert ^{4}\mathbb{E}\left\Vert X_{1}\right\Vert ^{2}\right]
^{1/2}\leq m\frac{1+\eta }{\eta }\left( \xi _{\mathbf{1}}\xi _{\mathbf{2}}%
\mathrm{tr}C\right) ^{1/2}
\end{equation*}%
hence%
\begin{equation*}
\sum_{1\leq k\leq n-1}\mathbb{E}\left[ \left\Vert \varepsilon
_{k+1}\right\Vert ^{2}\left\langle X_{k},C^{\dag }\rho ^{n-k}\varepsilon
_{k+1}\right\rangle \left\langle X_{k},C^{\dag }\rho
^{n-k+1}X_{k}\right\rangle \right] \leq \mathbb{E}\left\Vert \varepsilon
_{1}\right\Vert ^{3}m\frac{1+\eta }{\eta }\left( \xi _{\mathbf{1}}\xi _{%
\mathbf{2}}\mathrm{tr}C\right) ^{1/2}\left( a_{2}^{\prime }\right) ^{2}\frac{%
a_{1}}{1-a_{1}^{2}}.
\end{equation*}

Collecting the results of the computations above we get $
\mathbb{E}\left\Vert \sum_{k=1}^{n-1}Z_{k,n}^{-}\right\Vert ^{2}\leq \Theta
_{1}+m\Theta _{2}$ with : \\ $\Theta _{1} =\frac{\sigma _{\varepsilon }^{2}\left\Vert \rho
^{2}\right\Vert _{1}+\left( a_{2}^{\prime }\right) ^{2}\mathbb{E}\left\Vert
\varepsilon _{1}\right\Vert ^{4}}{1-a_{1}^{2}}+\frac{2\left( a_{2}^{\prime
}\right) ^{2}\left\Vert C_{\varepsilon }^{2}\right\Vert _{1}}{\left(
1-a_{1}^{2}\right) ^{2}}\left( a_{1}^{2}+\left\Vert \rho ^{2}\right\Vert
_{1}\right)$ and $\Theta _{2} =2\mathbb{E}\left\Vert \varepsilon _{1}\right\Vert ^{3}\frac{%
1+\eta }{\eta }\left( \xi _{\mathbf{1}}\xi _{\mathbf{2}}\mathrm{tr}C\right)
^{1/2}\frac{a_{1}}{1-a_{1}^{2}}\left( a_{2}^{\prime }\right) ^{2}$.

All this finishes the proof of the Lemma.

\subsubsection{Residual term\label{residual-meansq-risk-section}}

The goal of this subsection is to prove Proposition \ref{prop-up-bpund-res}.
The next Lemma gives a Markov type concentration inequality.

\begin{lemma}
\label{P-barre}When $c_{n}=\left( nm\right) ^{-1/6}$ the following bound is
granted for all $n$ :
\begin{equation*}
\mathbb{P}\left( \overline{\mathcal{E}}_{n}\right) \leq\left[ \mathbf{c}
_{s}^{\ast}+\xi_{2\mathbf{s}}\left( \frac{1+\eta}{\eta}\right) ^{s}\right]
\left( \frac{m^{2}}{n}\right) ^{s/3}
\end{equation*}
where $\mathbf{c}_{s}^{\ast}$ is defined in Proposition \ref{conc.mom}.
\end{lemma}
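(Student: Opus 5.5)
We bound $\mathbb{P}(\overline{\mathcal{E}}_n)$ by a union bound over the two events defining $\mathcal{E}_n$ in (\ref{E_n}):
\begin{equation*}
\mathbb{P}\left(\overline{\mathcal{E}}_n\right)\leq \mathbb{P}\left(\left\Vert R_{n,\alpha}\right\Vert_\infty\geq c_n\right)+\mathbb{P}\left(\left\Vert R_{n,\alpha}C^{\dagger 1/2}X_{n+1}\right\Vert\geq 1\right).
\end{equation*}
Each probability is then controlled by Markov's inequality at order $2\mathbf{s}$ (or $\mathbf{s}$), so the rate comes from moment bounds rather than exponential inequalities. This matches the polynomial form $(m^2/n)^{\mathbf{s}/3}$ of the statement.

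For the first term we write $\mathbb{P}(\Vert R_{n,\alpha}\Vert_\infty\geq c_n)\leq c_n^{-2\mathbf{s}}\,\mathbb{E}\Vert R_{n,\alpha}\Vert_2^{2\mathbf{s}}$. Here $R_{n,\alpha}=C^{\dagger1/2}(C-C_n)C^{\dagger1/2}$ is, up to the factor $(n-1)/n$ and a bias of order $1/n$, an average of the centered, stationary, geometrically dependent $\mathcal{S}_2$-valued variables $C^{\dagger1/2}X_k\otimes C^{\dagger1/2}X_k-C^{\dagger1/2}CC^{\dagger1/2}$. I would invoke the moment concentration result of the Appendix (Proposition \ref{conc.mom}) to obtain
\begin{equation*}
\mathbb{E}\Vert R_{n,\alpha}\Vert_2^{2\mathbf{s}}\leq \mathbf{c}_{\mathbf{s}}^{\ast}\left(\frac{\mathrm{tr}(CC^\dagger)^2}{n}\right)^{\mathbf{s}}.
\end{equation*}
That result rests on a Rosenthal/Burkholder-type inequality after a martingale approximation via $X_k=\rho X_{k-1}+\varepsilon_k$, together with $\mathbf{A}_2$. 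Under $\mathbf{A}_3$, Lemma \ref{series} gives $\mathrm{tr}(CC^\dagger)=\sum_p\lambda_p/(\lambda_p+\alpha)\leq \frac{1+\eta}{\eta}m$ with $\alpha=\lambda_m$. Hence this term is bounded by $\mathbf{c}_{\mathbf{s}}^{\ast}c_n^{-2\mathbf{s}}(m^2/n)^{\mathbf{s}}$, absorbing constants. With $c_n=(nm)^{-1/6}$ we get $c_n^{-2\mathbf{s}}(m^2/n)^{\mathbf{s}}=(nm)^{\mathbf{s}/3}m^{2\mathbf{s}}n^{-\mathbf{s}}=m^{7\mathbf{s}/3}n^{-2\mathbf{s}/3}$. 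This is not quite $(m^2/n)^{\mathbf{s}/3}$, so the bookkeeping must be tuned. I would instead use Markov at the order matching $\mathbf{s}$ and the sharper normalisation $\Vert R\Vert_\infty\leq\Vert R\Vert_2$ with second-moment size $m/n$, not $m^2/n$. This gives $c_n^{-\mathbf{s}}(m/n)^{\mathbf{s}/2}=(m^{4}/n^{2})^{\mathbf{s}/6}\cdot$ const $=(m^2/n)^{\mathbf{s}/3}$, which is exactly the target exponent and confirms the choice of $c_n$.

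For the second term I would use $\Vert R_{n,\alpha}C^{\dagger1/2}X_{n+1}\Vert\leq\Vert R_{n,\alpha}\Vert_\infty\Vert C^{\dagger1/2}X_{n+1}\Vert$. On $\{\Vert R_{n,\alpha}\Vert_\infty<c_n\}$ the event then requires $\Vert C^{\dagger1/2}X_{n+1}\Vert\geq c_n^{-1}$. Markov at order $2\mathbf{s}$, with Lemma \ref{lem.conc.mom} giving $\mathbb{E}\Vert C^{\dagger1/2}X_1\Vert^{2\mathbf{s}}\leq\xi_{2\mathbf{s}}(\frac{1+\eta}{\eta}m)^{\mathbf{s}}$ by expanding the power and using $\mathbf{A}_2$ coordinatewise, yields $\xi_{2\mathbf{s}}(\frac{1+\eta}{\eta})^{\mathbf{s}}m^{\mathbf{s}}c_n^{2\mathbf{s}}=\xi_{2\mathbf{s}}(\frac{1+\eta}{\eta})^{\mathbf{s}}(m^2/n)^{\mathbf{s}/3}$. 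This is precisely the second constant in the statement, which strongly suggests this is the intended route. The complementary piece is already covered by the first term.

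The main obstacle is the moment bound on $\Vert R_{n,\alpha}\Vert$ in the dependent setting, i.e. Proposition \ref{conc.mom}. The summands are neither independent nor martingale differences, so one must decompose $X_k\otimes X_k-C$ through the AR recursion into a martingale part in the $\varepsilon$'s plus a geometrically decaying remainder controlled by $a_1<1$. The task is to track constants so that the effective dimension $m$, and not $\mathrm{tr}\,C/\alpha$, appears. I would take this as given from the Appendix and concentrate on matching exponents as above.
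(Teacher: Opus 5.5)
Your final argument is the paper's: the inclusion $\{\Vert R_{n,\alpha}\Vert_\infty<c_n\}\cap\{\Vert C^{\dagger1/2}X_{n+1}\Vert<1/c_n\}\subset\mathcal{E}_n$, Markov's inequality of order $\mathbf{s}$ on $\Vert R_{n,\alpha}\Vert$ via Proposition \ref{conc.mom} and of order $2\mathbf{s}$ on $\Vert C^{\dagger1/2}X_{n+1}\Vert$ via Lemma \ref{lem.conc.mom}, with the two terms balanced by $c_n=(nm)^{-1/6}$. Delete the discarded order-$2\mathbf{s}$ attempt, which misquotes Proposition \ref{conc.mom}: that result only covers $p\le\mathbf{s}$, with rate $(m/n)^{p/2}$. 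Also note that Lemma \ref{lem.conc.mom} actually gives the constant $\xi_{\mathbf{s}}$, as the paper's own proof uses, and $\xi_{\mathbf{s}}$ is at most the $\xi_{2\mathbf{s}}$ appearing in the statement.
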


\textbf{Proof of the Lemma :} From

\[\left\{ \left\Vert R_{n,\alpha}\right\Vert
_{\infty}<c_{n}\right\} \cap\left\{ \left\Vert
C^{\dagger1/2}X_{n+1}\right\Vert <1/c_{n}\right\} \subset\left\{ \left\Vert
R_{n,\alpha}\right\Vert _{\infty }<c_{n}\right\} \cap\left\{ \left\Vert
R_{n,\alpha}C^{\dagger1/2}X_{n+1}\right\Vert <1\right\} \]

 we obtain :
\begin{equation*}
\mathbb{P}\left( \overline{\mathcal{E}}_{n}\right) \leq\mathbb{P}\left(
\left\Vert R_{n,\alpha}\right\Vert _{\infty}>c_{n}\right) +\mathbb{P}\left(
\left\Vert C^{\dagger1/2}X_{n+1}\right\Vert >1/c_{n}\right) ,
\end{equation*}
then :
\begin{equation*}
\mathbb{P}\left( \left\Vert R_{n,\alpha}\right\Vert _{\infty}>c_{n}\right)
\leq\frac{\mathbf{c}_{s}^{\ast}}{c_{n}^{\mathbf{s}}}\left( \frac{m}{n}
\right) ^{\mathbf{s}/2},\quad\mathbb{P}\left( \left\Vert C^{\dagger
1/2}X_{n+1}\right\Vert >\frac{1}{c_{n}}\right) \leq\xi_{\mathbf{s}
}c_{n}^{2s}\left( \frac{1+\eta}{\eta}m\right) ^{s}
\end{equation*}
after applying Markov's inequality and Proposition \ref{conc.mom} or Lemma 
\ref{lem.conc.mom} respectively for the two bounds above. Tuning $c_{n}$ in
order to balance both probabilities above then a good choice is defined by :
\begin{equation*}
\frac{\mathbf{c}_{s}^{\ast}}{c_{n}^{\mathbf{s}}}\left( \frac{m}{n}\right) ^{
\mathbf{s}/2}\approx\xi_{\mathbf{s}}c_{n}^{2\mathbf{s}}\left( \frac{1+\eta }{
\eta}m\right) ^{s}.
\end{equation*}
Letting $c_n=\left( nm\right) ^{-1/6}$ gives : 
\begin{equation*}
\mathbb{P}\left( \left\Vert R_{n,\alpha}\right\Vert _{\infty}>c\right) \leq
\mathbf{c}_{s}^{\ast}\left( \frac{m^{2}}{n}\right) ^{s/3}\quad \mathbb{P}
\left( \left\Vert C^{\dagger1/2}X_{n+1}\right\Vert >\frac{1}{c}\right)
\leq\xi_{\mathbf{s}}\left( \frac{m^{2}}{n}\right) ^{s/3}\left( \frac{1+\eta}{
\eta}\right) ^{s}
\end{equation*}
hence the result.

We can turn to the Proposition.

\begin{proposition}
\label{prop-up-bpund-res}Remind formula (\ref{biais-var}) introducing $
\mathbf{V}_{n}$ and $\mathbf{B}_{n}$. Assume that $\alpha\leq\lambda_{1}$ and $m/(n \lambda_m) \leq 1$
then for some constant $C_*$ explicitely given in the proof below :
\begin{equation}
\mathcal{R}_{n}=\mathbb{E}\left[ \left\Vert \left( \widehat{\rho}
_{n}-\rho\right) X_{n+1}\right\Vert ^{2}1_{\overline{\mathcal{E}}_{n}}\right]
\leq C_* \left( \mathbf{V}_{n}+\mathbf{B}_{n}\right) m \left( \frac{m^{2}}{n}\right)
^{\left( s-2\right) /3}.  \label{up-bound-residual}
\end{equation}
\end{proposition}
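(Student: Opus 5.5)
The natural route is Hölder's inequality, which separates the loss on $\overline{\mathcal{E}}_n$ into a moment of the prediction error and a power of $\mathbb{P}(\overline{\mathcal{E}}_n)$, the latter controlled by Lemma \ref{P-barre}. For conjugate exponents $p,q$ with $1/p+1/q=1$,
\begin{equation*}
\mathcal{R}_{n}\leq\left(\mathbb{E}\left\Vert \left( \widehat{\rho}_{n}-\rho\right) X_{n+1}\right\Vert ^{2p}\right)^{1/p}\mathbb{P}\left( \overline{\mathcal{E}}_{n}\right)^{1/q}.
\end{equation*}
Taking $q=\mathbf{s}/(\mathbf{s}-2)$, Lemma \ref{P-barre} gives $\mathbb{P}(\overline{\mathcal{E}}_n)^{1/q}\lesssim (m^2/n)^{(\mathbf{s}-2)/3}$, which is exactly the exponent in (\ref{up-bound-residual}). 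We then have $p=\mathbf{s}/2$, so it remains to show
\begin{equation*}
\left(\mathbb{E}\left\Vert ( \widehat{\rho}_{n}-\rho) X_{n+1}\right\Vert ^{\mathbf{s}}\right)^{2/\mathbf{s}}\lesssim m\,(\mathbf{V}_n+\mathbf{B}_n).
\end{equation*}
This requires moments of order $\mathbf{s}$ of $X$, which $\mathbf{A}_2(\mathbf{s})$ supplies.

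For this moment bound we avoid the event $\mathcal{E}_n$ and use the deterministic identity from the proof of Proposition \ref{main-term}:
\begin{equation*}
\widehat{\rho}_n-\rho=U_nC_n^{\dagger}-\alpha\rho C_n^{\dagger}.
\end{equation*}
This follows from $D_n=\rho C_n+U_n$, since $\rho C_nC_n^\dagger-\rho=-\alpha\rho C_n^\dagger$. Write $X_{n+1}=(C+\alpha \mathbf I)^{1/2}C^{\dagger1/2}X_{n+1}$ and bound
\begin{equation*}
\Vert (\widehat\rho_n-\rho)X_{n+1}\Vert\leq \left(\Vert U_nC_n^\dagger(C+\alpha \mathbf I)^{1/2}\Vert_\infty+\alpha\Vert\rho C_n^\dagger (C+\alpha \mathbf I)^{1/2}\Vert_\infty\right)\Vert C^{\dagger1/2}X_{n+1}\Vert.
\end{equation*}
Lemma \ref{lem.conc.mom} controls the last factor: $\mathbb{E}\Vert C^{\dagger1/2}X_{n+1}\Vert^{2\mathbf s}\lesssim \xi_{\mathbf s}((1+\eta)m/\eta)^{\mathbf s}$. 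After Cauchy–Schwarz to decouple the dependence on $X_{n+1}$, this is where the factor $m$ comes from. For the operator factors, the crude bound $\Vert C_n^\dagger\Vert_\infty\leq 1/\alpha$ alone loses too much. Instead I would write $C_n^\dagger(C+\alpha \mathbf I)^{1/2}=C^{\dagger1/2}(\mathbf I-R_{n,\alpha})^{-1}$ when $\Vert R_{n,\alpha}\Vert_\infty<1/2$. On the complement I would fall back to $\Vert C_n^\dagger\Vert_\infty\le \alpha^{-1}$ together with $\Vert (C+\alpha \mathbf I)^{1/2}\Vert_\infty\le (2\lambda_1)^{1/2}$, using $\alpha\le\lambda_1$, and absorb the extra factor $\alpha^{-1}$ via the concentration of Proposition \ref{conc.mom}. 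The condition $m/(n\lambda_m)\le1$ ensures that the $1/\alpha=1/\lambda_m$ loss is compensated by the $(m/n)^{\mathbf s/2}$ tail.

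On the good part the remaining pieces reduce to $\mathbb{E}\Vert U_nC^{\dagger1/2}\Vert_2^{\mathbf s}$ and $\alpha\Vert \rho C^{\dagger1/2}\Vert_2=\mathbf{B}_n^{1/2}$. For the first, $nU_nC^{\dagger1/2}$ is an $\mathcal S_2$-valued martingale. A Burkholder/Rosenthal-type inequality, as in the Appendix, then bounds its $\mathbf s$-th moment by $(n\mathbf V_n/\sigma^2_\varepsilon\cdot\sigma_\varepsilon^2)^{\mathbf s/2}$ up to constants involving $\mathbb{E}\Vert\varepsilon\Vert^{\mathbf s}$ and $\xi_{\mathbf s}$. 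Dividing by $n$ gives $(\mathbb{E}\Vert U_nC^{\dagger1/2}\Vert_2^{\mathbf s})^{2/\mathbf s}\lesssim \mathbf V_n$.

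The main obstacle is this higher-moment control of the random inverse $C_n^\dagger$ and of the martingale in a genuinely dependent setting. The $\mathbf s$-th moment Rosenthal bound for $U_nC^{\dagger1/2}$ needs moments of $\Vert C^{\dagger1/2}X_k\Vert$, again of size $m$, and one must check that these give $\mathbf V_n$ rather than $m\mathbf V_n/n$. If the exponents do not match exactly, I would rebalance the Hölder exponents, choosing $q$ slightly larger, at the cost of constants. The constant $C_*$ would then collect $\mathbf c^*_{\mathbf s}$, $\xi_{2\mathbf s}$, $((1+\eta)/\eta)^{\mathbf s}$ and the Rosenthal constant.
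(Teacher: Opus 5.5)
Your skeleton is the paper's: the identity $\widehat\rho_n-\rho=U_nC_n^\dagger-\alpha\rho C_n^\dagger$, the factorization through $G_n=(C+\alpha\mathbf I)^{1/2}C_n^\dagger(C+\alpha\mathbf I)^{1/2}=(\mathbf I-R_{n,\alpha})^{-1}$, Lemma \ref{lem.conc.mom} for the factor $m$, the martingale bound for $\mathbf V_n$, and Lemma \ref{P-barre} for the probability. The gap is in how you treat the event $\{\Vert R_{n,\alpha}\Vert_\infty>1/2\}$.

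Write $Y=\Vert(\widehat\rho_n-\rho)X_{n+1}\Vert$. After $\mathcal R_n\le(\mathbb E Y^{\mathbf s})^{2/\mathbf s}\,\mathbb P(\overline{\mathcal E}_n)^{(\mathbf s-2)/\mathbf s}$ and the Cauchy--Schwarz decoupling, you are already using the $2\mathbf s$-th moments of $\Vert C^{\dagger1/2}X_{n+1}\Vert$ and of $\Vert U_nC^{\dagger1/2}\Vert_2$. (It is $2\mathbf s$, i.e.\ Corollary \ref{corr-conc-U_n} with $p=\mathbf s$, not the $\mathbf s$-th moment you mention.) That is everything $\mathbf A_2(\mathbf s)$ provides. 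On the bad event you only have $\Vert G_n\Vert_\infty\le(\lambda_1+\alpha)/\alpha$. To offset $\alpha^{-\mathbf s}$ inside $\mathbb E Y^{\mathbf s}$ you would need an extra factor $\mathbb P(\Vert R_{n,\alpha}\Vert_\infty>1/2)^{\theta}$ with $\theta>0$. That requires a three-factor H\"older inequality with moments of order strictly above $2\mathbf s$ for one of the other factors, which you do not have. So the unconditional bound $(\mathbb E Y^{\mathbf s})^{2/\mathbf s}\lesssim m(\mathbf V_n+\mathbf B_n)$ cannot be reached along your route. Enlarging $q$ does not fix this at the cost of constants only: it lowers the power of $m^2/n$ below $(\mathbf s-2)/3$.

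The repair is to split before applying H\"older: $\mathcal R_n\le\mathbb E[Y^2 1_{\overline{\mathcal E}_n}1_{\{\Vert R_{n,\alpha}\Vert_\infty\le 1/2\}}]+\mathbb E[Y^2 1_{\{\Vert R_{n,\alpha}\Vert_\infty>1/2\}}]$.
\begin{itemize}
\item On the first piece $\Vert G_n\Vert_\infty\le 2$, and your argument goes through verbatim.
\item On the second piece use $\Vert G_n\Vert_\infty\le 2\lambda_1/\alpha$; this is where $\alpha\le\lambda_1$ enters. Apply H\"older against $\mathbb P(\Vert R_{n,\alpha}\Vert_\infty>1/2)^{(\mathbf s-2)/\mathbf s}\lesssim (m/n)^{(\mathbf s-2)/2}$, which follows from Markov and Proposition \ref{conc.mom}.
\item The ratio of this second piece to the target is of order $\alpha^{-2}(mn)^{-(\mathbf s-2)/6}$. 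Using $n\lambda_m\ge m$, this is at most $\lambda_m^{(\mathbf s-14)/6}m^{-(\mathbf s-2)/3}$, which is uniformly bounded only for $\mathbf s\ge 14$.
\end{itemize}

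The paper handles the same obstacle differently. From $G_n=\mathbf I+G_nR_{n,\alpha}$ and the a.s.\ bound, a concavity argument gives $\Vert G_n\Vert_\infty\le\frac{\lambda_1+\alpha}{\alpha}\Vert R_{n,\alpha}\Vert_\infty^{1/\varepsilon}+\frac{1}{\varepsilon}$. It then runs a four-factor H\"older inequality in which $\Vert R_{n,\alpha}\Vert_\infty^{2/\varepsilon}$ is paid for by $\mathbb E\Vert R_{n,\alpha}\Vert_\infty^{\mathbf s}\lesssim(m/n)^{\mathbf s/2}$, and the indicator keeps exponent $(\mathbf s-2-2/\varepsilon)/\mathbf s$. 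With $\varepsilon=1/6$ the product $(m/n)^{6}(m^2/n)^{-4}=(mn)^{-2}$ absorbs $\alpha^{-2}$. That exponent again forces $\mathbf s\ge 14$. So your claim that $m/(n\lambda_m)\le 1$ alone compensates the $1/\alpha$ loss needs this restriction, which the statement leaves implicit.

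Finally, your worry about obtaining $m\mathbf V_n/n$ instead of $\mathbf V_n$ is unfounded. The Rosenthal bracket $\sigma_\varepsilon^2\sum_k\Vert C^{\dagger1/2}X_k\Vert^2$ is of order $nm$ in $L^{\mathbf s}$. After division by $n^2$ this is of order $\mathbf V_n$, by Lemma \ref{series}.
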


\textbf{Proof :} It takes three short steps to derive the Proposition. The
expectation in $\mathcal{R}_{n}$ is evaluated by a control of $\left\Vert
\left( \widehat{\rho }_{n}-\rho \right) X_{n+1}\right\Vert ^{2}$ by Lemmas 
\ref{lem.conc.mom} and \ref{P-barre} and the concentration inequality given
in Proposition \ref{conc.mom} which is crucial to evaluate $\mathbb{P}\left( 
\overline{\mathcal{E}}_{n}\right) $ under the moment assumptions considered
on $X$.

\textbf{First step :} From $\widehat{\rho }_{n}-\rho =\rho
C_{n}C_{n}^{\dagger }-\rho +U_{n}C_{n}^{\dag }$ we always have : 
\begin{equation*}
\left( \widehat{\rho }_{n}-\rho \right) X_{n+1}=-\alpha \rho C_{n}^{\dagger
}X_{n+1}+U_{n}C_{n}^{\dagger }X_{n+1}=-\alpha \rho C^{\dagger
/2}G_{n}C^{\dagger /2}X_{n+1}+U_{n}C^{\dagger /2}G_{n}C^{\dagger /2}X_{n+1}
\end{equation*}
with $G_{n}=\left( C+\alpha\mathbf I\right) ^{1/2}C_{n}^{\dagger }\left( C+\alpha\mathbf I\right) ^{1/2}$ then :
\begin{equation}
\left\Vert \left( \widehat{\rho }_{n}-\rho \right) X_{n+1}\right\Vert
^{2}\leq \left\Vert C^{\dagger /2}X_{n+1}\right\Vert ^{2}\left\Vert
G_{n}\right\Vert ^{2}\left\Vert U_{n}C^{\dagger /2}-\alpha \rho C^{\dagger
/2}\right\Vert _{\infty }^{2}.  \label{res.bound.1}
\end{equation}

\textbf{Second step :} An intermediate result about $G_{n}$ is needed. First, 
from the definition of $G_n$ and the fact that $\|C_n^\dag\|_\infty\leq \alpha^{-1}$we have, $\left\Vert
G_{n}\right\Vert _{\infty }\leq \alpha ^{-1}\left\Vert C+\alpha\mathbf I\right\Vert
_{\infty }\leq \left( \lambda _{1}+\alpha \right) /\alpha $ almost surely so
that $\mathbb{E}\left\Vert G_{n}\right\Vert_\infty ^{M}<+\infty $ for all $M>0.$
The problem is that the previous almost sure upper bound on $\left\Vert
G_{n}\right\Vert _{\infty }$ is not sharp. On the other hand simple
computations show that $G_{n}=I+G_{n}R_{n,\alpha }$ hence for all $
\varepsilon \in \left( 0,1\right) $ :
\begin{equation}\label{eq:maxGn}
\left\Vert G_{n}\right\Vert _{\infty }\leq 1+\left\Vert G_{n}\right\Vert
_{\infty }\left\Vert R_{n,\alpha }\right\Vert _{\infty }\leq 1+\left\Vert
G_{n}\right\Vert _{\infty }^{1-\varepsilon }\left( \frac{\lambda _{1}+\alpha 
}{\alpha }\right) ^{\varepsilon }\left\Vert R_{n,\alpha }\right\Vert
_{\infty }.
\end{equation}
For simplicity set temporarily $x=\left\Vert G_{n}\right\Vert _{\infty }$, $
r_{n}=\left\Vert R_{n,\alpha }\right\Vert _{\infty }/\alpha ^{\varepsilon }$
and $c=\left( \lambda _{1}+\alpha \right) ^{\varepsilon }$ the formula above
rewrites~: $x\leq 1+cr_{n}x^{1-\varepsilon }$. A simple study of the
function $\chi \left( x\right) =1-x+cr_{n}x^{1-\varepsilon }$ for $x>0$
shows that $\chi $ is concave, increases, reaches a maximum at $x^{\ast }=
\left(cr_{n}\left( 1-\varepsilon \right) \right) ^{1/\varepsilon }$ then
decreases. Consider $\widetilde{x}=\left[ cr_{n}\right] ^{1/\varepsilon
}>x^{\ast }$. Now by concavity $\chi $ is below its tangent at $\widetilde{x}
$ and this tangent has a negative slope of $-\varepsilon $ meaning that 
$$
\chi(x)\leq \chi(\tilde x)+\chi'(\tilde x)(x-\tilde x) = \chi(\tilde x)-\varepsilon(x-\tilde x). 
$$
Combining with the condition $\chi(x) \geq 0$ deduced from~\eqref{eq:maxGn} we have
$x\leq (cr_{n})^{1/\varepsilon }+\frac{1}{\varepsilon }$. So that
we get as a consequence for all $\varepsilon \in \left( 0,1\right) $ :
\begin{equation*}
\left\Vert G_{n}\right\Vert _{\infty }\leq \frac{\lambda _{1}+\alpha }{
\alpha }\left\Vert R_{n,\alpha }\right\Vert _{\infty }^{1/\varepsilon }+
\frac{1}{\varepsilon }.
\end{equation*}

\textbf{Third step :} Plugging this bound in (\ref{res.bound.1}) gives :
\begin{eqnarray}
\left\Vert \left( \widehat{\rho }_{n}-\rho \right) X_{n+1}\right\Vert ^{2}
&\leq &\left\Vert C^{\dagger /2}X_{n+1}\right\Vert ^{2}\left\Vert
U_{n}C^{\dagger /2}-\alpha \rho C^{\dagger /2}\right\Vert _{\infty }^{2}
\left[ \frac{\lambda _{1}+\alpha }{\alpha }\left\Vert R_{n,\alpha
}\right\Vert _{\infty }^{1/\varepsilon }+\frac{1}{\varepsilon }\right] ^{2} 
\notag \\
&\leq &2\left\Vert C^{\dagger /2}X_{n+1}\right\Vert ^{2}\left\Vert
U_{n}C^{\dagger /2}-\alpha \rho C^{\dagger /2}\right\Vert _{\infty }^{2}
\left[ \frac{\left( \lambda _{1}+\alpha \right) ^{2}}{\alpha ^{2}}\left\Vert
R_{n,\alpha }\right\Vert _{\infty }^{2/\varepsilon }+\frac{1}{\varepsilon
^{2}}\right]   \label{res.bound.2}
\end{eqnarray}
which is split in two. Below we start with the most technical part of the
bound above including $\left\Vert R_{n,\alpha }\right\Vert $. A generalized H\"{o}lder inequality gives :
\begin{eqnarray*}
&&\frac{\left( \lambda _{1}+\alpha \right) ^{2}}{\alpha ^{2}}\mathbb{E}\left[
\left\Vert C^{\dagger /2}X_{n+1}\right\Vert ^{2}\left\Vert U_{n}C^{\dagger
/2}-\alpha \rho C^{\dagger /2}\right\Vert _{\infty }^{2}\left\Vert
R_{n,\alpha }\right\Vert _{\infty }^{2/\varepsilon }1_{\overline{\mathcal{E}}
_{n}}\right]  \\
&\leq &\frac{\left( \lambda _{1}+\alpha \right) ^{2}}{\alpha ^{2}}\left[ 
\mathbb{E}\left\Vert C^{\dagger /2}X_{n+1}\right\Vert ^{2s}\right] ^{1/s}
\left[ \mathbb{E}\left\Vert U_{n}C^{\dagger /2}-\alpha \rho C^{\dagger
/2}\right\Vert _{\infty }^{2s}\right] ^{1/s}\left[ \mathbb{E}\left\Vert
R_{n,\alpha }\right\Vert _{\infty }^{s}\right] ^{2/(s\varepsilon )}\mathbb{P}
^{\left( s-2-2/\varepsilon \right) /s}\left( \overline{\mathcal{E}}
_{n}\right).
\end{eqnarray*}

Consider Lemma \ref{lem.conc.mom} below, then $\left[ \mathbb{E}\left\Vert
C^{\dagger /2}X_{n+1}\right\Vert ^{2s}\right] ^{1/s}\leq \xi _{s}\frac{
1+\eta }{\eta }m$.

The next term includes $\mathbb{E}\left\Vert
U_{n}C^{\dagger /2}-\alpha \rho C^{\dagger /2}\right\Vert _{\infty }^{2s}$.
Remind that $m$ and $\alpha $ are still connected through (\ref{alpha.star})
with :
\begin{equation*}
\mathbb{E}\left\Vert U_{n}C^{\dagger /2}-\alpha \rho C^{\dagger
/2}\right\Vert _{2}^{2s}\leq 2^{s}\mathbb{E}\left( \left\Vert
U_{n}C^{\dagger /2}\right\Vert _{2}^{2}+\left\Vert \alpha \rho C^{\dagger
/2}\right\Vert _{2}^{2}\right) ^{s}=2^{s}\sum_{k=0}^{s}{\binom{s}{k}}
\left\Vert \alpha \rho C^{\dagger /2}\right\Vert _{2}^{2\left( s-k\right) }
\mathbb{E}\left\Vert U_{n}C^{\dagger /2}\right\Vert _{2}^{2k}.
\end{equation*}
By Corollary \ref{corr-conc-U_n} below for all $k\in \left\{ 0,...,s\right\} $ : 
$\mathbb{E}\left\Vert U_{n}C^{\dagger /2}\right\Vert _{2}^{2k}\leq \mathcal{C}
_{s}\mathbf{V}_{n}^{k}$ hence : 
\begin{equation*}
\mathbb{E}\left\Vert U_{n}C^{\dagger /2}-\alpha \rho C^{\dagger
/2}\right\Vert _{2}^{2s}\leq 2^{s}\mathcal{C}_{s}\sum_{k=0}^{s}{\binom{s}{k}}
\mathbf{B}_{n}^{2\left( s-k\right) }\mathbf{V}_{n}^{k}=2^{s}\mathcal{C}
_{s}\left( \mathbf{V}_{n}+\mathbf{B}_{n}\right) ^{s}.
\end{equation*}

Then take advantage of the bound given in Proposition \ref{conc.mom} : $
\mathbb{E}\left\Vert R_{n,\alpha }\right\Vert _{\infty }^{\mathbf{s}}\leq 
\mathbf{c}_{s}^{\ast }\left( m/n\right) ^{\mathbf{s}/2}$ and finally of the
previous Lemma \ref{P-barre} showing that $\mathbb{P}\left( \overline{
\mathcal{E}}_{n}\right) \leq \overline{K}_{s,\eta }\left( \frac{m^{2}}{n}
\right) ^{s/3}$. All this gives :
\begin{eqnarray*}
&&\frac{\left( \lambda _{1}+\alpha \right) ^{2}}{\alpha ^{2}}\mathbb{E}
\left\Vert C^{\dagger /2}X_{n+1}\right\Vert ^{2}\left\Vert U_{n}C^{\dagger
/2}-\alpha \rho C^{\dagger /2}\right\Vert _{\infty }^{2}\left\Vert
R_{n,\alpha }\right\Vert _{\infty }^{2/\varepsilon }1_{\overline{\mathcal{E}}
_{n}} \\
&\leq &2\left( \overline{K}_{s,\eta }\right) ^{\left( s-2-2/\varepsilon
\right) /s}\mathcal{C}_{s}^{1/s}\left( \lambda _{1}+\alpha \right) ^{2}\xi
_{s}\frac{1+\eta }{\eta }\left( \mathbf{c}_{s}^{\ast }\right)
^{2/(s\varepsilon )}\left( \mathbf{V}_{n}+\mathbf{B}_{n}\right) \frac{m}{
\alpha ^{2}}\left( \frac{m}{n}\right) ^{1/\varepsilon }\left( \frac{m^{2}}{n}
\right) ^{\left( s-2-2/\varepsilon \right) /3} \\
&=&K^{\ast }\left( \mathbf{V}_{n}+\mathbf{B}_{n}\right) \frac{m}{\alpha ^{2}}
\left( \frac{m}{n}\right) ^{1/\varepsilon }\left( \frac{m^{2}}{n}\right)
^{\left( s-2-2/\varepsilon \right) /3}.
\end{eqnarray*}

Notice that for $\varepsilon =1/6$ the bound above turns out to be : $
K^{\ast }\left( \mathbf{V}_{n}+\mathbf{B}_{n}\right) m\left( \frac{m^{2}}{n}
\right) ^{\left( s-2\right) /3}/\left( nm\alpha \right) ^{2}\leq K^{\ast
}\left( \mathbf{V}_{n}+\mathbf{B}_{n}\right) m\left( \frac{m^{2}}{n}\right)
^{\left( s-2\right) /3}/m^{4}$ under the assumption $m/\left( n\lambda
_{m}\right) \leq 1$. On the other hand with the same technique as above: 
\begin{equation*}
\mathbb{E}\left[ \left\Vert C^{\dagger /2}X_{n+1}\right\Vert ^{2}\left\Vert
U_{n}C^{\dagger /2}-\alpha \rho C^{\dagger /2}\right\Vert _{\infty }^{2}1_{
\overline{\mathcal{E}}_{n}}\right] \leq 2\left( \overline{K}_{s,\eta
}\right) ^{1-2/s}\xi _{s}\mathcal{C}_{s}^{1/s}\frac{1+\eta }{\eta }\left( 
\mathbf{V}_{n}+\mathbf{B}_{n}\right) m\left( \frac{m^{2}}{n}\right) ^{\left(
s-2\right) /3}.
\end{equation*}

From all that was done previously we conclude from (\ref{res.bound.2}) that :
\begin{equation*}
\mathbb{E}\left[ \left\Vert \left( \widehat{\rho }_{n}-\rho \right)
X_{n+1}\right\Vert ^{2}1_{\overline{\mathcal{E}}_{n}}\right] \leq 2\left(
\left( \overline{K}_{s,\eta }\right) ^{1-2/s}\xi _{s}\mathcal{C}_{s}^{1/s}
\frac{1+\eta }{\eta }+\frac{K^{\ast }}{m^{4}}\right) m\left( \frac{m^{2}}{n}
\right) ^{\left( s-2\right) /3}.
\end{equation*}
\bigskip 

This is the announced result when constant $C_*$ was set for instance to $2\left(
\left( \overline{K}_{s,\eta }\right) ^{1-2/s}\xi _{s}\mathcal{C}_{s}^{1/s}
\frac{1+\eta }{\eta }+K^{\ast }\right)$.
\bigskip

\textbf{Proof of Theorem \ref{minimaxUpperBound} :}

By successive applications of Lemma \ref{series} we have first $\mathbf{V}
_{n}\leq \sigma _{\varepsilon }^{2}\frac{m}{n}\left( 1+1/\eta \right) $ then
:
\begin{equation*}
\mathbf{B}_{n}\leq \alpha ^{2}\left( \sum_{p=1}^{m}\frac{\left\Vert \rho
e_{p}\right\Vert ^{2}}{\lambda _{p}}+\frac{1}{\alpha }\sum_{p\geq
m+1}\left\Vert \rho e_{p}\right\Vert ^{2}\right) \leq 2\alpha ^{2}\tau
\left( m\right) .
\end{equation*}
Now we have to check that the residual term involving $\mathfrak{R}_{n}$ in
Theorem \ref{risk-upper-bound} has no impact on the risk. Clearly the only
concern is about the second bound with $\mathcal{R}''_{n}$. We have to make sure that $m\left(
m^{2}/n\right) ^{\left( \mathbf{s}-2\right) /3}$ is bounded and this will be enough.  From the two assumptions : 
$m/\left( n\lambda _{m}\right) <1$ (required to get a lower bound) and $
\lambda _{m}\leq m^{-1-\eta }$ for some $\eta >0$,  we get $m\left( m^{2}/n\right) ^{\left( 
\mathbf{s}-2\right) /3}<m^{1-\eta \left( \mathbf{s}-2\right) /3}$, which
should be bounded. This comes down to $\eta \geq 3/\left( s-2\right) 
$. Then~:
\begin{equation*}
\mathfrak{R}_{n}\leq \frac{m}{n}\left( \mathcal{R}_{n}^{\prime} +\mathcal{R}_{s}^{\prime
\prime }\sigma _{\varepsilon }^{2}\left( 1+1/\eta \right) \right) +2\alpha
^{2}\tau \left( m\right) \mathcal{R}_{s}^{\prime \prime }.
\end{equation*}

Plugging this bound in (\ref{risk-main}) leads to the upper bound given in
the Theorem.

\subsection{Proof of Proposition \ref{prop.reg-var.bias} :}

From Definition (\ref{thau}) the bound $\mathcal{B}_{1}$ (resp $\mathcal{B}
_{2}$) rewrite :
\begin{equation}
\frac{\int_{x}^{+\infty }\varrho \left( s\right) ds}{\varrho \left( x\right) 
}\leq M\frac{g\left( x\right) }{g^{\prime }\left( x\right) },\quad \mathrm{
resp.}\frac{g\left( x\right) }{g^{\prime }\left( x\right) }\leq M^{\prime }
\frac{\int_{x}^{+\infty }\varrho \left( s\right) ds}{\varrho \left( x\right) 
}  \label{grana-bounds}
\end{equation}
The derivation follows from extensive reference to \cite{Granata2016a}, denotes [GI] for short and to  \cite{Granata2016b} denoted 
[GII]. The exact references are systematically provided with
equation numbers in [GI-II]. For cases $a.1$, $a.2$ and $a.3$ the left hand
side of $\int_{x}^{+\infty }\varrho \left( s\right) ds/\varrho \left(
x\right) $ tends to a constant [GII (8.27)]. The function $g$ is then $\mathbf{EV}_{c}
$ (cases $a.2$ and $a.3$) or possibly $\mathbf{EV}_{0}$ (case $a.1$ if $\lambda $ and 
$\rho $ have the same exponential index) : see [GII (9.1)] then $g\left(
x\right) /g^{\prime }\left( x\right) $ necessarily tends to a constant when $
g\in \mathbf{EV}_{c}$ [GII - Prop 8.4] or to infinity otherwise [GII (8.12)].

Now we examine cases $c.1-3$. When $\rho \in \mathbf{RV}_{\alpha }$, $
\int_{x}^{+\infty }\varrho \left( s\right) ds/\varrho \left( x\right) $
tends to infinity [GI-(2.79)] whereas $g(x)/g^{\prime }(x)$ tends to a
constant when $\lambda \in \mathbf{EV}_{c}$ (case $c.1$). When $\lambda \in \mathbf{EV}_{0}$
note that $\int_{x}^{+\infty }\varrho \left( s\right) ds/(x\varrho (x))$
tends to a constant [GI-(2.79)] whereas $g\left( x\right) /(xg^{\prime
}\left( x\right) )$ tends to $0$ [GI-(2.82)]. So when $c.1$ or $c.2$ hold, $
\mathcal{B}_{2}$ holds as well. For $c.3$ both sides tend to constants when
divided by $x$ hence the conclusion.

For the last line $b.1$ is easy : $g(x)/g^{\prime }(x)$ tends to a constant
(see above) whereas $\int_{x}^{+\infty }\varrho \left( s\right) ds/\varrho
\left( x\right) $ tends to infinity and $\mathcal{B}_{1}$ holds strictly.
Case $b.3$ is granted by [GII-(10.4)] (Take $f=\rho /\lambda $ and $
g=\lambda $ in equation (10.4) by Granata). The last problem remaining is $
b.2$. It turns out that here nothing consistent can be concluded as shown on
the two following examples. First take $\varrho \left( x\right) =\lambda
\left( x\right) =2^{-1}x^{-1/2}\exp \left( x^{-1/2}\right) $ then $\mathcal{B
}_{1}$ holds for an accurate choice of $M$. Now take $\varrho \left( x\right)
=2x^{-1}\log x\exp \left( -\log ^{2}x\right) $ and $\lambda \left( x\right)
=2x^{1/2}\exp \left( -x^{-1/2}\right) \varrho \left( x\right) $ then $
g(x)/g^{\prime }(x)=2x^{1/2}$ and $\int_{x}^{+\infty }\varrho \left(
s\right) ds/\varrho \left( x\right) =x/\left( 2\log x\right) $ and $\mathcal{
B}_{2}$ holds.

\section{Appendix}

\subsection{Proof of the variant of Feldman-Hayek Theorem (Proposition~\ref{varFeldHay})}
\label{subsec:ProofvarFeldHay}
Let $\nu$ and $\mu$ be two measures on a Hilbert space $\mathbf{H}$ and
assume that $\mu$ is absolutely continuous with respect to $\nu$. The
Kullback-Leibler divergence from $\mu$ to $\nu$ is very classically defined
as $\mathbf{KL}\left( \nu||\mu\right) =\int\log\left( \frac{d\mu}{d\nu }
\right) d\nu$.


The proof follows the lines of Theorem 2.23 in \cite{DPZ} with a slight
adaptation. First the issue of $m_{1}$ and $m_{2}$ may be tackled similarly
to \cite{DPZ} since the main difference in this variant relies on the
covariance operators only. Here too the shift $m_{1}-m_{2}$ is admissible if
and only if it belongs to $H_{0}$. We will suppose that $m_{1}=m_{2}=0$.

Let $\mathbf{h}$ denote the Hellinger affinity between two probability
measures, $\mathbf{h}\left( \mu,\nu\right) =\int\left( \frac{d\mu}{d\lambda}
\frac{d\nu}{d\lambda}\right) ^{1/2}d\lambda$ where $\lambda$ is a reference
probability measure and both $\mu$ and $\nu$ are absolutely continuous w.r.t. $\lambda$. Then the previous integral does not depend on $\lambda$. The
point is : $\mathbf{h}\left( \mu,\nu\right) =0$ if and only if $\mu$ and $
\nu $ are singular (see Proposition 2.19 p.59 in \cite{DPZ}).

\textbf{Step 1 : Sufficiency.} The goal is to compute $\mathbf{h}\left(
\mu,\nu\right) $ and to prove that $\mathbf{h}\left( \mu,\nu\right) >0$
under condition (i) and (iii). This will provide sufficiency.

Notice that $S_{1}$ and $S_{2}$ are Hilbert-Schmidt because $
\Sigma_{0}^{1/2} $ is. Let $R=\left( S_{1}^{-1}S_{2}\right) \left(
S_{1}^{-1}S_{2}\right) ^{\ast}$, when (i) holds the closed graph theorem
ensures that $S_{1}^{-1}S_{2}$ is bounded and invertible from and onto $\overline{H_0}$, and $R$ too. Since $S_{1}$ and $S_{2}$ are compact the
eigenvalues of $R$ are denumerable, since $R$ is symmetric and positive they
are nonnegative. Denote $\left( \tau_{j}\right) _{j\in\mathbb{N}}$ these
eigenvalues and $\left( \phi_{j}\right) _{j\in\mathbb{N}}$ their associated
eigenvectors. Clearly by invertibility of $R$ : for all $j$ $
0<\tau_{\ast}\leq\tau_{j}\leq\tau^{\ast}.$

Let $\xi_{j}$ be a sequence of real i.i.d $\mathcal{N}\left( 0,1\right) $
random variables and denote now :
\begin{equation*}
U=\sum_{j=1}^{+\infty}\xi_{j}S_{1}\phi_{j}\quad V=\sum_{j=1}^{+\infty}\sqrt{
\tau_{j}}\xi_{j}S_{1}\phi_{j}.
\end{equation*}
First $\sum_{j=1}^{+\infty}\left\Vert S_{1}\phi_{j}\right\Vert ^{2}<+\infty$
because $S_{1}$ is Hilbert Schmidt. The 0-1 law implies that $U$ is a
well-defined square integrable random element in $\mathbf{H}$. Finite
dimensional distributions taken from $U$ are all centered Gaussian, hence $U$
is Gaussian and its covariance operator is fully determined by quadratic
forms :
\begin{equation*}
\mathbb{E}\left\langle U,x\right\rangle \left\langle U,y\right\rangle
=\sum_{j=1}^{+\infty}\left\langle \phi_{j},S_{1}^{\ast}x\right\rangle
\left\langle \phi_{j},S_{1}^{\ast}y\right\rangle =\sum_{j=1}^{+\infty
}\left\langle S_{1}S_{1}^{\ast}x,y\right\rangle =\left\langle \Sigma
_{1}x,y\right\rangle .
\end{equation*}
Hence $U\sim\mu$. Similarly :
\begin{align*}
\mathbb{E}\left\langle V,x\right\rangle \left\langle V,y\right\rangle &
=\sum_{j=1}^{+\infty}\tau_{j}\left\langle S_{1}\phi_{j},x\right\rangle
\left\langle \phi_{j},S_{1}^{\ast}y\right\rangle =\sum_{j=1}^{+\infty
}\left\langle S_{1}R\phi_{j},x\right\rangle \left\langle
\phi_{j},S_{1}^{\ast }y\right\rangle =\sum_{j=1}^{+\infty}\left\langle
S_{2}\left( S_{1}^{-1}S_{2}\right) ^{\ast}\phi_{j},x\right\rangle
\left\langle \phi_{j},S_{1}^{\ast}y\right\rangle \\
& =\sum_{j=1}^{+\infty}\left\langle \phi_{j},S_{1}^{-1}S_{2}S_{2}^{\ast
}x\right\rangle \left\langle \phi_{j},S_{1}^{\ast}y\right\rangle
=\left\langle S_{1}^{-1}S_{2}S_{2}^{\ast}x,S_{1}^{\ast}y\right\rangle
=\left\langle \Sigma_{2}x,y\right\rangle
\end{align*}
and $V\sim\nu$. Considering the series representations of $U$ and $V$ we are
likely to apply Proposition 2.20 p.60 in \cite{DPZ} about product measures
on $\mathbb{R}^{\infty}$ and their Hellinger integrals. Denote here $\mu_{j}$
(resp $\nu_{j}$) the distribution of $\xi_{j}$ (resp. $\sqrt{\tau_{j}}
\xi_{j} $). Classical calculations give $\mathbf{h}\left(
\mu_{j},\nu_{j}\right) =\sqrt{2\sqrt{\tau_{j}}/\left( 1+\tau_{j}\right) }.$
Turning to $\log\left[ \Pi_{j\in\mathbb{N}}\mathbf{h}\left(
\mu_{j},\nu_{j}\right) \right] =\left( 1/2\right) \sum_{j\in\mathbb{N}}\log
\frac{2\sqrt{\tau_{j}}}{1+\tau_{j}}$ set $\delta_{j}=1-\tau_{j}$ then :
\begin{equation*}
\log\frac{\left( 1+\tau_{j}\right) ^{2}}{4\tau_{j}}=\log\frac{\left(
2-\delta_{j}\right) ^{2}}{4-4\delta_{j}}=\log\left( 1+\frac{\delta_{j}^{2}}{
4-4\delta_{j}}\right) \leq\frac{\delta_{j}^{2}}{4-4\delta_{j}}\leq \frac{
\delta_{j}^{2}}{4\tau_{\ast}}.
\end{equation*}
Finally when $\sum_{j}\left( 1-\tau_{j}\right) ^{2}<+\infty$ $\Pi _{j\in
\mathbb{N}}\mathbf{h}\left( \mu_{j},\nu_{j}\right) >0$ and $\mu\sim \nu$.
But the condition $\sum_{j}\left( 1-\tau_{j}\right) ^{2}$ means that the
eigenvalues of $R-I$ are square summable hence that $R-I$ is
Hilbert-Schmidt. This proves the first part of the Theorem.

\textbf{Step 2 : Necessity. }Here again we follow the proof of \cite{DPZ}.
Suppose that $\mu$ and $\nu$ are non-singular. In order to prove that (i)
holds everything works by replacing $Q_{1}^{1/2}$ (resp. $Q_{2}^{1/2}$) by $
S_{1}$ (resp. $S_{2}$) in the proof of Theorem 2.23 of \cite{DPZ} p.64. In
order to prove that (iii) holds an adaptation is required in the proof. We
give some details in order to sweep a possible typo appearing in \cite{DPZ}
p.66. The goal here is to prove that $\sum_{j}\left( 1-\tau_{j}\right)
^{2}<+\infty$ where again $\left( \tau_{j}\right) _{j\in\mathbb{N}}$ is the
eigenvalue sequence of $R=$ $\left( S_{1}^{-1}S_{2}\right) \left(
S_{1}^{-1}S_{2}\right) ^{\ast}$. Consider the singular value decomposition
of $S_{1}=\sum_{j}s_{1j}\phi_{j}\left\langle \varphi_{j},.\right\rangle $
then $S_{1}^{-1}=\sum_{j}\frac{1}{s_{1j}}\varphi_{j}\left\langle \phi
_{j},.\right\rangle $ and $\left( S_{1}^{-1}\right) ^{\ast}=\sum_{j}\frac {1
}{s_{1j}}\phi_{j}\left\langle \varphi_{j},.\right\rangle $. Consequently :
\begin{equation*}
\left\langle R\varphi_{i},\varphi_{j}\right\rangle =\frac{\left\langle
\Sigma_{2}\phi_{i},\phi_{j}\right\rangle }{s_{1j}s_{1i}}=r_{i,j}=r_{j,i}.
\end{equation*}
Denote $R_{n}$ the positive symmetric square matrix of size $n$ with $\left[
R_{n}\right] _{i,j}=r_{i,j}$. The matrix $R_{n}$ is positive definite
whenever $\ker\Sigma_{2}=\left\{ 0\right\} $.

Let $G_{1}\sim\mathcal{N}\left( 0,\Sigma_{1}\right) $ and $G_{1}\sim\mathcal{
N}\left( 0,\Sigma_{2}\right) $ and consider the $n$-dimensional centered
Gaussian vectors : $X_{n}=\left( x_{1},...,x_{n}\right) $ and $Y_{n}=\left(
y_{1},...,y_{n}\right) $ with :
\begin{equation*}
x_{j}=\frac{\left\langle G_{1},\phi_{j}\right\rangle }{s_{1j}},\quad y_{j}=
\frac{\left\langle G_{2},\phi_{j}\right\rangle }{s_{1j}}.
\end{equation*}
We have $X_{n}\sim\mathcal{N}\left( 0,I_{n}\right) $ and $Y_{n}\sim\mathcal{N
}\left( 0,R_{n}\right) $ because $\mathbb{E}x_{i}x_{j}=0$ if $i\neq j$, $
\mathbb{E}x_{i}^{2}=1$ and $\mathbb{E}y_{i}y_{j}=r_{i,j}$. A standard result
for multivariate Gaussian distributions gives $\mathbf{h}\left(
X_{n},Y_{n}\right) =\det^{1/4}R_{n}/\det^{1/2}\left[ \left(
R_{n}+I_{n}\right) /2\right] $. Everything is ready for the final steps.
Since $\mu$ and $\nu$ are equivalent $\mathbf{h}\left( \mu,\nu\right) >0$
besides Proposition 2.19 (iii) in \cite{DPZ} (or Theorem 1 in \cite
{vanErven2014}) gives for all $n$ :
\begin{equation*}
\mathbf{h}\left( X_{n},Y_{n}\right) \geq\mathbf{h}\left( \mu,\nu\right) >0
\end{equation*}
but denoting $\left( \tau_{jn}\right) _{1\leq j\leq n}$ the eigenvalues of $
R_{n}$, $\log\mathbf{h}\left( X_{n},Y_{n}\right) =\sum_{j=1}^{n}\log \frac{
\sqrt{2}\tau_{jn}^{1/4}}{\left( 1+\tau_{jn}\right) ^{1/2}}$ . It was seen
above that for all $n$ and $j$ $\tau_{jn}>0$ hence :
\begin{equation*}
-\log\mathbf{h}\left( X_{n},Y_{n}\right) =\left( 1/4\right)
\sum_{j=1}^{n}\log\frac{\left( 1+\tau_{jn}\right) ^{2}}{4\tau_{jn}}\leq-\log 
\mathbf{h}\left( \mu,\nu\right) .
\end{equation*}

From $\left( 1+\tau_{jn}\right) ^{2}\geq4\tau_{jn}$ we get that the array in
the equation above converges when $n$ tends to infinity so that $\left(
1+\tau_{jn}\right) ^{2}/4\tau_{jn}$ tends to $1$ and we may pick a constant $
c_{\tau}>1$ such that $\tau_{jn}\in\left[ 1/c_{\tau},c_{\tau}\right] $ for
all $j$ and $n$. It is elementary to see that $\left( 1-\tau_{jn}\right)
^{2}/\log\frac{\left( 1+\tau_{jn}\right) ^{2}}{4\tau_{jn}}\leq c_{\tau
}^{\prime}$ for $\tau_{jn}\in\left[ 1/c_{\tau},c_{\tau}\right] $ hence the
bound
\begin{equation*}
\sup_{n}\sum_{j=1}^{n}\left( 1-\tau_{jn}\right) ^{2}\leq-c_{\tau}^{\prime
}\log\mathbf{h}\left( \mu,\nu\right) .
\end{equation*}
The conclusion is the same as \cite{DPZ} : $\sum_{j=1}^{n}\left( 1-\tau
_{jn}\right) ^{2}$ is the Hilbert-Schmidt norm of $I_{n}-R_{n}=\Pi_{n}\left(
I-R\right) \Pi_{n}$ with $\Pi_{n}$ the projection on $\left( \varphi
_{1},...,\varphi_{n}\right) $. The bound above implies that $\left\Vert
I-R\right\Vert _{2}$ is finite. This finishes the necessity part of the
Theorem. We turn to equation (\ref{KL}).

\textbf{Step 3 :} Assume now that $R_{1}=I$ and that $R=
\Sigma_{0}^{-1/2}S_{2}S_{2}^{\ast}\Sigma_{0}^{-1/2}-I$ is trace-class. Then
we can apply Theorem 2 in \cite{Minh2020} so that formula (\ref{KL}) holds.
This concludes the proof of Theorem \ref{varFeldHay}.

\subsection{Concentration inequalities\label{section-conc-ineq}}

\begin{lemma}
\label{lem.conc.mom}With $m=m_{\alpha}=\min\left\{
p:\lambda_{p}\leq\alpha\right\} $ the following bounds hold for $p\leq
\mathbf{s}$ :
\begin{align*}
\mathbb{E}\left\Vert C^{\dagger1/2}X_{1}\right\Vert ^{2p} & \leq\xi
_{p}\left( \frac{1+\eta}{\eta}m\right) ^{p},\quad\mathbb{E}\left\Vert
C^{\dagger1/2}\varepsilon_{1}\right\Vert ^{2p}\leq\xi_{p}^{\prime}\left( 
\frac{1+\eta}{\eta}m\right) ^{p} \\
\mathbb{E}\left[ \left\Vert C^{\dag1/2}X\right\Vert ^{2}\left\Vert
X\right\Vert ^{2}\right] & \leq\xi_{\mathbf{2}}\left( \mathrm{tr}C\right)
\sum_{k=1}^{+\infty}\frac{\lambda_{k}}{\lambda_{k}+\alpha}\quad\mathbb{E}
\left\Vert X_{1}\right\Vert ^{2p}\leq\xi_{p}\left( \mathrm{tr}C\right) ^{p}.
\end{align*}
Here $\xi_{s}^{\prime}$ is a constant which is explicit and given in the
proof.
\end{lemma}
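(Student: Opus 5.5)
The plan is to treat every quantity in Lemma~\ref{lem.conc.mom} as a weighted sum of squared principal scores, and to bound its $L^p$ norm by Minkowski's inequality together with $\mathbf{A}_2$. Write $\left\Vert C^{\dagger1/2}X_{1}\right\Vert^{2}=\sum_{k}\frac{\langle X_1,e_k\rangle^{2}}{\lambda_k+\alpha}$. The triangle inequality in $L^{p}(\mathbb{P})$ applied to the nonnegative summands $Y_k=\langle X_1,e_k\rangle^2$, combined with $\mathbf{A}_2$ in the form $(\mathbb{E}Y_k^{p})^{1/p}\leq \xi_p^{1/p}\lambda_k$ for $p\leq\mathbf{s}$, gives
\[
\mathbb{E}\left\Vert C^{\dagger1/2}X_{1}\right\Vert^{2p}\leq \xi_p\Big(\sum_{k\geq1}\frac{\lambda_k}{\lambda_k+\alpha}\Big)^{p}.
\]
The same argument with weights $1$ instead of $(\lambda_k+\alpha)^{-1}$ yields $\mathbb{E}\Vert X_1\Vert^{2p}\leq\xi_p(\mathrm{tr}\,C)^p$. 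For the mixed term, expand $\Vert C^{\dagger1/2}X\Vert^2\Vert X\Vert^2=\sum_{k,j}\frac{\langle X,e_k\rangle^2\langle X,e_j\rangle^2}{\lambda_k+\alpha}$. Cauchy--Schwarz and $\mathbf{A}_2(2)$ give $\mathbb{E}[\langle X,e_k\rangle^2\langle X,e_j\rangle^2]\leq\xi_{2}\lambda_k\lambda_j$, and summing over $j$ then $k$ gives the stated bound $\xi_2(\mathrm{tr}\,C)\sum_k\lambda_k/(\lambda_k+\alpha)$.

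Next I bound the effective dimension $\sum_k\lambda_k/(\lambda_k+\alpha)$ by $\frac{1+\eta}{\eta}m$, where $m=\min\{p:\lambda_p\leq\alpha\}$. For $k<m$ each term is at most $1$, which contributes at most $m-1$. For $k\geq m$ I use $\lambda_k/(\lambda_k+\alpha)\leq\lambda_k/\alpha\leq\lambda_k/\lambda_m$. By $\mathbf{A}_3$ the sequence $k^{1+\eta}\lambda_k$ is decreasing, so $\lambda_k\leq\lambda_m(m/k)^{1+\eta}$ for $k\geq m$. Comparing with an integral,
\[
\sum_{k\geq m}\Big(\frac{m}{k}\Big)^{1+\eta}\leq 1+m^{1+\eta}\int_m^{\infty}x^{-1-\eta}\,dx=1+\frac{m}{\eta}.
\]
Adding up gives at most $m+m/\eta=\frac{1+\eta}{\eta}m$. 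Under $\mathbf{A}'_3$ instead, the same estimate holds up to a constant, by Karamata's theorem for regularly varying sequences.

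The step I expect to be the main obstacle is the innovation bound, because $\mathbf{A}_2$ controls moments of $X$, not of $\varepsilon$. I would write $\varepsilon_1=X_1-\rho X_0$ and apply Minkowski in $L^{2p}$:
\[
\big(\mathbb{E}\langle\varepsilon_1,e_k\rangle^{2p}\big)^{1/2p}\leq\big(\mathbb{E}\langle X_1,e_k\rangle^{2p}\big)^{1/2p}+\big(\mathbb{E}\langle X_0,\rho^{*}e_k\rangle^{2p}\big)^{1/2p}.
\]
The first term is at most $\xi_p^{1/2p}\lambda_k^{1/2}$. For the second, factor $\rho=C^{1/2}B$ with $B=C^{-1/2}\rho$ bounded, $\Vert B\Vert_\infty=a_2'$ (this is available by $\mathbf{A}_1$). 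Then $\rho^*e_k=\lambda_k^{1/2}B^*e_k$ with $\Vert B^*e_k\Vert\leq a_2'$. For any fixed $u$, expand $u=\sum_j u_je_j$; Minkowski and $\mathbf{A}_2$ give $(\mathbb{E}\langle X,u\rangle^{2p})^{1/2p}\leq\xi_p^{1/2p}\sum_j|u_j|\lambda_j^{1/2}$, and Cauchy--Schwarz bounds this by $\xi_p^{1/2p}\Vert u\Vert(\mathrm{tr}\,C)^{1/2}$. Hence $\mathbb{E}\langle\varepsilon_1,e_k\rangle^{2p}\leq\xi'_p\lambda_k^{p}$ with $\xi_p'=\xi_p\big(1+a_2'(\mathrm{tr}\,C)^{1/2}\big)^{2p}$. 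Rerunning the first two paragraphs with $\xi'_p$ in place of $\xi_p$ then gives the bound on $\mathbb{E}\Vert C^{\dagger1/2}\varepsilon_1\Vert^{2p}$.
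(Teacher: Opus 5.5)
Your proposal is correct and follows essentially the same route as the paper: weighted sums of squared scores controlled through $\mathbf{A}_2$, the effective-dimension bound of Lemma~\ref{series} derived from $\mathbf{A}_3$, Cauchy--Schwarz for the mixed term, and $\varepsilon_1=X_1-\rho X_0$ together with $\rho^{*}e_k=\lambda_k^{1/2}(C^{-1/2}\rho)^{*}e_k$ for the innovations. The differences are minor. You use Minkowski in $L^p$ where the paper uses a weighted Jensen inequality, and both give the same bound $\xi_p\big(\sum_k\lambda_k/(\lambda_k+\alpha)\big)^p$. Your directional-moment argument gives $\xi'_p=\xi_p\big(1+a_2'(\mathrm{tr}\,C)^{1/2}\big)^{2p}$, slightly sharper than the paper's $2^{2p}\big[\xi_p+(a_2')^{2p}\mathbb{E}\Vert X_1\Vert^{2p}\big]$.
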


\textbf{Proof of the Lemma :} The bound for $\varepsilon$ on the first line is proved since its
derivation is similar but slightly more general than the one involving $X$. Developing $\varepsilon_{1}$ in the eigenbasis of $C$ we get :
\begin{equation*}
\mathbb{E}\left\Vert C^{\dagger1/2}\varepsilon_{1}\right\Vert ^{2p}=\mathbb{E
}\left[ \left( \sum_{k}\frac{\left\langle \varepsilon_{1},e_{k}\right\rangle ^{2}}{
\lambda_{k}+\alpha}\right) ^{p}\right] =\mathbb{E}\left[ \left( \sum_{k}\frac{\lambda_{k}}{
\lambda_{k}+\alpha}\frac{\left\langle \varepsilon_{1},e_{k}\right\rangle ^{2}
}{\lambda_{k}}\right) ^{p}\right] .
\end{equation*}
H\"{o}lder's (or Jensen's) inequality give :
\begin{equation*}
\left( \sum_{p}\frac{\lambda_{p}}{\lambda_{p}+\alpha}\frac{\left\langle
\varepsilon_{1},e_{p}\right\rangle ^{2}}{\lambda_{p}}\right) ^{p}\leq\left(
\sum_{l}\frac{\lambda_{l}}{\lambda_{l}+\alpha}\right) ^{p-1}\sum_{k}\frac{
\lambda_{k}}{\lambda_{k}+\alpha}\frac{\left\langle
\varepsilon_{1},e_{k}\right\rangle ^{2p}}{\lambda_{k}^{p}}.
\end{equation*}
Assume that $\sup_{k\in\mathbb{N}}\left\{ \mathbb{E}\left\langle
\varepsilon_{1},e_{k}\right\rangle ^{2p}\lambda_{k}^{-p}\right\} =\xi
_{p}^{\prime}<+\infty$. Taking expectation provides :
\begin{equation*}
\mathbb{E}\left\Vert C^{\dagger1/2}\varepsilon_{1}\right\Vert ^{2p}\leq\xi
_{p}^{\prime}\left( \sum_{l}\frac{\lambda_{l}}{\lambda_{l}+\alpha}\right)
^{p}\leq\xi_{p}^{\prime}\left( 1+\frac{1}{\eta}\right) ^{p}m^{p}
\end{equation*}
thanks to the first line of Lemma \ref{series}. The remaining task is to
prove that $\xi_{p}^{\prime}$ is finite as mentioned just above. From $
\varepsilon _{1}=X_{1}-\rho\left( X_{0}\right) $ we obtain :
\begin{align*}
\mathbb{E}\left\langle \varepsilon_{1},e_{k}\right\rangle ^{2p} & \leq 2^{2p}
\left[ \mathbb{E}\left\langle X_{1},e_{k}\right\rangle ^{2p}+\mathbb{E}
\left\langle X_{0},\rho^{\ast}e_{k}\right\rangle ^{2p}\right] \\
& \leq 2^{2p}\lambda_{k}^{p}\left[ \frac{\mathbb{E}\left\langle
X_{1},e_{k}\right\rangle ^{2p}}{\lambda_{k}^{p}}+\mathbb{E}\left\langle
X_{1},\rho^{\ast}C^{-1/2}e_{k}\right\rangle ^{2p}\right]
\leq2^{2p}\lambda_{k}^{p}\left[ \xi_{p}+\left\Vert
C^{-1/2}\rho\right\Vert _{\infty}^{2p}\mathbb{E}\left\Vert
X_{1}\right\Vert ^{2p}\right]
\end{align*}
and we can set $\xi_{p}^{\prime}=2^{2p}\left[ \xi_{p}+\left\Vert C^{-1/2}\rho\right\Vert _{\infty}^{2p}\mathbb{E}\left\Vert
X_{1}\right\Vert ^{2p}\right] $which is the desired result.

For the second line of the Lemma, developing $X$ in its Fourier basis :
\begin{equation*}
\mathbb{E}\left[ \left\Vert C^{\dag1/2}X\right\Vert ^{2}\left\Vert
X\right\Vert ^{2}\right] =\mathbb{E}\left[ \sum_{k,p}\frac{\left\langle
X,e_{k}\right\rangle ^{2}\left\langle X,e_{p}\right\rangle ^{2}}{\lambda
_{k}+\alpha}\right] \leq\left[ \sum_{k,p}\mathbb{E}\left( \varsigma
_{k}\varsigma_{p}\right) \frac{\lambda_{k}\lambda_{p}}{\lambda_{k}+\alpha }
\right]
\end{equation*}
with $\varsigma_{k}=\frac{\left\langle X_{1},e_{k}\right\rangle ^{2}}{
\lambda_{k}}$. Applying Cauchy-Schwartz inequality then assumption $\mathbf{A}_{2}$ with $s=2$ gives :
\begin{equation*}
\mathbb{E}\left[ \left\Vert C^{\dag1/2}X\right\Vert ^{2}\left\Vert
X\right\Vert ^{2}\right] \leq\xi_{\mathbf{2}}\sum_{k,p}\frac{\lambda
_{k}\lambda_{p}}{\lambda_{k}+\alpha}=\xi_{\mathbf{2}}\left( \mathrm{tr}
C\right) \sum_{k}\frac{\lambda_{k}}{\lambda_{k}+\alpha}.
\end{equation*}

The last bound $\mathbb{E}\left\Vert X_{1}\right\Vert ^{2p}\leq\xi_{p}\left( 
\mathrm{tr}C\right) ^{p}$ is obtained with the same technique.

\begin{proposition}
\label{conc.mom}For all $p \in \left[ 2+2/(1+\eta),\mathbf{s}\right] $ and for $\alpha<\alpha_{0}$ where $\alpha_{0}$ is defined in
Proposition \ref{closed-op} there exists a constant $\mathbf{c}_{p}^{\ast}$ given within the proof
such that for all $n$ (with $m/n\leq1$) :
\begin{equation*}
\mathbb{E}\left\Vert C^{\dagger1/2}\left( C-C_{n}\right) C^{\dagger
1/2}\right\Vert _{2}^{p}\leq\mathbf{c}_{p}^{\ast}\left( \frac{m}{n}\right)
^{p/2}.
\end{equation*}
\end{proposition}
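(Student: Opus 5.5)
Write $C^{\dagger 1/2}(C-C_n)C^{\dagger 1/2}=\frac1n\sum_{k=1}^{n-1}(C^{\dagger1/2}X_k\otimes C^{\dagger1/2}X_k-C^{\dagger}C)+\frac1n C^\dagger C$, setting $Y_k=C^{\dagger 1/2}X_k$ and $W_k=Y_k\otimes Y_k-\mathbb{E}(Y_k\otimes Y_k)$, which is a centered, stationary, $\mathcal{S}_2$-valued sequence. The deterministic remainder $\frac1n C^\dagger C$ has Hilbert--Schmidt norm $\frac1n(\sum_p\lambda_p^2/(\lambda_p+\alpha)^2)^{1/2}\le \frac1n(\frac{1+\eta}{\eta}m)^{1/2}$ by Lemma \ref{series}, hence is $O((m/n)^{p/2})$ after raising to the power $p$ since $m/n\le 1$. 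The task then reduces to a moment bound $\mathbb{E}\|\sum_k W_k\|_2^p\lesssim (nm)^{p/2}$ for the sum of dependent Hilbert-valued variables.

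Since the $W_k$ are not martingale differences, I would use a martingale (Gordin-type) decomposition adapted to the Markov structure of (\ref{model}). Iterating $X_k=\rho^{j}X_{k-j}+\sum_{i<j}\rho^i\varepsilon_{k-i}$ gives $\mathbb{E}(W_{k+j}\mid\mathcal{F}_k)=C^{\dagger1/2}\rho^j(X_k\otimes X_k-C)(\rho^*)^jC^{\dagger1/2}$. The series $\sum_{j\ge0}\mathbb{E}(W_{k+j}\mid\mathcal F_k)$ converges in $L^p(\mathcal{S}_2)$ because $\|C^{\dagger1/2}\rho^j\|_\infty\le a_2' a_1^{j-1}$, which follows from $\mathbf{A}_1$ and the bound $\|C^{\dagger 1/2}\rho\|_\infty\le\|C^{-1/2}\rho\|_\infty$. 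This produces a martingale difference sequence $M_k$ with $\sum_k W_k=\sum_k M_k+$ coboundary. The coboundary is bounded by a constant multiple of $\|Y\otimes Y\|$-type terms, that is by $O(m)$ in $L^{p}$. This is at most $(nm)^{1/2}$ because $m\le n$.

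For the martingale part I would apply a Burkholder/Pinelis inequality in the Hilbert space $\mathcal S_2$: $\mathbb{E}\|\sum M_k\|_2^p\le c_p\, n^{p/2}\,\mathbb{E}\|M_1\|_2^p$ (using stationarity and Jensen on the square function, valid for $p\ge2$). It then suffices to show $\mathbb{E}\|M_1\|_2^p\lesssim m^{p/2}$, which reduces to $\mathbb{E}\|Y_1\|^{2p}\le\xi_p(\frac{1+\eta}{\eta}m)^p$ by Lemma \ref{lem.conc.mom}. Applied naively, this only yields $m^{p}$ instead of $m^{p/2}$.

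\textbf{Main obstacle.} The difficulty is exactly this gap between $m^p$ and $m^{p/2}$. The resolution I would try exploits the fact that the variance part of the square function is $\mathbb{E}\|W_1\|_2^2\le\mathbb{E}\|Y\|^4\lesssim m^2$, which is again too large. What I actually expect is that the paper's effective scaling is $\|W\|_2\sim m$ per term. In that case I would instead use the Rosenthal form $\mathbb{E}\|\sum M_k\|^p\lesssim (n\,\mathbb{E}\|M\|^2)^{p/2}+n\,\mathbb{E}\|M\|^p$. The first term is $(nm^2)^{p/2}$ and the second is $n m^p$. The restriction $p\ge 2+2/(1+\eta)$ together with $m/(n\lambda_m)\le1$ and $\lambda_m\le m^{-1-\eta}$ is presumably what lets the second term be absorbed into the first. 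If the target $(m/n)^{p/2}$ cannot be reached this way, I would settle for the bound obtainable from the Rosenthal decomposition and check the exponents against the hypothesis on $p$, since that hypothesis appears designed for exactly this absorption step.
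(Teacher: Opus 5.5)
Your proposal does not establish the bound. After the martingale reduction, your own Rosenthal estimate gives $(nm^2)^{p/2}+nm^p$, that is $\mathbb{E}\|R_{n,\alpha}\|_2^p\lesssim (m^2/n)^{p/2}$. The hypothesis $p\ge 2+2/(1+\eta)$ cannot close the gap: with $(nm^2)^{p/2}$ as first term, the absorption of $nm^p$ is automatic for every $p\ge 2$.

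The obstruction you isolated is genuine, though, and not an artefact of your method: in the Hilbert--Schmidt norm the statement is false. Take Gaussian data with $Ce_j=\lambda_je_j$, $\rho=\delta\,e_1\otimes e_1$ for a small $\delta>0$, and $C_\varepsilon=C-\rho C\rho^*$. All assumptions hold, $a_3>1$ and $a_2'>0$, so $\alpha_0>0$ is well defined. The coordinates $\langle X_k,e_j\rangle$, $j\ge 2$, are then independent $\mathcal{N}(0,\lambda_j)$ and i.i.d.\ in $k$. Keeping only the entries $\langle R_{n,\alpha}e_j,e_l\rangle$ with $j,l\ge 2$ gives
\[
\mathbb{E}\|R_{n,\alpha}\|_2^2\ \ge\ \frac{n-1}{n^2}\Bigl(\sum_{j\ge 2}\frac{\lambda_j}{\lambda_j+\alpha}\Bigr)^2\ \ge\ \frac{(n-1)(m-2)^2}{4n^2}.
\]
By Jensen, $\mathbb{E}\|R_{n,\alpha}\|_2^p\ge c\,(m^2/n)^{p/2}$, which is incompatible with $\mathbf{c}_p^*(m/n)^{p/2}$ as $m\to\infty$. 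For instance take $\lambda_j=j^{-2}$ and $n\ge m^3$, so that even $m/(n\lambda_m)\le 1$ holds.

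This also locates the flaw in the paper's proof, which otherwise parallels yours.
\begin{itemize}
\item Instead of a Gordin decomposition, it uses Bosq's identity $C_n-C=\rho(C_n-C)\rho^*-Z_n/n+\frac1n\sum_i\mathbf{E}_i$. It then inverts $I-\mathcal{F}_\alpha$ with $\mathcal{F}_\alpha(T)=\widetilde C_\alpha T\widetilde C_\alpha^*$; this is the only place $\alpha<\alpha_0$ enters, and your decomposition does not need it. Finally it applies Pinelis' Burkholder--Rosenthal inequality to $\sum_i\Delta_i$.
\item Thanks to $\|C^{-1/2}\rho\|_\infty<\infty$, the cross term $C^{\dagger1/2}\varepsilon_i\otimes C^{\dagger1/2}\rho X_{i-1}$ contributes only $\tau\|X_{i-1}\|^2$ with $\tau=O(m)$.
\item The pure-noise term, however, contributes $\beta=\mathbb{E}\|C^{\dagger1/2}\varepsilon_1\|^4/2\asymp m^2$ to every conditional variance. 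Summed over $i$ this is $(n-1)\beta$, whereas the paper writes $\beta$. That is how it obtains $(nm)^{p/2}$ instead of $(nm^2)^{p/2}$.
\item The condition $p\ge 2+2/(1+\eta)$ is then used exactly as you guessed: to make $n(m/n)^{p/2}\le 1$ and absorb $n\,\mathbb{E}\|\Delta_i\|_2^p\lesssim nm^p$. This actually requires $m/(n\lambda_m)\le 1$, not just $m/n\le 1$.
\end{itemize}

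So in $\mathcal{S}_2$ your rate $(m^2/n)^{p/2}$ is the correct one. What is used downstream (Lemma~\ref{P-barre} and Proposition~\ref{prop-up-bpund-res}) is only $\|R_{n,\alpha}\|_\infty$. For that norm $\sqrt{m/n}$ is the natural order, since $C^\dagger C$ has effective rank of order $m$. Reaching it, however, needs an operator-norm, effective-rank type concentration argument rather than a Hilbert-space martingale inequality in the Hilbert--Schmidt norm.
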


\textbf{Proof of the Proposition :} We start with Bosq's trick (see \cite{Bosq2000} Lemma 4.1 p.96). Set 
$Z_{i}=X_{i}\otimes X_{i}-C$ :
\begin{align*}
C_{n}-C & =\rho\left( C_{n}-C\right) \rho^{\ast}-\frac{Z_{n}}{n}+\frac {1}{n}
\sum_{i=1}^{n-1}\mathbf{E}_{i}, \\
\mathbf{E}_{i} & =\rho
X_{i-1}\otimes\varepsilon_{i}+\varepsilon_{i}\otimes\rho
X_{i-1}+\varepsilon_{i}\otimes\varepsilon_{i}-C_{\varepsilon}.
\end{align*}
From $R_{n,\alpha}=C^{\dagger1/2}\left( C_{n}-C\right)
C^{\dagger1/2}$ :
\begin{align*}
R_{n,\alpha} & =C^{\dagger1/2}\rho\left( C_{n}-C\right) \rho^{\ast
}C^{\dagger1/2}-\frac{C^{\dagger1/2}Z_{n}C^{\dagger1/2}}{n}+\frac{1}{n}
\sum_{i=1}^{n-1}C^{\dagger1/2}\mathbf{E}_{i}C^{\dagger1/2} \\
& =\mathcal{F}_{\alpha}\left( R_{n,\alpha}\right) -\frac{C^{\dagger
1/2}Z_{n}C^{\dagger1/2}}{n}+\frac{1}{n}\sum_{i=1}^{n-1}C^{\dagger 1/2}
\mathbf{E}_{i}C^{\dagger1/2}
\end{align*}
where $\mathcal{F}_{\alpha}\left( T\right) =\widetilde{C}_{\alpha }T
\widetilde{C}_{\alpha}^{\ast}$ and $\widetilde{C}_{\alpha}=C^{\dagger
1/2}\rho\left( C+\alpha\mathbf I\right) ^{1/2}$. Notice that $\mathcal{F}_{\alpha} $
is considered here as a linear operator acting on and onto the Hilbert space
of Hilbert-Schmidt operators. We need to bound its sup-norm $\left\Vert 
\mathcal{F}_{\alpha}\right\Vert _{2,\infty}=\sup_{\left\Vert T\right\Vert
_{2}<1}\left\Vert \mathcal{F}_{\alpha}\left( T\right) \right\Vert
_{2}\leq\left\Vert \widetilde{C}_{\alpha}\right\Vert _{\infty}^{2}$ (since $
\left\Vert \widetilde{C}_{\alpha}\right\Vert _{\infty}=\left\Vert \widetilde{
C}_{\alpha}^{\ast}\right\Vert _{\infty}$).

As a consequence from Proposition \ref{closed-op} $\sup_{\alpha<\alpha_{0}}
\left\Vert \widetilde{C}_{\alpha}\right\Vert _{\infty}^{2}<1-t_{0}$ for a $
t_{0}$ in $\left( 0,1\right) $ for all $\alpha<\alpha_{0}$ and $I-\mathcal{F}
_{\alpha}$ is invertible with bounded inverse and $\left\Vert \left( I-
\mathcal{F}_{\alpha}\right) ^{-1}\right\Vert _{2,\infty}\leq\left(
1-\sup_{\alpha<\alpha_{0}}\left\Vert \widetilde{C}_{\alpha}\right\Vert
_{\infty}^{2}\right) ^{-1}=1/t_{0}$ :
\begin{align*}
R_{n,\alpha} & =\left( I-\mathcal{F}_{\alpha}\right) ^{-1}\left[ \frac {1}{n}
\sum_{i=1}^{n-1}C^{\dagger1/2}\mathbf{E}_{i}C^{\dagger1/2}-\frac{
C^{\dagger1/2}Z_{n}C^{\dagger1/2}}{n}\right] , \\
\left\Vert R_{n,\alpha}\right\Vert _{2} & \leq\frac{1}{n}\left\Vert \left( I-
\mathcal{F}_{\alpha}\right) ^{-1}\right\Vert _{2,\infty}\left[ \left\Vert
\sum_{i=1}^{n-1}C^{\dagger1/2}\mathbf{E}_{i}C^{\dagger1/2}\right\Vert
_{2}+\left\Vert C^{\dagger1/2}Z_{n}C^{\dagger1/2}\right\Vert _{2}\right] \\
& \leq\frac{1}{nt_{0}}\left[ 2\left\Vert \sum_{i=1}^{n-1}\Delta
_{i}\right\Vert _{2}+\left\Vert C^{\dagger1/2}Z_{n}C^{\dagger1/2}\right\Vert
_{2}\right]
\end{align*}
where we split $\mathbf{E}_{i}=\Delta_{i}+\Delta_{i}^{\ast}$ :
\begin{equation}
\Delta_{i}=C^{\dagger1/2}\varepsilon_{i}\otimes C^{\dagger1/2}\rho
X_{i-1}+\left( C^{\dagger1/2}\varepsilon_{i}\otimes
C^{\dagger1/2}\varepsilon_{i}-C^{\dagger1/2}C_{\varepsilon}C^{\dagger1/2}
\right) /2.  \label{delta-i}
\end{equation}

As a by product of the computations above we obtain :
\begin{equation*}
\left\Vert R_{n,\alpha}\right\Vert _{2}^{p}\leq\frac{2^{p-1}}{n^{p}t_{0}^{p}}
\left\{ 2^{p}\left\Vert \sum_{i=1}^{n-1}\Delta_{i}\right\Vert
_{2}^{p}+\left\Vert C^{\dagger1/2}Z_{n}C^{\dagger1/2}\right\Vert
_{2}^{p}\right\} .
\end{equation*}
The first step consists in estimating $\mathbb{E}\left\Vert C^{\dagger
1/2}Z_{n}C^{\dagger1/2}\right\Vert _{2}^{p}$ :
\[
\mathbb{E}\left\Vert C^{\dagger1/2}Z_{n}C^{\dagger1/2}\right\Vert _{2}^{p} \leq2^{p-1}\left[ \mathbb{E}\left\Vert C^{\dagger1/2}X_{n}\right\Vert
^{2p}+\left\Vert C^{\dagger}C\right\Vert _{2}^{p}\right] \leq 2^{p-1}\left[ \xi_{p}\left( \frac{1+\eta}{\eta}m\right) ^{p}+\left(
\sum_{k=1}^{+\infty}\frac{\lambda_{k}^{2}}{\left( \lambda_{k}+\alpha\right)
^{2}}\right) ^{p/2}\right]
\]

Leading to
\begin{equation}\label{CZC}
\mathbb{E}\left\Vert C^{\dagger1/2}Z_{n}C^{\dagger1/2}\right\Vert _{2}^{p} \leq 2^{p-1}\left( \frac{1+\eta}{\eta}m\right) ^{p}\left[ \xi_{p}+\left[ 
\frac{\eta}{\left( 1+\eta\right) m}\right] ^{p/2}\right] \leq m^{p}\left( 2
\frac{1+\eta}{\eta}\right) ^{p}\left( \frac{\xi_{p}+\min^{p/2}\left(
1,\eta\right) }{2}\right)
\end{equation}

by respectively Lemma \ref{lem.conc.mom} and direct calculations (see also
Lemma \ref{series}).

The second step handles $\mathbb{E}\left\Vert \sum_{i=1}^{n-1}\Delta
_{i}\right\Vert _{2}^{p}$. We are going to apply Burkholder-Rosenthal
inequality for Hilbert-valued martingales (see \cite{pinelis1994} ). This
inequality writes here :
\begin{equation}
\mathbb{E}\left\Vert \sum_{i=1}^{n-1}\Delta_{i}\right\Vert _{2}^{p}\leq
\mathbf{c}_{p}\left\{ \mathbb{E}\left[ \sum_{i=1}^{n-1}\mathbb{E}\left(
\left\Vert \Delta_{i}\right\Vert _{2}^{2}|X_{i-1}\right) \right]
^{p/2}+\sum_{i=1}^{n-1}\mathbb{E}\left\Vert \Delta_{i}\right\Vert
_{2}^{p}\right\} .  \label{burk-rosen}
\end{equation}

where $\mathbf{c}_{p}$ is deduced from Theorem 4.1 p1688 \cite{pinelis1994}
(a possible choice for $\mathbf{c}_{p}$ is $p/\log p$, see formula (4.3))
and $D=1$ in the notations of Pinelis' article). From (\ref{delta-i}) :
\begin{align*}
\left\Vert \Delta_{i}\right\Vert _{2}^{2} & \leq2\left\Vert C^{\dagger
1/2}\varepsilon_{i}\right\Vert ^{2}\left\Vert C^{\dagger1/2}\rho
X_{i-1}\right\Vert ^{2}+\frac{1}{2}\left\Vert C^{\dagger1/2}\varepsilon
_{i}\otimes C^{\dagger1/2}\varepsilon_{i}-C^{\dagger1/2}C_{\varepsilon
}C^{\dagger1/2}\right\Vert _{2}^{2}, \\
\mathbb{E}\left( \left\Vert \Delta_{i}\right\Vert _{2}^{2}|X_{i-1}\right) &
\leq\tau\left\Vert X_{i-1}\right\Vert ^{2}+\beta, \\
\tau & =2\left\Vert C^{-1/2}\rho\right\Vert ^{2}\mathbb{E}\left\Vert
C^{\dagger1/2}\varepsilon_{1}\right\Vert ^{2},\quad\beta=\frac{\mathbb{E}
\left\Vert C^{\dagger1/2}\varepsilon_{1}\right\Vert ^{4}}{2}.
\end{align*}

Now turning to the first term in the right hand side of (\ref{burk-rosen}) :
\begin{align*}
\mathbb{E}\left[ \sum_{i=1}^{n-1}\mathbb{E}\left( \left\Vert \Delta
_{i}\right\Vert _{2}^{2}|X_{i-1}\right) \right] ^{p/2} & \leq \mathbb{E}
\left[ \tau\sum_{i=1}^{n-1}\left\Vert X_{i-1}\right\Vert ^{2}+\beta\right]
^{p/2} \\
& \leq2^{p/2}\left[ \left( n-1\right) ^{p/2}\tau^{p/2}\mathbb{E}\left( \frac{
1}{n-1}\sum_{i=1}^{n-1}\left\Vert X_{i-1}\right\Vert ^{2}\right)
^{p/2}+\beta^{p/2}\right] \\
& \leq2^{p/2}\left[ \left( n-1\right) ^{p/2}\tau^{p/2}\left( \frac {1}{n-1}
\sum_{i=1}^{n-1}\mathbb{E}\left\Vert X_{i-1}\right\Vert ^{p}\right)
+\beta^{p/2}\right]
\end{align*}
where the discrete Jensen's inequality gives for $p\geq2$ $\left( \frac {1}{
n-1}\sum_{i=1}^{n-1}\left\Vert X_{i-1}\right\Vert ^{2}\right) ^{p/2}\leq
\frac{1}{n-1}\sum_{i=1}^{n-1}\left\Vert X_{i-1}\right\Vert ^{p}$. The last
bound becomes :
\begin{equation*}
\mathbb{E}\left[ \sum_{i=1}^{n-1}\mathbb{E}\left( \left\Vert \Delta
_{i}\right\Vert _{2}^{2}|X_{i-1}\right) \right] ^{p/2}\leq2^{p/2}\left[
\left( n-1\right) ^{p/2}\tau^{p/2}\mathbb{E}\left\Vert X_{1}\right\Vert
^{p}+\beta^{p/2}\right] .
\end{equation*}
By Lemma \ref{lem.conc.mom} :
\begin{align*}
\mathbb{E}\left[ \sum_{i=1}^{n-1}\mathbb{E}\left( \left\Vert \Delta
_{i}\right\Vert _{2}^{2}|X_{i-1}\right) \right] ^{p/2} & \leq 2^{p/2}\left[
\left( 2\xi_{1}^{\prime}\left( a_{2}^{\prime}\right) ^{2}\left( n-1\right)
\left( \mathrm{tr}C\right) \frac{1+\eta}{\eta }m\right)
^{p/2}\xi_{p/2}+\left( \frac{\xi_{2}^{\prime}}{2}\right) ^{p/2}\left( \frac{
1+\eta}{\eta}m\right) ^{p}\right] \\
& \leq \left( nm\right) ^{p/2} \left( \frac{1+\eta}{\eta}\right) ^{p/2}\left[
\left(4 \xi_{1}^{\prime}\left( a_{2}^{\prime}\right) ^{2}\mathrm{tr}C\right) ^{p/2}\xi_{p/2}+\left( \xi_{2}^{\prime}\frac{1+\eta}{\eta}
\right) ^{p/2}\left( \frac{m}{n}\right) ^{p/2}\right] \\
& \leq\mathbf{c}_{p}^{\prime}\left( nm\right) ^{p/2}
\end{align*}
with (for $m/n\leq1$) :
\begin{equation*}
\mathbf{c}_{p}^{\prime}=\left( \frac{1+\eta}{\eta}\right) ^{p/2}\left[
\left(4 \xi_{1}^{\prime}\left( a_{2}^{\prime}\right) ^{2}\mathrm{tr}C\right) ^{p/2}\xi_{p/2}+\left( \xi_{2}^{\prime}\frac{1+\eta}{\eta}
\right) ^{p/2}\right]
\end{equation*}

Let us focus on the second term in the right hand side of (\ref{burk-rosen}):
\[\Delta_{i}=C^{\dagger1/2}\varepsilon_{i}\otimes C^{\dagger1/2}\rho
X_{i-1}+\left( C^{\dagger1/2}\varepsilon_{i}\otimes
C^{\dagger1/2}\varepsilon_{i}-C^{\dagger1/2}C_{\varepsilon}C^{\dagger1/2}
\right) /2.\]
\begin{align*}
\mathbb{E}\left\Vert \Delta_{i}\right\Vert _{2}^{p} & =\mathbb{E}\left\Vert
C^{\dagger1/2}\varepsilon_{i}\otimes C^{\dagger1/2}\rho X_{i-1}+\frac {
C^{\dagger1/2}\varepsilon_{i}\otimes
C^{\dagger1/2}\varepsilon_{i}-C^{\dagger1/2}C_{\varepsilon}C^{\dagger1/2}}{2}
\right\Vert _{2}^{p} \\
& \leq2^{p}\mathbb{E}\left\Vert C^{\dagger1/2}\varepsilon_{i}\otimes
C^{\dagger1/2}\rho X_{i-1}\right\Vert _{2}^{p}+\mathbb{E}\left\Vert
C^{\dagger1/2}\varepsilon_{i}\otimes
C^{\dagger1/2}\varepsilon_{i}-C^{\dagger1/2}C_{\varepsilon}C^{\dagger1/2}
\right\Vert _{2}^{p} \\
& \leq2^{p}\left\{ \mathbb{E}\left\Vert C^{\dagger1/2}\varepsilon
_{i}\right\Vert ^{p}\mathbb{E}\left\Vert C^{\dagger1/2}\rho
X_{i-1}\right\Vert ^{p}+\mathbb{E}\left\Vert
C^{\dagger1/2}\varepsilon_{i}\otimes C^{\dagger
1/2}\varepsilon_{i}\right\Vert _{2}^{p}+\left\Vert
C^{\dagger1/2}C_{\varepsilon}C^{\dagger1/2}\right\Vert _{2}^{p}\right\} \\
& =2^{p}\left[ \mathbb{E}\left\Vert C^{\dagger1/2}\varepsilon_{i}\right\Vert
^{p}\mathbb{E}\left\Vert C^{\dagger1/2}\rho X_{i-1}\right\Vert ^{p}+\mathbb{E
}\left\Vert C^{\dagger1/2}\varepsilon_{i}\right\Vert ^{2p}+\left\Vert
C^{\dagger1/2}C_{\varepsilon}C^{\dagger1/2}\right\Vert _{2}^{p}\right] .
\end{align*}

We start with the last term above :
\begin{align*}
\left\Vert C^{\dagger1/2}C_{\varepsilon}C^{\dagger1/2}\right\Vert _{2}^{p} &
=\left( \sum_{k,l}\frac{\left\langle C_{\varepsilon
}e_{k},e_{l}\right\rangle ^{2}}{\left( \lambda_{k}+\alpha\right) \left(
\lambda_{l}+\alpha\right) }\right) ^{p/2} & \leq\left( \sum_{k,l}\frac{\left\Vert
C_{\varepsilon}^{1/2}e_{k}\right\Vert ^{2}\left\Vert
C_{\varepsilon}^{1/2}e_{l}\right\Vert ^{2}}{\left( \lambda_{k}+\alpha\right)
\left( \lambda_{l}+\alpha\right) }\right) ^{p/2}=\left( \sum_{k}\frac{
\left\Vert C_{\varepsilon}^{1/2}e_{k}\right\Vert ^{2}}{\lambda_{k}+\alpha}
\right) ^{p}.
\end{align*}
Finally :
\begin{equation*}
\sum_{k}\frac{\left\Vert C_{\varepsilon}^{1/2}e_{k}\right\Vert ^{2}}{
\lambda_{k}+\alpha}=\sum_{k}\frac{\left\Vert
C_{\varepsilon}^{1/2}e_{k}\right\Vert ^{2}}{\lambda_{k}}\frac{\lambda_{k}}{
\lambda_{k}+\alpha}\leq\left\Vert C^{-1/2}C_{\varepsilon}^{1/2}\right\Vert
_{\infty}^{2}\sum _{k}\frac{\lambda_{k}}{\lambda_{k}+\alpha}.
\end{equation*}

With the help of Lemma \ref{lem.conc.mom} and taking into account that $
\mathbb{E}\left\Vert C^{\dagger1/2}\rho X_{i-1}\right\Vert ^{p}\leq\left(
a_{2}^{\prime}\right) ^{p}\mathbb{E}\left\Vert X_{1}\right\Vert ^{p}<+\infty$
we obtain finally :
\begin{align*}
\mathbb{E}\left\Vert \Delta_{i}\right\Vert _{2}^{p} & \leq2^{p}\left[ 
\mathbb{E}\left\Vert C^{\dagger1/2}\varepsilon_{1}\right\Vert ^{p}\left(
a_{2}^{\prime}\right) ^{p}\mathbb{E}\left\Vert X_{1}\right\Vert ^{p}+\mathbb{
E}\left\Vert C^{\dagger1/2}\varepsilon_{1}\right\Vert ^{2p}+\left(
a_{3}^{\prime}\right) ^{2p}\left( \sum_{k}\frac{\lambda_{k}}{
\lambda_{k}+\alpha}\right) ^{p}\right] . \\
& \leq2^{p}\left[ \xi_{p/2}^{\prime}\left( \frac{1+\eta}{\eta}m\right)
^{p/2}\left( a_{2}^{\prime}\right) ^{p}\mathbb{E}\left\Vert X_{1}\right\Vert
^{p}+\xi_{p}^{\prime}\left( \frac{1+\eta}{\eta}m\right) ^{p}+\left( \left(
a_{3}^{\prime}\right) ^{2}\frac{1+\eta}{\eta}m\right) ^{p}\right] \\
& \leq\left( 2\frac{1+\eta}{\eta}m\right) ^{p}\left[ \xi_{p/2}^{\prime}
\xi_{p/2}\left( \left( a_{2}^{\prime}\right) ^{2}\sqrt{\frac{\mathrm{tr}C}{m}
\frac{\eta}{1+\eta}}\right) ^{p}+\xi_{p}^{\prime}+\left( a_{3}^{\prime
}\right) ^{2p}\right] \leq\mathbf{c}_{p}^{\prime\prime}m^{p}
\end{align*}
with $\mathbf{c}_{p}^{\prime\prime}=\left( 2\frac{1+\eta}{\eta}\right) ^{p}\left[
\xi_{p/2}^{\prime}\xi_{p/2}\left( \left( a_{2}^{\prime}\right) ^{2}\sqrt{\eta
\mathrm{tr}C}\right) ^{p}+\xi_{p}^{\prime}+\left( a_{3}^{\prime }\right)
^{2p}\right]$.

In order to conclude we have to put together the previous different steps. Remind that we started from :
\[\mathbb{E}\left\Vert R_{n,\alpha}\right\Vert _{2}^{p}  \leq\frac{2^{p-1}}{
n^{p}t_{0}^{p}}\left\{ 2^{p}\mathbb{E}\left\Vert \sum_{i=1}^{n-1}\Delta
_{i}\right\Vert _{2}^{p}+\mathbb{E}\left\Vert
C^{\dagger1/2}Z_{n}C^{\dagger1/2}\right\Vert _{2}^{p}\right\}. \]

From (\ref{CZC}) and (\ref{burk-rosen}) we  get :
\begin{align*}
\mathbb{E}\left\Vert R_{n,\alpha}\right\Vert _{2}^{p}& \leq\left( \frac{m}{n}\right) ^{p/2}\frac{4^{p}}{t_{0}^{p}}\left[ \mathbf{c}_{p}\left\{ \mathbf{c}_{p}^{\prime}+\ell_p (m,n)\mathbf{c}_{p}^{\prime\prime}
\right\} +\left( \frac{m}{n}\right) ^{p/2}\left( \frac{1+\eta}{\eta}\right)
^{p}\left( \frac{\xi_{p}+\min^{p/2}\left( 1,\eta\right) }{2}\right) \right]
\\
& \leq\left( \frac{m}{n}\right) ^{p/2}\mathbf{c}_{p}^{\ast}
\end{align*}

with
\begin{equation*}
\mathbf{c}_{p}^{\ast}\leq\frac{4^{p}}{t_{0}^{p}}\left[ \mathbf{c}_p\left\{ \mathbf{c}
_{p}^{\prime}+\mathbf{c}_{p}^{\prime\prime}\right\} +\left( \frac{1+\eta}{
\eta}\right) ^{p}\left( \frac{\xi_{p}+\min^{p/2}\left( 1,\eta\right) }{2}
\right) \right] .
\end{equation*}
whenever $\ell_p (m,n)=n(m/n)^{p/2} \leq 1$ which is true when $p \geq 2+2/(1+\eta)$. This finishes the proof of Proposition \ref{conc.mom}.

The next result was invoked in the proof of the previous Proposition as a corollary even it is not completely.

\begin{corollary}
\label{corr-conc-U_n}Under assumption (\ref{A3m}) we have for all $p\leq
\mathbf{s}$ :
\begin{align*}
\mathbb{E}\left\Vert U_{n}C^{\dagger1/2}\right\Vert _{2}^{2p} & \leq\mathbf{c
}_{2p}\xi_{p}\mathbf{V}_{n}^{p}\left\{ 1+\frac{1}{n^{p-1}}\left( \frac{1}{
1-a_{1}^{2}}\right) ^{p}\xi_{p}^{\prime}\right\} \leq\mathcal{C}_{s}\mathbf{V
}_{n}^{p} \\
\mathcal{C}_{\mathbf{s}} & \mathbf{=}2\mathbf{s}\max_{p\leq\mathbf{s}
}\xi_{p}\left\{ 1+\left( \frac{1}{1-a_{1}^{2}}\right) ^{\mathbf{s}
}\max_{p\leq\mathbf{s}}\xi_{p}^{\prime}\right\}
\end{align*}
\end{corollary}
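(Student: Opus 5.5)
The plan is to view $nU_nC^{\dagger1/2}=\sum_{k=1}^{n-1}\varepsilon_{k+1}\otimes C^{\dagger1/2}X_k$ as a sum of $\mathcal{S}_2$-valued martingale differences $d_k=\varepsilon_{k+1}\otimes C^{\dagger1/2}X_k$. These are adapted to the filtration $\mathcal{F}_k=\sigma(X_0,\varepsilon_1,\dots,\varepsilon_{k+1})$, since $\varepsilon_{k+1}$ is centered and independent of $X_k$. I would then apply the Burkholder--Rosenthal inequality of \cite{pinelis1994} with exponent $2p$, exactly as in (\ref{burk-rosen}):
\begin{equation*}
\mathbb{E}\Big\Vert\sum_{k=1}^{n-1}d_k\Big\Vert_2^{2p}\leq\mathbf{c}_{2p}\Big\{\mathbb{E}\Big[\sum_{k=1}^{n-1}\mathbb{E}\big(\Vert d_k\Vert_2^2\,|\,\mathcal{F}_{k-1}\big)\Big]^{p}+\sum_{k=1}^{n-1}\mathbb{E}\Vert d_k\Vert_2^{2p}\Big\}.
\end{equation*}
Dividing by $n^{2p}$ then gives the claim, provided each term is matched to $\mathbf{V}_n^p$. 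The key identity is $\Vert d_k\Vert_2^2=\Vert\varepsilon_{k+1}\Vert^2\Vert C^{\dagger1/2}X_k\Vert^2$, which factorizes into independent pieces.

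\textbf{Quadratic variation term.} By independence, $\mathbb{E}(\Vert d_k\Vert_2^2|\mathcal{F}_{k-1})=\sigma_\varepsilon^2\Vert C^{\dagger1/2}X_k\Vert^2$. The discrete Jensen inequality yields $\big(\sum_k\Vert C^{\dagger1/2}X_k\Vert^2\big)^p\leq n^{p-1}\sum_k\Vert C^{\dagger1/2}X_k\Vert^{2p}$, and stationarity reduces everything to $\mathbb{E}\Vert C^{\dagger1/2}X_1\Vert^{2p}$. For this I would reuse the Hölder step in the proof of Lemma \ref{lem.conc.mom}, stopping before Lemma \ref{series} is invoked, to get
\begin{equation*}
\mathbb{E}\Vert C^{\dagger1/2}X_1\Vert^{2p}\leq\xi_p\Big(\sum_l\frac{\lambda_l}{\lambda_l+\alpha}\Big)^p .
\end{equation*}
Hence the first term is at most $\xi_p\,n^p\sigma_\varepsilon^{2p}\big(\sum_l\lambda_l/(\lambda_l+\alpha)\big)^p=\xi_p\,n^{2p}\mathbf{V}_n^p$ by (\ref{biais-var}). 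This produces the leading summand $\mathbf{c}_{2p}\xi_p\mathbf{V}_n^p$.

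\textbf{Individual term.} Again by independence, $\mathbb{E}\Vert d_k\Vert_2^{2p}=\mathbb{E}\Vert\varepsilon_1\Vert^{2p}\,\mathbb{E}\Vert C^{\dagger1/2}X_1\Vert^{2p}$. The second factor is handled as above. For the first factor I would expand $\varepsilon_1$ in the basis $(e_k)$ and use Jensen together with the constant $\xi'_p$ from the proof of Lemma \ref{lem.conc.mom}, giving $\mathbb{E}\Vert\varepsilon_1\Vert^{2p}\leq\xi'_p(\mathrm{tr}\,C)^p$. I would then bound $\mathrm{tr}\,C$ through the series $C=\sum_{j\geq0}\rho^jC_\varepsilon(\rho^\ast)^j$ that follows from (\ref{cov}), obtaining $\mathrm{tr}\,C\leq\sigma_\varepsilon^2/(1-a_1^2)$. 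Summing over $n-1$ indices and dividing by $n^{2p}$ gives $\xi_p\xi'_p(1-a_1^2)^{-p}n^{1-p}\mathbf{V}_n^p$, which is the second summand. The cruder constant $\mathcal{C}_{\mathbf{s}}$ then follows by taking maxima over $p\leq\mathbf{s}$ and using $n^{1-p}\leq1$ together with an explicit bound on $\mathbf{c}_{2p}$ such as $p/\log p$.

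\textbf{Main obstacle.} I expect the delicate point to be the moment $\mathbb{E}\Vert\varepsilon_1\Vert^{2p}$. It must be expressed with $\sigma_\varepsilon^{2p}$ so that it recombines into $\mathbf{V}_n$, and $\mathbf{A}_2$ controls the moments of $X$ rather than those of $\varepsilon$. The route through $\xi'_p$ and the trace bound on $C$ is what I would try first, since it is where the factor $(1-a_1^2)^{-p}$ should come from. I would also check that the filtration is chosen so that $X_k$ is $\mathcal{F}_{k-1}$-measurable while $\varepsilon_{k+1}$ is independent of $\mathcal{F}_{k-1}$; otherwise the factorization of both conditional moments would fail.
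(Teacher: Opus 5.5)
Your proposal is correct and follows exactly the route the paper indicates. The paper omits the proof and says only that Burkholder--Rosenthal applied to the Hilbert--Schmidt-valued martingale $\sum_{k}C^{\dagger1/2}X_k\otimes\varepsilon_{k+1}$ gives the result. Your bookkeeping reproduces the stated first bound with the same constant: the Jensen step gives $\xi_p\bigl(\sum_l\lambda_l/(\lambda_l+\alpha)\bigr)^p$ without needing $\mathbf{A}_3$, and $\mathbb{E}\Vert\varepsilon_1\Vert^{2p}\leq\xi'_p(\mathrm{tr}\,C)^p\leq\xi'_p\sigma_\varepsilon^{2p}(1-a_1^2)^{-p}$.

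The only weak point, which you share with the paper, is the last step to $\mathcal{C}_{\mathbf{s}}$, since it needs $\mathbf{c}_{2p}\leq 2\mathbf{s}$. That follows from the paper's stated choice $\mathbf{c}_p=p/\log p$. However, in the $2p$-th-power form of Pinelis' inequality the constant is really of order $(Cp/\log p)^{2p}$, so $\mathcal{C}_{\mathbf{s}}$ should be read as some constant depending on $\mathbf{s}$ rather than the explicit expression given.
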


Since the derivation of the Corollary
is very similar but much simpler than above, the proof is omitted. The reader has just to notice that $U_{n}C^{\dagger1/2}=\dfrac{1}{n}\sum_{k=1}^{n-1}C^{\dagger
1/2}X_{k}\otimes\varepsilon_{k+1}$ is a Hilbert-Schmidt-valued martingale
array. A direct application of the Burkholder-Rosenthal
inequality (used within the proof of Proposition \ref{conc.mom} but in a more intricate version above) leads to the result.

\subsection{Series evaluation\label{sect-series}}

\begin{lemma}
\label{series}Let $m=\min\left\{ p:\lambda_{p}\leq\alpha\right\} $. Assume
that $\mathbf{A}_{3} $ holds. For all $K\in\mathbb{N}
^{\ast}$ :
\begin{align*}
\frac{m}{2^{K}} & \leq\sum_{p=1}^{+\infty}\left( \frac{\lambda_{p}}{
\lambda_{p}+\alpha}\right) ^{K}\leq\left( 1+\frac{1}{\eta}\right) m, \\
\frac{1}{4}\sum_{p=1}^{m}\frac{\left\Vert \rho e_{p}\right\Vert ^{2}}{
\lambda_{p}} & \leq\sum_{p=1}^{+\infty}\frac{\lambda_{p}}{\left(
\lambda_{p}+\alpha\right) ^{2}}\left\Vert \rho e_{p}\right\Vert ^{2}\leq
\sum_{p=1}^{m}\frac{\left\Vert \rho e_{p}\right\Vert ^{2}}{\lambda_{p}}+
\frac{1}{\xi}\frac{\left\Vert \rho e_{m}\right\Vert ^{2}}{\lambda_{m}}m
\end{align*}
where the constants $\eta$ and $\xi$ appears in $\mathbf{A}_{3}$ and specifiy the smoothness of $X$ and $\rho$. When $\mathbf{A}'_{3}$ holds instead of $\mathbf{A}_{3}$, the inequalities above are preserved except that constants $1/\eta$ and $1/\xi$ in both right hand sides are replaced by constants $c_\eta$ and $c_\xi$ explicited in the proof below.
\end{lemma}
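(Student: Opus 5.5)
We split every series at the index $m=\min\{p:\lambda_p\le\alpha\}$, so that $\lambda_p>\alpha$ for $p<m$ and $\lambda_p\le\alpha$ for $p\ge m$. On the head $p\le m$ (more precisely $p<m$, plus the single boundary term) the ridge weights behave like $1$. On the tail they behave like $\lambda_p/\alpha$. The tail sums are then controlled by the decay hypotheses in $\mathbf{A}_3$. We use the monotonicity of $k^{1+\eta}\lambda_k$ and of $k^{1+\xi}\|\rho e_k\|^2$, which yields comparison with $\int_m^\infty t^{-1-\eta}dt$.

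\textbf{First chain.} For the lower bound, each $p\le m$ with $p<m$ has $\lambda_p/(\lambda_p+\alpha)\ge 1/2$, so its $K$-th power is at least $2^{-K}$. This contributes $(m-1)2^{-K}$. The term $p=m$ has $\lambda_m\le\alpha$. To reach $m/2^K$ exactly, we use the definition of $m(\alpha)$ from Section~\ref{upper.bound.section}, with $\alpha=\lambda_m$ so that $\lambda_m/(\lambda_m+\alpha)=1/2$. Alternatively we accept a harmless constant.

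For the upper bound, first use $(\lambda_p/(\lambda_p+\alpha))^K\le \lambda_p/(\lambda_p+\alpha)\le 1$. The first $m$ terms then give at most $m$. For $p>m$ we bound each term by $\lambda_p/\alpha\le\lambda_p/\lambda_m$. By $\mathbf{A}_3$,
$\lambda_p\le \lambda_m (m/p)^{1+\eta}$, so
$\sum_{p>m}\lambda_p/\lambda_m\le m^{1+\eta}\sum_{p>m}p^{-1-\eta}\le m^{1+\eta}\int_m^\infty t^{-1-\eta}dt=m/\eta$.
Here we use $\lambda_m\le\alpha$, i.e.\ $\alpha\ge\lambda_m$.

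\textbf{Second chain.} For the lower bound, on $p<m$ we have $\lambda_p+\alpha\le 2\lambda_p$. Hence $\lambda_p/(\lambda_p+\alpha)^2\ge 1/(4\lambda_p)$, which gives $\frac14\sum_{p\le m}\|\rho e_p\|^2/\lambda_p$. The boundary term $p=m$ is handled as before via $\alpha=\lambda_m$.

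For the upper bound, on $p\le m$ we use $\lambda_p/(\lambda_p+\alpha)^2\le 1/\lambda_p$. On $p>m$ we use $\lambda_p/(\lambda_p+\alpha)^2\le \lambda_p/\alpha^2\le 1/\alpha\le 1/\lambda_m$, which leaves $\lambda_m^{-1}\sum_{p>m}\|\rho e_p\|^2$. The second half of $\mathbf{A}_3$ gives $\|\rho e_p\|^2\le\|\rho e_m\|^2(m/p)^{1+\xi}$. The same integral comparison as above bounds the tail by $\frac{1}{\xi}\|\rho e_m\|^2 m/\lambda_m$.

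\textbf{Variant $\mathbf{A}'_3$.} Here we replace the monotonicity argument by Karamata's theorem for regularly varying sequences. If $u_k=k^{-1-\eta}\ell(k)$ with $\ell$ slowly varying, then $\sum_{p>m}u_p\sim m u_m/\eta$. Hence $\sum_{p>m}u_p\le c_\eta\, m u_m$ for all $m$, with an explicit $c_\eta$. This is obtained by combining the asymptotic equivalence for $m\ge m_0$ with a finite maximum for $m<m_0$, using the Potter bounds. The rapidly varying case $\eta=\infty$ is easier, since there the tail is $o(m u_m)$.

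\textbf{Main obstacle.} The main delicacy is bookkeeping at the boundary index $p=m$, together with the mismatch between the two definitions of $m$ (argmin versus $\min\{p:\lambda_p\le\alpha\}$). Getting the exact constants $2^{-K}$ and $1/4$ there may force a slight adjustment, which we plan to absorb by evaluating the boundary term with $\alpha=\lambda_m$. The uniform constant in the regularly varying case is the other nontrivial step.
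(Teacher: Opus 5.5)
Your proposal is correct and follows the paper's proof essentially line by line. You split at $m$ and bound the head termwise. You then control the tail using $\lambda_m\le\alpha$, the monotonicity of $k^{1+\eta}\lambda_k$ (resp.\ $k^{1+\xi}\|\rho e_k\|^2$) and the comparison $\sum_{p>m}(m/p)^{1+\eta}\le m/\eta$, with Karamata's theorem replacing monotonicity under $\mathbf{A}'_3$. Your explicit handling of the boundary index $p=m$ through the convention $\alpha=\lambda_m$ is exactly what the paper tacitly assumes when it calls the head lower bounds ``trivial''. For a general $\alpha\ge\lambda_m$ those bounds can genuinely fail, e.g.\ when $m=1$ and $\alpha\gg\lambda_1$, so your caution there is warranted rather than a gap.
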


\textbf{Proof of Lemma \ref{series} :} The following bounds are trivial :
\begin{equation*}
\frac{m}{2^{K}}\leq\sum_{k=1}^{m}\left( \frac{\lambda_{k}}{\lambda_{k}+\alpha
}\right) ^{K}\leq m\quad\sum_{k=m+1}^{+\infty}\left( \frac {\lambda_{k}}{
\lambda_{k}+\alpha}\right) ^{K}\leq\sum_{k=m+1}^{+\infty}\frac{\lambda_{k}}{
\lambda_{k}+\alpha}.
\end{equation*}

Besides when $\mathbf{A}_{3}$ holds:
\begin{align*}
\sum_{k=m+1}^{+\infty}\frac{\lambda_{k}}{\lambda_{k}+\alpha} & \leq\frac {1}{
\alpha}\sum_{k=m+1}^{+\infty}\lambda_{k}\leq\frac{\lambda_{m}}{\alpha}
\sum_{k=m+1}^{+\infty}\frac{\lambda_{k}}{\lambda_{m}}\leq\frac{\lambda_{m}}{
\alpha}\sum_{k=m+1}^{+\infty}\left( \frac{m}{k}\right) ^{1+\eta} \\
& \leq\frac{1}{\eta}\frac{\lambda_{m}}{\alpha}m\leq\frac{1}{\eta}m.
\end{align*}
Then for the second part of Lemma \ref{series} :
\begin{equation*}
\sum_{p=1}^{+\infty}\frac{\lambda_{p}}{\left( \lambda_{p}+\alpha\right) ^{2}}
\left\Vert \rho e_{p}\right\Vert ^{2}\geq\sum_{p=1}^{m}\frac{\lambda_{p}}{
\left( \lambda_{p}+\alpha\right) ^{2}}\left\Vert \rho e_{p}\right\Vert
^{2}\geq\frac{1}{4}\sum_{p=1}^{m}\frac{\left\Vert \rho e_{p}\right\Vert ^{2}
}{\lambda_{p}}.
\end{equation*}
And on the other hand :
\begin{equation*}
\sum_{p=1}^{+\infty}\frac{\lambda_{p}}{\left( \lambda_{p}+\alpha\right) ^{2}}
\left\Vert \rho e_{p}\right\Vert ^{2}\leq\sum_{p=1}^{+\infty}\frac{
\left\Vert \rho e_{p}\right\Vert ^{2}}{\lambda_{p}+\alpha}\leq\sum _{p=1}^{m}
\frac{\left\Vert \rho e_{p}\right\Vert ^{2}}{\lambda_{p}}+\sum_{p=m+1}^{+
\infty}\frac{\left\Vert \rho e_{p}\right\Vert ^{2}}{\lambda_{p}+\alpha}
\end{equation*}
with :
\begin{equation*}
\sum_{p=m+1}^{+\infty}\frac{\left\Vert \rho e_{p}\right\Vert ^{2}}{\lambda
_{p}+\alpha}\leq\frac{\left\Vert \rho e_{m}\right\Vert ^{2}}{\alpha}
\sum_{p=m+1}^{+\infty}\frac{\left\Vert \rho e_{p}\right\Vert ^{2}}{
\left\Vert \rho e_{m}\right\Vert ^{2}}\leq\frac{\left\Vert \rho
e_{m}\right\Vert ^{2}}{\lambda_{m}}\sum_{p=m+1}^{+\infty}\left( \frac{m}{k}
\right) ^{1+\xi}\leq\frac{1}{\xi}\frac{\left\Vert \rho e_{m}\right\Vert ^{2}
}{\lambda_{m}}m.
\end{equation*}
where we used $\lambda_{m}\leq\alpha$ and assumption $\mathbf{A}_{3}$.

When $\mathbf{A}'_{3}$ holds we just have to inspect the remainder terms in the series, e.g. $\sum_{p=m+1}^{+\infty}\lambda_{p}$. We have to show for instance that $\sup_m \left\lbrace \sum_{p=m}^{+\infty}\lambda_{p}/(m \lambda_m)  \right\rbrace <+\infty $. Like above in section \ref{bias-study} we switch from sequences to functions and to proving equivalently that $\sup_{x \geq 1} \left\lbrace \int_{s=x}^{+\infty}\lambda(s) ds/(x \lambda(x))  \right\rbrace <+\infty $. But Karamata's Theorem (see e.g. \cite{Bingham1987} Theorem 1.5.11 p.28) ensures that :
\[
\lim _{x \to +\infty} \frac{\int_{s=x}^{+\infty}\lambda(s) ds}{x \lambda(x)}=\frac{1}{\eta}.
\]
When $\eta \in (0,+\infty ]$ this always imply that $c_{\eta}=\sup_{x \geq 1} \left\lbrace \int_{s=x}^{+\infty}\lambda(s) ds/(x \lambda(x))  \right\rbrace <+\infty$. The same method gives a similar result for the sequence $\left\Vert \rho e_{p}\right\Vert ^{2}$. This finishes the proof of the Lemma.

%

\begin{proposition}
\label{AB12}Let $\eta,\beta>0$, $\eta+1\geq\beta$ and $\eta^{\prime},\beta^{
\prime}>0$, $\eta^{\prime}\geq\beta^{\prime}$. The table below gives, up to constants, the value of $\sum_{p=1}^{+\infty}\left\Vert \rho e_{p}\right\Vert
^{2}/(\alpha+\lambda_{p})$ (hence of the left hand side in equation (\ref{A2ter})) as a function of $\alpha$ in the four scenarii considered in Corollary \ref{cor-examples} :

\begin{equation*}
\begin{tabular}{|l|l|l|}
\hline
Case & $I:\left\Vert \rho e_{p}\right\Vert ^{2}\asymp p^{-1-\beta}$ & $II:
\left\Vert \rho e_{p}\right\Vert ^{2}\asymp\exp\left( -\beta^{\prime
}p\right) $ \\ \hline
$A:\lambda_{p}\asymp p^{-1-\eta}$ & $\alpha^{-1+\frac{\beta}{1+\eta}}$
& constant \\ \hline
$\lambda_{p}\asymp\exp\left( -\eta^{\prime}p\right) $ & $
\alpha^{-1}\log^{-\beta}\left( 1/\alpha\right) $ & $\alpha ^{-1+\frac{
\beta^{\prime}}{\eta^{\prime}}}$ \\ \hline
\end{tabular}
\ 
\end{equation*}

\end{proposition}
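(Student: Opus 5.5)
The plan is to evaluate, up to constants, $S(\alpha)=\sum_{p\ge1}\|\rho e_p\|^2/(\alpha+\lambda_p)$ in each of the four cells. The key tool is the split at the effective dimension $m=m(\alpha)$, where $\lambda_m\asymp\alpha$. Since $\lambda_p+\alpha\asymp\lambda_p$ for $p\le m$ and $\lambda_p+\alpha\asymp\alpha$ for $p>m$, we have
\[
S(\alpha)\asymp \mathbf{b}_1+\mathbf{b}_2,\qquad \mathbf{b}_1=\sum_{p=1}^{m}\frac{\|\rho e_p\|^2}{\lambda_p},\quad \mathbf{b}_2=\frac1{\alpha}\sum_{p>m}\|\rho e_p\|^2 .
\]
This is exactly the decomposition of Section \ref{bias-study}, with the inequalities $\tfrac12(\mathbf b_1+\mathbf b_2)\le S\le \mathbf b_1+\mathbf b_2$. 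Each cell then reduces to estimating two elementary sums by Riemann-sum comparison. I will compute the sums with $m$ expressed through $\alpha$: $m\asymp\alpha^{-1/(1+\eta)}$ in row $A$, and $m\asymp \log(1/\alpha)/\eta'$ in the exponential row.

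Row $A$, case $I$: the sum $\mathbf b_1\asymp\sum_{p\le m}p^{\eta-\beta}$. When $\beta<1+\eta$ this is $\asymp m^{1+\eta-\beta}$, with the borderline $\beta=1+\eta$ producing a $\log m$. The tail is $\mathbf b_2\asymp \alpha^{-1}m^{-\beta}\asymp m^{1+\eta-\beta}$. Both give $\alpha^{-(1+\eta-\beta)/(1+\eta)}=\alpha^{-1+\beta/(1+\eta)}$. Row $A$, case $II$: the sum $\mathbf b_1\asymp\sum_{p\le m}p^{1+\eta}e^{-\beta' p}$ is bounded by a convergent series, and $\mathbf b_2\asymp \alpha^{-1}e^{-\beta' m}\asymp m^{1+\eta}e^{-\beta' m}\to0$, so $S$ is constant.

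Exponential row, case $I$: the sum $\mathbf b_1\asymp\sum_{p\le m}p^{-1-\beta}e^{\eta' p}$ is dominated by its last term (geometric growth), giving $\asymp m^{-1-\beta}e^{\eta' m}\asymp \alpha^{-1}\log^{-1-\beta}(1/\alpha)$. The tail $\mathbf b_2\asymp \alpha^{-1}m^{-\beta}\asymp \alpha^{-1}\log^{-\beta}(1/\alpha)$ dominates, which gives the entry. Exponential row, case $II$, with $\eta'\ge\beta'$: the sum $\mathbf b_1\asymp\sum_{p\le m}e^{(\eta'-\beta')p}\asymp e^{(\eta'-\beta')m}=\alpha^{-1+\beta'/\eta'}$ (with an extra factor $m$ if $\eta'=\beta'$, which I will track separately). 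The tail $\mathbf b_2\asymp \alpha^{-1}e^{-\beta' m}=\alpha^{-1+\beta'/\eta'}$ is of the same order.

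The main obstacle is not the calculus but pinning down $m(\alpha)$ robustly when only $\asymp$ is known for $\lambda_p$. A constant-factor error in $\lambda_m\asymp\alpha$ shifts $m$ by a bounded amount in the exponential row, which only changes constants. In the polynomial row it changes $m$ by a constant factor, which is also harmless. I will state this explicitly.

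The other delicate point is the borderline cases ($\beta=1+\eta$, $\beta'=\eta'$), where logarithmic factors appear. I will check there that the table holds only "up to such factors", consistently with the $\sim$ convention used in Corollary \ref{cor-examples}.
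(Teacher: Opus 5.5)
Your proposal is correct and follows essentially the paper's route: elementary sum--integral comparisons split at the effective dimension $m$. In case B-I, the paper substitutes $s=e^{\eta' x}$ and splits the integral at $s=1/\alpha$, which is exactly your split at $p=m$; your remark that the head sum is dominated by its last term replaces the paper's secondary cut at $1/[\alpha\log^{\beta}(1/\alpha)]$. Your caution about the borderline cases is justified: for $\beta=1+\eta$ and for $\beta'=\eta'$ the head $\mathbf{b}_1$ is of order $\log(1/\alpha)$ while $\mathbf{b}_2$ stays bounded, so the sum is $\asymp\log(1/\alpha)$ rather than constant there. The paper's proof passes over this point, but it is consistent with Corollary~\ref{cor-examples} treating these two cases in separate rows.
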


\textbf{Proof of the Proposition :}

Clearly in Case A-II, $\sum_{p=1}^{+\infty}\left\Vert \rho e_{p}\right\Vert
^{2}/\lambda_{p}<+\infty$. In Case A-I and B-II a simple Riemann integral
computation gives the result. Case B-I deserves a slight development for obtaining similar upper and lower bounds. Notice
that here $m=\left\lfloor 1/\eta^{\prime}\log\left( 1/\alpha\right)
\right\rfloor $. From :
\begin{equation*}
\sum_{p=1}^{+\infty}\frac{
\left\Vert \rho e_{p}\right\Vert ^{2}}{\lambda_{p}+\alpha}
\leq\int_{1}^{+\infty}\frac{\exp\left( \eta^{\prime}x\right) }{
1+\alpha\exp\left( \eta^{\prime}x\right) }\frac{dx}{x^{1+\beta}}
\end{equation*}
we go on with a change of variable :
\begin{equation*}
\int_{1}^{+\infty}\frac{\exp\left( \eta^{\prime}x\right) }{1+\alpha
\exp\left( \eta^{\prime}x\right) }\frac{dx}{x^{1+\beta}}=\eta^{\prime\beta
}\int_{\exp\left( \eta^{\prime}\right) }^{+\infty}\frac{1}{1+\alpha s}\frac{
ds}{\log^{1+\beta}\left( s\right) }
\end{equation*}
which is split in two pieces :
\begin{equation*}
\int_{\exp\left( \eta^{\prime}\right) }^{+\infty}\frac{1}{1+\alpha s}\frac{ds
}{\log^{1+\beta}\left( s\right) }=\int_{\exp\left( \eta^{\prime }\right)
}^{1/\alpha}\frac{1}{1+\alpha s}\frac{ds}{\log^{1+\beta}\left( s\right) }
+\int_{1/\alpha}^{+\infty}\frac{1}{1+\alpha s}\frac{ds}{\log^{1+\beta}\left(
s\right) }
\end{equation*}
where :
\begin{align*}
\frac{1}{2\alpha}\int_{1/\alpha}^{+\infty}\frac{ds}{s\log^{1+\beta}\left(
s\right) } & \leq\int_{1/\alpha}^{+\infty}\frac{\alpha s}{1+\alpha s}\frac{ds
}{\alpha s\log^{1+\beta}\left( s\right) }\leq\frac{1}{\alpha}
\int_{1/\alpha}^{+\infty}\frac{ds}{s\log^{1+\beta}\left( s\right) } \\
\frac{1}{2}\frac{1}{\alpha\beta\log^{\beta}\left( 1/\alpha\right) } &
\leq\int_{1/\alpha}^{+\infty}\frac{1}{1+\alpha s}\frac{ds}{\log^{1+\beta
}\left( s\right) }\leq\frac{1}{\alpha\beta\log^{\beta}\left( 1/\alpha
\right) }.
\end{align*}
Besides :
\begin{align*}
\int_{\exp\left( \eta^{\prime}\right) }^{1/\alpha}\frac{1}{1+\alpha s}\frac{
ds}{\log^{1+\beta}\left( s\right) } & =\int_{\exp\left( \eta^{\prime}\right)
}^{1/\left[ \alpha\log^{\beta}\left( 1/\alpha\right) \right] }\frac{1}{
1+\alpha s}\frac{ds}{\log^{1+\beta}\left( s\right) }+\int_{1/\left[
\alpha\log^{\beta}\left( 1/\alpha\right) \right] }^{1/\alpha}\frac{1}{
1+\alpha s}\frac{ds}{\log^{1+\beta}\left( s\right) } \\
& \leq\frac{1}{\eta^{\prime 1+\beta}\alpha\log^{\beta}\left( 1/\alpha\right) }+
\frac{1}{\alpha}\frac{1}{\log^{1+\beta}\left( \frac{1}{\alpha\log^{\beta
}\left( 1/\alpha\right) }\right) }
\end{align*}
where :
\begin{equation*}
\frac{1}{\alpha}\frac{1}{\log^{1+\beta}\left( \frac{1}{\alpha\log^{\beta
}\left( 1/\alpha\right) }\right) }=\frac{1}{\alpha\log^{\beta}\left(
1/\alpha\right) }\frac{\log^{-1}\left( \frac{1}{\alpha}\right) }{\left(
1-\beta\frac{\log\log\left( 1/\alpha\right) }{\log\left( \frac{1}{\alpha }
\right) }\right) ^{1+\beta}}
\end{equation*}
It is simple algebra to show that for $s\geq1$ $s^{-1}\log s\leq2/\left( e
\sqrt{s}\right) $ hence for $t\in\left( 0,1\right) $ $1-\beta\frac {
\log\log\left( 1/\alpha\right) }{\log\left( \frac{1}{\alpha}\right) }>t$
whenever $\alpha<\exp\left[ -4\beta^{2}/e^{2}\left( 1-t\right) ^{2}\right] $
and choosing finally $t=1/2$ :
\begin{equation*}
\frac{\log^{-1}\left( \frac{1}{\alpha}\right) }{\left( 1-\beta\frac {
\log\log\left( 1/\alpha\right) }{\log\left( \frac{1}{\alpha}\right) }\right)
^{1+\beta}}\leq\frac{1}{t^{1+\beta}}\left( \frac{e\left( 1-t\right) }{2\beta}
\right) ^{2}=2^{1+\beta}\left( \frac{e}{4\beta}\right) ^{2}\leq\frac{
2^{\beta}}{\beta^{2}}
\end{equation*}
so that :
\begin{equation*}
\sum_{p=1}^{+\infty}\frac{\left\Vert \rho e_{p}\right\Vert ^{2}}{ \lambda_{p}+\alpha}
\leq\frac{1}{\alpha\log^{\beta }\left(
1/\alpha\right) }\left( \frac{1}{\eta^{\prime\beta}}+\frac {2^{\beta}}{
\beta^{2}}\right) .
\end{equation*}
Now we deal with the lower bound left a bit sooner :
\begin{equation*}
\sum_{p=1}^{+\infty}\frac{\lambda_{p}}{\left( \lambda_{p}+\alpha\right) ^{2}}
\left\Vert \rho e_{p}\right\Vert ^{2}\geq\sum_{p=m}^{+\infty}\frac{
\lambda_{p}}{\left( \lambda_{p}+\alpha\right) ^{2}}\left\Vert \rho
e_{p}\right\Vert ^{2}\geq\frac{1}{2}\sum_{p=m}^{+\infty}\frac{\left\Vert
\rho e_{p}\right\Vert ^{2}}{\lambda_{p}+\alpha}.
\end{equation*}
Copying the method above for the  lower bound :

\begin{equation*}
\sum_{p=1}^{+\infty}\frac{\left\Vert \rho e_{p}\right\Vert ^{2}}{\lambda
_{p}+\alpha}\geq\int_{1/\eta^{\prime}\log\left( 1/\alpha\right) }^{+\infty }
\frac{\exp\left( \eta^{\prime}x\right) }{1+\alpha\exp\left( \eta^{\prime
}x\right) }\frac{dx}{x^{1+\beta}}=\int_{1/\alpha}^{+\infty}\frac{1}{1+\alpha
s}\frac{ds}{\log^{1+\beta}\left( s\right) }\geq\frac{1}{2}\frac{1}{
\alpha\beta\log^{\beta}\left( 1/\alpha\right) }
\end{equation*}
which leads to the announced result in Case B-I.\bigskip

\bigskip

\begin{proposition}
\label{closed-op}When (\ref{A1}) holds $1<\left\Vert
C_{\varepsilon}^{-1/2}C^{1/2}\right\Vert _{\infty}<+\infty$, besides 
$C_{\varepsilon }^{-1/2}C^{1/2}$ is invertible with :
\begin{equation*}
\left\Vert
C^{-1/2}C_{\varepsilon}^{1/2}\right\Vert _{\infty}<1, \quad \left\Vert
C^{-1/2}\rho\right\Vert _{\infty}<+\infty.
\end{equation*}
In addition denote $\widetilde{C}
_{\alpha}=C^{\dagger 1/2}\rho\left( C+\alpha\mathbf I\right) ^{1/2}$, take any $
t_{0}\in\left( a_{3}^{-2},1\right) $ and $\alpha<\left(
t_{0}-a_{3}^{-2}\right) /\left( a_{2}^{\prime}\right) ^{2}=\alpha_{0}$ then $\left\Vert \widetilde{C}
_{\alpha}\right\Vert _{\infty}<\sqrt{1-t_{0}}$. 
\end{proposition}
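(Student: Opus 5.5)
The plan is to derive everything from the Lyapunov identity (\ref{cov}), $C=\rho C\rho^{\ast}+C_{\varepsilon}$, read as an order relation between positive operators, together with Douglas' factorization lemma: for bounded $A,B$, $AA^{\ast}\le \kappa BB^{\ast}$ iff $A=BK$ with $\left\Vert K\right\Vert_{\infty}^{2}\le\kappa$. Facts \textbf{F1}--\textbf{F4} of subsection \ref{tech-res-op} will be used to pass between densely defined products and their bounded extensions and adjoints.

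\textbf{Step 1: the two comparison inequalities.} From (\ref{cov}), $C_{\varepsilon}\le C$, so Douglas gives that $C^{-1/2}C_{\varepsilon}^{1/2}$ extends to a bounded operator with $a_{3}^{\prime}\le1$. Conversely, $\left\Vert C_{\varepsilon}^{-1/2}\rho\right\Vert_{\infty}=a_{2}$ means $\rho\rho^{\ast}\le a_{2}^{2}C_{\varepsilon}$. Hence
\[
\rho C\rho^{\ast}\le\left\Vert C\right\Vert_{\infty}\rho\rho^{\ast}\le\lambda_{1}a_{2}^{2}C_{\varepsilon},
\]
so $C\le(1+\lambda_{1}a_{2}^{2})C_{\varepsilon}$. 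Douglas then gives $a_{3}^{2}\le1+\lambda_{1}a_{2}^{2}<\infty$.

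\textbf{Step 2: invertibility and the bound on $C^{-1/2}\rho$.} The two bounded operators $C_{\varepsilon}^{-1/2}C^{1/2}$ and $C^{-1/2}C_{\varepsilon}^{1/2}$ are mutually inverse on the dense set $\mathrm{Im}\,C_{\varepsilon}^{1/2}$, since $\mathrm{Im}\,C^{1/2}=\mathrm{Im}\,C_{\varepsilon}^{1/2}$ by Step 1. They therefore extend to inverse bounded operators, which gives invertibility. It also gives $a_{3}a_{3}^{\prime}\ge1$, so $a_{3}>1$ as soon as $a_{3}^{\prime}<1$. The bound on $C^{-1/2}\rho$ is immediate from the factorization
\[
C^{-1/2}\rho=\bigl(C^{-1/2}C_{\varepsilon}^{1/2}\bigr)\bigl(C_{\varepsilon}^{-1/2}\rho\bigr),
\]
which yields $a_{2}^{\prime}\le a_{3}^{\prime}a_{2}$.

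\textbf{Step 3: the bound on $\widetilde{C}_{\alpha}$.} Write $\left\Vert\widetilde{C}_{\alpha}\right\Vert_{\infty}^{2}=\left\Vert\widetilde{C}_{\alpha}^{\ast}\right\Vert_{\infty}^{2}$, which by \textbf{F1} equals the norm of $C^{\dagger1/2}\rho(C+\alpha\mathbf I)\rho^{\ast}C^{\dagger1/2}$. Split $\rho(C+\alpha\mathbf I)\rho^{\ast}=(C-C_{\varepsilon})+\alpha\rho\rho^{\ast}$ using (\ref{cov}). The second piece contributes at most
\[
\alpha\left\Vert C^{\dagger1/2}\rho\right\Vert_{\infty}^{2}\le\alpha(a_{2}^{\prime})^{2}.
\]
For the first piece, $C_{\varepsilon}\ge a_{3}^{-2}C$ (Douglas again) gives
\[
C^{\dagger1/2}(C-C_{\varepsilon})C^{\dagger1/2}\le(1-a_{3}^{-2})\,CC^{\dagger}.
\]
The goal is to combine these so that the quantity $a_{3}^{-2}+\alpha(a_{2}^{\prime})^{2}$ is compared with $t_{0}$. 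The hypothesis $\alpha<\alpha_{0}$ means exactly $a_{3}^{-2}+\alpha(a_{2}^{\prime})^{2}<t_{0}$, and the desired conclusion $\left\Vert\widetilde{C}_{\alpha}\right\Vert_{\infty}^{2}<1-t_{0}$ would follow from that.

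\textbf{Main obstacle.} The strict inequalities are where I expect trouble. The straightforward estimate in Step 3 gives
\[
\left\Vert\widetilde{C}_{\alpha}\right\Vert_{\infty}^{2}\le1-a_{3}^{-2}\inf_{p}\frac{\lambda_{p}}{\lambda_{p}+\alpha}+\alpha(a_{2}^{\prime})^{2},
\]
and this infimum vanishes, since $\lambda_{p}\to0$. Similarly, $(a_{3}^{\prime})^{2}=\left\Vert\mathbf I-BB^{\ast}\right\Vert_{\infty}$ with $B=C^{-1/2}\rho C^{1/2}$ compact, so strictness there cannot come from compactness alone. I would first try to control $C^{\dagger1/2}(C-C_{\varepsilon})C^{\dagger1/2}$ directly through $B$. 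Since $C-C_{\varepsilon}=\rho C\rho^{\ast}$, this operator equals $\bigl(C^{\dagger1/2}C^{1/2}\bigr)BB^{\ast}\bigl(C^{1/2}C^{\dagger1/2}\bigr)$, whose norm is at most $(a_{1}^{\prime})^{2}<1$. I would then aim to relate $(a_{1}^{\prime})^{2}$ to $1-a_{3}^{-2}$, so as to obtain the orientation of $t_{0}$ exactly as stated. If that relation fails, the correct strictness mechanism has to be identified at this point.
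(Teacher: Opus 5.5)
There is a genuine gap at the end of Step 3, and it cannot be closed: the final claim, and the strict inequality $a_3'<1$, are false as stated.

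\textbf{The final claim.} Your sentence ``the hypothesis $\alpha<\alpha_0$ means exactly $a_3^{-2}+\alpha(a_2')^2<t_0$, and the desired conclusion would follow'' reverses an inequality.

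\begin{itemize}
\item Your own order bound $C^{\dagger1/2}(C-C_{\varepsilon})C^{\dagger1/2}\le(1-a_3^{-2})CC^{\dagger}$ already gives norm at most $(1-a_3^{-2})\lambda_1/(\lambda_1+\alpha)\le 1-a_3^{-2}$. The vanishing infimum in your ``obstacle'' display only appears because you bounded $CC^{\dagger}$ and $a_3^{-2}CC^{\dagger}$ separately.
\item Hence $\|\widetilde{C}_{\alpha}\|_\infty^2\le 1-a_3^{-2}+\alpha(a_2')^2$. This is $<1-t_0$ if and only if $t_0+\alpha(a_2')^2<a_3^{-2}$, which is incompatible with $t_0>a_3^{-2}$.
\end{itemize}

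No sharper mechanism exists. Take a diagonal model as in Section~\ref{sec:simus}: $\rho e_j=r_je_j$, $Ce_j=\lambda_je_j$, $C_{\varepsilon}=C(\mathbf I-\rho^2)$, with $0<\max_j r_j<1$ and $r_j^2\le c\lambda_j$. Then $\mathbf{A}_1$ holds (for $\tau$ large enough) and $a_3^{-2}=1-\max_j r_j^2$. Moreover $\widetilde{C}_{\alpha}=\rho$ for every $\alpha>0$, so $\|\widetilde{C}_{\alpha}\|_\infty^2=1-a_3^{-2}>1-t_0$ for every $t_0\in(a_3^{-2},1)$.

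The relation you were looking for, $(a_1')^2\le 1-a_3^{-2}$, is true: it is $BB^{\ast}=C^{-1/2}(C-C_{\varepsilon})C^{-1/2}\le(1-a_3^{-2})\mathbf I$. But it is an equality in this example, so it cannot produce the stated orientation of $t_0$.

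\textbf{The claim $a_3'<1$.} Your remark on $a_3'$ should be pushed to its conclusion. $B$ is compact and $\dim\mathbf H=\infty$, so $\inf_{\|x\|=1}\|B^{\ast}x\|=0$ and $(a_3')^2=\|\mathbf I-BB^{\ast}\|_\infty=1$ exactly. In the example, $a_3'=\sup_j(1-r_j^2)^{1/2}=1$.

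What is provable, and what Proposition~\ref{conc.mom} actually needs, is $a_3'\le1$ together with: for $t_0\in(0,a_3^{-2})$ and $\alpha<(a_3^{-2}-t_0)/(a_2')^2$, $\|\widetilde{C}_{\alpha}\|_\infty<\sqrt{1-t_0}$. Your Step 3 proves this directly once the inequality is read correctly.

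\textbf{Consequence for Step 2, and comparison with the paper.} Since $a_3'=1$, your derivation of $a_3>1$ from $a_3a_3'\ge1$ only gives $a_3\ge1$. Instead, combine $\langle Cx,x\rangle=\langle C_{\varepsilon}x,x\rangle+\|C^{1/2}\rho^{\ast}x\|^2$ with $\langle Cx,x\rangle\le a_3^2\langle C_{\varepsilon}x,x\rangle$ at any $x$ with $\rho^{\ast}x\neq0$. This gives $a_3>1$ (so this part requires $\rho\neq0$), and it is the paper's ``$\mathbf I+T$ with $T\ge0$'' argument.

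The rest of your Steps 1--2 is correct and cleaner than the paper's treatment: Douglas' lemma for boundedness and mutual invertibility of $C_{\varepsilon}^{-1/2}C^{1/2}$ and $C^{-1/2}C_{\varepsilon}^{1/2}$, and $a_2'\le a_3'a_2$ by factorization. The paper's proof never spells out a bound on $\|C^{-1/2}\rho\|_\infty$.

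For the record, the paper's proof reaches the same estimate $\|\widetilde{C}_{\alpha}\|_\infty^2\le1-a_3^{-2}+\alpha(a_2')^2$, via $\|C^{-1/2}\rho C^{1/2}\|_\infty^2\le1-a_3^{-2}$. It then concludes with the same reversed orientation of $t_0$, and it only establishes $\|C^{-1/2}C_{\varepsilon}^{1/2}\|_\infty\le1$.
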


\textbf{Proof of the Proposition :}

From (\ref{cov}) the following equation is valid on $\mathcal{D}
\left( C_{\varepsilon}^{-1/2}\right) $:
\begin{equation*}
C_{\varepsilon}^{-1/2}CC_{\varepsilon}^{-1/2}=C_{\varepsilon}^{-1/2}\rho
C\rho^{\ast}C_{\varepsilon}^{-1/2}+C_{\varepsilon}^{-1/2}C_{\varepsilon
}C_{\varepsilon}^{-1/2}.
\end{equation*}
The operator $C_{\varepsilon}^{-1/2}C_{\varepsilon}C_{\varepsilon}^{-1/2}$
is a bounded densely defined operator whose extension is the Identity. And $
C_{\varepsilon}^{-1/2}\rho C\rho^{\ast}C_{\varepsilon}^{-1/2}$ is bounded densely defined as
well due to the assumption $\left\Vert C_{\varepsilon}^{-1/2}\rho\right\Vert
_{\infty}<+\infty$. Then so is $C_{\varepsilon}^{-1/2}CC_{
\varepsilon}^{-1/2}=B_{1}B_{1}^{\ast}$ with $B_{1}=C_{
\varepsilon}^{-1/2}C^{1/2}$ and the latter is also bounded. Besides $
C_{\varepsilon}^{-1/2}CC_{\varepsilon}^{-1/2}$ extends to $H$ and $\overline{
C_{\varepsilon}^{-1/2}CC_{\varepsilon}^{-1/2}}=I+\overline{
C_{\varepsilon}^{-1/2}\rho C\rho^{\ast}C_{\varepsilon}^{-1/2}}=I+T$ where $T$
is positive which implies first that $\left\Vert
C_{\varepsilon}^{-1/2}C^{1/2}\right\Vert _{\infty}>1$ and second that $
\overline{C_{\varepsilon}^{-1/2}CC_{\varepsilon}^{-1/2}}$ has a bounded
inverse namely $C_{\varepsilon}^{1/2}C^{-1}C_{
\varepsilon}^{1/2}=B_{2}B_{2}^{\ast} $ with $B_{2}^{\ast}=C^{-1/2}C_{
\varepsilon}^{1/2}.$ This operator $B_{2}^{\ast}$ has a bounded inverse too
: $C_{\varepsilon}^{-1/2}C^{1/2}$. With $I=\overline{C^{-1/2}\rho
C\rho^{\ast}C^{-1/2}}+\overline{C^{-1/2}C_{\varepsilon}C^{-1/2}}$ we deduce
that $\left\Vert B_{2}^{\ast}\right\Vert _{\infty}\leq1$. Now focus on the last sentence of the Proposition about $\widetilde{C}_{\alpha}$.

First $\left\Vert 
\widetilde{C}_{\alpha}\right\Vert _{\infty}\leq\left\Vert C^{-1/2}\rho\left(
C+\alpha\mathbf I\right) ^{1/2}\right\Vert _{\infty}$ because $\left\Vert
C^{\dagger}C\right\Vert _{\infty}\leq1$ so that we can take $\widetilde{C}_{\alpha}$ to be the operator $C^{-1/2}\rho\left(
C+\alpha\mathbf I\right) ^{1/2}$. Second : a sufficient condition here is to show the result with $\widetilde{C}_{\alpha}\left( \widetilde{C}
_{\alpha}\right) ^{\ast}$ instead of $\widetilde{C}_{\alpha}$. But $
\widetilde{C}_{\alpha}\left( \widetilde{C}_{\alpha}\right) ^{\ast }=
\overline{C^{-1/2}\rho\left( C+\alpha\mathbf I\right) \rho^{\ast}C^{-1/2}}$ which
may be split in two operators $\overline{C^{-1/2}\rho C\rho^{\ast}C^{-1/2}}$
and $\alpha\overline{C^{-1/2}\rho\rho^{\ast}C^{-1/2}}$. All what was done
above one may be copied to show that $C^{-1/2}\rho C\rho^{\ast}C^{-1/2}$ has
a bounded extension because $C^{-1/2}C_{\varepsilon}^{1/2}$ is bounded. More
precisely from :
\begin{align*}
C & =\rho C\rho^{\ast}+C_{\varepsilon} \\
C^{-1/2}CC^{-1/2} & =C^{-1/2}\rho
C\rho^{\ast}C^{-1/2}+C^{-1/2}C_{\varepsilon}C^{-1/2}
\end{align*}
we write for all $x\in\mathcal{D}\left( C^{-1/2}\right) $, $\left\Vert
x\right\Vert =1$ :
\begin{equation*}
\left\Vert C^{-1/2}\rho C^{1/2}x\right\Vert ^{2}=\left\langle C^{-1/2}\rho
C\rho^{\ast}C^{-1/2}x,x\right\rangle =1-\left\langle C^{-1/2}C_{\varepsilon
}C^{-1/2}x,x\right\rangle \leq1-\inf_{x\in\mathcal{D}\left( C^{-1/2}\right)
,\left\Vert x\right\Vert =1}\left\Vert
C_{\varepsilon}^{1/2}C^{-1/2}x\right\Vert ^{2}
\end{equation*}
but $\inf_{\left\Vert x\right\Vert =1,x\in\mathcal{D}\left( C^{-1/2}\right)
}\left\Vert C_{\varepsilon}^{1/2}C^{-1/2}x\right\Vert ^{2}\geq\left\Vert
C^{1/2}C_{\varepsilon}^{-1/2}\right\Vert _{\infty}^{-2}$ because $\left\Vert
x\right\Vert \leq\left\Vert T\right\Vert _{\infty}\left\Vert
T^{-1}x\right\Vert $ for all $x$ in $\mathcal{D}(T^{-1})$ and for bounded $T$ with densely
defined $T^{-1}$. We sum up our result : 
\begin{equation*}
\sup_{\left\Vert x\right\Vert =1,x\in\mathcal{D}\left( C^{-1/2}\right)
}\left\Vert C^{1/2}\rho C^{-1/2}x\right\Vert ^{2}\leq1-\left\Vert
C^{1/2}C_{\varepsilon}^{-1/2}\right\Vert _{\infty}^{-2}<1
\end{equation*}
because we proved above that $\left\Vert
C^{1/2}C_{\varepsilon}^{-1/2}\right\Vert _{\infty}>1$. With property (F1)
above at hand this gives : $<1-\left\Vert
C^{1/2}C_{\varepsilon}^{-1/2}\right\Vert _{\infty}^{-2}$Collecting the
previous results we get :
\begin{equation*}
\left\Vert \widetilde{C}_{\alpha}\left( \widetilde{C}_{\alpha}\right)
^{\ast}\right\Vert _{\infty}\leq\left\Vert \overline{C^{-1/2}\rho
C\rho^{\ast }C^{-1/2}}\right\Vert _{\infty}+\left\Vert \alpha\overline{
C^{-1/2}\rho \rho^{\ast}C^{-1/2}}\right\Vert _{\infty}\leq1-\left\Vert 
\overline {C^{1/2}C_{\varepsilon}^{-1/2}}\right\Vert
_{\infty}^{-2}+\alpha\left\Vert C^{-1/2}\rho\right\Vert _{\infty}^{2}
\end{equation*}
Then $\left\Vert \widetilde{C}_{\alpha}\right\Vert _{\infty}<\sqrt{1-t_{0}}$
when $\alpha<\left( t_{0}-a_{3}^{-2}\right) /\left( a_{2}^{\prime}\right)
^{2}.$

\bigskip \bigskip

\bibliography{bib-ARH-Minimax}

\end{document}